\newcolumntype{P}[1]{>{\centering\arraybackslash}p{#1}}
\newcolumntype{M}[1]{>{\centering\arraybackslash}m{#1}}
\normalfont\fontfamily{ptm}\fontsize{11}{11}\bfseries}{\thesection}{1em}{}
\normalfont\fontfamily{ptm}\fontsize{10}{11}\bfseries}{\thesubsection}{1em}{}
\normalfont\fontfamily{ptm}\fontsize{10}{11}\selectfont}{\thesubsubsection}{1em}{}
\newtheorem{theorem}{Theorem}[section]
\newtheorem{corollary}[theorem]{Corollary}
\newtheorem{lemma}[theorem]{Lemma}
\newtheorem{proposition}[theorem]{Proposition}
\newtheorem{definition}[theorem]{Definition}
\newtheorem{remark}[theorem]{Remark}
\newtheorem{example}[theorem]{Example}
\newtheorem{observation}[theorem]{Observation}
\providecommand{\keywords}[1]
{
  \small	
  \quad \quad \textbf{\textit{Keywords --}} #1
}
\newcommand{\conv}[1]{\text{conv}\left({#1}\right)}
\newcommand{\cone}[1]{\text{cone}\left({#1}\right)}
\title{\large  Convexification of classes of mixed-integer sets with L$^\natural$-convexity}
\author{ \small Qimeng Yu$^1$ \quad Simge K\"u\c{c}\"ukyavuz$^2$\vspace{0.2cm}  \\ \small $^1$ D\'epartement d’informatique et de recherche op\'erationnelle, Universit\'e de Montr\'eal, Montr\'eal, QC, Canada \\ \small $^2$ Department of Industrial Engineering and Management Sciences, Northwestern University, Evanston, IL, USA \\ \small $^1$\{kimyu@iro.umontreal.ca\}, $^2$\{simge@northwestern.edu\}}
\date{\small \today} 
\begin{document}
\maketitle

\begin{abstract}
\noindent L$^\natural$ (natural)-convex functions encompass a large class of nonlinear functions over general integer domains and arise in a wide range of real-world applications. We explore the minimization of L$^\natural$-convex functions, of multiple L$^\natural$-convex functions with common variables, and of a mixed-integer extension of L$^\natural$-convex functions---functions defined over a mixed-integer domain with properties that resemble L$^\natural$-convexity. For each of these families of minimization problems, we propose valid linear inequalities and provide convex hull descriptions for the corresponding epigraphs. For all classes of proposed inequalities, we discuss their facet conditions, develop exact separation methods, and analyze the complexity of the separation problem. We discover hidden L$^\natural$-convexity in well-known mixed-integer structures in the integer programming literature, namely the (general integer) mixing set and the continuous mixing set. We show that our findings subsume the existing polyhedral results for these sets and establish new results for the multi-capacity variant of the continuous mixing set. 
\end{abstract}
\keywords{L$^\natural$-convex functions; mixed-integer variables; polyhedral study; convex hull; submodularity; mixing set; continuous mixing set. }

\section{Introduction}
\label{sect:intro}
Let $N = \{1,\dots,n\}$ be a finite non-empty ground set. Throughout, we use  
\[\mathcal{X} := \left\{\mathbf{x} \in \mathbb{Z}^n : \ell_i \leq x_i \leq u_i, \forall\, i \in N \right\}\]
to denote any \emph{discrete hyperrectangle}, where $\ell_i, u_i\in\mathbb{Z} \cup \{\pm\infty\}$ for every $i \in N$. 
Introduced by \citet{murota2003dca}, a function $f:\mathcal{X}\rightarrow\mathbb{R}$ is called \emph{L$^\natural$-convex} if 
\begin{equation}
\label{eq:LC_def_mpc}
f(\mathbf{x}) + f(\mathbf{y}) \geq f\left(\left\lceil\frac{\mathbf{x}+\mathbf{y}}{2}\right\rceil\right) + f\left(\left\lfloor\frac{\mathbf{x}+\mathbf{y}}{2}\right\rfloor\right)
\end{equation}
for every $\mathbf{x}, \mathbf{y}\in \mathcal{X}$, where the rounding operators are component-wise. We provide Figure \ref{fig:midpoint} as a visualization of \eqref{eq:LC_def_mpc}. Property \eqref{eq:LC_def_mpc} is referred to as \emph{mid-point convexity} \citep{moriguchi2020discrete} because of its close analogy to the continuous notion of convexity. Thus, L$^\natural$-convexity is considered a discrete counterpart of classical convexity. \\ 
\begin{figure}[h]
    \centering
    \includegraphics[width=0.95\linewidth]{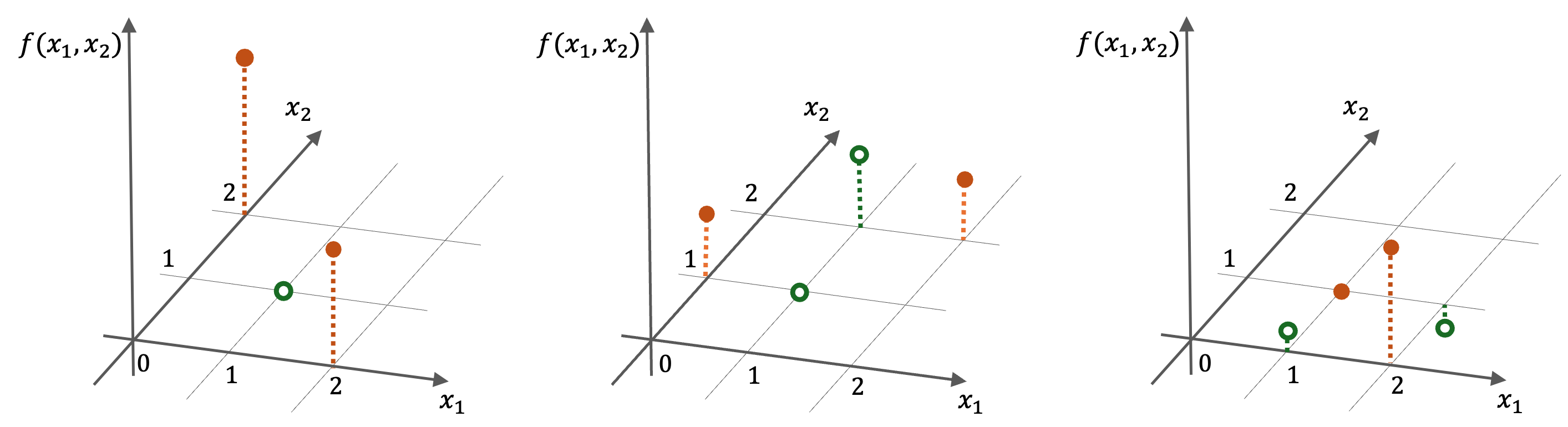}
    \caption{A visual illustration of \eqref{eq:LC_def_mpc}, where $f$ represents an L$^\natural$-convex function, the solid dots represent the function values at arbitrary pairs of elements in the domain, and the hollow dots represent the function values at the discrete midpoint(s).}
    \label{fig:midpoint}
\end{figure}

L$^\natural$-convex functions form a broad class of nonlinear functions. Examples of L$^\natural$-convex functions include and are not limited to submodular set functions, quadratic functions with diagonally dominant $M$-matrices, multimodular functions, and max component functions \citep{murota2003dca,murota2005note}. Despite the intuitive analogy with convexity, L$^\natural$-convex functions are \emph{not} equivalent to convex functions restricted to integer domains. In fact, \emph{non-convex} functions over discrete domains can be L$^\natural$-convex. For example, as illustrated in Figure \ref{fig:nonconvex_eg}, $f(\mathbf{x}) = 10x_1^2 - x_2^2$ is nonconvex over $\mathbb{R}^2$ and is L$^\natural$-convex over $\mathcal{X} = \{\mathbf{x}\in\mathbb{Z}^2: \mathbf{0} \leq \mathbf{x} \leq  [2\,\, 1]^\top \}$ \citep{favati1990convexity}.  \\

\begin{figure}[h]
    \centering
    \includegraphics[width=0.8\linewidth]{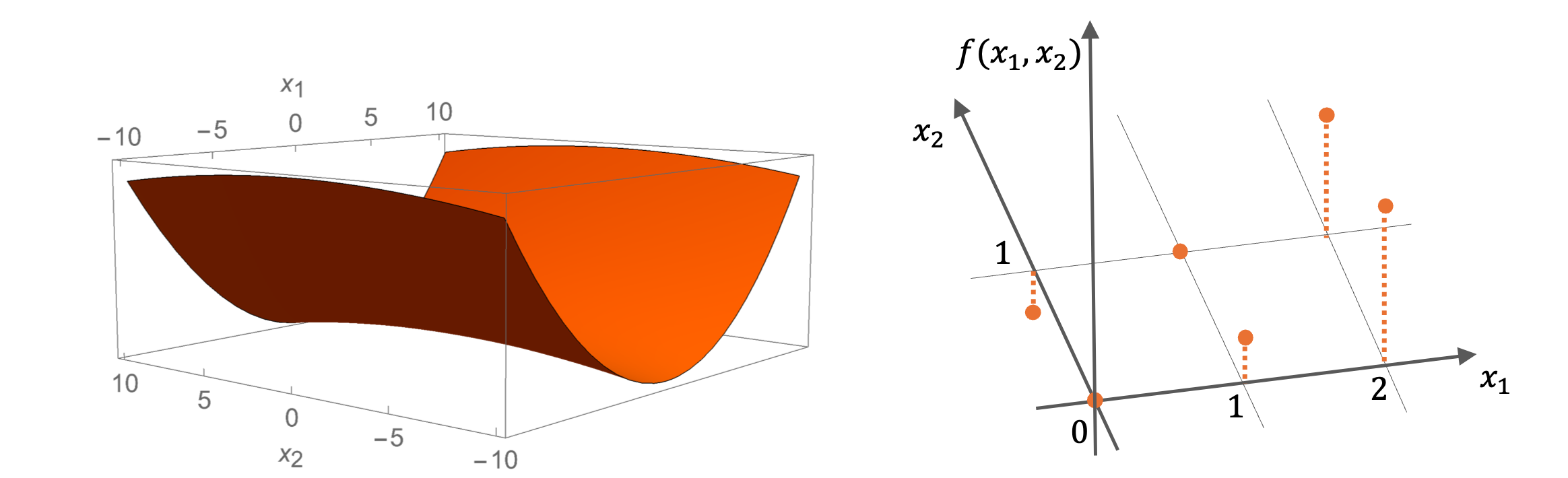}
    \caption{A non-convex function $f(\mathbf{x}) = 10x_1^2 - x_2^2$ (left) and an L$^\natural$-convex function (right) obtained by restricting $f$ to $\mathcal{X} = \{\mathbf{x}\in\mathbb{Z}^2: \mathbf{0} \leq \mathbf{x} \leq  [2\,\, 1]^\top \}$.}
    \label{fig:nonconvex_eg}
\end{figure}

In particular, L$^\natural$-convexity is closely related to submodularity (see Figure \ref{fig:lc_venn}). Submodularity is one of the most important concepts in integer programming and combinatorial optimization. 
A set function $f:2^N\rightarrow\mathbb{R}$ is \emph{submodular} if 
\begin{equation}
\label{eq:submodular_set}
f(S) + f(T) \geq f(S\cup T) + f(S\cap T)
\end{equation}
for all $S, T\subseteq N$. A generalization of submodularity to $f:\mathcal{X} \rightarrow \mathbb{R}$ is the \emph{lattice submodularity}. A function $f:\mathcal{X} \rightarrow \mathbb{R}$ is lattice submodular if 
\begin{equation}
\label{eq:submodular}
f(\mathbf{x}) + f(\mathbf{y}) \geq f\left(\mathbf{x} \vee \mathbf{y}\right) + f\left(\mathbf{x} \wedge \mathbf{y}\right)
\end{equation}
for all $\mathbf{x},\mathbf{y}\in\mathcal{X}$. Here, $\vee$ and $\wedge$ represent the component-wise maximum and minimum between two vectors, respectively. A more precise definition of lattice submodularity is provided in Section \ref{sect:prelim}. Notice that by replacing the subset $S\subseteq N$ by its binary indicator vector, a submodular function is a lattice submodular function with $\mathcal{X} = \{0,1\}^n$. To see the connection with L$^\natural$-convex functions, we give an equivalent definition of 
L$^\natural$-convexity. A function $f:\mathcal{X}\rightarrow\mathbb{R}$ is L$^\natural$-convex if 
\begin{equation}
\label{eq:LC_def_sub}
f(\mathbf{x}) + f(\mathbf{y}) \geq f\left((\mathbf{x}-\alpha\mathbf{1})\vee \mathbf{y}\right) + f\left(\mathbf{x} \wedge (\mathbf{y}+\alpha\mathbf{1})\right)
\end{equation}
for every $\mathbf{x}, \mathbf{y}\in \mathcal{X}$ and every nonnegative $\alpha\in\mathbb{Z}_+$  such that $(\mathbf{x}-\alpha\mathbf{1})\vee \mathbf{y}, \mathbf{x} \wedge (\mathbf{y}+\alpha\mathbf{1}) \in\mathcal{X}$ \citep{murota2003dca}. 
When $\alpha = 0$, \eqref{eq:LC_def_sub} coincides with \eqref{eq:submodular}, which shows that lattice submodular functions subsume L$^\natural$-convex functions. It is straightforward to see that a submodular function is also an L$^\natural$-convex function over the discrete unit hypercube. Although much progress has been made in the optimization of submodular functions, comparable progress in lattice submodular optimization, particularly efficient \emph{exact} solution methods, is lacking. Choquet integral is generalized for lattice submodular functions, and an algorithm that relies on additional binary variables to represent general integer variables is given for the minimization problem over box constraints \citep{bach2019submodular}. The convex hull description of the epigraph of lattice submodular functions in the original decision space remains open. Achieving this result for L$^\natural$-convex functions, a broad subclass of lattice submodular functions, will provide invaluable insights into this challenge, and in this paper, we do just that.  \\

\begin{figure}[h]
    \centering
    \includegraphics[width=0.4\linewidth]{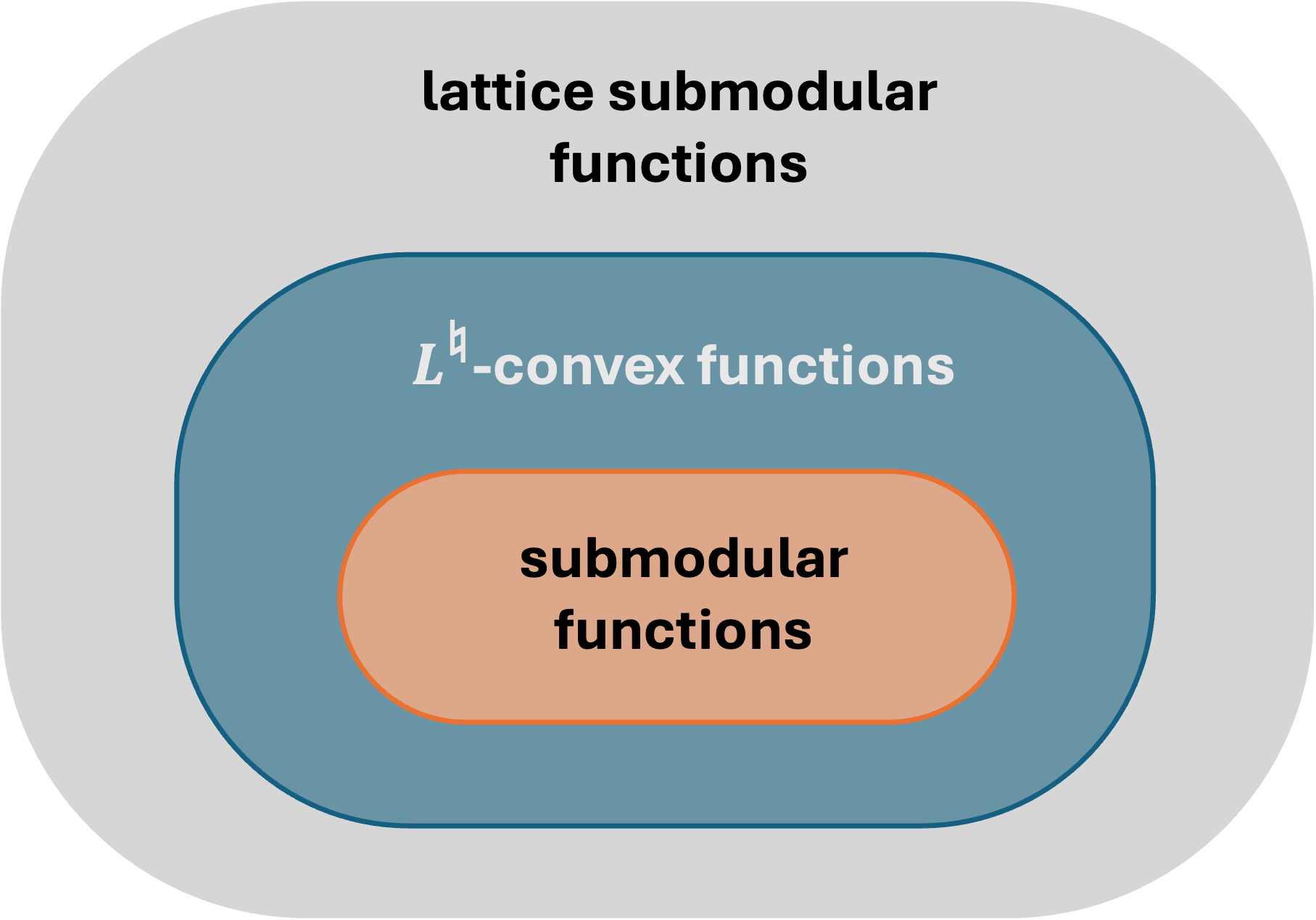}
    \caption{A Venn diagram displaying the relation between L$^\natural$-convexity and submodularity.}
    \label{fig:lc_venn}
\end{figure}

L$^\natural$-convex functions have immense utility in real-world applications, including inventory management \citep{miller1971minimizing,lu2005order,zipkin2008structure,huh2010optimal}, revenue management \citep{chen2017convexity,chen2018preservation},  healthcare \citep{zacharias2020multimodularity}, computer vision \citep{murota2014exact}, and bike sharing \citep{freund2022minimizing}. It is known that L$^\natural$-convex minimization over discrete hyperrectangles is polynomially solvable. This complexity result follows from the fact that minimizing \emph{integrally convex lattice submodular} functions over discrete hyperrectangles is polynomially solvable \citep{favati1990convexity}, and integrally convex lattice submodular functions are shown to be exactly the L$^\natural$-convex functions \citep{murota2001relationship}. 
Discrete steepest descent algorithms are developed for L$^\natural$-convex function minimization  \citep{murota2004steepest}.
Minimization of a continuous extension of L$^\natural$-convex function subject to a linear inequality constraint is shown to be polynomially-solvable, and an algorithm that relies on binary search for an optimal Lagrangian multiplier is proposed \citep{fujishige2009minimizing}. \\

Nonetheless, arbitrary linearly representable constraints cannot be directly handled by these specialized algorithms. In contrast to submodular functions, which are naturally extendable to continuous and mixed-integer domains, for which the associated mixed-integer optimization problems have been examined \citep{atamturk2008polymatroids, atamturk2020submodularity, yu2025constrained}, the mixed-integer extensions of L$^\natural$-convex functions are unexplored, and the associated optimization problems with \emph{mixed-integer} decision variables remain unaddressed. To bridge these gaps, we aim to devise a versatile cutting-plane approach to handle arbitrary constraints in L$^\natural$-convex function minimization and to handle both general-integer and continuous decision variables when minimizing extensions of L$^\natural$-convex functions. To achieve these goals, the key is to convexify the underlying mixed-integer sets in these optimization problems, which we accomplish in this work through a polyhedral approach. \\

The polyhedral approach has demonstrated its significant efficacy in solving integer programming problems to achieve global optimality. Submodular function optimization is one such example. The pioneering work by 
\citet{edmonds2003submodular} establishes that a specific class of linear inequalities, known as the extremal polymatroid inequalities, completely characterizes the convex hull of the epigraph of any submodular function. Furthermore, 
\citet{wolsey1999integer} introduce the submodular inequalities, which are linear inequalities valid for the hypograph of any submodular function, and they offer a 
mixed 0--1 \emph{linear} program reformulation for the unconstrained maximization of submodular functions. These polyhedral results have subsequently been extended to accommodate 
\emph{constrained} instances of submodular optimization \citep{ahmed2011maximizing,yu2017maximizing,shi2022sequence,yu2017polyhedral,yu2023strong}. 
The polyhedral approach has further demonstrated success in submodular optimization under stochastic settings \citep{wu2018two, wu2019probabilistic, wu2020exact, kilincc2022joint, xie2021distributionally, zhang2018ambiguous, shen2023chance} as well as minimization of general set functions \citep{atamturk2022submodular}. In addition, a growing body of research investigates how the polyhedral approach can be leveraged for optimizing \emph{extensions} of submodular functions. One natural generalization of submodularity is $k$-submodularity (i.e., functions with $k\geq 2$ set arguments that display diminishing returns). \citet{yu2020polyhedral, yu2021exact} develop efficient exact solution methods for $k$-submodular minimization and maximization by obtaining polyhedral descriptions for the associated epigraph and hypograph. We refer the reader to \citep{kuccukyavuz2023mixed} for a review of generalized submodular optimization. Taken together, these advances motivate exploring the polyhedral approach for L$^\natural$-convex optimization, particularly since L$^\natural$-convex functions extend the class of submodular functions. 
We next provide a summary of our contributions to this end.

 \subsection{Our contributions}
This work presents a comprehensive, in-depth study of the convexification of multiple families of mixed-integer sets associated with L$^\natural$-convex functions. We describe our main contributions as follows. 
\begin{itemize}
    \item[(i)] We establish useful properties of L$^\natural$-convex functions and of closely related classes of functions, namely submodular functions, lattice submodular functions, and continuous submodular functions.
    
    \item[(ii)] We formalize the complete linear description for the epigraph convex hull of any L$^\natural$-convex function $f$ by proposing a class of valid linear inequalities, which we call the shifted extremal polymatroid inequalities (SEPIs). We provide a polynomial-time exact separation algorithm for the SEPIs. We further show that these inequalities are facet-defining and, with trivial bounds when necessary, are sufficient to describe the convex hull of the epigraph of $f$. In particular, we discover the hidden L$^\natural$-convexity in a well-known mixed-integer set in the mixed-integer programming (MIP) literature, called the \emph{mixing set} \citep{gunluk2001mixing}. We show that the existing polyhedral results for the mixing set are direct corollaries of our discussion.
    
    \item[(iii)]  We further consider the joint epigraph of multiple L$^\natural$-convex functions sharing common variables, as well as a variant with additional constraints linking the epigraph variables of these L$^\natural$-convex functions. We provide their complete convex hull descriptions, respectively. We remark that the intersection of the epigraph convex hulls of L$^\natural$-convex functions is exactly the convex hull of the joint epigraph. This appears surprising because the intersection of the convex hulls of multiple mixed-integer sets is generally not the convex hull of the intersections of these mixed-integer sets. This scenario is known to hold for submodular set functions with common binary variables, and our result generalizes it to a much broader class of functions.
    
    \item[(iv)] L$^\natural$-convex functions are functions defined over general integer variables, and so far, there is no notion of L$^\natural$-convexity defined for mixed-integer functions. That is, a class of functions that takes both integer and continuous variables as arguments and possesses properties that resemble L$^\natural$-convexity. Our work is the first to examine a structured mixed-integer extension of L$^\natural$-convexity, and we provide the convex hull description of the epigraph of this family of mixed-integer functions. Under certain conditions, such a convex hull is fully described by a class of linear inequalities that we refer to as the mixed-integer SEPIs (MISEPIs). We derive facet-conditions for MISEPIs and show that the separation problem is solvable in polynomial time. This, in turn, proves the complexity of unconstrained minimization of any such mixed-integer function. Lastly, we illustrate our theoretical results using the \emph{continuous mixing set} \citep{van2005continuous} and its capacitated variant. More specifically, we uncover the hidden L$^\natural$-convexity in the general integer continuous mixing set and the binary multi-capacity continuous mixing set, both of which are important mixed-integer structures in the MIP literature. We show that our results subsume the existing polyhedral results for the former. For the latter, to the best of our knowledge, we are the first to present its convex hull description using the MISEPIs and establish the complexity of its unconstrained minimization. 
\end{itemize} 

 \subsection{Outline}
We organize the rest of this paper as follows. 
Section \ref{sect:prelim} provides background information on L$^\natural$-convexity and submodularity. After outlining their known properties, we establish additional characteristics of these functions that are crucial for the later discussions. 
In Section \ref{sect:epi_int_conv}, we examine the epigraph convex hull of any L$^\natural$-convex function. We describe the SEPIs, provide a polynomial-time exact separation algorithm, and show that these inequalities are facet-defining. We then formally argue that the epigraph convex hull of any L$^\natural$-convex function is completely described by SEPIs along with trivial bounds. We elaborate on our discovery of the hidden L$^\natural$-convexity in the \emph{mixing set} and show that its known polyhedral description is a direct corollary of our discussion. 
In Section \ref{sect:multi_LC}, we explore the joint epigraph of multiple L$^\natural$-convex functions sharing common variables and its constrained variant. In Section \ref{sect:mi_extension}, we analyze a class of mixed-integer extensions of L$^\natural$-convexity and give the form of the convex hull description for the epigraph of any such mixed-integer function. We describe the MISEPIs and set forth the conditions under which they fully describe the convex hull. We show in Section \ref{sect:sepa_MISEPIs} that the separation for MISEPIs is solvable in polynomial time, which proves the complexity of minimizing such mixed-integer functions. We further provide the facet-conditions for MISEPIs in Section \ref{sect:FD_MISEPI}. Lastly, in Section \ref{sec:structuredH}, we recover the existing polyhedral results for the general integer continuous mixing set and establish new results for the binary multi-capacity continuous mixing set. A closing discussion is included in Section \ref{sect:conclusion}.

\section{Preliminaries} 
\label{sect:prelim}

Recall that $\mathcal{X}$ denotes any discrete hyperrectangle. Alternatively, $\mathcal{X} = \prod_{i\in N} \mathcal{X}_i$ where  $\mathcal{X}_i := \{x\in\mathbb{Z}: \ell_i \leq x_i \leq u_i\}$ for $i\in N$. We represent its continuous relaxation by 
\[\overline{\mathcal{X}} := \{\mathbf{x}\in \mathbb{R}^n : \boldsymbol{\ell}\leq \mathbf{x} \leq \boldsymbol{u}\}.\] In addition, we define $\underline{\mathcal{X}} := \{\mathbf{x}\in \mathbb{Z}^n : \ell_i \leq x_i \leq u_i - 1, \forall \, i \in N\}$, and we follow the convention that $\infty -1 = \infty$.
We let 
\[\mathcal{B}_\mathbf{p} := \left\{\mathbf{x} \in \mathbb{Z}^n : p_i \leq x_i \leq p_i+1, \forall\, i \in N \right\}\]
be the \emph{discrete unit hypercube} with a \emph{shifted origin} $\mathbf{p}\in\mathbb{Z}^n$. For example, $\mathcal{B}_\mathbf{0} = \{0,1\}^n$.  
We follow the convention that $\mathbb{Z}_+$ denotes the set of non-negative integers, and $\mathbb{Z}_{++} = \mathbb{Z}_+\setminus\{0\}$.  Let $\mathbf{1}$ be a vector with all ones of appropriate dimension. Given any subset $S\subseteq N$, $\mathbf{1}^S$ is the indicator vector of $S$. If the subset is a singleton, $S=\{i\}$, we use $\mathbf{1}^i$ for ease of notation. \\

In Section \ref{sect:intro}, we have introduced submodular set functions \eqref{eq:submodular_set} and lattice submodular functions \eqref{eq:submodular}. We revisit these functions more rigorously. First, note that set functions are also functions defined over discrete unit hypercubes. Given any submodular set function $g:2^N \rightarrow \mathbb{R}$, we can equivalently state $g$ as $f: \mathcal{B}_\mathbf{0} \rightarrow \mathbb{R}$, where $f(\mathbf{x}) = g(\{i\in N: x_i = 1\})$ for any $\mathbf{x}\in\mathcal{B}_\mathbf{0}$. Conversely, $g(S) = f(\mathbf{1}^S)$ for any $S\subseteq N$. 
More generally, consider $f:\mathcal{D} \subseteq \mathbb{R}^n \rightarrow \mathbb{R}$ where $\mathcal{D}$ is a lattice. Here, by \citet{topkis1978minimizing}, a lattice refers to a partially ordered set (poset) that contains the join ($\vee$) and the meet ($\wedge$) of each pair of its elements. A poset is a set on which there is a binary relation $\leq$ that is reflexive, antisymmetric, and transitive. Moreover, the direct product of lattices is a lattice. Following these definitions, $\overline{\mathcal{X}} = \prod_{i\in N}\overline{\mathcal{X}}_i \subseteq \mathbb{R}^n$, as well as $\mathcal{X} = \prod_{i\in N}\mathcal{X}_i \subseteq \mathbb{Z}^n$, where component-wise natural ordering is the binary relation that defines the poset, are examples of lattices. Below, we provide the formal definition of the extension of submodularity beyond set functions. 

\begin{definition} \citep{topkis1978minimizing}
Consider any function $f:\mathcal{D} \subseteq \mathbb{R}^n \rightarrow \mathbb{R}$, where $\mathcal{D}$ is a lattice. This function is \emph{submodular} if \eqref{eq:submodular} holds 
for all $\mathbf{x},\mathbf{y}\in\mathcal{D}$.
\end{definition}

 The following is an equivalent definition of submodularity that captures an intuition of diminishing returns. 

\begin{definition} \citep{bach2019submodular}
For any $\mathcal{D}$ that is a discrete hyperrectangle $\mathcal{X}$ or a continuous hyperrectangle $\overline{\mathcal{X}}$, 
the function $f:\mathcal{D} \subseteq \mathbb{R}^n \rightarrow \mathbb{R}$ is submodular if 
\begin{equation}
\label{eq:sub_alt_def}
f(\mathbf{x} + a\mathbf{1}^i) - f(\mathbf{x}) \geq f(\mathbf{x} + a\mathbf{1}^i + b\mathbf{1}^j) - f(\mathbf{x}+ b\mathbf{1}^j) 
\end{equation}
for all $\mathbf{x}\in \mathcal{D}$, all $a,b\in\mathbb{R}_+$ and all distinct $i,j\in N$ such that $\mathbf{x} + a\mathbf{1}^i, \mathbf{x}+ b\mathbf{1}^j, \mathbf{x} + a\mathbf{1}^i + b\mathbf{1}^j\in\mathcal{D}$.
\end{definition} 

In the literature, some studies (e.g., \citep{soma2015generalization, soma2018maximizing}) use the term \emph{lattice submodular functions} to refer to submodular functions defined over general integer domains. Other studies simply use the term \emph{submodular functions} regardless of the integrality of the domains (e.g., \citep{bach2019submodular, staib2017robust, bian2017guaranteed, topkis1978minimizing, han2022polynomial}). For clarity, throughout the discussions that follow, we will use \emph{submodular functions} (and \emph{submodularity}) to address the submodular set functions. We will use \emph{lattice submodular} (lattice submodularity) and \emph{continuous submodular} (continuous submodularity) to distinguish the submodular functions defined over \emph{general integer domains} and \emph{continuous domains}, respectively.  \\

Submodularity is often considered the analog of convexity for functions over discrete unit hypercubes because it is well-known that submodular set functions are efficiently minimizable \citep{grotschel1981ellipsoid, iwata2001combinatorial, lee2015faster, orlin2009faster, cunningham1985submodular, schrijver2000combinatorial, mccormick2005submodular, iwata2009simple, iwata2008submodular}. Moreover, a continuous extension of a set function, called the Lov\'asz extension, is \emph{convex} exactly when the set function is submodular \citep{lovasz1983submodular}. More precisely, let a submodular set function $f:\mathcal{B}_\mathbf{0}\rightarrow\mathbb{R}$ be given. For any $\mathbf{x}\in\overline{\mathcal{B}_\mathbf{0}}=[0,1]^n$, let $\boldsymbol{\delta}\in\mathfrak{S}(N)$ be the permutation such that $x_{\delta(1)} \geq x_{\delta(2)} \geq \dots \geq x_{\delta(n)} \geq x_{\delta(n+1)} \equiv 0$. The Lov\'asz extension of $f$ at $\mathbf{x}$ is 
\begin{equation}
\label{eq:lovasz}
f^L(\mathbf{x}) = \sum_{i=1}^n \left[f\left(\sum_{j=1}^i\mathbf{1}^{\delta(j)}\right) - f\left(\sum_{j=1}^{i-1}\mathbf{1}^{\delta(j)}\right)\right]x_{\delta(i)}.\end{equation}

In fact, the question of whether a function defined over an integral domain has a convex continuous extension has driven \citet{favati1990convexity} to explore the following class of functions. 

\begin{definition} \citep{favati1990convexity}
Given any $f:\mathcal{X}\subseteq \mathbb{Z}^n \rightarrow \mathbb{R}$, consider its continuous extension $\overline{f}:\overline{\mathcal{X}}\rightarrow\mathbb{R}$ defined by 
\[\overline{f}(\mathbf{y}) := \min\left\{\sum_{\mathbf{z}\in\mathcal{N}(\mathbf{y})} \lambda_\mathbf{z} f(\mathbf{z}) :  \sum_{\mathbf{z}\in\mathcal{N}(\mathbf{y})} \lambda_\mathbf{z} \mathbf{z} = \mathbf{y}, \sum_{\mathbf{z}\in\mathcal{N}(\mathbf{y})} \lambda_\mathbf{z} = 1, \lambda_\mathbf{z} \geq 0, \, \forall\, \mathbf{z}\in\mathcal{N}(\mathbf{y})\right\}, \]
for any $\mathbf{y} \in \overline{\mathcal{X}}$, where $\mathcal{N}(\mathbf{y}) := \{\mathbf{z}\in\mathbb{Z}^n : |y_i - z_i| < 1, \forall\, i \in N\}$. 
If $\overline{f}$ is convex, then $f$ is an \emph{integrally convex function} over $\mathcal{X}$. 
\end{definition}

\begin{remark}
\label{re:int_conv_lovasz}
Suppose that $\mathcal{X} = \mathcal{B}_\mathbf{0}$ and $f$ is a submodular function. As suggested in \citep{favati1990convexity}, $\overline{f}$ coincides with the Lov\'asz extension $f^L:\overline{\mathcal{X}} \rightarrow\mathbb{R}$ of $f$, given in \eqref{eq:lovasz} \citep{lovasz1983submodular}. 
\end{remark}

Moreover, L$^\natural$-convex functions are discovered to be closely related to integrally convex functions, as explained in the next theorem. 

\begin{theorem} \citep{fujishige2000notes} 
A function $f:\mathcal{X}\rightarrow\mathbb{R}$ is L$^\natural$-convex if and only if it is integrally convex and lattice submodular. 
\end{theorem}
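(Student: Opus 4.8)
\textbf{The plan} is to prove the two directions separately, in each case showing that the defining inequality of one property, combined with the hypotheses, yields the defining inequality of the other. Throughout I work with the midpoint-convexity form \eqref{eq:LC_def_mpc} of L$^\natural$-convexity and with the diminishing-returns form \eqref{eq:sub_alt_def} (with $a=b=1$, integer steps) of lattice submodularity.

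\textbf{($\Rightarrow$)} Suppose $f$ is L$^\natural$-convex. First I would recover lattice submodularity. Given $\mathbf{x},\mathbf{y}\in\mathcal{X}$, I want to produce a pair whose discrete midpoints are exactly $\mathbf{x}\vee\mathbf{y}$ and $\mathbf{x}\wedge\mathbf{y}$. The natural choice is to apply \eqref{eq:LC_def_mpc} to the pair $(\mathbf{x},\mathbf{y})$ after noting that for each coordinate $i$, $\{x_i,y_i\}=\{\max(x_i,y_i),\min(x_i,y_i)\}$, so $\lceil(x_i+y_i)/2\rceil$ and $\lfloor(x_i+y_i)/2\rfloor$ need \emph{not} equal $\max,\min$ unless $|x_i-y_i|\le 1$. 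The fix is the equivalent form \eqref{eq:LC_def_sub}: taking $\alpha$ large enough that $(\mathbf{x}-\alpha\mathbf{1})\vee\mathbf{y}=\mathbf{y}\vee\mathbf{x}$... actually $\alpha=0$ in \eqref{eq:LC_def_sub} already gives \eqref{eq:submodular} directly, as the excerpt itself observes. So lattice submodularity of $f$ is immediate from \eqref{eq:LC_def_sub} with $\alpha=0$. For integral convexity, I would verify that the Favati--Tardella extension $\overline f$ is convex. The cleanest route is: it suffices to check convexity of $\overline f$ on each ``unit box'' $\mathcal{N}(\mathbf{y})$-region, i.e., to show that for every $\mathbf{p}\in\mathbb{Z}^n$ the restriction $f|_{\mathcal{B}_\mathbf{p}}$ has a convex piecewise-linear interpolation, and that these local pieces agree on overlaps. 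On the unit hypercube $\mathcal{B}_\mathbf{p}$, midpoint convexity \eqref{eq:LC_def_mpc} restricted to vertices forces $f|_{\mathcal{B}_\mathbf{p}}$ to be submodular as a $\{0,1\}$-function (after translating $\mathbf{p}$ to the origin), since for $\mathbf{x},\mathbf{y}\in\mathcal{B}_\mathbf{p}$ all coordinate gaps are $0$ or $1$, so the rounded midpoints coincide with $\mathbf{x}\vee\mathbf{y}$, $\mathbf{x}\wedge\mathbf{y}$; by Lov\'asz's theorem its Lov\'asz extension is convex, and by Remark~\ref{re:int_conv_lovasz} this Lov\'asz extension is exactly $\overline f$ on that region. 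Gluing across adjacent boxes uses that the extensions share the common facet and that a piecewise-linear function, convex on each of two cells meeting along a hyperplane with matching gradients' subdifferential condition, is convex on the union — here one must invoke midpoint convexity across the shared face to control the ``kink.''

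\textbf{($\Leftarrow$)} Suppose $f$ is integrally convex and lattice submodular; I must derive \eqref{eq:LC_def_mpc}. Fix $\mathbf{x},\mathbf{y}\in\mathcal{X}$, set $\mathbf{m}^+=\lceil(\mathbf{x}+\mathbf{y})/2\rceil$ and $\mathbf{m}^-=\lfloor(\mathbf{x}+\mathbf{y})/2\rfloor$, and note $\mathbf{m}^++\mathbf{m}^-=\mathbf{x}+\mathbf{y}$, so the midpoint $\mathbf{c}=(\mathbf{x}+\mathbf{y})/2=(\mathbf{m}^++\mathbf{m}^-)/2$ is common to both chords. By integral convexity, $\overline f(\mathbf{c})\le\tfrac12 f(\mathbf{x})+\tfrac12 f(\mathbf{y})$ (using $\mathbf{x},\mathbf{y}$ as the supporting integer points — one checks $\mathbf{x},\mathbf{y}\in$ ... careful: $\mathbf{x},\mathbf{y}$ need not lie in $\mathcal{N}(\mathbf{c})$, so instead use convexity of $\overline f$ along the segment, $\overline f(\mathbf{c})\le\tfrac12\overline f(\mathbf{x})+\tfrac12\overline f(\mathbf{y})=\tfrac12 f(\mathbf{x})+\tfrac12 f(\mathbf{y})$). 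It then remains to show $\tfrac12 f(\mathbf{m}^+)+\tfrac12 f(\mathbf{m}^-)\le\overline f(\mathbf{c})$, i.e., that $\mathbf{m}^+,\mathbf{m}^-$ furnish an optimal (or at least feasible-and-cheaper) convex representation of $\mathbf{c}$ within $\mathcal{N}(\mathbf{c})$. Since $|m^\pm_i-c_i|\le\tfrac12<1$, both $\mathbf{m}^+,\mathbf{m}^-\in\mathcal{N}(\mathbf{c})$ and $\mathbf{c}=\tfrac12\mathbf{m}^++\tfrac12\mathbf{m}^-$ is a valid representation, giving $\overline f(\mathbf{c})\le\tfrac12 f(\mathbf{m}^+)+\tfrac12 f(\mathbf{m}^-)$ — the \emph{wrong} direction. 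So integral convexity alone is not enough; this is exactly where lattice submodularity enters, to show that among all representations of $\mathbf{c}$ by points of $\mathcal{N}(\mathbf{c})$, the one using $\mathbf{m}^+,\mathbf{m}^-$ is \emph{optimal}, hence $\overline f(\mathbf{c})=\tfrac12 f(\mathbf{m}^+)+\tfrac12 f(\mathbf{m}^-)$, and then chaining the two inequalities gives \eqref{eq:LC_def_mpc}.

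\textbf{The main obstacle} is this last point: proving that lattice submodularity forces the ``diagonal'' pair $(\mathbf{m}^+,\mathbf{m}^-)$ to be the cost-minimizing way to write $\mathbf{c}$ as a convex combination of nearby integer points. Concretely, one must show that the submodular-type inequality $f(\mathbf{a})+f(\mathbf{b})\ge f(\mathbf{a}\vee\mathbf{b})+f(\mathbf{a}\wedge\mathbf{b})$, applied repeatedly to the integer points appearing in an arbitrary optimal representation, lets one ``uncross'' that representation until only $\mathbf{m}^+$ and $\mathbf{m}^-$ (or points dominated in cost by them) remain. This is a finite exchange/uncrossing argument but it is delicate because one must simultaneously preserve the barycenter constraint $\sum\lambda_\mathbf{z}\mathbf{z}=\mathbf{c}$ while applying meet/join operations; the bookkeeping of which coefficients to shift is the crux. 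An alternative that sidesteps the gluing argument in ($\Rightarrow$) and the uncrossing in ($\Leftarrow$) is to cite known equivalences between \eqref{eq:LC_def_mpc}, \eqref{eq:LC_def_sub}, and the ``integrally convex $+$ submodular'' characterization established in \citep{favati1990convexity, fujishige2000notes, murota2001relationship}, and merely assemble them; but if a self-contained proof is wanted, the uncrossing lemma for the local cube is the piece that must be proved by hand.
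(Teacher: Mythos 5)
First, note that the paper does not prove this statement at all: it is imported verbatim as a known characterization from \citet{fujishige2000notes} (see also \citet{favati1990convexity,murota2001relationship}), so there is no in-paper argument to match your proposal against; the only question is whether your sketch would stand on its own. It does not yet, because both of the steps you yourself flag as the hard ones are exactly the substance of the theorem and are left unproved. In the ($\Rightarrow$) direction, deducing lattice submodularity from \eqref{eq:LC_def_sub} with $\alpha=0$ is fine (granting the stated equivalence of \eqref{eq:LC_def_mpc} and \eqref{eq:LC_def_sub}), and restricting to a unit box $\mathcal{B}_\mathbf{p}$ to get local submodularity and hence local convexity of the Lov\'asz/Favati--Tardella extension is the right first move. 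But the gluing step is not a routine piecewise-linear patching: convexity of $\overline f$ on each cell plus agreement on shared facets does not imply convexity on the union (a ``tent'' function is the standard counterexample), and controlling the kink across a facet requires a quantitative inequality relating function values in two adjacent boxes --- in the literature this is where discrete midpoint convexity for points at $\ell_\infty$-distance $2$, or an equivalent local-to-global lemma for integrally convex functions, is invoked and proved. Saying ``invoke midpoint convexity across the shared face'' names the obstacle without resolving it.

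In the ($\Leftarrow$) direction the gap is even more central. You correctly observe that integral convexity gives $\overline f\!\left(\mathbf{c}\right)\le\tfrac12 f(\mathbf{x})+\tfrac12 f(\mathbf{y})$ and that the representation $\mathbf{c}=\tfrac12\mathbf{m}^++\tfrac12\mathbf{m}^-$ only yields $\overline f(\mathbf{c})\le\tfrac12 f(\mathbf{m}^+)+\tfrac12 f(\mathbf{m}^-)$, i.e., the wrong direction; the needed reverse inequality $\tfrac12 f(\mathbf{m}^+)+\tfrac12 f(\mathbf{m}^-)\le\overline f(\mathbf{c})$ is precisely the claim that the diagonal pair is an \emph{optimal} representation within $\mathcal{N}(\mathbf{c})$, and your proposal reduces the theorem to an ``uncrossing lemma'' that you do not prove. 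That lemma is delicate for the reason you state (meet/join exchanges must preserve the barycenter constraint $\sum_{\mathbf{z}}\lambda_\mathbf{z}\mathbf{z}=\mathbf{c}$ while not increasing $\sum_{\mathbf{z}}\lambda_\mathbf{z}f(\mathbf{z})$), and it also needs care at points $\mathbf{c}$ with some integer coordinates, where $\mathcal{N}(\mathbf{c})$ is lower-dimensional and $\mathbf{m}^+,\mathbf{m}^-$ may coincide in those coordinates. As written, the proposal is a correct plan of attack that mirrors the known Favati--Tardella/Fujishige--Murota proofs, but it is not a proof: either carry out the two missing lemmas (the adjacent-box convexity inequality and the local uncrossing/optimality of $(\mathbf{m}^+,\mathbf{m}^-)$), or do what the paper does and cite the result.
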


It follows that an L$^\natural$-convex function $f:\mathcal{B}_\mathbf{0} \rightarrow\mathbb{R}$ must be a submodular function (see Figure \ref{fig:lc_venn}). Moreover, the so-called \emph{L-convex functions} form a broader subclass of L$^\natural$-convex functions than submodular functions.

\begin{definition} \citep{murota2003dca} 
\label{def:LC}
A function $f:\mathcal{X}\rightarrow\mathbb{R}$ is \emph{L-convex} if \eqref{eq:submodular} holds for all $\mathbf{x},\mathbf{y} \in \mathcal{X}$, and if there exists $r \in \mathbb{R}$ such that $f(\mathbf{x} + \mathbf{1}) = f(\mathbf{x}) + r$ for all $\mathbf{x}\in\mathcal{X}$ with $\mathbf{x} + \mathbf{1}\in \mathcal{X}$. 
\end{definition}
The next result establishes the relationship between L-convex functions and L$^\natural$-convex functions. 
\begin{lemma}
\citep{murota2003dca}
\label{thm:LC_LNC}
Any L-convex function $f:\mathcal{X}\rightarrow\mathbb{R}$ satisfies \eqref{eq:LC_def_sub} for every $\mathbf{x}, \mathbf{y}\in \mathcal{X}$ and every $\alpha\in\mathbb{Z}$  such that $(\mathbf{x}-\alpha\mathbf{1})\vee \mathbf{y}, \mathbf{x} \wedge (\mathbf{y}+\alpha\mathbf{1}) \in\mathcal{X}$. 
\end{lemma}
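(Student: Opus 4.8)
The plan is to split on the sign of $\alpha$. When $\alpha\ge 0$, the inequality in question is precisely the form \eqref{eq:LC_def_sub} of L$^\natural$-convexity, and since every L-convex function is L$^\natural$-convex (as recorded in the discussion preceding Definition~\ref{def:LC}), there is nothing left to prove. So the entire content lies in the case $\alpha<0$: writing $\alpha=-\beta$ with $\beta\in\mathbb{Z}_{++}$, the goal becomes
\[ f(\mathbf{x}) + f(\mathbf{y}) \ \geq\ f\big((\mathbf{x}+\beta\mathbf{1})\vee\mathbf{y}\big) + f\big(\mathbf{x}\wedge(\mathbf{y}-\beta\mathbf{1})\big). \]

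The key observation I would use is the coordinatewise identity $\max(x_i+\beta,y_i)=\max(x_i,y_i-\beta)+\beta$, that is, $(\mathbf{x}+\beta\mathbf{1})\vee\mathbf{y}=\big(\mathbf{x}\vee(\mathbf{y}-\beta\mathbf{1})\big)+\beta\mathbf{1}$, while $\mathbf{x}\wedge(\mathbf{y}-\beta\mathbf{1})$ is already in ``shifted-down'' form. Applying the translation property of L-convex functions, $f(\mathbf{z}+\mathbf{1})=f(\mathbf{z})+r$, iterated $\beta$ times, I would rewrite $f(\mathbf{y})=f(\mathbf{y}-\beta\mathbf{1})+\beta r$ and $f\big((\mathbf{x}+\beta\mathbf{1})\vee\mathbf{y}\big)=f\big(\mathbf{x}\vee(\mathbf{y}-\beta\mathbf{1})\big)+\beta r$. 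Substituting these into the target inequality and cancelling the common $\beta r$ reduces it to
\[ f(\mathbf{x}) + f(\mathbf{y}-\beta\mathbf{1}) \ \geq\ f\big(\mathbf{x}\vee(\mathbf{y}-\beta\mathbf{1})\big) + f\big(\mathbf{x}\wedge(\mathbf{y}-\beta\mathbf{1})\big), \]
which is exactly lattice submodularity \eqref{eq:submodular} for the pair $\mathbf{x},\,\mathbf{y}-\beta\mathbf{1}$, a property every L-convex function satisfies by Definition~\ref{def:LC}.

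The step that needs genuine care, and which I expect to be the main obstacle, is justifying the iterated translation: applying $f(\cdot+\mathbf{1})=f(\cdot)+r$ is only legal when every intermediate lattice point lies in $\mathcal{X}$. From the standing hypotheses $(\mathbf{x}-\alpha\mathbf{1})\vee\mathbf{y}=(\mathbf{x}+\beta\mathbf{1})\vee\mathbf{y}\in\mathcal{X}$ and $\mathbf{x}\wedge(\mathbf{y}+\alpha\mathbf{1})=\mathbf{x}\wedge(\mathbf{y}-\beta\mathbf{1})\in\mathcal{X}$, I would first check $\mathbf{y}-\beta\mathbf{1}\in\mathcal{X}$ (its $i$th coordinate satisfies $y_i-\beta\le y_i\le u_i$ and $y_i-\beta\ge\min(x_i,y_i-\beta)\ge\ell_i$) and $\mathbf{x}\vee(\mathbf{y}-\beta\mathbf{1})\in\mathcal{X}$ (coordinates $\ge x_i\ge\ell_i$ and $\le\max(x_i+\beta,y_i)-\beta\le u_i$). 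Since $\mathcal{X}$ is a discrete hyperrectangle, the sequence of points from $\mathbf{y}-\beta\mathbf{1}$ up to $\mathbf{y}$, and likewise from $\mathbf{x}\vee(\mathbf{y}-\beta\mathbf{1})$ up to its $\beta\mathbf{1}$-translate, remain in $\mathcal{X}$ throughout, so the translations are valid. With these membership verifications in place, the chain of equalities followed by one application of \eqref{eq:submodular} finishes the proof.
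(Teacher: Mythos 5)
The paper gives no proof of this lemma (it is cited to \citet{murota2003dca}), so your proposal has to stand on its own. The $\alpha<0$ half does: the identity $(\mathbf{x}+\beta\mathbf{1})\vee\mathbf{y}=\bigl(\mathbf{x}\vee(\mathbf{y}-\beta\mathbf{1})\bigr)+\beta\mathbf{1}$, the membership checks for $\mathbf{y}-\beta\mathbf{1}$ and $\mathbf{x}\vee(\mathbf{y}-\beta\mathbf{1})$ using the two hypothesis points, the observation that all intermediate $\mathbf{1}$-translates stay in the hyperrectangle, and the final reduction to \eqref{eq:submodular} for the pair $\mathbf{x},\,\mathbf{y}-\beta\mathbf{1}$ are all correct and carefully justified.

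The gap is the case $\alpha\geq 0$, which you dismiss by invoking ``every L-convex function is L$^\natural$-convex.'' Within this paper that inclusion is not an available fact: it is \emph{deduced from the present lemma} in the sentence immediately following it (``\,\dots so every L-convex function is L$^\natural$-convex''), and the remark before Definition~\ref{def:LC} is an unproven forward assertion. Since L$^\natural$-convexity \emph{is} property \eqref{eq:LC_def_sub} for all $\alpha\in\mathbb{Z}_+$, appealing to that inclusion simply assumes the nonnegative half of the statement you are proving; the argument is circular. Nor does your translation trick extend verbatim to $\alpha\geq 0$ on a bounded box: take $\mathcal{X}=\{0,1,2\}^2$, $\mathbf{x}=(2,0)$, $\mathbf{y}=(0,2)$, $\alpha=1$. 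Then $(\mathbf{x}-\alpha\mathbf{1})\vee\mathbf{y}=(1,2)\in\mathcal{X}$ and $\mathbf{x}\wedge(\mathbf{y}+\alpha\mathbf{1})=(1,0)\in\mathcal{X}$, but neither $\mathbf{y}+\alpha\mathbf{1}=(1,3)$ nor $\mathbf{x}-\alpha\mathbf{1}=(1,-1)$ lies in $\mathcal{X}$, so no single diagonal shift followed by one application of \eqref{eq:submodular} is available; the inequality $f(2,0)+f(0,2)\geq f(1,2)+f(1,0)$ does hold, but only after chaining several local submodular inequalities with $f(\mathbf{z}+\mathbf{1})=f(\mathbf{z})+r$. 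So the $\alpha\geq 0$ case needs a genuine argument of its own (e.g., an induction on $\alpha$ combining submodularity and the translation property on the box, or an explicit citation of Murota's theorem that L-convex functions are L$^\natural$-convex, adapted to the hyperrectangle domain), rather than an appeal to the paper's downstream remark.
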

Recall that an L$^\natural$-convex function satisfies \eqref{eq:LC_def_sub} for all $\alpha\in\mathbb{Z}_+$, so every L-convex function is L$^\natural$-convex.

\subsection{Examples and property preserving operations}

In this section, we provide examples of L$^\natural$-convex functions, lattice submodular functions, and continuous submodular functions. We also discuss the transformations of such functions that preserve L$^\natural$-convexity or lattice/continuous submodularity. These properties are crucial for the discussions that follow. We note that all the lemmas in this section are either known or trivially follow from existing results in the literature, so their proofs are omitted or supplemented in the appendix.

\begin{lemma}\citep{murota2003dca}
Let $g, h : \mathcal{X}\subseteq \mathbb{Z}^n\rightarrow\mathbb{R}$ be L$^\natural$-convex functions. 
\begin{itemize}
    \item[(1)] For $\alpha\in\mathbb{R}_{++}$, $\alpha g$ is L$^\natural$-convex.
    \item[(2)] For $\mathbf{a}\in\mathbb{Z}^n$ and $\beta \in \mathbb{Z}\setminus \{0\}$, $g(\mathbf{a} + \beta\mathbf{x})$ is L$^\natural$-convex in $\mathbf{x}$. 
    \item[(3)] The sum $g+h$ is also L$^\natural$-convex.
\end{itemize}
\end{lemma}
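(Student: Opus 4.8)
The plan is to verify each of the three closure properties directly from the midpoint-convexity definition~\eqref{eq:LC_def_mpc}, since in each case the claim reduces to a one-line inequality manipulation once the relevant substitution is made. For part~(1), given $\mathbf{x},\mathbf{y}\in\mathcal{X}$, I would write $(\alpha g)(\mathbf{x}) + (\alpha g)(\mathbf{y}) = \alpha\bigl(g(\mathbf{x}) + g(\mathbf{y})\bigr)$, apply~\eqref{eq:LC_def_mpc} to $g$, and use $\alpha > 0$ to preserve the inequality direction, obtaining exactly the midpoint-convexity inequality for $\alpha g$. Note that the domain is unchanged, so no domain bookkeeping is needed here. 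For part~(3), the argument is the same but additive: sum the two midpoint inequalities for $g$ and for $h$ term by term; since the discrete midpoints $\lceil(\mathbf{x}+\mathbf{y})/2\rceil$ and $\lfloor(\mathbf{x}+\mathbf{y})/2\rfloor$ are the same vectors for both functions, the right-hand sides add cleanly and we recover midpoint convexity for $g+h$.

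Part~(2) is the one requiring genuine care, so I would treat it last and in most detail. Set $h(\mathbf{x}) := g(\mathbf{a} + \beta\mathbf{x})$; its effective domain is $\{\mathbf{x}\in\mathbb{Z}^n : \mathbf{a}+\beta\mathbf{x}\in\mathcal{X}\}$, which is again a discrete hyperrectangle (the affine map $\mathbf{x}\mapsto\mathbf{a}+\beta\mathbf{x}$ sends box to box when $\beta\ne 0$ is an integer), so the statement is well posed. The crux is the interaction between the rounding operators and the affine substitution. Given $\mathbf{x},\mathbf{y}$ in the domain of $h$, I would split into the cases $\beta > 0$ and $\beta < 0$. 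When $\beta>0$, component-wise one has $\bigl\lceil\beta\cdot\frac{x_i+y_i}{2}\bigr\rceil$ versus $\beta\bigl\lceil\frac{x_i+y_i}{2}\bigr\rceil$, which need \emph{not} coincide; the right move is to instead use the equivalent submodular-type characterization~\eqref{eq:LC_def_sub}, or better, to observe that $g(\mathbf{a}+\beta\mathbf{x})$ is L$^\natural$-convex iff $g$ restricted to the sublattice $\{\mathbf{a}+\beta\mathbf{x}\}$ satisfies~\eqref{eq:LC_def_sub}, and then invoke that~\eqref{eq:LC_def_sub} holds for $g$ with shift parameter $|\beta|\alpha'$ to transfer it to $h$ with parameter $\alpha'$. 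For $\beta < 0$ one additionally checks that negating the argument swaps the roles of $\vee$ and $\wedge$ and of the two midpoints, which is symmetric in~\eqref{eq:LC_def_sub}, so L$^\natural$-convexity is preserved. Alternatively, since this lemma is cited from \citet{murota2003dca} and flagged as known, the cleanest route is simply to reduce $g(\mathbf{a}+\beta\mathbf{x})$ to the composition of the translation $\mathbf{x}\mapsto\mathbf{a}+\mathbf{x}$ (obviously L$^\natural$-convexity-preserving, by relabeling the box) with the scaling $\mathbf{x}\mapsto\beta\mathbf{x}$, and handle the scaling via the ``scaling'' closure property of L$^\natural$-convex functions.

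The main obstacle, as flagged above, is entirely in part~(2): the rounding operators $\lceil\cdot\rceil,\lfloor\cdot\rfloor$ do not commute with multiplication by $\beta$, so a naive substitution into~\eqref{eq:LC_def_mpc} fails. The resolution is to work with the characterization~\eqref{eq:LC_def_sub} (which is stated in the excerpt to be equivalent), where the parameter $\alpha$ plays the role that $\beta$ would scale, and to verify that $(\mathbf{x}-\alpha\mathbf{1})\vee\mathbf{y}$ and $\mathbf{x}\wedge(\mathbf{y}+\alpha\mathbf{1})$ behave correctly under $\mathbf{x}\mapsto\mathbf{a}+\beta\mathbf{x}$ with the substitution $\alpha\mapsto|\beta|\alpha$. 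Parts~(1) and~(3) are routine one-liners and I would dispatch them first.
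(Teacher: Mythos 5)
The paper does not prove this lemma at all: it is cited to \citet{murota2003dca}, and the surrounding text explicitly states that the lemmas of that subsection are known results whose proofs are omitted. Your blind reconstruction is correct and matches the standard argument from the literature. Parts (1) and (3) are, as you say, immediate from midpoint convexity \eqref{eq:LC_def_mpc}, since the discrete midpoints do not depend on the function. For part (2) you correctly identify the only real obstruction---$\lceil\cdot\rceil$ and $\lfloor\cdot\rfloor$ do not commute with multiplication by $\beta$---and the fix you propose is the right one: verify \eqref{eq:LC_def_sub} for $h(\mathbf{x})=g(\mathbf{a}+\beta\mathbf{x})$ by invoking \eqref{eq:LC_def_sub} for $g$ with shift parameter $\beta\alpha$, using that translation by $\mathbf{a}$ and scaling by $\beta>0$ commute with componentwise $\vee$ and $\wedge$, so $(\mathbf{a}+\beta\mathbf{x}-\beta\alpha\mathbf{1})\vee(\mathbf{a}+\beta\mathbf{y})=\mathbf{a}+\beta\bigl((\mathbf{x}-\alpha\mathbf{1})\vee\mathbf{y}\bigr)$ and similarly for the meet; the domain qualification transfers verbatim because the effective domain of $h$ is again a discrete hyperrectangle. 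Your treatment of $\beta<0$ is also sound, though your phrasing is loose: the cleanest statement is that for $\mathbf{u}=-\mathbf{y}$, $\mathbf{v}=-\mathbf{x}$ one has $-\bigl((\mathbf{x}-\alpha\mathbf{1})\vee\mathbf{y}\bigr)=\mathbf{u}\wedge(\mathbf{v}+\alpha\mathbf{1})$ and $-\bigl(\mathbf{x}\wedge(\mathbf{y}+\alpha\mathbf{1})\bigr)=(\mathbf{u}-\alpha\mathbf{1})\vee\mathbf{v}$, so \eqref{eq:LC_def_sub} applied to $g$ at the swapped pair $(\mathbf{u},\mathbf{v})$ gives exactly what is needed (equivalently, negation swaps $\lceil\cdot\rceil$ and $\lfloor\cdot\rfloor$ in \eqref{eq:LC_def_mpc}, which is harmless). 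The one sentence I would tighten is the ``iff'' claim about $g$ restricted to the sublattice $\{\mathbf{a}+\beta\mathbf{x}\}$: as written it is imprecise (the restriction must satisfy translation submodularity with shifts in $\beta\mathbb{Z}_+$, not arbitrary shifts), but the mechanism you actually use in the next clause is the correct one, so this is a presentational issue rather than a gap.
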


\begin{lemma} \citep{murota2003dca}
\label{lemma:max_LC}
The maximum-component function $g:\mathbb{Z}^n\rightarrow\mathbb{R}$ defined by $g(\mathbf{x}) := \max_{i\in N}\{x_i\}$ is L-convex. 
\end{lemma}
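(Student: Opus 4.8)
The goal is to show that $g(\mathbf{x}) = \max_{i \in N}\{x_i\}$ is L-convex on $\mathbb{Z}^n$, i.e., that it satisfies the lattice submodularity inequality \eqref{eq:submodular} for all $\mathbf{x},\mathbf{y}$ and that it is linear along the all-ones direction. The plan is to verify the two defining conditions of Definition \ref{def:LC} directly. The translation condition is immediate: for any $\mathbf{x}$, $g(\mathbf{x} + \mathbf{1}) = \max_{i}\{x_i + 1\} = \max_i\{x_i\} + 1 = g(\mathbf{x}) + 1$, so the constant $r = 1$ works. The substance is therefore the submodularity inequality $g(\mathbf{x}) + g(\mathbf{y}) \geq g(\mathbf{x}\vee\mathbf{y}) + g(\mathbf{x}\wedge\mathbf{y})$.

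For the submodularity inequality, first I would observe $g(\mathbf{x}\vee\mathbf{y}) = \max_i \max\{x_i,y_i\} = \max\{\max_i x_i, \max_i y_i\} = \max\{g(\mathbf{x}), g(\mathbf{y})\}$, and similarly $g(\mathbf{x}\wedge\mathbf{y}) = \max_i\min\{x_i,y_i\} \leq \min\{\max_i x_i, \max_i y_i\} = \min\{g(\mathbf{x}),g(\mathbf{y})\}$ (the last inequality because $\min\{x_i,y_i\} \leq x_j$ for the index $j$ attaining $\max_i x_i$, etc.). Combining, the right-hand side is at most $\max\{g(\mathbf{x}),g(\mathbf{y})\} + \min\{g(\mathbf{x}),g(\mathbf{y})\} = g(\mathbf{x}) + g(\mathbf{y})$, which is exactly the left-hand side. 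Hence \eqref{eq:submodular} holds.

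Strictly speaking, since the domain here is $\mathbb{Z}^n$ (all of it), one could alternatively just cite that $g$ is the restriction to the integers of the convex, positively-homogeneous, translation-covariant function $\mathbf{x}\mapsto\max_i x_i$ and invoke a standard characterization; but the elementary two-line verification above is cleaner and self-contained. There is essentially no obstacle here — the only thing to be slightly careful about is the direction of the meet inequality $g(\mathbf{x}\wedge\mathbf{y})\leq\min\{g(\mathbf{x}),g(\mathbf{y})\}$, which is an inequality rather than an equality (unlike the join case), but that is all that is needed.
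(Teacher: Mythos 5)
Your proposal is correct. Note that the paper does not prove this lemma itself; it is quoted from \citet{murota2003dca}, so there is no in-paper argument to match against verbatim. Your direct verification of Definition \ref{def:LC} is sound: the translation property holds with $r=1$, and for \eqref{eq:submodular} your identities $g(\mathbf{x}\vee\mathbf{y})=\max\{g(\mathbf{x}),g(\mathbf{y})\}$ and $g(\mathbf{x}\wedge\mathbf{y})\leq\min\{g(\mathbf{x}),g(\mathbf{y})\}$ (the latter because $\min\{x_i,y_i\}\leq x_i\leq g(\mathbf{x})$ for every $i$, and likewise for $\mathbf{y}$) immediately give $g(\mathbf{x}\vee\mathbf{y})+g(\mathbf{x}\wedge\mathbf{y})\leq g(\mathbf{x})+g(\mathbf{y})$. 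The closest argument actually written in the paper is the appendix proof of the continuous analogue, Lemma \ref{lemma:continuous_max}, which instead fixes indices $i',j'$ attaining the join and meet maxima and splits into cases according to whether $x_{i'}\geq y_{i'}$; your max/min identity route is a bit cleaner, avoids the case split, and, unlike that lemma, you also check the $\mathbf{1}$-translation condition, which is exactly what upgrades lattice submodularity to L-convexity here. The only stylistic nit is the justification of the meet inequality, which reads awkwardly ("$\min\{x_i,y_i\}\leq x_j$ for the index $j$ attaining $\max_i x_i$"); stating it as $\min\{x_i,y_i\}\leq x_i\leq\max_j x_j$ for every $i$ would be clearer, but the mathematics is fine.
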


\begin{lemma} \citep{murota2003dca}
\label{lemma:dc_bivariate}
Suppose that $f:\mathbb{Z}\rightarrow\mathbb{R}$ satisfies $f(x-1) + f(x+1) \geq 2f(x)$. Then $f$ is L$^\natural$-convex. The function $g:\mathbb{Z}^2\rightarrow\mathbb{R}$ defined by $g(\mathbf{x}) = f(x_1 - x_2)$ is L-convex. 
\end{lemma}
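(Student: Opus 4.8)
The plan is to reduce both assertions to one elementary fact about one-dimensional discrete convexity. Put $d(x) := f(x+1) - f(x)$; the hypothesis $f(x-1) + f(x+1) \ge 2f(x)$ says precisely that $d$ is non-decreasing on $\mathbb{Z}$. From this I would first record a \emph{sandwich inequality}: if $a, b, p, q \in \mathbb{Z}$ satisfy $p + q = a + b$ and $p, q \in [\min\{a,b\}, \max\{a,b\}]$, then $f(p) + f(q) \le f(a) + f(b)$. To see this, assume without loss of generality $a \le b$ and $p \le q$, so $a \le p \le q \le b$ with $k := p - a = b - q \ge 0$ and $k \le b - a$; writing $f(a) - f(p) = -\sum_{i=0}^{k-1} d(a+i)$ and $f(q) - f(b) = -\sum_{i=0}^{k-1} d(b-k+i)$ and using $b - k + i \ge a + i$ together with monotonicity of $d$ gives $\sum_{i=0}^{k-1} d(b-k+i) \ge \sum_{i=0}^{k-1} d(a+i)$, which rearranges to the claim.

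For the first assertion I would verify the mid-point form \eqref{eq:LC_def_mpc}. Given $x, y \in \mathbb{Z}$, put $p = \lceil (x+y)/2 \rceil$ and $q = \lfloor (x+y)/2 \rfloor$; then $p + q = x + y$ and both $p$ and $q$ lie in $[\min\{x,y\}, \max\{x,y\}]$, so the sandwich inequality with $(a,b) = (x,y)$ is exactly \eqref{eq:LC_def_mpc}, and $f$ is L$^\natural$-convex. (Equivalently, one may use \eqref{eq:LC_def_sub} together with the identity $\max\{x - \alpha, y\} + \min\{x, y + \alpha\} = x + y$ and the observation that both of these arguments lie in $[\min\{x,y\},\max\{x,y\}]$ for every $\alpha \in \mathbb{Z}_+$.) I would also note that the converse is trivial — specializing \eqref{eq:LC_def_mpc} to $y = x + 2$ returns $f(x) + f(x+2) \ge 2f(x+1)$ — so discrete convexity in fact characterizes univariate L$^\natural$-convexity.

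For the second assertion, by Definition \ref{def:LC} it suffices to show that $g(\mathbf{x}) = f(x_1 - x_2)$ satisfies \eqref{eq:submodular} on $\mathbb{Z}^2$ and that $g(\mathbf{x} + \mathbf{1}) = g(\mathbf{x})$ for all $\mathbf{x} \in \mathbb{Z}^2$. The latter holds with $r = 0$, since $(x_1 + 1) - (x_2 + 1) = x_1 - x_2$. For submodularity, fix $\mathbf{x}, \mathbf{y} \in \mathbb{Z}^2$ and set $a = x_1 - x_2$, $b = y_1 - y_2$, $p = \max\{x_1, y_1\} - \max\{x_2, y_2\}$, and $q = \min\{x_1, y_1\} - \min\{x_2, y_2\}$, so that $g(\mathbf{x} \vee \mathbf{y}) = f(p)$ and $g(\mathbf{x} \wedge \mathbf{y}) = f(q)$. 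Clearly $p + q = (x_1 + y_1) - (x_2 + y_2) = a + b$. A brief case check — on whether the order of $x_1, y_1$ agrees with that of $x_2, y_2$ — shows $p, q \in [\min\{a,b\}, \max\{a,b\}]$: when the two coordinates are ordered the same way one gets $\{p,q\} = \{a,b\}$, and otherwise $p$ and $q$ fall (weakly) inside the interval spanned by $a$ and $b$. The sandwich inequality then yields $g(\mathbf{x} \vee \mathbf{y}) + g(\mathbf{x} \wedge \mathbf{y}) = f(p) + f(q) \le f(a) + f(b) = g(\mathbf{x}) + g(\mathbf{y})$, so $g$ is submodular and hence L-convex.

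The only step requiring any care is the sandwich inequality together with the bookkeeping that passing to $\mathbf{x} \vee \mathbf{y}$ and $\mathbf{x} \wedge \mathbf{y}$ moves the argument of $f$ \emph{inward} (toward the interval spanned by $x_1 - x_2$ and $y_1 - y_2$) rather than outward; once that monotonicity observation is in place, the rest is routine arithmetic. If preferred, the submodularity step can alternatively be phrased through the diminishing-returns form \eqref{eq:sub_alt_def}, checking the inequality for the two index pairs $(i,j) \in \{(1,2),(2,1)\}$, which amounts to the same comparison of increments of $f$ over shifted intervals.
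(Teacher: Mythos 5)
Your proposal is correct and complete. Note that the paper does not prove this lemma at all: it is stated with a citation to \citet{murota2003dca}, and the surrounding text explains that proofs of such known results are omitted, so there is no in-paper argument to compare against. Your route — observing that the hypothesis says the first difference $d(x)=f(x+1)-f(x)$ is non-decreasing, proving the ``sandwich'' inequality $f(p)+f(q)\le f(a)+f(b)$ whenever $p+q=a+b$ with $p,q$ between $a$ and $b$, and then reading off the mid-point condition \eqref{eq:LC_def_mpc} for the first claim and submodularity \eqref{eq:submodular} plus translation-invariance (Definition \ref{def:LC}, with $r=0$) for the second — is a sound, self-contained elementary derivation; the case analysis showing that $\max\{x_1,y_1\}-\max\{x_2,y_2\}$ and $\min\{x_1,y_1\}-\min\{x_2,y_2\}$ sum to $(x_1-x_2)+(y_1-y_2)$ and fall weakly inside the interval spanned by those two differences is exactly the point that needs care, and you handle it correctly. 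The closest analogue in the paper is the appendix proof of Lemma \ref{lemma:uni_conv_sub}, the continuous counterpart, which verifies the diminishing-returns form \eqref{eq:sub_alt_def} by comparing increments of $f$ over shifted intervals; that is precisely the alternative phrasing you sketch in your final remark, so either of your two routes would align with the paper's style for the analogous result.
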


\begin{lemma} \citep{bach2019submodular}
\label{lemma:monotone_composite_sub}
Let $F:\overline{\mathcal{X}}\subseteq \mathbb{R}^n \rightarrow\mathbb{R}$  be a continuous submodular function. For every $i\in N$, suppose $h^i:\mathcal{X}_i\rightarrow\mathbb{R}$ is a monotonically decreasing function. Then $G:\mathcal{X} \rightarrow\mathbb{R}$ defined by 
\[G(\mathbf{x}) := F\left(h^1(x_1), h^2(x_2), \dots, h^n(x_n)\right)\]
is lattice submodular. 
\end{lemma}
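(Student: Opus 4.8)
The plan is to verify the lattice submodularity inequality \eqref{eq:submodular} for $G$ directly, reducing it to the same inequality for $F$ under the substitution $\mathbf{x}\mapsto(h^1(x_1),\dots,h^n(x_n))$. First I would fix arbitrary $\mathbf{x},\mathbf{y}\in\mathcal{X}$ and set $\mathbf{a}:=(h^1(x_1),\dots,h^n(x_n))$ and $\mathbf{b}:=(h^1(y_1),\dots,h^n(y_n))$. Since each $h^i$ is defined on all of $\mathcal{X}_i$ and (implicitly) takes values in $\overline{\mathcal{X}}_i$, all of $\mathbf{a},\mathbf{b},\mathbf{a}\vee\mathbf{b},\mathbf{a}\wedge\mathbf{b}$ lie in $\overline{\mathcal{X}}$, so every term appearing below is well-defined.

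The key observation is that each $h^i$ is order-reversing on the chain $\mathcal{X}_i$: for any two integers $s,t\in\mathcal{X}_i$ one has $h^i(\max\{s,t\}) = \min\{h^i(s),h^i(t)\}$ and $h^i(\min\{s,t\}) = \max\{h^i(s),h^i(t)\}$, and this holds whether $h^i$ is weakly or strictly decreasing (if, say, $s\le t$, then $h^i(s)\ge h^i(t)$, so $\min\{h^i(s),h^i(t)\}=h^i(t)=h^i(\max\{s,t\})$, and symmetrically). Applying this componentwise gives $(\mathbf{a}\vee\mathbf{b})_i = h^i\big((\mathbf{x}\wedge\mathbf{y})_i\big)$ and $(\mathbf{a}\wedge\mathbf{b})_i = h^i\big((\mathbf{x}\vee\mathbf{y})_i\big)$ for every $i\in N$; that is, the coordinatewise map $\mathbf{x}\mapsto(h^1(x_1),\dots,h^n(x_n))$ interchanges $\vee$ and $\wedge$. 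Consequently $G(\mathbf{x}\wedge\mathbf{y}) = F(\mathbf{a}\vee\mathbf{b})$ and $G(\mathbf{x}\vee\mathbf{y}) = F(\mathbf{a}\wedge\mathbf{b})$, while $G(\mathbf{x}) = F(\mathbf{a})$ and $G(\mathbf{y}) = F(\mathbf{b})$ by definition of $G$.

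Finally I would invoke the continuous submodularity of $F$ in its lattice (Topkis) form \eqref{eq:submodular}, applied to the points $\mathbf{a},\mathbf{b}\in\overline{\mathcal{X}}$, namely $F(\mathbf{a})+F(\mathbf{b})\geq F(\mathbf{a}\vee\mathbf{b})+F(\mathbf{a}\wedge\mathbf{b})$. Substituting the identities from the previous paragraph yields $G(\mathbf{x})+G(\mathbf{y})\geq G(\mathbf{x}\wedge\mathbf{y})+G(\mathbf{x}\vee\mathbf{y})$, which is exactly \eqref{eq:submodular} for $G$ on $\mathcal{X}$; since $\mathbf{x},\mathbf{y}$ were arbitrary, $G$ is lattice submodular. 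The argument ultimately collapses to a one-line substitution, so there is no real obstacle; the only points requiring care are the order-reversal identity for the $h^i$ and the standing assumption that each $h^i$ maps $\mathcal{X}_i$ into $\overline{\mathcal{X}}_i$ so that the composition is meaningful. Notably, the diminishing-returns characterization \eqref{eq:sub_alt_def} is not needed — the lattice form of submodularity is precisely what makes the composition go through.
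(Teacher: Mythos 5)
Your proposal is correct and follows essentially the same route as the paper's proof: establish the order-reversal identity $h^i(v)\vee h^i(w)=h^i(v\wedge w)$, $h^i(v)\wedge h^i(w)=h^i(v\vee w)$ from monotonicity, then apply the lattice form \eqref{eq:submodular} of submodularity of $F$ and substitute. The only cosmetic difference is your explicit remark that the $h^i$ must map into $\overline{\mathcal{X}}_i$ for the composition to be well-defined, which the paper leaves implicit.
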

The same result holds when $h^i$ is a monotonically increasing mapping of $x_i$ for every $i\in N$. \\

Lemmas \ref{lemma:max_LC} and \ref{lemma:dc_bivariate} suggest that the maximum-component functions and the structured bivariate functions are lattice submodular. The next two lemmas formalize the continuous submodularity of these functions. 

\begin{lemma} 
\label{lemma:continuous_max}
The function $g:\overline{\mathcal{X}}\subseteq \mathbb{R}^n\rightarrow\mathbb{R}$  defined by $g(\mathbf{x}) := \max_{i\in N}\{x_i\}$ is continuous submodular. 
\end{lemma}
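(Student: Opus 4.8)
The plan is to verify submodularity of $g(\mathbf{x}) := \max_{i \in N}\{x_i\}$ on $\overline{\mathcal{X}}$ directly from the definition $g(\mathbf{x}) + g(\mathbf{y}) \geq g(\mathbf{x} \vee \mathbf{y}) + g(\mathbf{x} \wedge \mathbf{y})$ for all $\mathbf{x}, \mathbf{y} \in \overline{\mathcal{X}}$. First I would observe the two elementary identities $g(\mathbf{x} \vee \mathbf{y}) = \max\{g(\mathbf{x}), g(\mathbf{y})\}$ and $g(\mathbf{x} \wedge \mathbf{y}) \leq \min\{g(\mathbf{x}), g(\mathbf{y})\}$. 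The first holds because $\max_i \max\{x_i, y_i\} = \max\{\max_i x_i, \max_i y_i\}$, and the second because each component of $\mathbf{x}\wedge\mathbf{y}$ is at most the corresponding component of $\mathbf{x}$ (and of $\mathbf{y}$), so the componentwise maximum is monotone.

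Combining these, $g(\mathbf{x}\vee\mathbf{y}) + g(\mathbf{x}\wedge\mathbf{y}) \leq \max\{g(\mathbf{x}), g(\mathbf{y})\} + \min\{g(\mathbf{x}), g(\mathbf{y})\} = g(\mathbf{x}) + g(\mathbf{y})$, which is exactly inequality \eqref{eq:submodular}. Since $\overline{\mathcal{X}}$ is a lattice under the componentwise order (as noted in the preliminaries, being a product of the intervals $\overline{\mathcal{X}}_i$), the definition of continuous submodularity applies verbatim, and this completes the argument.

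Alternatively, and perhaps more in the spirit of Lemma \ref{lemma:max_LC}, one could note that $g$ restricted to any discrete hyperrectangle is L-convex hence lattice submodular by Lemma \ref{lemma:max_LC}, and then argue that $g$ is a finite maximum of linear (hence continuous submodular) functions, since the pointwise maximum of submodular functions on a lattice need not in general be submodular—so the direct two-line verification above is cleaner and avoids any such subtlety. There is essentially no obstacle here: the only thing to be careful about is making sure the meet/join inequality $g(\mathbf{x}\wedge\mathbf{y}) \leq \min\{g(\mathbf{x}),g(\mathbf{y})\}$ is stated as an inequality (not equality), since it genuinely can be strict, and that this weaker direction still suffices when paired with the exact identity for the join. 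I expect to present this as a short self-contained paragraph rather than invoking any machinery.
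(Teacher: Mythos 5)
Your argument is correct and takes essentially the same route as the paper: a direct verification of inequality \eqref{eq:submodular}, with your pairing of the exact identity $g(\mathbf{x}\vee\mathbf{y})=\max\{g(\mathbf{x}),g(\mathbf{y})\}$ with the bound $g(\mathbf{x}\wedge\mathbf{y})\leq\min\{g(\mathbf{x}),g(\mathbf{y})\}$ being just a cleaner packaging of the paper's case analysis on the maximizing indices. The alternative route you sketch (and rightly discard) is unnecessary; the two-line verification is exactly what the paper does.
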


\begin{lemma}
\label{lemma:uni_conv_sub}
If $f:\mathbb{R}\rightarrow\mathbb{R}$ is convex, then $g:\mathbb{R}^2\rightarrow\mathbb{R}$  where $g(\mathbf{x}) := f(x_1 - x_2)$ is continuous submodular. 
\end{lemma}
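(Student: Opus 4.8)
The plan is to verify the definitional inequality \eqref{eq:sub_alt_def} for $g(\mathbf{x}) = f(x_1 - x_2)$ directly, exploiting the fact that $g$ depends only on the single quantity $t := x_1 - x_2$. Since $n = 2$ here, the only pair of distinct indices is $\{1,2\}$, and I must check, for all $\mathbf{x}\in\mathbb{R}^2$ and all $a, b \in \mathbb{R}_+$, that
\[
g(\mathbf{x} + a\mathbf{1}^1) - g(\mathbf{x}) \;\geq\; g(\mathbf{x} + a\mathbf{1}^1 + b\mathbf{1}^2) - g(\mathbf{x} + b\mathbf{1}^2).
\]
Writing everything in terms of $t = x_1 - x_2$, the left-hand side equals $f(t+a) - f(t)$, while on the right-hand side adding $b\mathbf{1}^2$ shifts $t$ to $t - b$, so it equals $f(t - b + a) - f(t - b)$. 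Thus the claim reduces to the one-variable statement
\[
f(t + a) - f(t) \;\geq\; f((t-b) + a) - f(t-b)
\]
for all $t\in\mathbb{R}$ and $a, b\geq 0$, i.e. the increment $f(s+a) - f(s)$ is nondecreasing in $s$. This is a standard characterization of convexity of $f$: for a convex function, the first divided difference $\frac{f(s+a)-f(s)}{a}$ is nondecreasing in $s$ for fixed $a > 0$ (monotonicity of slopes of secant lines), and the case $a = 0$ is trivial. I would cite or briefly reprove this elementary fact.

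A cleaner alternative I would use instead is to invoke Lemma~\ref{lemma:monotone_composite_sub} (or rather its continuous analogue): the map $\mathbf{x}\mapsto f(x_1 - x_2)$ is the composition of the affine substitution $(x_1, x_2)\mapsto (x_1, -x_2)$ with $(y_1, y_2)\mapsto f(y_1 + y_2)$, and for a convex $f$ the two-variable function $(y_1,y_2)\mapsto f(y_1+y_2)$ is continuous submodular — indeed it has zero mixed second derivative in the smooth case, or one can check \eqref{eq:sub_alt_def} as above, where it becomes an equality-type condition that follows from convexity. Then precomposing one coordinate with the monotone decreasing map $x_2\mapsto -x_2$ preserves continuous submodularity. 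However, since Lemma~\ref{lemma:monotone_composite_sub} is stated for the lattice (integer) case in the excerpt, the safest route is the direct verification of \eqref{eq:sub_alt_def} sketched above, which keeps the argument self-contained.

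The only mild subtlety — not really an obstacle — is bookkeeping the sign: because $x_2$ enters $g$ with a minus sign, increasing $x_2$ decreases the argument $t$, so one must be careful that the "diminishing returns" inequality in $x_1$ corresponds to the secant-slope monotonicity of $f$ evaluated at the points $t$ and $t - b$ with $t - b \leq t$; the inequality then goes in the right direction precisely because $b \geq 0$. I expect the entire proof to be two or three lines once this substitution is set up.
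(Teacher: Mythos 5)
Your proposal is correct and takes essentially the same route as the paper's proof: a direct verification of \eqref{eq:sub_alt_def} by writing everything in terms of $t = x_1 - x_2$ and invoking the monotonicity of the increments $s \mapsto f(s+a)-f(s)$ of the convex function $f$. The only cosmetic difference is that the paper also writes out the ordered pair $(i,j)=(2,1)$ explicitly, whereas in your argument that case is covered automatically because your one-variable inequality holds for all $a,b \geq 0$ (swap the roles of $a$ and $b$), so nothing substantive is missing.
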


The next lemma shows that partial minimization preserves continuous submodularity. 
\begin{lemma} \citep{bach2019submodular}
\label{lemma:partial_min_sub}
Suppose $H: \prod_{i\in N} \overline{\mathcal{X}_i} \subseteq \mathbb{R}^n \rightarrow\mathbb{R}$ is submodular. Let $K = \{1, \dots, k\} \subset N$ for any $k\in N$. The partial minimization function $F: \prod_{i\in K} \overline{\mathcal{X}_i} \rightarrow\mathbb{R}$ defined by 
\[F(x_1, \dots, x_k) := \inf_{\mathbf{z}\in \prod_{i\in N\setminus K} \overline{\mathcal{X}_i}} H(x_1, \dots, x_k, z_1, \dots, z_{n-k})\]
 is submodular. 
\end{lemma}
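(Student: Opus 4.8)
The plan is to verify the submodular inequality \eqref{eq:submodular} for $F$ directly from the definition, exploiting the fact that an infimum of a jointly submodular function over one block of variables is submodular on the complementary block. Fix $\mathbf{x}, \mathbf{y} \in \prod_{i\in K}\overline{\mathcal{X}_i}$; I want to show $F(\mathbf{x}) + F(\mathbf{y}) \geq F(\mathbf{x}\vee\mathbf{y}) + F(\mathbf{x}\wedge\mathbf{y})$. First I would fix an arbitrary $\varepsilon > 0$ and choose $\mathbf{z}^x, \mathbf{z}^y \in \prod_{i\in N\setminus K}\overline{\mathcal{X}_i}$ that are $\varepsilon$-minimizers, i.e.\ $H(\mathbf{x}, \mathbf{z}^x) \leq F(\mathbf{x}) + \varepsilon$ and $H(\mathbf{y}, \mathbf{z}^y) \leq F(\mathbf{y}) + \varepsilon$. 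The key point is that $\prod_{i\in N\setminus K}\overline{\mathcal{X}_i}$ is a lattice closed under the componentwise $\vee$ and $\wedge$, so $\mathbf{z}^x\vee\mathbf{z}^y$ and $\mathbf{z}^x\wedge\mathbf{z}^y$ are again feasible choices for the inner infimum.

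Next I would apply the joint submodularity of $H$ on $\prod_{i\in N}\overline{\mathcal{X}_i}$ to the two points $(\mathbf{x}, \mathbf{z}^x)$ and $(\mathbf{y}, \mathbf{z}^y)$. Since the componentwise join of these two points is $(\mathbf{x}\vee\mathbf{y},\, \mathbf{z}^x\vee\mathbf{z}^y)$ and the componentwise meet is $(\mathbf{x}\wedge\mathbf{y},\, \mathbf{z}^x\wedge\mathbf{z}^y)$, inequality \eqref{eq:submodular} for $H$ gives
\[
H(\mathbf{x}, \mathbf{z}^x) + H(\mathbf{y}, \mathbf{z}^y) \;\geq\; H(\mathbf{x}\vee\mathbf{y},\, \mathbf{z}^x\vee\mathbf{z}^y) + H(\mathbf{x}\wedge\mathbf{y},\, \mathbf{z}^x\wedge\mathbf{z}^y).
\]
Then I would bound each term on the right below by the corresponding infimum: $H(\mathbf{x}\vee\mathbf{y}, \mathbf{z}^x\vee\mathbf{z}^y) \geq F(\mathbf{x}\vee\mathbf{y})$ and $H(\mathbf{x}\wedge\mathbf{y}, \mathbf{z}^x\wedge\mathbf{z}^y) \geq F(\mathbf{x}\wedge\mathbf{y})$, while the left side is at most $F(\mathbf{x}) + F(\mathbf{y}) + 2\varepsilon$. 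Combining yields $F(\mathbf{x}) + F(\mathbf{y}) + 2\varepsilon \geq F(\mathbf{x}\vee\mathbf{y}) + F(\mathbf{x}\wedge\mathbf{y})$, and letting $\varepsilon \to 0$ completes the argument.

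The main subtlety—and the only place one must be careful—is the handling of the infimum when it is not attained (which is why I work with $\varepsilon$-minimizers rather than exact minimizers), together with ensuring $F$ is real-valued so the inequality is meaningful; one should note that finiteness of $F$ is implicitly assumed (or follows from boundedness hypotheses on $H$), and if $F$ can take the value $-\infty$ the inequality holds vacuously in the appropriate sense. Everything else is a routine consequence of the lattice structure of the product domain being preserved under restriction to a subset of coordinates. Since this lemma is attributed to \citet{bach2019submodular} and stated as standard, I would keep the write-up brief and, if needed, defer it to the appendix as the excerpt indicates for the known lemmas in this section.
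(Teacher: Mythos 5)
Your proposal is correct: the $\varepsilon$-minimizer argument combined with the componentwise join/meet decomposition $(\mathbf{x},\mathbf{z}^x)\vee(\mathbf{y},\mathbf{z}^y)=(\mathbf{x}\vee\mathbf{y},\,\mathbf{z}^x\vee\mathbf{z}^y)$ (and similarly for $\wedge$) is exactly the standard proof of this preservation result. The paper itself gives no proof of this lemma---it is cited to \citet{bach2019submodular} and is not among the lemmas proved in the appendix---so your write-up supplies precisely the argument the reference relies on, with the appropriate care about non-attained infima and real-valuedness of $F$.
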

In fact, $K$ can be any subset of $N$. It is assumed that $K = \{1, \dots, k\}$ without loss of generality up to re-indexing.

\subsection{New results on L$^\natural$-convex and  submodular functions}

In this section, we establish new classes of functions that satisfy submodularity or L$^\natural$-convexity. These functions are helpful to our discussion in the later sections. \\

We first consider the function $F_{u_0, \mathbf{u}}:\overline{\mathcal{X}}\subseteq \mathbb{R}^n \rightarrow\mathbb{R}$ for any 
\[(u_0, \mathbf{u}) \in \mathcal{U} := \left\{(u_0, \mathbf{u})\in \mathbb{R}_+^{n+1} : u_0 \leq \sum_{i\in N} u_i\right\},\]
given by 
\begin{equation}
\label{eq:F_u0u_def}
F_{u_0, \mathbf{u}}(\mathbf{x}) := \min_{j\in N} \left\{u_0h^j(\mathbf{x}) + \sum_{i\in N}u_i \max \{0, h^i(\mathbf{x}) - h^j(\mathbf{x})\} \right\},  
\end{equation}
where $h^i :\mathcal{X} \rightarrow\mathbb{R}$ for every $i\in N$. We show that any such function is lattice submodular under certain conditions. Fixing an arbitrary $(u_0, \mathbf{u}) \in \mathcal{U}$, we first discuss the continuous submodularity of a simplified variant of $F_{u_0, \mathbf{u}}$, namely $G_{u_0, \mathbf{u}}:\overline{\mathcal{X}}\subseteq \mathbb{R}^n \rightarrow\mathbb{R}$, 
\begin{equation}
\label{eq:G_u0u_def}
G_{u_0, \mathbf{u}}(\mathbf{x}) := \min_{j\in N} \left\{u_0x_j + \sum_{i\in N}u_i \max \{0, x_i - x_j\} \right\}. 
\end{equation}
We further define a function $H:\overline{\mathcal{X}}\times \mathbb{R} \rightarrow\mathbb{R}$ by 
\begin{equation}
\label{eq:partial_H}
H(\mathbf{x}, t) := u_0 t + \sum_{i\in N}u_i \max \{0, x_i - t\}.
\end{equation}

\begin{lemma}
\label{lemma:partial_H_sub}
For any $(u_0, \mathbf{u}) \in \mathcal{U}$, the function $H$ defined by \eqref{eq:partial_H} is continuous submodular over $\overline{\mathcal{X}}\times \mathbb{R}$. 
\end{lemma}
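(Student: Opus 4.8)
The plan is to realize $H$ as a nonnegative combination of continuous submodular functions together with a modular term, and then invoke the standard property-preserving operations recorded above. First I would observe that the map $(\mathbf{x},t)\mapsto u_0 t$ is linear, hence it satisfies \eqref{eq:submodular} with equality and is, in particular, continuous submodular over $\overline{\mathcal{X}}\times\mathbb{R}$.

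Next I would treat, for each fixed $i\in N$, the term $(\mathbf{x},t)\mapsto u_i\max\{0,x_i-t\}$. Writing $f(s):=\max\{0,s\}$, which is convex on $\mathbb{R}$, Lemma \ref{lemma:uni_conv_sub} gives that $(a,b)\mapsto f(a-b)=\max\{0,a-b\}$ is continuous submodular on $\mathbb{R}^2$. I would then argue that restricting this map to the sublattice $\overline{\mathcal{X}}_i\times\mathbb{R}$ and viewing the result as a function on the full product $\overline{\mathcal{X}}\times\mathbb{R}$ that ignores the remaining coordinates preserves continuous submodularity: in \eqref{eq:submodular} the operations $\vee$ and $\wedge$ act coordinatewise, so only coordinates $i$ and $t$ play a role for this term, and restriction to a sublattice is harmless. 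Multiplying by the nonnegative constant $u_i$ preserves continuous submodularity as well. Summing these $n$ terms over $i\in N$ and adding $u_0 t$ reproduces $H$; since \eqref{eq:submodular} is additive, a finite sum of continuous submodular functions (with a modular one added) is continuous submodular, which yields the claim.

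The main thing to be careful with — rather than a genuine obstacle — is the bookkeeping in the previous paragraph: that a function submodular in two of the variables remains submodular when embedded in the larger product lattice, and that nonnegative scalings, finite sums, and addition of modular terms all preserve (continuous) submodularity; all of these follow immediately from the coordinatewise definition \eqref{eq:submodular}. I would also note that the hypothesis $u_0\le\sum_{i\in N}u_i$ defining $\mathcal{U}$ is not actually used for this lemma; it becomes relevant only downstream, when passing from $H$ to $G_{u_0,\mathbf{u}}$ by partial minimization over $t$ via Lemma \ref{lemma:partial_min_sub}, since it is precisely the condition under which $\inf_{t\in\mathbb{R}}H(\mathbf{x},t)$ is finite and attained at some breakpoint $t=x_j$ with $j\in N$, so that the infimum coincides with $G_{u_0,\mathbf{u}}(\mathbf{x})$.
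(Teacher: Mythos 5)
Your proposal is correct and follows essentially the same route as the paper: each term $\max\{0,x_i-t\}$ is continuous submodular via Lemma \ref{lemma:uni_conv_sub} applied to the convex function $\max\{0,\cdot\}$, the linear term $u_0t$ is trivially submodular, and a nonnegative combination (using only $(u_0,\mathbf{u})\geq\mathbf{0}$ from the definition of $\mathcal{U}$) preserves continuous submodularity. Your added remarks on embedding the bivariate terms into the full product lattice and on the inequality $u_0\le\sum_{i\in N}u_i$ being unnecessary here are accurate but do not change the argument.
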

\begin{proof}
We first notice that $\max\{0, \cdot\}$ is a univariate convex function. Based on Lemma \ref{lemma:uni_conv_sub}, $\max \{0, x_i - t\}$ is continuous submodular for every $i\in N$. The linear function $t$ is trivially continuous submodular. By definition of $\mathcal{U}$, $(u_0, \mathbf{u}) \geq \mathbf{0}$, so $H$ is a non-negative combination of continuous submodular functions, which remains continuous submodular. 
\end{proof}

\begin{lemma}
\label{lemma:G_partialmin}
For any $(u_0, \mathbf{u}) \in \mathcal{U}$ and any $\mathbf{x}\in\overline{\mathcal{X}}$, 
\[G_{u_0, \mathbf{u}}(\mathbf{x}) = \inf_{t\in \mathbb{R}} H(\mathbf{x}, t). \]
\end{lemma}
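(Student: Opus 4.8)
The plan is to establish the identity $G_{u_0,\mathbf{u}}(\mathbf{x}) = \inf_{t\in\mathbb{R}} H(\mathbf{x},t)$ by showing that, for a fixed $\mathbf{x}\in\overline{\mathcal{X}}$, the infimum of the piecewise-linear convex function $t \mapsto H(\mathbf{x},t)$ over $\mathbb{R}$ is attained at one of the breakpoints $t = x_j$, $j\in N$. First I would observe that $H(\mathbf{x},\cdot)$ is convex in $t$: it is $u_0 t$ plus a nonnegative combination of the convex functions $\max\{0, x_i - t\}$, using $(u_0,\mathbf{u})\ge\mathbf{0}$ from the definition of $\mathcal{U}$. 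Next I would argue the infimum is finite and attained. The right derivative of $H(\mathbf{x},\cdot)$ at $t$ is $u_0 - \sum_{i : x_i > t} u_i$, which is nondecreasing in $t$; as $t\to+\infty$ it equals $u_0 \ge 0$, and as $t\to-\infty$ it equals $u_0 - \sum_{i\in N} u_i \le 0$ by the constraint $u_0\le\sum_{i\in N}u_i$ defining $\mathcal{U}$. Hence the derivative changes sign (weakly), so a minimizer exists; and because $H(\mathbf{x},\cdot)$ is piecewise linear with breakpoints only at the values $\{x_j : j\in N\}$ (and is affine with slope $u_0\ge 0$ on the last piece, slope $u_0 - \sum_i u_i \le 0$ on the first piece), the minimum over $\mathbb{R}$ is attained at some breakpoint $t = x_j$.

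The second half is the evaluation step: at $t = x_j$ we have $H(\mathbf{x}, x_j) = u_0 x_j + \sum_{i\in N} u_i \max\{0, x_i - x_j\}$, which is exactly the $j$th term in the minimand defining $G_{u_0,\mathbf{u}}(\mathbf{x})$ in \eqref{eq:G_u0u_def}. Therefore
\[
\inf_{t\in\mathbb{R}} H(\mathbf{x},t) = \min_{j\in N} H(\mathbf{x}, x_j) = \min_{j\in N}\left\{ u_0 x_j + \sum_{i\in N} u_i \max\{0, x_i - x_j\}\right\} = G_{u_0,\mathbf{u}}(\mathbf{x}).
\]
To be careful about the direction $\inf_t H(\mathbf{x},t) \le H(\mathbf{x},x_j)$ for every $j$ (giving $\le G$) is immediate; the reverse inequality $\inf_t H(\mathbf{x},t) \ge \min_j H(\mathbf{x},x_j)$ follows from the breakpoint argument above, since any optimal $t^\star$ can be taken to be some $x_j$.

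The main obstacle is the edge-case bookkeeping in the breakpoint argument: one must handle the possibility that several $x_i$ coincide, that the optimal slope is exactly zero on a whole segment (in which case an endpoint of that segment is still a breakpoint $x_j$ and achieves the minimum), and the degenerate situation where some $u_i = 0$ or $u_0 = 0$ so that a candidate breakpoint is not actually a kink of $H(\mathbf{x},\cdot)$ — in all of these the value $H(\mathbf{x}, x_j)$ is still well-defined and the min over $j$ is unaffected. None of this is deep, but it is the only place where care is required; the convexity and the two sign conditions coming from $\mathcal{U}$ do all the real work. I would phrase the breakpoint claim as: a convex piecewise-linear function on $\mathbb{R}$ whose one-sided slopes at $\pm\infty$ are $\ge 0$ and $\le 0$ respectively attains its minimum, and among its minimizers there is always a breakpoint; then specialize.
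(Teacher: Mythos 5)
Your proof is correct, and it establishes the same key fact as the paper---that $\inf_{t\in\mathbb{R}} H(\mathbf{x},t)$ is attained at some $t\in\{x_j\}_{j\in N}$---but by a different route. The paper argues elementarily: it sorts the entries of $\mathbf{x}$ and splits into three cases ($\overline{t}>x_{(1)}$, $\overline{t}<x_{(n)}$, and $\overline{t}$ strictly between consecutive sorted values), showing in the first two cases by direct inequality manipulation that moving $\overline{t}$ to the nearest $x_{(i)}$ does not increase $H$ (the first case uses $u_0\ge 0$, the second uses $\sum_{i\in N} u_i\ge u_0$), and in the third case that $H(\mathbf{x},\cdot)$ is affine between consecutive breakpoints, so $H(\mathbf{x},\overline{t})$ is a convex combination of $H(\mathbf{x},x_{(j)})$ and $H(\mathbf{x},x_{(j+1)})$ and hence at least their minimum. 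You instead invoke convexity of $t\mapsto H(\mathbf{x},t)$ and track its nondecreasing right derivative $u_0-\sum_{i:\,x_i>t}u_i$, whose signs at $\pm\infty$ come from exactly the same two inequalities defining $\mathcal{U}$; this compresses the paper's case analysis into one structural statement about piecewise-linear convex functions, at the cost of the bookkeeping you correctly flag (ties among the $x_i$, flat minimizing segments, and the degenerate case $u_0=\sum_i u_i=0$, where $H$ is identically zero and the identity holds trivially). Both arguments rest only on $(u_0,\mathbf{u})\ge\mathbf{0}$ and $u_0\le\sum_{i\in N}u_i$, so yours is a valid, somewhat more conceptual substitute for the paper's computation; the paper's version has the advantage of being fully self-contained and explicit about which term of the minimand dominates in each regime.
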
 
\begin{proof}
We sort and relabel the entries in $\mathbf{x}$ such that $x_{(1)} \geq x_{(2)} \geq \dots \geq x_{(n)}$. \\
(Case 1) Consider any $\overline{t} > x_{(1)}$. There exists $r > 0$ such that $\overline{t} = x_{(1)} + r$. Then 
\begin{align*}
H(\mathbf{x}, \overline{t}) & = u_0(x_{(1)} + r) + \sum_{i\in N}u_i \max \{0, x_i - (x_{(1)} + r)\} \\
& = u_0(x_{(1)} + r) + \sum_{i\in N}u_i \cdot 0 \quad \text{($x_i \leq x_{(1)}$ for all $i\in N$ and $r > 0$)} \\
& \geq u_0 x_{(1)} \quad \text{(because $u_0 \geq 0$ and $r>0$)} \\
& =  u_0x_{(1)} + \sum_{i\in N}u_i \max \{0, x_i - x_{(1)}\} \quad \text{($x_i \leq x_{(1)}$ for all $i\in N$)} \\
& = H(\mathbf{x}, x_{(1)}).
\end{align*}

\allowdisplaybreaks

(Case 2) Consider any $\overline{t} < x_{(n)}$. There exists $r > 0$ such that $\overline{t} = x_{(n)} - r$. Then 
\begin{align*}
H(\mathbf{x}, \overline{t}) & = u_0(x_{(n)} - r) + \sum_{i\in N}u_i \max \{0, x_i - (x_{(n)} - r)\} \\
& = u_0(x_{(n)} - r) + \sum_{i\in N}u_i ( x_i - x_{(n)} + r) \quad \text{($x_i \geq x_{(n)}$ for all $i\in N$ and $r > 0$)} \\
& = u_0x_{(n)} + \sum_{i\in N}u_i ( x_i - x_{(n)})  + r \left(\sum_{i\in N}u_i - u_0\right)\\ 
& \geq u_0x_{(n)} + \sum_{i\in N}u_i ( x_i - x_{(n)}) \quad \left(\text{$r > 0$ and $\sum_{i\in N} u_i \geq u_0$ by definition of $\mathcal{U}$}\right) \\
& =  u_0x_{(n)} + \sum_{i\in N}u_i \max \{0, x_i - x_{(n)}\} \quad \left(\text{$x_i \geq x_{(n)}$ for all $i\in N$}\right) \\
& = H(\mathbf{x}, x_{(n)}).
\end{align*}

(Case 3) Lastly, consider any $x_{(1)} > \overline{t} > x_{(n)}$ such that $\overline{t} \notin \{x_i\}_{i\in N}$. There must be an index $j\in\{1, \dots, n-1\}$, such that $x_{(j)} > \overline{t} > x_{(j+1)}$. We can then express  $\overline{t}$ as the convex combination $\lambda x_{(j)} + (1-\lambda)x_{(j+1)}$ with $1 > \lambda > 0$. In this case, 
\begingroup
\allowdisplaybreaks
\begin{align*}
H(\mathbf{x}, \overline{t}) & = u_0\overline{t} + \sum_{i\in N}u_i \max \{0, x_i - \overline{t}\} \\
& = u_0\overline{t} + \sum_{i = 1}^j u_{(i)} (x_{(i)} - \overline{t}) \quad \text{(because $x_{(j)} - \overline{t}> 0$ and  $x_{(j+1)} - t <0$)} \\
& = u_0[\lambda x_{(j)} + (1-\lambda)x_{(j+1)}] + \sum_{i = 1}^j u_{(i)} \{x_{(i)} - [\lambda x_{(j)} + (1-\lambda)x_{(j+1)}]\} \\
& = \lambda u_0 x_{(j)} + (1-\lambda) u_0 x_{(j+1)} + \sum_{i = 1}^j u_{(i)} \{\lambda x_{(i)}+ (1-\lambda)x_{(i)} - [\lambda x_{(j)} + (1-\lambda)x_{(j+1)}]\} \\
& = \lambda u_0 x_{(j)} + (1-\lambda) u_0 x_{(j+1)} + \sum_{i = 1}^j u_{(i)} \left[\lambda x_{(i)} - \lambda x_{(j)} + (1-\lambda)x_{(i)}- (1-\lambda)x_{(j+1)}\right] \\
& =  \lambda u_0 x_{(j)} + (1-\lambda) u_0 x_{(j+1)} + \sum_{i = 1}^j u_{(i)} \lambda \left(x_{(i)} - x_{(j)}\right) + \sum_{i = 1}^j u_{(i)} (1-\lambda)\left(x_{(i)}- x_{(j+1)}\right) \\
& = \lambda \left[u_0 x_{(j)} + \sum_{i = 1}^j u_{(i)}  \left(x_{(i)} - x_{(j)}\right)\right] + (1-\lambda) \left[u_0 x_{(j+1)} + \sum_{i = 1}^j u_{(i)} \left(x_{(i)}- x_{(j+1)}\right)  \right] \\
& = \lambda \left[u_0 x_{(j)} + \sum_{i \in N} \max\{0, x_i - x_{(j)}\}\right] + (1-\lambda) \left[u_0 x_{(j+1)} + \sum_{i \in N}\max\{0, x_i- x_{(j+1)}\}  \right] \\
 & \quad \text{(because $x_{(1)} \geq x_{(2)} \geq \dots \geq x_{(n)}$)} \\
 & = \lambda  H(\mathbf{x}, x_{(j)}) + (1-\lambda)  H(\mathbf{x}, x_{(j+1)}). 
\end{align*}
\endgroup
Thus, $H(\mathbf{x}, \overline{t}) \geq \min\{ H(\mathbf{x}, x_{(j)}), H(\mathbf{x}, x_{(j+1)})\}$. \\

So far we have shown that, for all $\overline{t}\in \mathbb{R} \setminus \{x_i\}_{i\in N}$, there always exists $i\in N$ with $H(\mathbf{x}, \overline{t}) \geq H(\mathbf{x}, x_i)$. Therefore, 
\[\inf_{t\in \mathbb{R}} H(\mathbf{x}, t)  = \min_{t\in \{x_i\}_{i\in N}} H(\mathbf{x}, t)  = \min_{j\in N} \left\{u_0x_j + \sum_{i\in N}u_i \max \{0, x_i - x_j\} \right\}  = G_{u_0, \mathbf{u}}(\mathbf{x}). 
\]
\end{proof}

\begin{proposition}
\label{prop:F_lattice_sub}
The function $F_{u_0, \mathbf{u}}$ given by \eqref{eq:F_u0u_def} is lattice submodular for any $(u_0, \mathbf{u}) \in \mathcal{U}$ when $h_i:\mathcal{X}_i\rightarrow\mathbb{R}$ are monotone for all $i\in N$.
\end{proposition}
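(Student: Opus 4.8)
The plan is to recognize $F_{u_0,\mathbf{u}}$ as the composition of the continuous function $G_{u_0,\mathbf{u}}$ of \eqref{eq:G_u0u_def} with the univariate monotone maps $h^i$: explicitly, $F_{u_0,\mathbf{u}}(\mathbf{x}) = G_{u_0,\mathbf{u}}\bigl(h^1(x_1),\dots,h^n(x_n)\bigr)$ (reading $h^i(\mathbf{x})$ in \eqref{eq:F_u0u_def} as $h^i(x_i)$, and regarding $G_{u_0,\mathbf{u}}$ as defined by its formula on all of $\mathbb{R}^n$), and then to invoke the composition rule of Lemma~\ref{lemma:monotone_composite_sub}. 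The only substantive ingredient that rule needs is the continuous submodularity of $G_{u_0,\mathbf{u}}$, which I would assemble from the partial-minimization results already in hand.

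First I would show $G_{u_0,\mathbf{u}}$ is continuous submodular. By Lemma~\ref{lemma:G_partialmin}, $G_{u_0,\mathbf{u}}(\mathbf{x}) = \inf_{t\in\mathbb{R}} H(\mathbf{x},t)$ for the function $H$ of \eqref{eq:partial_H}, and by Lemma~\ref{lemma:partial_H_sub} this $H$ is continuous submodular on the box obtained by adjoining to $\overline{\mathcal{X}}$ a further coordinate $t$ ranging over the (unbounded) interval $\mathbb{R}$. Applying Lemma~\ref{lemma:partial_min_sub} with $K = N$, so that the partial minimization is taken precisely over the single coordinate $t$, yields that $G_{u_0,\mathbf{u}}$ is continuous submodular on $\overline{\mathcal{X}}$; Lemma~\ref{lemma:G_partialmin} also certifies that the infimum defining it is finite --- it equals $\min_{j\in N}\{\cdots\}$ --- so this partial-minimization function is well defined.

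Second, each $h^i:\mathcal{X}_i\to\mathbb{R}$ is monotone and $G_{u_0,\mathbf{u}}$ is continuous submodular, so Lemma~\ref{lemma:monotone_composite_sub}, together with its stated extension to monotonically increasing coordinate maps, gives that $\mathbf{x}\mapsto G_{u_0,\mathbf{u}}\bigl(h^1(x_1),\dots,h^n(x_n)\bigr) = F_{u_0,\mathbf{u}}(\mathbf{x})$ is lattice submodular on $\mathcal{X}$, which is the assertion.

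The step that needs the most care is the partial-minimization argument: one must verify that $\overline{\mathcal{X}}\times\mathbb{R}$ is genuinely of the hyperrectangle form required by Lemma~\ref{lemma:partial_min_sub}, that minimizing over $t$ matches its ``$N'\setminus K$'' convention, and that the defining infimum is finite --- all of which are supplied by Lemmas~\ref{lemma:partial_H_sub} and~\ref{lemma:G_partialmin} and by the standing hypothesis $(u_0,\mathbf{u})\in\mathcal{U}$. A secondary caveat worth flagging is that the composition rule needs the $h^i$ to share a direction of monotonicity; a mixed sign pattern would in general destroy lattice submodularity, so ``monotone for all $i\in N$'' is to be read as all-increasing or all-decreasing.
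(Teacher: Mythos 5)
Your proposal is correct and follows essentially the same route as the paper: establish continuous submodularity of $G_{u_0,\mathbf{u}}$ via Lemma \ref{lemma:partial_H_sub}, Lemma \ref{lemma:partial_min_sub}, and Lemma \ref{lemma:G_partialmin}, then compose with the monotone maps $h^i$ using Lemma \ref{lemma:monotone_composite_sub}. The caveats you flag (the hyperrectangle form of $\overline{\mathcal{X}}\times\mathbb{R}$, finiteness of the infimum, and the common direction of monotonicity) match the paper's own remarks and do not change the argument.
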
 
Here, $h_i$ can be monotonically increasing or decreasing, as long as the direction is consistent across $i\in N$. 
\begin{proof}
By Lemma \ref{lemma:partial_H_sub}, $H$ given by \eqref{eq:partial_H} is continuous submodular over $\overline{\mathcal{X}}\times \mathbb{R}$. According to Lemma \ref{lemma:partial_min_sub}, its partial minimization $\inf_{t\in \mathbb{R}} H(\mathbf{x}, t)$ is a continuous submodular function over $\mathbf{x}\in\overline{\mathcal{X}}$. We have shown in Lemma \ref{lemma:G_partialmin} that $\inf_{t\in \mathbb{R}} H(\mathbf{x}, t)$ for any  $\mathbf{x}\in\overline{\mathcal{X}}$ is equal to $G_{u_0, \mathbf{u}}(\mathbf{x})$, so $G_{u_0, \mathbf{u}}$ is continuous submodular. Lastly, observe that $F_{u_0, \mathbf{u}}(\mathbf{x}) = G_{u_0, \mathbf{u}}(h^1(x_1), \dots, h^n(x_n))$ where $h^i$ is monotone for all $i\in N$. We conclude that $F_{u_0, \mathbf{u}}$ is lattice submodular, following from Lemma \ref{lemma:monotone_composite_sub}. 
\end{proof}

\begin{proposition}
\label{prop:F_constant_LNC}
Suppose $F:\mathcal{X}\subseteq \mathbb{Z}^n \rightarrow\mathbb{R}$ satisfies the following conditions. 
\begin{itemize}
\item[(i)] $F$ is decreasing (i.e., $F(\mathbf{x}) \geq F(\mathbf{x}')$ for all $\mathbf{x} \leq \mathbf{x}'$ in $\mathcal{X}$). 
\item[(ii)] There exists $r\in \mathbb{R}$ such that $F(\mathbf{x} + \mathbf{1}) = F(\mathbf{x}) + r$ for all $\mathbf{x}\in\mathcal{X}$. 
 \item[(iii)] $F(\mathbf{x} \wedge \mathbf{y}) = F(\mathbf{x}) \vee F(\mathbf{y})$ for all $\mathbf{x}, \mathbf{y}\in\mathcal{X}$.
\end{itemize}
Then $F$ is L-convex, and the function $F^c:\mathcal{X}\rightarrow\mathbb{R}$, 
\[F^c(\mathbf{x}) := F(\mathbf{x}) \vee c\]
is L$^\natural$-convex for any $c\in\mathbb{R}$. 
\end{proposition}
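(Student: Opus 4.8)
The plan is to treat the two assertions in turn; the second carries the real content. For the claim that $F$ is L-convex, I would check the two requirements of Definition~\ref{def:LC}. Condition (ii) is verbatim the ``affine along $\mathbf{1}$'' requirement, so only lattice submodularity \eqref{eq:submodular} remains, and it drops out of (i) and (iii): since $\mathbf{x}\vee\mathbf{y}\ge\mathbf{x}$ and $\mathbf{x}\vee\mathbf{y}\ge\mathbf{y}$, monotonicity gives $F(\mathbf{x}\vee\mathbf{y})\le\min\{F(\mathbf{x}),F(\mathbf{y})\}$, while (iii) gives $F(\mathbf{x}\wedge\mathbf{y})=\max\{F(\mathbf{x}),F(\mathbf{y})\}$; adding the two yields $F(\mathbf{x}\vee\mathbf{y})+F(\mathbf{x}\wedge\mathbf{y})\le F(\mathbf{x})+F(\mathbf{y})$. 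Hence $F$ is L-convex, and therefore L$^\natural$-convex, so it satisfies \eqref{eq:LC_def_sub} for every $\alpha\in\mathbb{Z}_+$ (by Lemma~\ref{thm:LC_LNC} it in fact holds for all $\alpha\in\mathbb{Z}$, but only $\alpha\ge0$ is needed below).

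For $F^c=F\vee c$ I would verify \eqref{eq:LC_def_sub} directly. Fix $\mathbf{x},\mathbf{y}\in\mathcal{X}$ and $\alpha\in\mathbb{Z}_+$ with $\mathbf{a}:=(\mathbf{x}-\alpha\mathbf{1})\vee\mathbf{y}\in\mathcal{X}$ and $\mathbf{b}:=\mathbf{x}\wedge(\mathbf{y}+\alpha\mathbf{1})\in\mathcal{X}$; the target is $F^c(\mathbf{x})+F^c(\mathbf{y})\ge F^c(\mathbf{a})+F^c(\mathbf{b})$. Because $\mathbf{y}+\alpha\mathbf{1}$ itself may fall outside $\mathcal{X}$, I would pass to the truncation $\tilde{\mathbf{y}}:=(\mathbf{y}+\alpha\mathbf{1})\wedge\boldsymbol{u}$, which lies in $\mathcal{X}$, satisfies $\tilde{\mathbf{y}}\ge\mathbf{y}$, and obeys $\mathbf{x}\wedge\tilde{\mathbf{y}}=\mathbf{b}$ (since $\mathbf{x}\le\boldsymbol{u}$). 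Writing $s:=F(\tilde{\mathbf{y}})$, I would record three facts: $F(\mathbf{b})=F(\mathbf{x})\vee s$ by applying (iii) to $\mathbf{x}$ and $\tilde{\mathbf{y}}$; $s\le F(\mathbf{y})$ and $F(\mathbf{a})\le F(\mathbf{y})$ by monotonicity (i) (from $\tilde{\mathbf{y}}\ge\mathbf{y}$ and $\mathbf{a}\ge\mathbf{y}$); and $F(\mathbf{a})+F(\mathbf{b})\le F(\mathbf{x})+F(\mathbf{y})$ from the L-convexity of $F$.

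The remainder is a short case analysis on the reals $F(\mathbf{x}),F(\mathbf{y}),F(\mathbf{a}),s,c$. If $F(\mathbf{x})\ge s$, then $F(\mathbf{b})=F(\mathbf{x})$, hence $F^c(\mathbf{b})=F^c(\mathbf{x})$ and the claim reduces to $F^c(\mathbf{y})\ge F^c(\mathbf{a})$, which holds because $F(\mathbf{a})\le F(\mathbf{y})$. If $F(\mathbf{x})<s$, then $F(\mathbf{b})=s$, so $F(\mathbf{a})\le F(\mathbf{x})+F(\mathbf{y})-s$; when $F(\mathbf{a})\le c$ the claim follows by adding $\max\{s,c\}\le\max\{F(\mathbf{y}),c\}$ to $c\le\max\{F(\mathbf{x}),c\}$, and when $F(\mathbf{a})>c$ one has $\max\{F(\mathbf{y}),c\}=F(\mathbf{y})$ (as $F(\mathbf{y})\ge F(\mathbf{a})>c$) and splits on $\max\{s,c\}$: against $c$ use $F(\mathbf{y})-F(\mathbf{a})\ge0$ together with $\max\{F(\mathbf{x}),c\}\ge c$, and against $s$ use $F(\mathbf{a})+s\le F(\mathbf{x})+F(\mathbf{y})\le\max\{F(\mathbf{x}),c\}+F(\mathbf{y})$. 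In every branch $F^c(\mathbf{x})+F^c(\mathbf{y})\ge F^c(\mathbf{a})+F^c(\mathbf{b})$, so $F^c$ is L$^\natural$-convex.

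The step I expect to be the obstacle, and the reason the hypotheses cannot be weakened to merely ``$F$ is L$^\natural$-convex'', is exactly the passage to $F\vee c$: the pointwise maximum of an L$^\natural$-convex (even submodular) function with a constant need not be L$^\natural$-convex, so there is no generic closure property to invoke. What makes it work here is the rigid semilattice identity (iii), which pins $F(\mathbf{b})$ down to $F(\mathbf{x})\vee F(\tilde{\mathbf{y}})$ and lets me feed it into the exchange inequality \eqref{eq:LC_def_sub} for $F$; the only genuine bookkeeping is the domain-boundary subtlety, handled by the truncation $\tilde{\mathbf{y}}$.
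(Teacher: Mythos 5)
Your proof is correct, and it follows the paper's overall strategy: part one (lattice submodularity from (i) and (iii), then L-convexity via (ii)) is identical, and part two verifies \eqref{eq:LC_def_sub} for $F^c$ directly by a case analysis built on the same three ingredients --- monotonicity, the exchange inequality for $F$, and the meet-to-max identity (iii). The organization differs in a way worth noting. The paper splits into four cases according to which of $F\bigl((\mathbf{x}-\alpha\mathbf{1})\vee\mathbf{y}\bigr)$ and $F\bigl(\mathbf{x}\wedge(\mathbf{y}+\alpha\mathbf{1})\bigr)$ exceed $c$, and in its hardest case it applies (iii) to the pair $\mathbf{x}$, $\mathbf{y}+\alpha\mathbf{1}$ and then (ii) with $r\le 0$ to write $F(\mathbf{y}+\alpha\mathbf{1})=F(\mathbf{y})+\alpha r\le F(\mathbf{y})$; this tacitly uses $\mathbf{y}+\alpha\mathbf{1}\in\mathcal{X}$, which is harmless in the paper's applications ($\mathcal{X}=\mathbb{Z}^n$, and (ii) as stated essentially forces closure under adding $\mathbf{1}$) but is not among the stated hypotheses of \eqref{eq:LC_def_sub}. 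You instead split on $F(\mathbf{x})$ versus $F(\tilde{\mathbf{y}})$ and pass to the truncation $\tilde{\mathbf{y}}=(\mathbf{y}+\alpha\mathbf{1})\wedge\boldsymbol{u}\in\mathcal{X}$, which pins $F(\mathbf{b})=F(\mathbf{x})\vee F(\tilde{\mathbf{y}})$ via (iii) and lets monotonicity replace the use of (ii) and $r\le 0$ entirely in the second half. What your route buys is robustness at the domain boundary (it works verbatim when $\mathcal{X}$ is bounded above) and a leaner set of ingredients for the $F^c$ step; what the paper's route buys is a case structure that mirrors the definition of $F^c$ and makes the role of the translation property (ii) explicit. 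Both arguments are complete; your closing remark that no generic closure property of L$^\natural$-convexity under $\cdot\vee c$ can be invoked is also accurate, so the reliance on (iii) is indeed the essential point.
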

\begin{proof}
First, we show that $F$ is L-convex. For any $\mathbf{x}, \mathbf{y}\in\mathcal{X}$, 
\begin{align*}
F(\mathbf{x} \vee \mathbf{y}) + F(\mathbf{x} \wedge \mathbf{y}) & = F(\mathbf{x} \vee \mathbf{y}) + [F(\mathbf{x}) \vee F(\mathbf{y})] \quad \text{(by (iii))} \\
& \leq [F(\mathbf{x}) \wedge F(\mathbf{y})] + [F(\mathbf{x}) \vee F(\mathbf{y})] \\
& \quad  \quad \text{(because $F$ is decreasing, $\mathbf{x} \vee \mathbf{y} \geq \mathbf{x}$, and $\mathbf{x} \vee \mathbf{y} \geq \mathbf{y}$)} \\
& = F(\mathbf{x}) + F(\mathbf{y}). 
\end{align*}
Thus, $F$ is lattice submodular. Due to (ii), $F$ is L-convex, and in particular, 
\[r = F(\mathbf{x} + \mathbf{1}) - F(\mathbf{x}) \leq 0.\]

Next, we prove that $F^c$ is L$^\natural$-convex by showing \eqref{eq:LC_def_sub}. That is, for any $\mathbf{x}, \mathbf{y}\in\mathcal{X}$, and for any $\alpha\in\mathbb{Z}_+$ with $(\mathbf{x} - \alpha \mathbf{1}) \vee \mathbf{y}$, $\mathbf{x} \wedge (\mathbf{y} + \alpha \mathbf{1}) \in\mathcal{X}$, we will show that 
\[F^c(\mathbf{x}) +F^c(\mathbf{y}) \geq F^c\left( (\mathbf{x} - \alpha \mathbf{1}) \vee \mathbf{y} \right) + F^c\left( \mathbf{x} \wedge (\mathbf{y} + \alpha \mathbf{1}) \right).\]

(Case 1) If $F\left( (\mathbf{x} - \alpha \mathbf{1}) \vee \mathbf{y} \right), F\left( \mathbf{x} \wedge (\mathbf{y} + \alpha \mathbf{1})\right) > c$, then 
\begin{align*}
F^c\left( (\mathbf{x} - \alpha \mathbf{1}) \vee \mathbf{y} \right) + F^c\left( \mathbf{x} \wedge (\mathbf{y} + \alpha \mathbf{1}) \right) & = F\left( (\mathbf{x} - \alpha \mathbf{1}) \vee \mathbf{y} \right) + F\left( \mathbf{x} \wedge (\mathbf{y} + \alpha \mathbf{1}) \right) \\
& \leq F(\mathbf{x}) + F(\mathbf{y}) \quad \text{(by L-convexity of $F$ and Theorem \ref{thm:LC_LNC})} \\
& \leq [F(\mathbf{x}) \vee c] + [F(\mathbf{y}) \vee c]  \\
& \leq F^c(\mathbf{x}) +F^c(\mathbf{y}). 
\end{align*}

(Case 2) If $F\left( (\mathbf{x} - \alpha \mathbf{1}) \vee \mathbf{y} \right), F\left( \mathbf{x} \wedge (\mathbf{y} + \alpha \mathbf{1})\right) \leq c$, then 
\begin{align*}
F^c\left( (\mathbf{x} - \alpha \mathbf{1}) \vee \mathbf{y} \right) + F^c\left( \mathbf{x} \wedge (\mathbf{y} + \alpha \mathbf{1}) \right) & = c + c  \leq [F(\mathbf{x}) \vee c] + [F(\mathbf{y}) \vee c]  = F^c(\mathbf{x}) +F^c(\mathbf{y}). 
\end{align*}

(Case 3) If $F\left( (\mathbf{x} - \alpha \mathbf{1}) \vee \mathbf{y} \right) > c$ and $F\left( \mathbf{x} \wedge (\mathbf{y} + \alpha \mathbf{1})\right) \leq c$, then 
\begin{align*}
F^c\left( (\mathbf{x} - \alpha \mathbf{1}) \vee \mathbf{y} \right) + F^c\left( \mathbf{x} \wedge (\mathbf{y} + \alpha \mathbf{1}) \right) & = F\left( (\mathbf{x} - \alpha \mathbf{1}) \vee \mathbf{y} \right) + c \\
& \leq F(\mathbf{y}) + c \quad \text{(because $F$ is decreasing)} \\
& \leq [F(\mathbf{y}) \vee c] + [F(\mathbf{x})\vee c] \\
& \leq F^c(\mathbf{y}) + F^c(\mathbf{x}). 
\end{align*}

(Case 4) Lastly, if $F\left( (\mathbf{x} - \alpha \mathbf{1}) \vee \mathbf{y} \right) \leq c$ and $F\left( \mathbf{x} \wedge (\mathbf{y} + \alpha \mathbf{1})\right) > c$, then 
\begin{align*}
F^c\left( (\mathbf{x} - \alpha \mathbf{1}) \vee \mathbf{y} \right) + F^c\left( \mathbf{x} \wedge (\mathbf{y} + \alpha \mathbf{1}) \right) & = c + F\left( \mathbf{x} \wedge (\mathbf{y} + \alpha \mathbf{1})\right)\\
& = c + [F( \mathbf{x}) \vee F(\mathbf{y} + \alpha \mathbf{1}) ]\quad \text{(by (iii))}\\
& = c + \{F( \mathbf{x}) \vee [F(\mathbf{y}) + \alpha r]\} \quad \text{(by (ii))}\\
& \leq (c \wedge c) + [F(\mathbf{x}) \vee F(\mathbf{y})] \quad \text{($\alpha \in \mathbb{Z}_+$, $r \leq 0$)}\\
& \leq \{[F(\mathbf{x}) \vee c] \wedge [F(\mathbf{y}) \vee c]\} + \{[F(\mathbf{x})\vee c] \vee [F(\mathbf{y})\vee c]\} \\
& = [F(\mathbf{x}) \vee c] + [F(\mathbf{y}) \vee c] \\
& = F^c(\mathbf{x}) +F^c(\mathbf{y}). 
\end{align*}
\end{proof}

\begin{lemma}
\label{lemma:gen_int_mix}
The function $F^0:\mathbb{Z}^n\rightarrow\mathbb{R}$, 
\[F^0(\mathbf{x}) := \max_{i\in N} \{q_i - x_i\} \vee 0,\]
where $\mathbf{q}\in\mathbb{R}^n$, is L$^\natural$-convex. 
\end{lemma}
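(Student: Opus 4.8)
The plan is to recognize $F^0$ as a special case of the function $F^c$ studied in Proposition~\ref{prop:F_constant_LNC}. Define $F:\mathbb{Z}^n\rightarrow\mathbb{R}$ by $F(\mathbf{x}) := \max_{i\in N}\{q_i - x_i\}$, and observe that $F^0(\mathbf{x}) = F(\mathbf{x})\vee 0$. So it suffices to verify that $F$ satisfies the three hypotheses (i), (ii), (iii) of Proposition~\ref{prop:F_constant_LNC}; then the proposition immediately yields that $F$ is L-convex and that $F^0 = F^{c}$ with $c = 0$ is L$^\natural$-convex.

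First I would check condition (i): if $\mathbf{x}\leq\mathbf{x}'$ componentwise, then $q_i - x_i \geq q_i - x_i'$ for every $i$, so taking the maximum over $i\in N$ preserves the inequality, giving $F(\mathbf{x})\geq F(\mathbf{x}')$; hence $F$ is decreasing. Next, condition (ii): for every $i$, $q_i - (x_i+1) = (q_i - x_i) - 1$, so $F(\mathbf{x}+\mathbf{1}) = \max_{i\in N}\{q_i - x_i - 1\} = \max_{i\in N}\{q_i - x_i\} - 1 = F(\mathbf{x}) - 1$, so (ii) holds with $r = -1$ (note $r\leq 0$, consistent with the proposition). Finally, condition (iii): I need $F(\mathbf{x}\wedge\mathbf{y}) = F(\mathbf{x})\vee F(\mathbf{y})$ for all $\mathbf{x},\mathbf{y}$. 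Writing $z_i = \min\{x_i,y_i\}$, we have $q_i - z_i = \max\{q_i - x_i,\, q_i - y_i\}$, so
\[
F(\mathbf{x}\wedge\mathbf{y}) = \max_{i\in N}\max\{q_i - x_i,\, q_i - y_i\} = \max\Bigl\{\max_{i\in N}(q_i - x_i),\ \max_{i\in N}(q_i - y_i)\Bigr\} = F(\mathbf{x})\vee F(\mathbf{y}),
\]
using that $\max$ over the union of two index ranges equals the max of the two separate maxima. This establishes (iii).

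With all three conditions verified, Proposition~\ref{prop:F_constant_LNC} applies verbatim: $F$ is L-convex, and $F^0(\mathbf{x}) = F(\mathbf{x})\vee 0$ is L$^\natural$-convex. I do not anticipate any genuine obstacle here — each of (i), (ii), (iii) is a one-line consequence of the fact that translating or comparing all arguments of a pointwise maximum commutes with the maximum. The only mild subtlety is condition (iii), where one must be careful that it is the \emph{meet} $\mathbf{x}\wedge\mathbf{y}$ (componentwise minimum) that maps to the \emph{join} $F(\mathbf{x})\vee F(\mathbf{y})$ (the larger value), which is exactly the order-reversal induced by the minus sign in $q_i - x_i$; this is why $F$ is L-convex rather than merely lattice submodular in the naive direction. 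An alternative, self-contained route would be to invoke Lemma~\ref{lemma:max_LC} (the max-component function is L-convex) together with the affine substitution $x_i\mapsto q_i - x_i$ from part (2) of the first lemma of Section~2.3 and closure under the $\vee\,0$ operation, but routing through Proposition~\ref{prop:F_constant_LNC} is cleaner since that closure statement is precisely what the proposition provides.
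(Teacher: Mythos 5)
Your proposal is correct and follows essentially the same route as the paper: define $F(\mathbf{x}) = \max_{i\in N}\{q_i - x_i\}$, verify conditions (i)--(iii) of Proposition \ref{prop:F_constant_LNC} exactly as the paper does, and apply that proposition with $c = 0$. The verifications of monotonicity, the unit-shift property, and the meet-to-join identity match the paper's argument line for line.
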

\begin{proof}
Let $F(\mathbf{x}) := \max_{i\in N} \{q_i - x_i\}$. We check conditions (i)--(iii) in Proposition \ref{prop:F_constant_LNC}.
\begin{itemize}
\item[(i)] For any $\mathbf{x}, \mathbf{x}'\in\mathbb{Z}^n$ with $\mathbf{x} \leq \mathbf{x}'$, $q_i - x_i \geq q_i - x'_i$ for every $i\in N$. Thus, 
$F(\mathbf{x}) = \max_{i\in N} \{q_i - x_i\} \geq \max_{i\in N} \{q_i - x'_i\} = F(\mathbf{x}')$. 
\item[(ii)] Note that $F(\mathbf{x}+\mathbf{1}) = \max_{i\in N} \{q_i - x_i - 1\} = \max_{i\in N} \{q_i - x_i\} - 1 = F(\mathbf{x}) - 1$ for all $\mathbf{x}\in\mathcal{X}$. 
\item[(iii)] For all $\mathbf{x}, \mathbf{y}\in\mathbb{Z}^n$, 
\begin{align*}
F(\mathbf{x} \wedge \mathbf{y}) & = \max_{i\in N} \{q_i - (x_i \wedge y_i)\} \\
& = \max_{i\in N} \{(q_i - x_i) \vee (q_i - y_i)\} \\
& = \max_{i\in N} \{(q_i - x_i)\} \vee \max_{i\in N} \{(q_i - y_i)\} \\
& = F(\mathbf{x}) \vee F(\mathbf{y}). 
\end{align*}
\end{itemize}
Therefore, it follows from Proposition \ref{prop:F_constant_LNC} that $F^0$ is L$^\natural$-convex. 
\end{proof}

\begin{proposition}
\label{prop:max_aff_sub}
Given $\mathbf{a},\mathbf{b}\in \mathbb R^n$ and $a_0,b_0\in \mathbb R$, the function $F:\mathcal{X}\subseteq \mathbb{Z}^n \rightarrow\mathbb{R}$, 
\[F(\mathbf{x}) := \max \left\{\mathbf{a}^\top\mathbf{x} + a_0, \mathbf{b}^\top\mathbf{x} + b_0 \right\}\]
is lattice submodular if there exist $i, j\in N$ and $\alpha, \beta \geq 0$ such that $\mathbf{a} - \mathbf{b} = \alpha \mathbf{1}^i - \beta\mathbf{1}^j$. 
\end{proposition}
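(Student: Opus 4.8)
The plan is to reduce $F(\mathbf{x}) = \max\{\mathbf{a}^\top\mathbf{x}+a_0,\ \mathbf{b}^\top\mathbf{x}+b_0\}$ to a shifted/composed version of the structured bivariate functions already shown to be continuous submodular in Lemma \ref{lemma:uni_conv_sub}, and then invoke the composition result of Lemma \ref{lemma:monotone_composite_sub}. First I would rewrite $F$ by factoring out the common affine part: let $\mathbf{c} := \mathbf{b}$ (or the component-wise minimum of $\mathbf{a},\mathbf{b}$ in the relevant coordinates) so that $F(\mathbf{x}) = \mathbf{c}^\top\mathbf{x} + \max\{(\mathbf{a}-\mathbf{c})^\top\mathbf{x}+a_0,\ b_0\}$ up to relabeling which of the two affine pieces plays which role. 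Using the hypothesis $\mathbf{a}-\mathbf{b} = \alpha\mathbf{1}^i - \beta\mathbf{1}^j$ with $\alpha,\beta\geq 0$, the "max" term depends on $\mathbf{x}$ only through the single quantity $\alpha x_i - \beta x_j$, i.e.\ it equals $\psi(\alpha x_i - \beta x_j)$ for the univariate convex function $\psi(s) := \max\{s + a_0, b_0\}$ (after absorbing constants). So $F(\mathbf{x}) = \mathbf{c}^\top\mathbf{x} + \psi(\alpha x_i - \beta x_j)$.

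Next I would argue each summand is lattice submodular and then use that sums of lattice submodular functions are lattice submodular (a non-negative combination, as used e.g.\ in Lemma \ref{lemma:partial_H_sub}). The linear term $\mathbf{c}^\top\mathbf{x}$ is trivially lattice submodular (modular). For the term $\psi(\alpha x_i - \beta x_j)$: apply Lemma \ref{lemma:uni_conv_sub} to the convex function $\psi$ to get that $g(y_1,y_2) := \psi(y_1 - y_2)$ is continuous submodular on $\mathbb{R}^2$; then compose with the monotone maps $y_1 = \alpha x_i$ (monotonically increasing in $x_i$, or constant if $\alpha=0$) and $y_2 = \beta x_j$ (monotonically increasing in $x_j$, or constant if $\beta=0$), extending the other coordinates as dummy arguments, and invoke Lemma \ref{lemma:monotone_composite_sub}. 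A harmless detail: if $\alpha = 0$ or $\beta = 0$ the term collapses to a univariate convex function of a single $x_i$ or $x_j$ (or a constant), which is still lattice submodular since a function of one coordinate alone trivially satisfies \eqref{eq:submodular}; similarly if $i=j$ the term reduces to $\psi((\alpha-\beta)x_i)$, again univariate. These degenerate cases can be dispatched in a sentence.

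The main obstacle I anticipate is purely bookkeeping: Lemmas \ref{lemma:monotone_composite_sub} and \ref{lemma:uni_conv_sub} are stated for functions whose arguments are the coordinates $x_i$ themselves, so I need to be careful embedding the two-variable function $g(y_1,y_2)$ into an $n$-variable setting with the right monotone coordinate maps $h^k$, and to make sure the "consistent direction" hypothesis of the composition lemma is respected (here both active maps are increasing, so this is fine; the coordinates not appearing in the max get the identity map, which is also increasing). One should also double-check the sign conventions so that "$\max$ of two affine functions" genuinely becomes "$\psi(y_1-y_2)$ plus a modular term" rather than, say, a min; since $\max\{A,B\} = B + \max\{A-B,0\}$ and $\max\{\cdot,0\}$ is convex, this works cleanly. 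With these bindings in place, the conclusion follows directly from the cited lemmas; no new estimates are needed.
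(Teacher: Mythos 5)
Your proposal is correct, but it proves the proposition by a genuinely different route than the paper. The paper verifies the lattice-submodular inequality \eqref{eq:submodular} directly: for arbitrary $\mathbf{x},\mathbf{y}\in\mathcal{X}$ it splits into cases according to which affine piece attains the maximum at $\mathbf{x}\vee\mathbf{y}$ and at $\mathbf{x}\wedge\mathbf{y}$, and in each case rearranges the sums using the fact that $\mathbf{a}-\mathbf{b}=\alpha\mathbf{1}^i-\beta\mathbf{1}^j$ has at most one positive and one negative entry (so on the set where $x_k>y_k$, resp.\ $x_k<y_k$, the sign of $a_k-b_k$ is constant). That argument is self-contained and elementary but runs through three cases with sub-cases. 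Your decomposition $F(\mathbf{x})=\mathbf{b}^\top\mathbf{x}+b_0+\psi(\alpha x_i-\beta x_j)$ with $\psi(s)=\max\{s+a_0-b_0,0\}$ univariate convex, followed by Lemma \ref{lemma:uni_conv_sub} (convexity of $\psi$ gives continuous submodularity of $(y_1,y_2)\mapsto\psi(y_1-y_2)$), the dummy-coordinate embedding, and the monotone-composition Lemma \ref{lemma:monotone_composite_sub} with the consistently increasing maps $y_1=\alpha x_i$, $y_2=\beta x_j$ (identity on the remaining coordinates), is a valid and shorter compositional argument that reuses machinery the paper already has; the modular term and the sum-closure of \eqref{eq:submodular} are indeed immediate. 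The only points you must make explicit, as you note, are the degenerate situations ($\alpha=0$ or $\beta=0$, where the map is constant, and $i=j$, where the max term is a function of one coordinate and satisfies \eqref{eq:submodular} with equality), and the routine check that a continuous submodular function of two coordinates remains continuous submodular when regarded as a function of all $n$ coordinates; with those sentences added, your proof is complete. What the paper's approach buys is independence from the composition lemmas (useful if one wants the statement before those lemmas are available); what yours buys is brevity and a clearer view of where the hypothesis on $\mathbf{a}-\mathbf{b}$ enters, namely only through the reduction of the max term to a function of the scalar $\alpha x_i-\beta x_j$.
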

\begin{proof}
For any $\mathbf{x}, \mathbf{y}\in\mathcal{X}$, we will show that $F(\mathbf{x}) + F(\mathbf{y})\geq F(\mathbf{x}\vee \mathbf{y}) + F(\mathbf{x}\wedge \mathbf{y})$ by cases. We first make an observation that is useful in the following discussion. Under the given condition on $\mathbf{a}, \mathbf{b}$, there exists at most one positive entry and at most one negative entry in $\mathbf{a} - \mathbf{b}$. All other entries in $\mathbf{a} - \mathbf{b}$ are zeros. For the given arbitrary $\mathbf{x}, \mathbf{y}\in\mathcal{X}$, we let
\[X := \{i\in N: x_i > y_i\}, \quad Y:=\{i\in N: x_i < y_i\},\]
and $E = N\setminus (X\cup Y) =\{i\in N: x_i = y_i\}$.
Suppose $X = \emptyset$. Then $\mathbf{x}\leq \mathbf{y}$ component-wise. This means that $\mathbf{x} \vee \mathbf{y} = \mathbf{y}$ and $\mathbf{x} \wedge \mathbf{y} = \mathbf{x}$. It follows that $F(\mathbf{x}) + F(\mathbf{y}) = F(\mathbf{x}\vee \mathbf{y}) + F(\mathbf{x}\wedge \mathbf{y})$. The same argument applies when $Y = \emptyset$. Thus, it remains to consider the cases where $X\neq \emptyset$ and $Y\neq\emptyset$.
Observe that $X\cap Y = \emptyset$, so there always exists $A \in \{X, Y\}$ such that $a_i - b_i \geq 0$ for all $i\in A$. 
Similarly, there always exists $B \in \{X, Y\}$ such that $a_i - b_i \leq 0$ for all $i\in B$. Here, $A$ and $B$ can be identical. Note that $A, B \neq \emptyset$ because $X$ and $Y$ are both non-empty in this case. 

(Case 1) Suppose $F(\mathbf{x}\vee \mathbf{y}) = \mathbf{b}^\top (\mathbf{x}\vee \mathbf{y}) + b_0$ and $F(\mathbf{x}\wedge \mathbf{y}) = \mathbf{a}^\top (\mathbf{x}\wedge \mathbf{y}) + a_0$. We obtain 
\[F(\mathbf{x}\vee \mathbf{y}) + F(\mathbf{x}\wedge \mathbf{y})  = \sum_{i\in X} b_ix_i + \sum_{i\in Y} b_iy_i + \sum_{i\in E} b_ix_i + b_0 + \sum_{i\in X} a_iy_i + \sum_{i\in Y} a_ix_i + \sum_{i\in E} a_ix_i + a_0.\]
Here, $x_i, y_i$ for $i\in E$ are used interchangeably because $x_i = y_i$ for all $i\in E$.
We know that there exists $A \in \{X, Y\}$ such that $a_i - b_i \geq 0$ for all $i\in A$.
\begin{itemize}
\item Suppose $A = X$. We have 
\begin{align*}
& \hspace{20pt} \sum_{i\in X} b_ix_i + \sum_{i\in Y} b_iy_i + \sum_{i\in E} b_ix_i + b_0 + \sum_{i\in X} a_iy_i + \sum_{i\in Y} a_ix_i + \sum_{i\in E} a_ix_i + a_0 \\
& = \sum_{i\in X} b_ix_i + \sum_{i\in N} b_iy_i - \sum_{i\in X} b_iy_i - \sum_{i\in E} b_iy_i + \sum_{i\in E} b_ix_i + b_0 \\
& \hspace{20pt} + \sum_{i\in X} a_iy_i + \sum_{i\in N} a_ix_i - \sum_{i\in X} a_ix_i - \sum_{i\in E} a_ix_i+ \sum_{i\in E} a_ix_i + a_0 \\
& = \sum_{i\in X} b_i(x_i -y_i) + \mathbf{b}^\top\mathbf{y}+ b_0 + \sum_{i\in X} a_i(y_i-x_i) + \mathbf{a}^\top\mathbf{x} + a_0 \quad \text{(by definition, $x_i = y_i$ for all $i\in E$)}\\
& =  \mathbf{b}^\top\mathbf{y}+ b_0 + \mathbf{a}^\top\mathbf{x} + a_0  + \sum_{i\in X} (b_i - a_i)(x_i -y_i) \\
& \leq  \mathbf{b}^\top\mathbf{y}+ b_0 + \mathbf{a}^\top\mathbf{x} + a_0 \quad \text{($b_i \leq a_i$ and $x_i > y_i$ for $i\in X$)}\\
& \leq F(\mathbf{y}) + F(\mathbf{x}). 
\end{align*}

\item Suppose $A = Y$. Observe that 
\begin{align*}
& \hspace{20pt} \sum_{i\in X} b_ix_i + \sum_{i\in Y} b_iy_i + \sum_{i\in E} b_ix_i + b_0 + \sum_{i\in X} a_iy_i + \sum_{i\in Y} a_ix_i + \sum_{i\in E} a_ix_i + a_0 \\
& = \sum_{i\in N} b_ix_i - \sum_{i\in Y} b_ix_i - \sum_{i\in E} b_ix_i + \sum_{i\in Y} b_iy_i + \sum_{i\in E} b_ix_i + b_0 \\
& \hspace{20pt} + \sum_{i\in N} a_iy_i - \sum_{i\in Y} a_iy_i- \sum_{i\in E} a_iy_i+ \sum_{i\in Y} a_ix_i + \sum_{i\in E} a_ix_i + a_0 \\
& =  \mathbf{b}^\top\mathbf{x}+ b_0 + \sum_{i\in Y} b_i(y_i - x_i) + \mathbf{a}^\top\mathbf{y} + a_0 - \sum_{i\in Y} a_i (y_i - x_i)\\
& = \mathbf{b}^\top\mathbf{x}+ b_0 + \mathbf{a}^\top\mathbf{y} + a_0 + \sum_{i\in Y} (b_i - a_i )(y_i - x_i)\\
& \leq \mathbf{b}^\top\mathbf{x}+ b_0 + \mathbf{a}^\top\mathbf{y} + a_0 \quad \text{($b_i \leq a_i$ and $y_i > x_i$ for $i\in Y$)}\\
& \leq F(\mathbf{x}) + F(\mathbf{y}). 
\end{align*}
\end{itemize}

(Case 2) Suppose $F(\mathbf{x}\vee \mathbf{y}) = \mathbf{a}^\top (\mathbf{x}\vee \mathbf{y}) + a_0$ and $F(\mathbf{x}\wedge \mathbf{y}) = \mathbf{b}^\top (\mathbf{x}\wedge \mathbf{y}) + b_0$. In this case, 
\[
F(\mathbf{x}\vee \mathbf{y}) + F(\mathbf{x}\wedge \mathbf{y})  = \sum_{i\in X} a_i x_i + \sum_{i\in Y} a_i y_i + \sum_{i\in E} a_ix_i + a_0 + \sum_{i\in X}b_iy_i + \sum_{i\in Y} b_ix_i + \sum_{i\in E} b_i x_i + b_0. 
\]
Recall $B \in \{X, Y\}$ such that $a_i - b_i \leq 0$ for all $i\in B$. 
\begin{itemize}
\item Suppose $B = X$. Then 
\begin{align*}
& \hspace{20pt} \sum_{i\in X} a_i x_i + \sum_{i\in Y} a_i y_i + \sum_{i\in E} a_ix_i + a_0 + \sum_{i\in X}b_iy_i + \sum_{i\in Y} b_ix_i + \sum_{i\in E} b_i x_i + b_0 \\
& = \sum_{i\in X} a_i x_i + \sum_{i\in N} a_iy_i - \sum_{i\in X} a_i y_i - \sum_{i\in E} a_i y_i + \sum_{i\in E} a_ix_i + a_0 \\
& \hspace{20pt} + \sum_{i\in X} b_iy_i + \sum_{i\in N} b_i x_i - \sum_{i\in X} b_i x_i - \sum_{i\in E} b_i x_i + \sum_{i\in E} b_i x_i + b_0 \\
& = \sum_{i\in N} a_iy_i + \sum_{i\in X} a_i (x_i - y_i) + a_0 +  \sum_{i\in N} b_i x_i + \sum_{i\in X} b_i(y_i - x_i) + b_0 \quad \text{($x_i = y_i$ for $i\in E$)} \\
& = \mathbf{a}^\top\mathbf{y} + a_0 + \mathbf{b}^\top\mathbf{x} + b_0 + \sum_{i\in X} (a_i - b_i) (x_i - y_i) \\
& \leq \mathbf{a}^\top\mathbf{y} + a_0 + \mathbf{b}^\top\mathbf{x} + b_0 \quad \text{($a_i - b_i \leq 0$ and $x_i > y_i$ for all $i\in X$)} \\
& \leq F(\mathbf{x}) + F(\mathbf{y}). 
\end{align*}

\item Suppose $B = Y$. Then 
\begin{align*}
& \hspace{20pt} \sum_{i\in X} a_i x_i + \sum_{i\in Y} a_i y_i + \sum_{i\in E} a_ix_i + a_0 + \sum_{i\in X}b_iy_i + \sum_{i\in Y} b_ix_i + \sum_{i\in E} b_i x_i + b_0 \\
& = \sum_{i\in N} a_ix_i - \sum_{i\in Y} a_i x_i - \sum_{i\in E} a_i x_i + \sum_{i\in Y} a_i y_i + \sum_{i\in E} a_ix_i + a_0 \\
& \hspace{20pt} + \sum_{i\in N} b_iy_i - \sum_{i\in Y}b_iy_i -\sum_{i\in E}b_iy_i + \sum_{i\in Y} b_ix_i + \sum_{i\in E} b_i x_i + b_0 \\
& =  \sum_{i\in N} a_ix_i + \sum_{i\in Y} a_i (y_i - x_i) + a_0 + \sum_{i\in N} b_iy_i - \sum_{i\in Y} b_i(y_i -x_i)+ b_0 \\
& = \mathbf{a}^\top\mathbf{x} + a_0 +\mathbf{b}^\top\mathbf{y}+ b_0 + \sum_{i\in Y} (a_i - b_i) (y_i - x_i) \\
& \leq \mathbf{a}^\top\mathbf{x} + a_0 +\mathbf{b}^\top\mathbf{y}+ b_0 \quad \text{($a_i - b_i \leq 0$ and $y_i > x_i$ for all $i\in Y$)} \\
& \leq F(\mathbf{x}) + F(\mathbf{y}). 
\end{align*}
\end{itemize}

(Case 3) Suppose $F(\mathbf{x}\vee \mathbf{y}) = \mathbf{c}^\top (\mathbf{x}\vee \mathbf{y}) + c_0$ and $F(\mathbf{x}\wedge \mathbf{y}) = \mathbf{c}^\top (\mathbf{x}\wedge \mathbf{y}) + c_0$ for $(\mathbf{c}, c_0) \in \{(\mathbf{a}, a_0), (\mathbf{b}, b_0)\}$. Then 
\begin{align*}
F(\mathbf{x}\vee \mathbf{y}) + F(\mathbf{x}\wedge \mathbf{y}) & = \mathbf{c}^\top (\mathbf{x}\vee \mathbf{y}) + c_0 + \mathbf{c}^\top (\mathbf{x}\wedge \mathbf{y}) + c_0 \\
& = \mathbf{c}^\top (\mathbf{x} + \mathbf{y}) + 2c_0 \\
& = \mathbf{c}^\top \mathbf{x} + c_0 + \mathbf{c}^\top\mathbf{y} + c_0 \\
& \leq F(\mathbf{x}) + F(\mathbf{y}). 
\end{align*}
Hence, $F$ is lattice submodular. 
\end{proof}

\begin{lemma}
The function $F:\mathcal{X}\subseteq \mathbb{Z}^n \rightarrow\mathbb{R}$, 
\[F(\mathbf{x}) := \max \left\{\mathbf{a}^\top\mathbf{x} + a_0, \mathbf{b}^\top\mathbf{x} + b_0 \right\}\]
is L-convex if there exist $i, j\in N$ and $\alpha \geq 0$ such that $\mathbf{a} - \mathbf{b} = \alpha (\mathbf{1}^i - \mathbf{1}^j)$.
\end{lemma}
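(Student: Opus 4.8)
The plan is to verify directly the two defining properties of L-convexity listed in Definition \ref{def:LC}: lattice submodularity (inequality \eqref{eq:submodular}) and the existence of a constant $r$ with $F(\mathbf{x}+\mathbf{1}) = F(\mathbf{x}) + r$ for all $\mathbf{x}\in\mathcal{X}$ with $\mathbf{x}+\mathbf{1}\in\mathcal{X}$.

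For the first property, I would invoke Proposition \ref{prop:max_aff_sub}: under the hypothesis we may write $\mathbf{a} - \mathbf{b} = \alpha(\mathbf{1}^i - \mathbf{1}^j) = \alpha\mathbf{1}^i - \alpha\mathbf{1}^j$, which is exactly the form $\alpha\mathbf{1}^i - \beta\mathbf{1}^j$ required there, with the nonnegative coefficients $\alpha$ and $\beta := \alpha$. Hence $F$ is lattice submodular and nothing further is needed for \eqref{eq:submodular}.

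For the second property, the key observation is that $(\mathbf{a}-\mathbf{b})^\top\mathbf{1} = \alpha(\mathbf{1}^i - \mathbf{1}^j)^\top\mathbf{1} = \alpha(1-1) = 0$, so $\mathbf{a}^\top\mathbf{1} = \mathbf{b}^\top\mathbf{1}$; denote this common value by $r$. Then for any $\mathbf{x}\in\mathcal{X}$ with $\mathbf{x}+\mathbf{1}\in\mathcal{X}$, each affine piece satisfies $\mathbf{a}^\top(\mathbf{x}+\mathbf{1}) + a_0 = (\mathbf{a}^\top\mathbf{x}+a_0) + r$ and likewise for $\mathbf{b}$, so
\[
F(\mathbf{x}+\mathbf{1}) = \max\{\mathbf{a}^\top\mathbf{x}+a_0+r,\ \mathbf{b}^\top\mathbf{x}+b_0+r\} = \max\{\mathbf{a}^\top\mathbf{x}+a_0,\ \mathbf{b}^\top\mathbf{x}+b_0\} + r = F(\mathbf{x}) + r.
\]
This gives the required linearity along the all-ones direction, and combining the two properties shows $F$ satisfies Definition \ref{def:LC}, i.e., $F$ is L-convex.

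I do not expect any genuine obstacle here: the argument is essentially a two-line computation once Proposition \ref{prop:max_aff_sub} is in hand. The only point that warrants a sentence of care is that the extra assumption relative to Proposition \ref{prop:max_aff_sub} — namely equal coefficients $\alpha = \beta$, so $\mathbf{a}-\mathbf{b}$ is a scalar multiple of $\mathbf{1}^i - \mathbf{1}^j$ rather than a general $\alpha\mathbf{1}^i - \beta\mathbf{1}^j$ — is precisely what makes $(\mathbf{a}-\mathbf{b})^\top\mathbf{1}$ vanish, hence what upgrades lattice submodularity to L-convexity.
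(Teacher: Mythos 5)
Your proposal is correct and follows essentially the same route as the paper: invoke Proposition \ref{prop:max_aff_sub} (with $\beta=\alpha$) for lattice submodularity, then use $\mathbf{a}^\top\mathbf{1}=\mathbf{b}^\top\mathbf{1}=r$ to obtain $F(\mathbf{x}+\mathbf{1})=F(\mathbf{x})+r$ and conclude via Definition \ref{def:LC}. No issues to flag.
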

\begin{proof}
By Proposition \ref{prop:max_aff_sub}, $F$ is lattice submodular. Let $\mathbf{a}^\top \mathbf{1} = r$. Then $\mathbf{b}^\top \mathbf{1} = r - \alpha (1-1) = r$.  Thus, 
\[F(\mathbf{x} + \mathbf{1}) = \max \left\{\mathbf{a}^\top(\mathbf{x} + \mathbf{1}) + a_0, \mathbf{b}^\top(\mathbf{x} + \mathbf{1}) + b_0 \right\} = \max \left\{\mathbf{a}^\top\mathbf{x} + a_0 + r, \mathbf{b}^\top\mathbf{x} + b_0 + r \right\} = F(\mathbf{x}) + r. \]
We conclude that $F$ is L-convex. 
\end{proof}

\section{Epigraphs of L$^\natural$-Convex Functions}
\label{sect:epi_int_conv}

In this section, we formalize the epigraph convex hull description for L$^\natural$-convex functions. Let $f:\mathcal{X}\subseteq \mathbb{Z}^n \rightarrow\mathbb{R}$ be any L$^\natural$-convex function. Recall that $\mathcal{X}$ is a hyperrectangle. We denote the epigraph of $f$ over $\mathcal{X}$ by
\[\mathcal{P}^f_\mathcal{X} := \{(\mathbf{x}, w)\in\mathcal{X}\times \mathbb{R}: w \geq f(\mathbf{x})\}.\]

We introduce the following class of linear inequalities, referred to as the \emph{shifted extremal polymatroid inequalities (SEPIs)}. Each SEPI is associated with a point $\mathbf{p}\in \underline{\mathcal{X}}$ and a permutation $\boldsymbol{\delta} = (\delta(1), \delta(2), \dots, \delta(n)) \in \mathfrak{S}(N)$. Such an inequality assumes the following form
\begin{equation}
\label{eq:SEPI}
w \geq f(\mathbf{p}) + \sum_{i=1}^{n} \left[ f\left(\mathbf{p} + \sum_{j=1}^i \mathbf{1}^{\delta(j)} \right) - f\left( \mathbf{p} + \sum_{j=1}^{i-1} \mathbf{1}^{\delta(j)} \right) \right] \left(x_{\delta(i)} - p_{\delta(i)}\right). 
\end{equation}
Recall $\mathbf{1}^i\in \{0,1\}^n$ is an indicator vector with one in the $i$th entry and zeros elsewhere. 

\begin{remark}
Recall when $\mathcal{X} = \{0,1\}^n$, $f$ is a \emph{submodular set function}. In this case, $\underline{\mathcal{X}} = \{\mathbf{0}\}$, and the SEPIs are exactly the extremal polymatroid inequalities (EPIs), which are facet-defining for the epigraph of $f$ \citep{lovasz1983submodular,atamturk2022submodular,edmonds2003submodular}. Each EPI is associated with a permutation $\boldsymbol{\delta}\in \mathfrak{S}(N)$ and is in the form of
\[w \geq f(\mathbf{0}) + \sum_{i=1}^{n} \left[ f\left(\sum_{j=1}^i \mathbf{1}^{\delta(j)} \right) - f\left(\sum_{j=1}^{i-1} \mathbf{1}^{\delta(j)} \right) \right] x_{\delta(i)}. \]
\end{remark}

\begin{proposition}
\label{prop:valid_SEPI}
An SEPI associated with any $\mathbf{p}\in\underline{\mathcal{X}}$ and any $\boldsymbol{\delta}\in\mathfrak{S}(N)$ is valid for $\mathcal{P}^f_\mathcal{X}$. 
\end{proposition}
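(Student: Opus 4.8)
The plan is to show that the SEPI associated with a given $\mathbf{p}\in\underline{\mathcal{X}}$ and permutation $\boldsymbol{\delta}\in\mathfrak{S}(N)$ holds at every point $(\mathbf{x},w)\in\mathcal{P}^f_\mathcal{X}$; since $w\geq f(\mathbf{x})$, it suffices to prove the inequality with $w$ replaced by $f(\mathbf{x})$, i.e.\ that the right-hand side of \eqref{eq:SEPI} is a lower bound for $f(\mathbf{x})$ for all $\mathbf{x}\in\mathcal{X}$. Without loss of generality I would relabel the coordinates so that $\boldsymbol{\delta}$ is the identity permutation, writing $\mathbf{p}^{(i)} := \mathbf{p} + \sum_{j=1}^{i}\mathbf{1}^j$ (so $\mathbf{p}^{(0)}=\mathbf{p}$ and $\mathbf{p}^{(n)}=\mathbf{p}+\mathbf{1}$), and set $g_i := f(\mathbf{p}^{(i)}) - f(\mathbf{p}^{(i-1)})$ for the slope coefficients appearing in \eqref{eq:SEPI}.

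The core is a telescoping/greedy argument analogous to the classical proof that extremal polymatroid (Lovász-extension) inequalities are valid for submodular epigraphs, but carried out around the shifted base point $\mathbf{p}$ and exploiting the stronger exchange inequality \eqref{eq:LC_def_sub} rather than mere lattice submodularity. Concretely, I would argue that the right-hand side of \eqref{eq:SEPI} equals $\overline{f}$ evaluated by the greedy algorithm along the chain $\mathbf{p}=\mathbf{p}^{(0)}\le\mathbf{p}^{(1)}\le\cdots\le\mathbf{p}^{(n)}=\mathbf{p}+\mathbf{1}$, restricted to the unit hypercube $\mathcal{B}_\mathbf{p}$ after a suitable coordinate-wise reduction. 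The key reduction: given arbitrary $\mathbf{x}\in\mathcal{X}$, one first shifts $\mathbf{x}$ by an integer multiple of $\mathbf{1}$ and clips coordinates, using the translation-type and submodularity-type inequalities of L$^\natural$-convexity (in particular repeated application of \eqref{eq:LC_def_sub} with $\mathbf{y}=\mathbf{p}^{(i)}$ and suitable $\alpha$) to show that it is enough to verify the bound when $\mathbf{x}\in\mathcal{B}_\mathbf{p}$, i.e.\ $\mathbf{x}=\mathbf{p}+\mathbf{1}^S$ for some $S\subseteq N$. On $\mathcal{B}_\mathbf{p}$ the function $S\mapsto f(\mathbf{p}+\mathbf{1}^S)$ is a submodular set function (by Theorem~\ref{thm:LC_LNC} applied to the restriction, or directly from \eqref{eq:submodular}), and the right-hand side of \eqref{eq:SEPI} becomes exactly the value of its Lovász extension / greedy lower bound at $\mathbf{1}^S$, which is $\ge f(\mathbf{p}+\mathbf{1}^S)$ by the standard Edmonds–Lovász inequality (cited in the excerpt via \citet{edmonds2003submodular,lovasz1983submodular}). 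Summing the verified inequalities back up and undoing the shift/clip recovers \eqref{eq:SEPI} for general $\mathbf{x}$.

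I expect the main obstacle to be the reduction to the hypercube $\mathcal{B}_\mathbf{p}$: one must handle coordinates of $\mathbf{x}$ lying below $p_i$, between $p_i$ and $p_i+1$, and above $p_i+1$ in a unified way, and show that at each "peeling" step the loss incurred in the right-hand side of \eqref{eq:SEPI} is dominated by the corresponding drop in $f$, which is precisely where the full strength of \eqref{eq:LC_def_sub} (as opposed to plain lattice submodularity) is needed; the bound $p_i\le u_i-1$ guaranteed by $\mathbf{p}\in\underline{\mathcal{X}}$ is exactly what makes $\mathcal{B}_\mathbf{p}\subseteq\mathcal{X}$ so that $f$ is defined on all the chain points $\mathbf{p}^{(i)}$. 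An alternative, possibly cleaner route that I would also keep in reserve is a direct induction on $\|\mathbf{x}-\mathbf{p}\|_1$ (or on $n$): one picks a coordinate in which $\mathbf{x}$ and $\mathbf{p}$ differ, applies \eqref{eq:LC_def_sub} once to relate $f(\mathbf{x})$ to $f$ at a point strictly closer to the chain, and checks that the RHS of \eqref{eq:SEPI} decreases by at least the matching amount using the convexity of the univariate slopes $g_i$ along the chain — but the bookkeeping of which slope coefficient is charged in which direction is the delicate part in either approach.
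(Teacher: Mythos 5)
Your reduction handles only the easy half of the paper's argument. The case $\mathbf{x}\in\mathcal{B}_{\mathbf{p}}$ (where the SEPI is just the Edmonds--Lov\'asz extremal polymatroid inequality for the submodular restriction of $f$ to the shifted unit cube) is indeed how the paper starts, and that part of your plan is fine. The genuine gap is the case $\mathbf{x}\notin\mathcal{B}_{\mathbf{p}}$: you assert that ``repeated application of \eqref{eq:LC_def_sub} with $\mathbf{y}=\mathbf{p}^{(i)}$ and suitable $\alpha$'' lets you peel $\mathbf{x}$ back to the cube while the loss on the right-hand side of \eqref{eq:SEPI} is dominated by the drop in $f$, but you never exhibit the inequality that makes a single peeling step work, and you yourself flag this bookkeeping as the delicate part. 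What is actually needed is that the affine function defined by the chain values $f(\mathbf{p}^{(0)}),\dots,f(\mathbf{p}^{(n)})$ remains a global underestimator of $f$ far outside $\mathcal{B}_{\mathbf{p}}$; this is an extrapolation statement, and plain exchange/submodularity manipulations around the chain do not obviously deliver it (for general lattice submodular functions the analogous statement fails, which is why the convex-hull question for that class is open, as the paper notes). So as written the proposal proves validity only on $\mathcal{B}_{\mathbf{p}}$, i.e.\ it is missing the main step.

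The paper closes exactly this gap with integral convexity rather than with discrete exchange steps: since $f$ is L$^\natural$-convex, its local convex-combination extension $\overline{f}$ is convex (Theorem 2.6), and on the simplex of points of $\overline{\mathcal{B}_{\mathbf{p}}}$ whose coordinates are ordered according to $\boldsymbol{\delta}$, $\overline{f}$ coincides with the affine expression appearing in \eqref{eq:SEPI} (Remark \ref{re:int_conv_lovasz}). Given $\hat{\mathbf{x}}\notin\mathcal{B}_{\mathbf{p}}$, the paper picks an interior point $\mathbf{p}^1$ of that simplex and a second point $\mathbf{p}^2=\hat{\mathbf{x}}+(1+\alpha)(\mathbf{p}^1-\hat{\mathbf{x}})$ still in the simplex, so that $\mathbf{p}^1$ is a convex combination of $\hat{\mathbf{x}}$ and $\mathbf{p}^2$; convexity of $\overline{f}$ then gives $f(\hat{\mathbf{x}})=\overline{f}(\hat{\mathbf{x}})\ge\frac{1+\alpha}{\alpha}\overline{f}(\mathbf{p}^1)-\frac{1}{\alpha}\overline{f}(\mathbf{p}^2)$, and since both terms on the right are values of the same affine function, the bound collapses to the SEPI evaluated at $\hat{\mathbf{x}}$. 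If you want to salvage a purely discrete induction on $\|\mathbf{x}-\mathbf{p}\|_1$, you would have to prove the coordinate-wise discrete-convexity and cross-direction monotonicity inequalities that this geometric argument encodes, which amounts to re-deriving the integral-convexity characterization; invoking Theorem 2.6 directly, as the paper does, is the intended shortcut.
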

\begin{proof}
We show that for an arbitrary $\hat{\mathbf{x}}\in\mathcal{X}$,
\[f(\hat{\mathbf{x}}) \geq f(\mathbf{p}) + \sum_{i=1}^{n} \left[ f\left(\mathbf{p} + \sum_{j=1}^i \mathbf{1}^{\delta(j)} \right) - f\left( \mathbf{p} + \sum_{j=1}^{i-1} \mathbf{1}^{\delta(j)} \right) \right] \left(\hat{x}_{\delta(i)} - p_{\delta(i)}\right).\]

Suppose $\hat{\mathbf{x}} \in \mathcal{B}_\mathbf{p}$. Because $f$ is a submodular set function over the discrete unit hypercube $\mathcal{B}_\mathbf{p}$, SEPI is valid at $\hat{\mathbf{x}}$. On the other hand, suppose $\hat{\mathbf{x}} \notin \mathcal{B}_\mathbf{p}$. We know that $\overline{f}(\hat{\mathbf{x}})= f(\hat{\mathbf{x}})$ because $\hat{\mathbf{x}}\in\mathbb{Z}^n$. 
We first construct $\mathbf{p}^1 \in \mathbb{R}^n$ with 
\[p^1_{\delta(i)} = p_{\delta(i)} + \frac{n+1-i}{n+1}\] for all $i = 1,2,\dots, n$. By construction, $\mathbf{p}^1 \in \overline{\mathcal{B}_\mathbf{p}}$ and 
\[p^1_{\delta(1)} - p_{\delta(1)} > p^1_{\delta(2)} - p_{\delta(2)} > \dots > p^1_{\delta(n)} - p_{\delta(n)}.\] 
Thus, 
\[\overline{f}(\mathbf{p}^1) = f(\mathbf{p}) + \sum_{i=1}^{n} \left[ f\left(\mathbf{p} + \sum_{j=1}^i \mathbf{1}^{\delta(j)} \right) - f\left( \mathbf{p} + \sum_{j=1}^{i-1} \mathbf{1}^{\delta(j)} \right) \right] \left(p^1_{\delta(i)} - p_{\delta(i)}\right)\] 
by Remark \ref{re:int_conv_lovasz}. Next, we construct $\mathbf{p}^2 \in \mathbb{R}^n$ such that $\mathbf{p}^2 = \hat{\mathbf{x}} + (1+\alpha) (\mathbf{p}^1 - \hat{\mathbf{x}})$. Here, $\alpha \in \mathbb{R}_{++}$ satisfies $\alpha|\mathbf{p}^1 - \hat{\mathbf{x}}|_\infty \leq 1/(2n+2)$. Intuitively, $\mathbf{p}^2$ lies in a small neighborhood of $\{\mathbf{x}\in \mathbb{R}^n : p^1_i - 1/(2n+2) \leq x_i \leq p^1_i + 1/(2n+2), \forall\, i\in N\}$. We observe that $\mathbf{p}^2 \in \overline{\mathcal{B}_\mathbf{p}}$, and $p^2_{\delta(1)} - p_{\delta(1)} \geq p^2_{\delta(2)} - p_{\delta(2)} \geq \dots \geq p^2_{\delta(n)} - p_{\delta(n)}$. Therefore, 
\[\overline{f}(\mathbf{p}^2) = f(\mathbf{p}) + \sum_{i=1}^{n} \left[ f\left(\mathbf{p} + \sum_{j=1}^i \mathbf{1}^{\delta(j)} \right) - f\left( \mathbf{p} + \sum_{j=1}^{i-1} \mathbf{1}^{\delta(j)} \right) \right] \left(p^2_{\delta(i)} - p_{\delta(i)}\right).\] 
Given that $f$ is integrally convex, $\overline{f}$ is convex and 
\[ \overline{f}(\mathbf{p}^1) = \overline{f}\left(\frac{\alpha}{1+\alpha}\hat{\mathbf{x}} + \frac{1}{1+\alpha}\mathbf{p}^2\right)  \leq  \frac{\alpha}{1+\alpha} \overline{f}(\hat{\mathbf{x}}) + \frac{1}{1+\alpha}\overline{f}(\mathbf{p}^2). \]
In other words, 
\begin{align*}
\overline{f}(\hat{\mathbf{x}}) & \geq  \frac{1+\alpha}{\alpha} \overline{f}(\mathbf{p}^1) - \frac{1}{\alpha} \overline{f}(\mathbf{p}^2) \\
& = \frac{1+\alpha}{\alpha} \left\{f(\mathbf{p}) + \sum_{i=1}^{n} \left[ f\left(\mathbf{p} + \sum_{j=1}^i \mathbf{1}^{\delta(j)} \right) - f\left( \mathbf{p} + \sum_{j=1}^{i-1} \mathbf{1}^{\delta(j)} \right) \right] \left(p^1_{\delta(i)} - p_{\delta(i)}\right) \right\} \\
& \quad  - \frac{1}{\alpha} \left\{f(\mathbf{p}) + \sum_{i=1}^{n} \left[ f\left(\mathbf{p} + \sum_{j=1}^i \mathbf{1}^{\delta(j)} \right) - f\left( \mathbf{p} + \sum_{j=1}^{i-1} \mathbf{1}^{\delta(j)} \right) \right] \left(p^2_{\delta(i)} - p_{\delta(i)}\right)\right\} \\
& = f(\mathbf{p}) + \sum_{i=1}^{n} \left[ f\left(\mathbf{p} + \sum_{j=1}^i \mathbf{1}^{\delta(j)} \right) - f\left( \mathbf{p} + \sum_{j=1}^{i-1} \mathbf{1}^{\delta(j)} \right) \right] \left(\frac{1+\alpha}{\alpha} ( p^1_{\delta(i)} - p_{\delta(i)}) - \frac{1}{\alpha} (p^2_{\delta(i)} - p_{\delta(i)}) \right) \\
& = f(\mathbf{p}) + \sum_{i=1}^{n} \left[ f\left(\mathbf{p} + \sum_{j=1}^i \mathbf{1}^{\delta(j)} \right) - f\left( \mathbf{p} + \sum_{j=1}^{i-1} \mathbf{1}^{\delta(j)} \right) \right] \left(\frac{1+\alpha}{\alpha} p^1_{\delta(i)} - \frac{1}{\alpha} p^2_{\delta(i)} - p_{\delta(i)} \right) \\
& = f(\mathbf{p}) + \sum_{i=1}^{n} \left[ f\left(\mathbf{p} + \sum_{j=1}^i \mathbf{1}^{\delta(j)} \right) - f\left( \mathbf{p} + \sum_{j=1}^{i-1} \mathbf{1}^{\delta(j)} \right) \right] \\
& \quad \quad \left[\frac{1+\alpha}{\alpha} \left(\frac{\alpha}{1+\alpha}\hat{x}_{\delta(i)} + \frac{1}{1+\alpha} p^2_{\delta(i)}\right) - \frac{1}{\alpha} p^2_{\delta(i)} - p_{\delta(i)} \right] \\
& =  f(\mathbf{p}) + \sum_{i=1}^{n} \left[ f\left(\mathbf{p} + \sum_{j=1}^i \mathbf{1}^{\delta(j)} \right) - f\left( \mathbf{p} + \sum_{j=1}^{i-1} \mathbf{1}^{\delta(j)} \right) \right] \left(\hat{x}_{\delta(i)} - p_{\delta(i)} \right).
\end{align*}
This completes the proof for the validity of the SEPIs for $\mathcal{P}^f_\mathcal{X}$. 
\end{proof}

Given any $(\hat{\mathbf{x}}, \hat{w}) \in (\overline{\mathcal{X}} \times \mathbb{R} )\setminus \mathcal{P}^f_\mathcal{X}$, we propose Algorithm \ref{alg:frac_greedy} to obtain an SEPI that is violated by this infeasible point. 

\vspace{0.2cm}\begin{algorithm}[H]
\caption{\texttt{Fractional\_Greedy}}
\label{alg:frac_greedy}
\begin{algorithmic}[1]
\STATE \textbf{Input} $(\hat{\mathbf{x}}, \hat{w}) \in \overline{\mathcal{X}} \times \mathbb{R}$\;
\FOR {$i = 1, 2, \dots, n$}
    \IF {$\hat{x}_i = u_i$}
    \STATE $p_i \leftarrow \hat{x}_i - 1$\;
    \ELSE
    \STATE $p_i \leftarrow \lfloor\hat{x}_i\rfloor$\;
    \ENDIF
\ENDFOR
\STATE Sort the entries in $\mathbf{r} = \hat{\mathbf{x}} - \mathbf{p}$ to obtain a permutation $\boldsymbol{\delta}$ such that $r_{\delta(1)} \geq r_{\delta(2)} \geq \dots \geq r_{\delta(n)}$\;
\STATE  \textbf{Output} An SEPI associated with $\mathbf{p}$ and $\boldsymbol{\delta}$. 
\end{algorithmic}
\end{algorithm}
In fact, Algorithm \ref{alg:frac_greedy} determines the \emph{most} violated SEPI at $(\hat{\mathbf{x}}, \hat{w})$ as we show in Proposition \ref{prop:exact_sepa_SEPI}. 

\begin{proposition}
\label{prop:exact_sepa_SEPI}
Algorithm \ref{alg:frac_greedy} is an exact separation algorithm for SEPIs with time complexity $\mathcal{O}(n\log n)$.
\end{proposition}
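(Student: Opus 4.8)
The plan is to establish two things: (a) Algorithm~\ref{alg:frac_greedy} runs in $\mathcal{O}(n\log n)$ time, and (b) the SEPI it returns is a \emph{most violated} SEPI at $(\hat{\mathbf{x}},\hat w)$, so that some SEPI is violated by $(\hat{\mathbf{x}},\hat w)$ if and only if the returned one is — which is exactly the meaning of ``exact separation algorithm''. Part (a) is direct accounting: the \textbf{for} loop and the computation of $\mathbf{r}=\hat{\mathbf{x}}-\mathbf{p}$ each take $\mathcal{O}(n)$ time, sorting $\mathbf{r}$ into the permutation $\boldsymbol{\delta}$ takes $\mathcal{O}(n\log n)$, and writing down the returned inequality requires the $n+1$ values $f(\mathbf{p})$ and $f(\mathbf{p}+\sum_{j=1}^i\mathbf{1}^{\delta(j)})$, $i\in N$, i.e.\ $\mathcal{O}(n)$ function evaluations and $\mathcal{O}(n)$ arithmetic operations; the sort dominates.

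For part (b), fix $\hat{\mathbf{x}}\in\overline{\mathcal{X}}$ and, for an SEPI indexed by $(\mathbf{q},\boldsymbol{\sigma})$, let $g_{\mathbf{q},\boldsymbol{\sigma}}(\mathbf{x})$ denote its right-hand side from \eqref{eq:SEPI}, which is affine in $\mathbf{x}$. First I would verify that the pair $(\mathbf{p},\boldsymbol{\delta})$ produced by the algorithm is admissible: the choice $p_i=\lfloor\hat{x}_i\rfloor$, or $p_i=\hat{x}_i-1$ when $\hat{x}_i=u_i$, satisfies $\ell_i\le p_i\le u_i-1$ and $p_i\le\hat{x}_i\le p_i+1$ (assuming $\ell_i<u_i$, else $\underline{\mathcal{X}}=\emptyset$ and there is nothing to prove), so $\mathbf{p}\in\underline{\mathcal{X}}$, $\hat{\mathbf{x}}\in\overline{\mathcal{B}_\mathbf{p}}$, and the entries of $\mathbf{r}=\hat{\mathbf{x}}-\mathbf{p}$ lie in $[0,1]$. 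Next I claim $g_{\mathbf{p},\boldsymbol{\delta}}(\hat{\mathbf{x}})=\overline{f}(\hat{\mathbf{x}})$ for this particular pair. Since $\mathbf{p}\in\underline{\mathcal{X}}$ gives $\mathcal{B}_\mathbf{p}\subseteq\mathcal{X}$, the restriction $f|_{\mathcal{B}_\mathbf{p}}$ is lattice submodular (L$^\natural$-convexity implies lattice submodularity by the $\alpha=0$ case of \eqref{eq:LC_def_sub}, and lattice submodularity is inherited by sublattices), hence, after the shift by $\mathbf{p}$, a submodular set function. By Remark~\ref{re:int_conv_lovasz} and the definition of $\overline{f}$, on $\overline{\mathcal{B}_\mathbf{p}}$ the extension $\overline{f}$ coincides with the Lov\'asz extension of $f|_{\mathcal{B}_\mathbf{p}}$; evaluating that Lov\'asz extension at $\hat{\mathbf{x}}$ with the permutation $\boldsymbol{\delta}$ that sorts the entries of $\mathbf{r}$ nonincreasingly yields precisely the telescoping expression in \eqref{eq:SEPI}, i.e.\ $g_{\mathbf{p},\boldsymbol{\delta}}(\hat{\mathbf{x}})$. (This identity for coordinates in sorted order is already implicit in the proof of Proposition~\ref{prop:valid_SEPI}.)

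It then remains to show $g_{\mathbf{q},\boldsymbol{\sigma}}(\hat{\mathbf{x}})\le\overline{f}(\hat{\mathbf{x}})$ for \emph{every} SEPI $(\mathbf{q},\boldsymbol{\sigma})$. By Proposition~\ref{prop:valid_SEPI}, $g_{\mathbf{q},\boldsymbol{\sigma}}$ is affine with $g_{\mathbf{q},\boldsymbol{\sigma}}(\mathbf{z})\le f(\mathbf{z})$ for all $\mathbf{z}\in\mathcal{X}$. Writing $\overline{f}(\hat{\mathbf{x}})=\sum_{\mathbf{z}\in\mathcal{N}(\hat{\mathbf{x}})}\lambda_\mathbf{z}\,f(\mathbf{z})$ for the optimal weights $\lambda\ge0$ with $\sum_\mathbf{z}\lambda_\mathbf{z}\mathbf{z}=\hat{\mathbf{x}}$ and $\sum_\mathbf{z}\lambda_\mathbf{z}=1$ (feasible since $\mathcal{N}(\hat{\mathbf{x}})\subseteq\mathcal{X}$ contains the vertices of a box around $\hat{\mathbf{x}}$), affineness gives $g_{\mathbf{q},\boldsymbol{\sigma}}(\hat{\mathbf{x}})=\sum_\mathbf{z}\lambda_\mathbf{z}\,g_{\mathbf{q},\boldsymbol{\sigma}}(\mathbf{z})\le\sum_\mathbf{z}\lambda_\mathbf{z}\,f(\mathbf{z})=\overline{f}(\hat{\mathbf{x}})$. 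Combining with the previous paragraph, $\max_{(\mathbf{q},\boldsymbol{\sigma})}g_{\mathbf{q},\boldsymbol{\sigma}}(\hat{\mathbf{x}})=\overline{f}(\hat{\mathbf{x}})$ and this maximum is attained by the algorithm's output; hence $\hat w<g_{\mathbf{q},\boldsymbol{\sigma}}(\hat{\mathbf{x}})$ for some SEPI if and only if $\hat w<\overline{f}(\hat{\mathbf{x}})$, i.e.\ if and only if the returned SEPI is violated. This gives exactness, and together with part (a) completes the proof.

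I expect the main obstacle to be the middle step: certifying that the returned SEPI's right-hand side equals $\overline{f}(\hat{\mathbf{x}})$. This rests on two facts that must be stated carefully: (i) $\overline{f}$ restricted to the shifted unit cube $\overline{\mathcal{B}_\mathbf{p}}$ is the Lov\'asz extension of $f|_{\mathcal{B}_\mathbf{p}}$ — which holds because $\mathcal{N}(\mathbf{y})$ for $\mathbf{y}$ in the relative interior of a face of $\overline{\mathcal{B}_\mathbf{p}}$ is exactly that face's vertex set, so the Favati--Tardella extension there is built only from cube vertices; and (ii) the greedy/sorted evaluation of a Lov\'asz extension reproduces the marginal-difference coefficients of \eqref{eq:SEPI}. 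Both are standard, and (ii) essentially repeats computations from the proof of Proposition~\ref{prop:valid_SEPI}, so once these are invoked the argument is short.
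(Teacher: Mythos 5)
Your proof is correct, but it takes a genuinely different route from the paper's. The paper formulates the separation problem as the LP $\max\{\pi_0+\hat{\mathbf{x}}^\top\boldsymbol{\pi} : \pi_0+\mathbf{x}^\top\boldsymbol{\pi}\le f(\mathbf{x}),\ \forall \mathbf{x}\in\mathcal{X}\}$, exhibits the returned SEPI as a primal-feasible solution, and then constructs an explicit dual solution (the chain weights $1-r_{\delta(1)},\, r_{\delta(i)}-r_{\delta(i+1)},\, r_{\delta(n)}$ placed on $\mathbf{p},\mathbf{p}+\sum_{j\le i}\mathbf{1}^{\delta(j)},\mathbf{p}+\mathbf{1}$) whose objective matches, so strong duality certifies optimality. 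You instead work through the Favati--Tardella extension $\overline{f}$: you show the returned SEPI's right-hand side at $\hat{\mathbf{x}}$ equals $\overline{f}(\hat{\mathbf{x}})$ (via submodularity of $f$ restricted to $\mathcal{B}_\mathbf{p}$, the shifted version of Remark \ref{re:int_conv_lovasz}, and the greedy evaluation of the Lov\'asz extension), and that every SEPI, being affine and below $f$ on $\mathcal{X}\supseteq\mathcal{N}(\hat{\mathbf{x}})$, is bounded above by $\overline{f}(\hat{\mathbf{x}})$ at $\hat{\mathbf{x}}$. The two arguments share the same combinatorial core---your chain convex combination is exactly the paper's dual solution---but the payoffs differ: the paper's LP-duality argument shows the algorithm returns the most violated inequality among \emph{all} valid linear inequalities for $\mathcal{P}^f_\mathcal{X}$ (a fact that is then reused for the convex hull theorem), whereas your argument only certifies maximality among SEPIs, which suffices for the proposition as stated and is arguably more self-contained given the L$^\natural$-convexity machinery; on the other hand, it leans on the convex-closure-equals-Lov\'asz-extension fact beyond the literal statement of Remark \ref{re:int_conv_lovasz} (shifted cube, minimal-face vertex set), which you correctly flag and would need to spell out. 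The complexity accounting matches the paper.
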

\begin{proof}
Given any infeasible $(\hat{\mathbf{x}}, \hat{w}) \in \overline{\mathcal{X}} \times \mathbb{R}$, the separation problem to identify the most violated inequality $\boldsymbol{\pi}^\top \boldsymbol{x}+\pi_0\le w$ is 
\begin{equation}
\label{eq:sepa_SEPI_primal}
\max \{\pi_0 + \hat{\mathbf{x}}^\top \boldsymbol{\pi} : \pi_0 + \mathbf{x}^\top\boldsymbol{\pi}\leq f(\mathbf{x}), \forall\, \mathbf{x}\in\mathcal{X}\}.
\end{equation}
Algorithm \ref{alg:frac_greedy} returns an SEPI associated with $\mathbf{p}$ and $\boldsymbol{\delta}$. Recall for every $i\in N$, 
\[
p_i = 
\begin{cases}
\hat{x}_i - 1, & \text{if } \hat{x}_i = u_i, \\
\lfloor\hat{x}_i\rfloor, & \text{otherwise.}
\end{cases}
\]
Let $(\overline{\pi}_0, \overline{\boldsymbol{\pi}})\in\mathbb{R}^{n+1}$ be the constant term and the coefficients of this SEPI. Following from Proposition \ref{prop:valid_SEPI}, $(\overline{\pi}_0, \overline{\boldsymbol{\pi}})$ is a feasible solution to problem \eqref{eq:sepa_SEPI_primal}. The corresponding objective value is 
\[\theta_{\text{primal}} = f(\mathbf{p}) + \sum_{i=1}^{n} \left[ f\left(\mathbf{p} + \sum_{j=1}^i \mathbf{1}^{\delta(j)} \right) - f\left( \mathbf{p} + \sum_{j=1}^{i-1} \mathbf{1}^{\delta(j)} \right) \right] \left(\hat{x}_{\delta(i)} - p_{\delta(i)}\right).\]

By assigning the dual variables $\nu_\mathbf{x}$ for all $\mathbf{x}\in\mathcal{X}$, we obtain the following dual problem of \eqref{eq:sepa_SEPI_primal}. 
\begin{equation}
\min \left\{ \sum_{\mathbf{x}\in\mathcal{X}} \nu_\mathbf{x} f(\mathbf{x}) :  \sum_{\mathbf{x}\in\mathcal{X}} \nu_\mathbf{x} \mathbf{x} =  \hat{\mathbf{x}}, \sum_{\mathbf{x}\in\mathcal{X}} \nu_\mathbf{x} = 1, \nu_\mathbf{x} \geq 0, \, \forall\, \mathbf{x} \in \mathcal{X} \right\}. 
\end{equation}
Recall $\mathbf{r} = \hat{\mathbf{x}} - \mathbf{p}$. Next, we construct a dual solution $\overline{\boldsymbol{\nu}}$ by 
\[
\overline{\nu}_\mathbf{x} =
\begin{cases}
1 - r_{\delta(1)}, & \mathbf{x} = \mathbf{p}, \\
r_{\delta(i)} - r_{\delta(i+1)}, & \mathbf{x} = \mathbf{p} + \sum_{j=1}^{i} \mathbf{1}^{\delta(j)}, \forall\, i = 1,2,\dots, n-1, \\
r_{\delta(n)}, & \mathbf{x} = \mathbf{p} + \mathbf{1}, \\
0, & \text{otherwise.}
\end{cases}
\]
By construction, $\sum_{\mathbf{x}\in\mathcal{X}} \overline{\nu}_\mathbf{x} = 1$. For every $k \in \{1,2,\dots,n\}$, 
\begin{align*}
\sum_{\mathbf{x}\in\mathcal{X}} \overline{\nu}_\mathbf{x} x_k & = (1 - r_{\delta(1)}) p_k  + \sum_{i=1}^{\delta^{-1}(k) - 1} (r_{\delta(i)} - r_{\delta(i+1)}) p_k + \sum_{i=\delta^{-1}(k)}^{n-1} (r_{\delta(i)} - r_{\delta(i+1)}) (p_k + 1) \\
& \quad + r_{\delta(n)} (p_k + 1) \\
& = \sum_{\mathbf{x}\in\mathcal{X}} \overline{\nu}_\mathbf{x} p_k + \sum_{i=\delta^{-1}(k)}^{n-1} (r_{\delta(i)} - r_{\delta(i+1)}) + r_{\delta(n)} \\
& = p_k + r_k \\
& = \hat{x}_k.
\end{align*}
Therefore, the proposed dual solution is feasible. The corresponding dual objective value is 
\begin{align*}
\theta_{\text{dual}} & = (1-r_{\delta(1)})f(\mathbf{p}) + \sum_{i=1}^{n-1} \left(r_{\delta(i)} - r_{\delta(i+1)}\right) f\left(\mathbf{p} + \sum_{j=1}^{i} \mathbf{1}^{\delta(j)}\right) + r_{\delta(n)} f(\mathbf{p} + \mathbf{1}) \\
& = f(\mathbf{p}) - r_{\delta(1)} f(\mathbf{p}) + \sum_{i=1}^{n-1} r_{\delta(i)} f\left(\mathbf{p} + \sum_{j=1}^{i} \mathbf{1}^{\delta(j)}\right) - \sum_{i=2}^{n} r_{\delta(i)} f\left(\mathbf{p} + \sum_{j=1}^{i-1} \mathbf{1}^{\delta(j)}\right) \\
& \quad + r_{\delta(n)}f(\mathbf{p}+\mathbf{1}) \\
& =  f(\mathbf{p}) + \sum_{i=1}^{n} r_{\delta(i)} f\left(\mathbf{p} + \sum_{j=1}^{i} \mathbf{1}^{\delta(j)}\right) - \sum_{i=1}^{n} r_{\delta(i)} f\left(\mathbf{p} + \sum_{j=1}^{i-1} \mathbf{1}^{\delta(j)}\right) \\
& = f(\mathbf{p}) + \sum_{i=1}^{n} \left[f\left(\mathbf{p} + \sum_{j=1}^{i} \mathbf{1}^{\delta(j)}\right) - f\left(\mathbf{p} + \sum_{j=1}^{i-1} \mathbf{1}^{\delta(j)}\right)\right] r_{\delta(i)} \\
& = f(\mathbf{p}) + \sum_{i=1}^{n} \left[f\left(\mathbf{p} + \sum_{j=1}^{i} \mathbf{1}^{\delta(j)}\right) - f\left(\mathbf{p} + \sum_{j=1}^{i-1} \mathbf{1}^{\delta(j)}\right)\right] \left(\hat{x}_{\delta(i)} - p_{\delta(i)}\right). 
\end{align*}
Strong duality holds and $(\overline{\pi}_0, \overline{\boldsymbol{\pi}})$ is an optimal solution to problem \eqref{eq:sepa_SEPI_primal}.  Hence, Algorithm \ref{alg:frac_greedy} is exact. The entire algorithm is dominated by the sorting of $\mathbf{r}\in\mathbb{R}^n$, so the time complexity of this separation algorithm is $\mathcal{O}(n\log n)$.
\end{proof}

The following proposition shows that the SEPIs are strong valid inequalities for $\mathcal{P}^f_\mathcal{X}$. 
\begin{proposition}
\label{prop:SEPI_FD}
When $\mathcal{X}\supseteq \mathcal{B}_\mathbf{q}$ for any $\mathbf{q}\in\mathbb{Z}^n$, $\mathcal{P}^f_\mathcal{X}$ is full dimensional, and the SEPI associated with any $\mathbf{p}\in\underline{\mathcal{X}}$ and any $\boldsymbol{\delta}\in\mathfrak{S}(N)$ is facet-defining for $\mathcal{P}^f_\mathcal{X}$. 
\end{proposition}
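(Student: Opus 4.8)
The plan is to handle the two assertions in turn. For \textbf{full-dimensionality}, I use the hypothesis $\mathcal{X}\supseteq\mathcal{B}_\mathbf{q}$: the $n+1$ integer points $\mathbf{q},\mathbf{q}+\mathbf{1}^1,\dots,\mathbf{q}+\mathbf{1}^n$ lie in $\mathcal{X}$ and are affinely independent in $\mathbb{R}^n$, so their lifts $\big(\mathbf{q},f(\mathbf{q})\big)$ and $\big(\mathbf{q}+\mathbf{1}^i,f(\mathbf{q}+\mathbf{1}^i)\big)$, $i\in N$, are affinely independent points of $\mathcal{P}^f_\mathcal{X}$; adjoining $\big(\mathbf{q},f(\mathbf{q})+1\big)\in\mathcal{P}^f_\mathcal{X}$, whose $w$-coordinate is the only one not equal to $f$ evaluated at its $\mathbf{x}$-part, yields $n+2$ affinely independent points. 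Hence $\dim\mathcal{P}^f_\mathcal{X}=n+1$, and a facet of $\conv{\mathcal{P}^f_\mathcal{X}}$ is precisely a proper face of dimension $n$.

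Next, fix $\mathbf{p}\in\underline{\mathcal{X}}$ and $\boldsymbol{\delta}\in\mathfrak{S}(N)$, and write the associated SEPI \eqref{eq:SEPI} as $w\ge\pi_0+\boldsymbol{\pi}^\top\mathbf{x}$. By Proposition \ref{prop:valid_SEPI} it is valid, so it induces the face $\mathcal{F}:=\{(\mathbf{x},w)\in\mathcal{P}^f_\mathcal{X}: w=\pi_0+\boldsymbol{\pi}^\top\mathbf{x}\}$, which is contained in an affine hyperplane of $\mathbb{R}^{n+1}$ and is a \emph{proper} face since $\big(\mathbf{p},f(\mathbf{p})+1\big)\in\mathcal{P}^f_\mathcal{X}\setminus\mathcal{F}$. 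It therefore suffices to exhibit $n+1$ affinely independent points in $\mathcal{F}$.

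For this I use the ``chain'' of points of the shifted unit cube $\mathcal{B}_\mathbf{p}$: since $\mathbf{p}\in\underline{\mathcal{X}}$ we have $\mathcal{B}_\mathbf{p}\subseteq\mathcal{X}$, so for $k=0,1,\dots,n$ the points $\mathbf{x}^k:=\mathbf{p}+\sum_{j=1}^{k}\mathbf{1}^{\delta(j)}$ lie in $\mathcal{X}$ and $\big(\mathbf{x}^k,f(\mathbf{x}^k)\big)\in\mathcal{P}^f_\mathcal{X}$. Because $\boldsymbol{\delta}$ is a bijection, $x^k_{\delta(i)}-p_{\delta(i)}$ equals $1$ for $i\le k$ and $0$ for $i>k$, so the right-hand side of \eqref{eq:SEPI} evaluated at $\mathbf{x}^k$ telescopes to exactly $f(\mathbf{x}^k)$; hence $\big(\mathbf{x}^k,f(\mathbf{x}^k)\big)\in\mathcal{F}$ for every $k$. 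These $n+1$ points are affinely independent: the differences $\mathbf{x}^k-\mathbf{x}^0=\sum_{j=1}^k\mathbf{1}^{\delta(j)}$ ($k=1,\dots,n$) form, after reindexing coordinates by $\boldsymbol{\delta}$, a triangular system with unit diagonal, hence are linearly independent in $\mathbb{R}^n$, and this is inherited by the lifts. Combined with the previous paragraph, $\mathcal{F}$ is a facet of $\conv{\mathcal{P}^f_\mathcal{X}}$.

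I do not expect a genuine obstacle; this is a standard affine-independence argument. The few points that need care are: checking the telescoping so the chain points lie \emph{exactly} on the SEPI hyperplane rather than strictly above it; using $\mathbf{p}\in\underline{\mathcal{X}}$ to ensure the whole chain $\mathbf{x}^0,\dots,\mathbf{x}^n$ (equivalently $\mathcal{B}_\mathbf{p}$) stays in $\mathcal{X}$, and $\mathcal{X}\supseteq\mathcal{B}_\mathbf{q}$ to supply the $n+1$ affinely independent domain points for full-dimensionality; and reading ``facet-defining for $\mathcal{P}^f_\mathcal{X}$'' in the sense of $\conv{\mathcal{P}^f_\mathcal{X}}$, so that producing affinely independent members of $\mathcal{P}^f_\mathcal{X}$ certifies the face dimension. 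Note also that the facet argument itself invokes L$^\natural$-convexity only through the validity statement of Proposition \ref{prop:valid_SEPI}.
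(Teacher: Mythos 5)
Your proof is correct and follows essentially the same route as the paper: the chain $\mathbf{p}^k=\mathbf{p}+\sum_{j=1}^k\mathbf{1}^{\delta(j)}$ lies in $\mathcal{X}$ since $\mathbf{p}\in\underline{\mathcal{X}}$, the telescoping identity puts these $n+1$ affinely independent lifted points on the SEPI face, and full-dimensionality comes from $n+2$ affinely independent points in the epigraph. The only (immaterial) difference is that the paper reuses the chain at $\mathbf{p}$ together with $(\mathbf{p},f(\mathbf{p})+1)$ for full-dimensionality rather than the cube $\mathcal{B}_\mathbf{q}$, and it leaves the properness of the face implicit, which you spell out.
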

\begin{proof}
For every $k = 0,1,\dots, n$, let $\mathbf{p}^k = \mathbf{p} + \sum_{j=1}^k\mathbf{1}^{\delta(j)}$. Given that $\mathbf{p}\in\underline{\mathcal{X}}$, $\{\mathbf{p}^k\}_{k\in \{0,1,\dots,n\}} \subseteq \mathcal{X}$. The points $(\mathbf{p}, f(\mathbf{p})+1)$ and $\left(\mathbf{p}^k, f(\mathbf{p}^k)\right)$ for $k = 0, 1, \dots, n$ belong to $\mathcal{P}^f_\mathcal{X}$ and are affinely independent. Thus, $\mathcal{P}^f_\mathcal{X}$ is full dimensional. We observe that 
\begin{align*}
f(\mathbf{p}^k) & = f(\mathbf{p})  + \sum_{i=1}^k [f(\mathbf{p}^i) - f(\mathbf{p}^{i-1})] \\
& = f(\mathbf{p}) + \sum_{i=1}^k [f(\mathbf{p}^i) - f(\mathbf{p}^{i-1})](p_{\delta(i)} + 1 - p_{\delta(i)})  + \sum_{i=k+1}^n [f(\mathbf{p}^i) - f(\mathbf{p}^{i-1})](p_{\delta(i)} - p_{\delta(i)}).
\end{align*}
In other words, $\left(\mathbf{p}^k, f(\mathbf{p}^k)\right)$ for all $k = 0,1,\dots, n$ are on the face defined by the arbitrarily given SEPI and are affinely independent. We conclude that the SEPI is facet-defining. 
\end{proof}

In the next theorem, we formalize the complete description of $\conv{\mathcal{P}^f_\mathcal{X}}$. 
\begin{theorem}
\label{thm:conv_SEPI}
The convex hull of the epigraph, $\mathcal{P}^f_\mathcal{X}$, is fully defined by the trivial inequalities $\boldsymbol{\ell} \leq \mathbf{x} \leq \mathbf{u}$, and all the SEPIs associated with any $\mathbf{p}\in \underline{\mathcal{X}} = \{\mathbf{x}\in \mathbb{Z}^n : \ell_i \leq x_i \leq u_i - 1, \forall \, i = 1,2,\dots,n\}$ and any permutation $\boldsymbol{\delta} \in \mathfrak{S}(N)$.
\end{theorem}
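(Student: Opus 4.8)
The plan is to show that $Q=\conv{\mathcal{P}^f_\mathcal{X}}$, where $Q$ denotes the set of all $(\mathbf{x},w)\in\mathbb{R}^{n+1}$ satisfying $\boldsymbol{\ell}\le\mathbf{x}\le\mathbf{u}$ together with the SEPI \eqref{eq:SEPI} for every $\mathbf{p}\in\underline{\mathcal{X}}$ and every $\boldsymbol{\delta}\in\mathfrak{S}(N)$. One inclusion is immediate: the trivial bounds hold throughout $\mathcal{X}\subseteq\overline{\mathcal{X}}$, and by Proposition \ref{prop:valid_SEPI} every SEPI is valid for $\mathcal{P}^f_\mathcal{X}$, hence for $\conv{\mathcal{P}^f_\mathcal{X}}$; thus $\conv{\mathcal{P}^f_\mathcal{X}}\subseteq Q$. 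All the work is in the reverse inclusion $Q\subseteq\conv{\mathcal{P}^f_\mathcal{X}}$, and the idea is to extract the required convex representation directly from the strong-duality certificate already constructed in the proof of Proposition \ref{prop:exact_sepa_SEPI}.

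Before that I would dispose of degenerate coordinates: if $\ell_i=u_i$ for some $i$, that coordinate is pinned by the trivial bounds on both $Q$ and $\mathcal{P}^f_\mathcal{X}$, so it may be eliminated; after removing all such coordinates one has $\ell_i<u_i$ for every remaining $i$, equivalently $\underline{\mathcal{X}}\neq\emptyset$, so Algorithm \ref{alg:frac_greedy} is well defined. Now fix $(\hat{\mathbf{x}},\hat{w})\in Q$; in particular $\hat{\mathbf{x}}\in\overline{\mathcal{X}}$. Feeding $\hat{\mathbf{x}}$ to Algorithm \ref{alg:frac_greedy} yields $\mathbf{p}\in\underline{\mathcal{X}}$ and $\boldsymbol{\delta}\in\mathfrak{S}(N)$; since $\hat{\mathbf{x}}\in\overline{\mathcal{X}}$, the vector $\mathbf{r}:=\hat{\mathbf{x}}-\mathbf{p}$ has all entries in $[0,1]$. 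Put $\mathbf{p}^k:=\mathbf{p}+\sum_{j=1}^{k}\mathbf{1}^{\delta(j)}\in\mathcal{X}$ for $k=0,1,\dots,n$. The proof of Proposition \ref{prop:exact_sepa_SEPI} exhibits coefficients $\overline{\nu}_{\mathbf{p}^0},\dots,\overline{\nu}_{\mathbf{p}^n}\ge 0$ with $\sum_{k=0}^{n}\overline{\nu}_{\mathbf{p}^k}=1$ and $\sum_{k=0}^{n}\overline{\nu}_{\mathbf{p}^k}\mathbf{p}^k=\hat{\mathbf{x}}$, and shows that the right-hand side of the SEPI \eqref{eq:SEPI} associated with $(\mathbf{p},\boldsymbol{\delta})$, evaluated at $\hat{\mathbf{x}}$, equals $\sum_{k=0}^{n}\overline{\nu}_{\mathbf{p}^k}f(\mathbf{p}^k)$.

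Since $(\hat{\mathbf{x}},\hat{w})\in Q$ satisfies this particular SEPI, we get $\hat{w}\ge\sum_{k=0}^{n}\overline{\nu}_{\mathbf{p}^k}f(\mathbf{p}^k)=:\beta$, and with $\gamma:=\hat{w}-\beta\ge 0$ we may write
\begin{equation*}
(\hat{\mathbf{x}},\hat{w})=\sum_{k=0}^{n}\overline{\nu}_{\mathbf{p}^k}\bigl(\mathbf{p}^k,f(\mathbf{p}^k)\bigr)+\gamma\,(\mathbf{0},1).
\end{equation*}
Each $\bigl(\mathbf{p}^k,f(\mathbf{p}^k)\bigr)\in\mathcal{P}^f_\mathcal{X}$, so the convex combination lies in $\conv{\mathcal{P}^f_\mathcal{X}}$; and because $(\mathbf{x},w)\in\mathcal{P}^f_\mathcal{X}$ forces $(\mathbf{x},w+t)\in\mathcal{P}^f_\mathcal{X}$ for every $t\ge 0$, the set $\conv{\mathcal{P}^f_\mathcal{X}}$ is invariant under adding nonnegative multiples of $(\mathbf{0},1)$. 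Hence $(\hat{\mathbf{x}},\hat{w})\in\conv{\mathcal{P}^f_\mathcal{X}}$, which gives $Q\subseteq\conv{\mathcal{P}^f_\mathcal{X}}$ and therefore $Q=\conv{\mathcal{P}^f_\mathcal{X}}$.

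I expect no conceptual obstacle: Proposition \ref{prop:exact_sepa_SEPI} already supplies the decisive object, namely a dual-optimal vector that is simultaneously a probability distribution over points of $\mathcal{X}$ whose barycenter is $\hat{\mathbf{x}}$. The only care needed is routine bookkeeping — checking $\mathbf{r}\in[0,1]^n$ so that the $\overline{\nu}_{\mathbf{p}^k}$ are a genuine convex-combination vector, and routing the degenerate coordinates ($\ell_i=u_i$) through the trivial bounds rather than through SEPIs. An equivalent route, if one prefers not to invoke the algorithm, is to verify that for every objective $(\mathbf{c},c_w)$ the linear program over $Q$ has the same optimal value as over $\mathcal{P}^f_\mathcal{X}$: the case $c_w\le 0$ is handled by unboundedness or by the trivial bounds, while for $c_w>0$ one normalizes and reduces to minimizing the convex function $\mathbf{c}^\top\mathbf{x}+\overline{f}(\mathbf{x})$ over $\overline{\mathcal{X}}$, whose minimum is attained at an integer point by integral convexity of $f$.
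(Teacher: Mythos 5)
Your proposal is correct and follows essentially the same route as the paper: the paper's proof also applies Algorithm \ref{alg:frac_greedy} to the point in question, forms exactly the same convex-combination weights (your $\overline{\nu}_{\mathbf{p}^k}$ are the paper's $\lambda_k$), and writes the point as that convex combination of $(\mathbf{p}^k, f(\mathbf{p}^k))$ plus a nonnegative multiple of the ray $(\mathbf{0},1)$. The only cosmetic difference is that you recycle the strong-duality certificate from the proof of Proposition \ref{prop:exact_sepa_SEPI} rather than re-deriving the telescoping identities inside the theorem's proof, and you add a harmless remark handling coordinates with $\ell_i=u_i$.
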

\begin{proof}
We let $\mathcal{C}\subseteq \mathbb{R}^{n+1}$ denote the set constructed by the trivial inequalities and the aforementioned SEPIs. By Proposition \ref{prop:valid_SEPI}, $\mathcal{C} \supseteq \conv{\mathcal{P}^f_\mathcal{X}}$. Next, we establish the reverse containment. Consider arbitrary $(\mathbf{y}, \eta) \in \mathcal{C}$. We can apply Algorithm \ref{alg:frac_greedy} to $(\mathbf{y}, \eta)$ because $\mathbf{y}\in \overline{\mathcal{X}}$ due to the trivial inequalities. Suppose Algorithm \ref{alg:frac_greedy} returns an SEPI associated with $\overline{\mathbf{p}}\in\mathcal{X}$ and $\overline{\boldsymbol{\delta}} \in \mathfrak{S}(N)$. In what follows, we let $\lambda_0 = 1- \left(y_{\overline{\delta}(1)} - \overline{p}_{\overline{\delta}(1)}\right)$, $\lambda_i = \left(y_{\overline{\delta}(i)} - \overline{p}_{\overline{\delta}(i)}\right) - \left(y_{\overline{\delta}(i+1)} - \overline{p}_{\overline{\delta}(i+1)}\right)$ for $i \in \{1, 2, \dots, n-1\}$, $\lambda_n = \left(y_{\overline{\delta}(n)} - \overline{p}_{\overline{\delta}(n)}\right)$. We notice that 
\begin{align*}
\sum_{i=0}^n \lambda_i &= 1- \left(y_{\overline{\delta}(1)} - \overline{p}_{\overline{\delta}(1)}\right) + \sum_{i=1}^{n-1} \left[\left(y_{\overline{\delta}(i)} - \overline{p}_{\overline{\delta}(i)}\right) - \left(y_{\overline{\delta}(i+1)} - \overline{p}_{\overline{\delta}(i+1)}\right)\right] + \left(y_{\overline{\delta}(n)} - \overline{p}_{\overline{\delta}(n)}\right)  = 1. 
\end{align*}
Moreover, by line 9 of Algorithm \ref{alg:frac_greedy}, $\lambda_i \geq 0$, for all $i\in\{1,2,\dots,n-1\}$. By lines 2--7 of Algorithm \ref{alg:frac_greedy}, we know that $0\leq y_{\overline{\delta}(1)} - \overline{p}_{\overline{\delta}(1)}, y_{\overline{\delta}(n)} - \overline{p}_{\overline{\delta}(n)} \leq 1$, so $\lambda_0, \lambda_n \geq 0$.  
The point $(\mathbf{y}, \eta)$ satisfies the SEPI returned by Algorithm \ref{alg:frac_greedy}, due to the definition of $\mathcal{C}$. Thus
\begingroup
\allowdisplaybreaks
\begin{align*} 
\eta & \geq f(\overline{\mathbf{p}}) + \sum_{i=1}^{n} \left[f\left(\overline{\mathbf{p}} + \sum_{j=1}^{i} \mathbf{1}^{\overline{\delta}(j)}\right) - f\left(\overline{\mathbf{p}} + \sum_{j=1}^{i-1} \mathbf{1}^{\overline{\delta}(j)}\right)\right] \left(y_{\overline{\delta}(i)} - \overline{p}_{\overline{\delta}(i)}\right) \\
& = f(\overline{\mathbf{p}}) + \sum_{i=1}^{n} f\left(\overline{\mathbf{p}} + \sum_{j=1}^{i} \mathbf{1}^{\overline{\delta}(j)}\right) \left(y_{\overline{\delta}(i)} - \overline{p}_{\overline{\delta}(i)}\right)  - \sum_{i=1}^{n}f\left(\overline{\mathbf{p}} + \sum_{j=1}^{i-1} \mathbf{1}^{\overline{\delta}(j)}\right)\left(y_{\overline{\delta}(i)} - \overline{p}_{\overline{\delta}(i)}\right)  \\
& =  f(\overline{\mathbf{p}}) + f\left(\overline{\mathbf{p}} + \mathbf{1}\right) \left(y_{\overline{\delta}(n)} - \overline{p}_{\overline{\delta}(n)}\right)  + \sum_{i=1}^{n-1} f\left(\overline{\mathbf{p}} + \sum_{j=1}^{i} \mathbf{1}^{\overline{\delta}(j)}\right) \left(y_{\overline{\delta}(i)} - \overline{p}_{\overline{\delta}(i)}\right)  \\
& \quad - f\left(\overline{\mathbf{p}}\right)\left(y_{\overline{\delta}(1)} - \overline{p}_{\overline{\delta}(1)}\right) - \sum_{i=2}^{n} f\left(\overline{\mathbf{p}} + \sum_{j=1}^{i-1} \mathbf{1}^{\overline{\delta}(j)}\right)\left(y_{\overline{\delta}(i)} - \overline{p}_{\overline{\delta}(i)}\right)  \\
& = \left[1- \left(y_{\overline{\delta}(1)} - \overline{p}_{\overline{\delta}(1)}\right)\right] f(\overline{\mathbf{p}}) + \sum_{i=1}^{n-1} \left[\left(y_{\overline{\delta}(i)} - \overline{p}_{\overline{\delta}(i)}\right) - \left(y_{\overline{\delta}(i+1)} - \overline{p}_{\overline{\delta}(i+1)}\right)\right]f\left(\overline{\mathbf{p}} + \sum_{j=1}^{i} \mathbf{1}^{\overline{\delta}(j)}\right) \\
& \quad + \left(y_{\overline{\delta}(n)} - \overline{p}_{\overline{\delta}(n)}\right) f\left(\overline{\mathbf{p}} + \mathbf{1}\right). \\
& = \lambda_0 f(\overline{\mathbf{p}}) + \sum_{i=1}^{n-1} \lambda_i f\left(\overline{\mathbf{p}} + \sum_{j=1}^{i} \mathbf{1}^{\overline{\delta}(j)}\right) + \lambda_n f\left(\overline{\mathbf{p}} + \mathbf{1}\right). 
\end{align*}
\endgroup

We further observe that 
\begin{align*}
\mathbf{y} & = \overline{\mathbf{p}} + \sum_{i=1}^n \left(y_{\overline{\delta}(i)} - \overline{p}_{\overline{\delta}(i)}\right)  \sum_{j=1}^{i} \mathbf{1}^{\overline{\delta}(j)} \\
& = \left[1- \left(y_{\overline{\delta}(1)} - \overline{p}_{\overline{\delta}(1)}\right)\right] \overline{\mathbf{p}} + \sum_{i=1}^{n-1} \left[\left(y_{\overline{\delta}(i)} - \overline{p}_{\overline{\delta}(i)}\right) - \left(y_{\overline{\delta}(i+1)} - \overline{p}_{\overline{\delta}(i+1)}\right)\right]\left(\overline{\mathbf{p}} + \sum_{j=1}^{i} \mathbf{1}^{\overline{\delta}(j)}\right)  \\
& \quad + \left(y_{\overline{\delta}(n)} - \overline{p}_{\overline{\delta}(n)}\right) \left(\overline{\mathbf{p}} + \mathbf{1}\right) \\
& = \lambda_0 \overline{\mathbf{p}} + \sum_{i=1}^{n-1} \lambda_i \left(\overline{\mathbf{p}} + \sum_{j=1}^{i} \mathbf{1}^{\overline{\delta}(j)}\right) + \lambda_n \left(\overline{\mathbf{p}} + \mathbf{1}\right). 
\end{align*}

Following from these observations, an arbitrary  $(\mathbf{y}, \eta) \in \mathcal{C}$ can be written as a convex combination of certain elements in $\mathcal{P}^f_\mathcal{X}$ with a positive multiple of the ray $(\mathbf{0}, 1)\in \mathbb{R}^n$, namely 
\begin{align*}
(\mathbf{y}, \eta) & = \lambda_0( \overline{\mathbf{p}}, f( \overline{\mathbf{p}})) + \lambda_n ( \overline{\mathbf{p}}+\mathbf{1}, f( \overline{\mathbf{p}}+ \mathbf{1}) ) + \sum_{i=1}^{n-1} \lambda_i\left(\overline{\mathbf{p}} + \sum_{j=1}^{i} \mathbf{1}^{\overline{\delta}(j)}, f\left(\overline{\mathbf{p}} + \sum_{j=1}^{i} \mathbf{1}^{\overline{\delta}(j)}\right)\right) \\
  & \quad + \left(\eta - \lambda_0 f(\overline{\mathbf{p}}) - \sum_{i=1}^{n-1} \lambda_i f\left(\overline{\mathbf{p}} + \sum_{j=1}^{i} \mathbf{1}^{\overline{\delta}(j)}\right) - \lambda_n f\left(\overline{\mathbf{p}} + \mathbf{1}\right)\right)(\mathbf{0}, 1).
 \end{align*}
 Hence, $\mathcal{C}\subseteq \conv{\mathcal{P}^f_\mathcal{X}}$. 
\end{proof}

\begin{remark}
\label{remark:poly_conv}
When finitely many distinct SEPIs are needed for the description of $\conv{\mathcal{P}^f_\mathcal{X}}$, this convex hull is polyhedral. This can happen when $\mathcal{X}$ is bounded; that is $\ell_i > -\infty$ and $u_i < \infty$ for all $i\in N$. Even when $\mathcal{X}$ is unbounded, $\conv{\mathcal{P}^f_\mathcal{X}}$ can still be polyhedral---we will demonstrate this with an example next. 
\end{remark}

\subsection*{An example of $\mathcal{P}^f_\mathcal{X}$ --- the mixing set}
\label{sect:MIX}

The \emph{general integer mixing set} is given by 
\begin{equation}
\mathcal{P}^{\text{MIX}} := \{(\mathbf{x}, w)\in\mathbb{Z}^n\times\mathbb{R}_+ : w + x_i \geq q_i, \forall\, i\in N\},
\end{equation}
where the parameters $\mathbf{q}\in[0,1)^n$ are fixed and sorted such that $1 > q_1 \geq \cdots \geq q_n \geq 0$. This set is an important substructure that arises in lot-sizing, production planning, chance-constrained programming, and vertex packing. \citet{pochet1994polyhedra}, as well as \citet{gunluk2001mixing} establish that the \emph{mixing inequalities}, in addition to the trivial bounds, fully describe $\conv{P^\text{MIX}}$.
Every mixing inequality is associated with a subset $K\subseteq N$ and any of (in case of ties) its corresponding descending order $\boldsymbol{\sigma}\in\mathfrak{S}(K)$, such that $q_{\sigma(1)} \geq \dots \geq q_{\sigma(|K|)}$. For any $K\subseteq N$, the mixing inequalities assume the following two forms: 
\begin{equation}
\label{eq:mix_a}
w \geq \sum_{k=1}^{|K|} (q_{\sigma(k)} - q_{\sigma(k+1)}) ( 1- x_{\sigma(k)}),  
\end{equation}
and 
\begin{equation}
\label{eq:mix_b}
w \geq \sum_{k=1}^{|K|} (q_{\sigma(k)} - q_{\sigma(k+1)}) ( 1- x_{\sigma(k)}) - (1-q_{\sigma(1)})x_{\sigma(|K|)}, 
\end{equation}
where $q_{\sigma(|K|+1)} = 0$. In what follows, we recover the polyhedral result for $\mathcal{P}^{\text{MIX}}$ by exploiting the hidden L$^\natural$-convexity. 

\begin{remark}
The special case of $\mathcal{P}^{\text{MIX}}$ with \emph{binary variables} has been studied extensively as a substructure in problems ranging from chance-constrained programs to vertex packing \citep{atamturk2000mixed, Kucukyavuz2012, luedtke2014branch,Liu2016,Liu2019,kilincc2022joint}.  In particular, \citet{kilincc2022joint} were the first to uncover the hidden submodularity in $\mathcal{P}^{\text{MIX}}$ for binary $\mathbf{x}$ and established the equivalence between the (binary) mixing inequalities and the EPIs. 
\end{remark}

Consider $f:\mathbb{Z}^n\rightarrow\mathbb{R}$ defined by 
\begin{equation}
\label{eq:f_mix}
f(\mathbf{x}) := \max_{i\in N} \{q_i - x_i\} \vee 0, 
\end{equation}
whose epigraph 
\[\mathcal{P}^f_{\mathbb{Z}^n} = \left\{(\mathbf{x}, w)\in\mathbb{Z}^n\times \mathbb{R} : w \geq f(\mathbf{x})\right\}\]
is exactly $\mathcal{P}^{\text{MIX}}$. 

\begin{lemma}
The function $f$ given by \eqref{eq:f_mix} is L$^\natural$-convex. 
\end{lemma}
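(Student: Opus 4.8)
The plan is to recognize that the function $f$ defined in \eqref{eq:f_mix} is nothing but a special instance of the function $F^0$ treated in Lemma~\ref{lemma:gen_int_mix}. That lemma already establishes that $F^0(\mathbf{x}) := \max_{i\in N}\{q_i - x_i\}\vee 0$ is L$^\natural$-convex over $\mathbb{Z}^n$ for an \emph{arbitrary} vector $\mathbf{q}\in\mathbb{R}^n$. Since the mixing-set parameters satisfy $\mathbf{q}\in[0,1)^n\subseteq\mathbb{R}^n$, we have $f = F^0$ on the common domain $\mathcal{X}=\mathbb{Z}^n$, and the conclusion is immediate.

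Concretely, the single step is: invoke Lemma~\ref{lemma:gen_int_mix} with the given $\mathbf{q}$. The only thing worth a sentence is that the hypotheses match exactly---the domain in Lemma~\ref{lemma:gen_int_mix} is $\mathbb{Z}^n$, which is the domain of $f$ here---so no reduction or restriction is needed. There is essentially no obstacle; this is a direct corollary of the preliminary development, included precisely because $\mathcal{P}^f_{\mathbb{Z}^n}$ will then be identified with $\mathcal{P}^{\text{MIX}}$ and Theorem~\ref{thm:conv_SEPI} applied.

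If a self-contained argument were preferred over citing Lemma~\ref{lemma:gen_int_mix}, I would instead set $F(\mathbf{x}) := \max_{i\in N}\{q_i - x_i\}$ and verify the three conditions of Proposition~\ref{prop:F_constant_LNC}: (i) $F$ is decreasing, because $q_i - x_i$ is decreasing in $x_i$ for each $i$ and the pointwise maximum of decreasing functions is decreasing; (ii) $F(\mathbf{x}+\mathbf{1}) = F(\mathbf{x}) - 1$, since subtracting $1$ from every coordinate shifts each term $q_i - x_i$ by $-1$; and (iii) $F(\mathbf{x}\wedge\mathbf{y}) = F(\mathbf{x})\vee F(\mathbf{y})$, which follows from $q_i - (x_i\wedge y_i) = (q_i - x_i)\vee(q_i - y_i)$ together with $\max_{i\in N}\{a_i\vee b_i\} = (\max_{i\in N} a_i)\vee(\max_{i\in N} b_i)$. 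Proposition~\ref{prop:F_constant_LNC} with $c = 0$ then gives that $F^c = F\vee 0 = f$ is L$^\natural$-convex. This is exactly the proof path taken in Lemma~\ref{lemma:gen_int_mix}, so in the write-up I would simply cite that lemma rather than repeat the verification.
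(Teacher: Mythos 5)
Your proposal is correct and matches the paper exactly: the paper's proof is a one-line appeal to Lemma~\ref{lemma:gen_int_mix}, which is precisely your main step, and your optional self-contained verification simply reproduces how that lemma is itself proved via Proposition~\ref{prop:F_constant_LNC} with $c=0$.
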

\begin{proof}
This lemma follows directly from Lemma \ref{lemma:gen_int_mix}. 
\end{proof}

By Theorem \ref{thm:conv_SEPI}, the SEPIs for $\mathcal{P}^f_{\mathbb{Z}^n}$ associated with all $\mathbf{p}\in\mathbb{Z}^n$ and all $\boldsymbol{\delta}\in\mathfrak{S}(N)$ describe $\conv{\mathcal{P}^{\text{MIX}}}$. We thus obtain the next corollary. 

\begin{corollary}
The mixing inequalities are exactly the SEPIs. 
\end{corollary}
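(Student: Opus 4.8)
The corollary is the set‑equality $\{\text{SEPIs for } f\} = \{\text{mixing inequalities}\}$, where $f$ is the function in \eqref{eq:f_mix}. I would prove it by establishing the two inclusions separately. It is worth noting at the outset that every SEPI \eqref{eq:SEPI}, every mixing inequality \eqref{eq:mix_a}--\eqref{eq:mix_b}, and also the bound $w\ge 0$ (which is the $K=\emptyset$ instance of \eqref{eq:mix_a}, with empty sum) are all written in the normalized form ``$w\ge(\text{affine function of }\mathbf{x})$'', so that equality of two such inequalities simply means equality of the affine right‑hand sides and no scaling ambiguity arises. Throughout, recall $\underline{\mathcal{X}}=\mathbb{Z}^n$ in this setting, so a SEPI may use any $\mathbf{p}\in\mathbb{Z}^n$ and any $\boldsymbol{\delta}\in\mathfrak{S}(N)$.

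\textbf{Every mixing inequality is a SEPI.} Here I would exhibit, for each mixing inequality, an explicit pair $(\mathbf{p},\boldsymbol{\delta})$ whose SEPI \eqref{eq:SEPI} reduces to it. For the form \eqref{eq:mix_a} attached to $K\subseteq N$ and its descending order $\boldsymbol{\sigma}\in\mathfrak{S}(K)$, take $p_i=0$ for $i\in K$ and $p_i=1$ for $i\notin K$, and let $\boldsymbol{\delta}$ list $\sigma(1),\dots,\sigma(|K|)$ first and then the elements of $N\setminus K$ in any order. For the form \eqref{eq:mix_b}, take instead $p_{\sigma(|K|)}=-1$, $p_i=0$ for $i\in K\setminus\{\sigma(|K|)\}$, $p_i=1$ for $i\notin K$, and let $\boldsymbol{\delta}$ list $\sigma(|K|)$ first, then $\sigma(1),\dots,\sigma(|K|-1)$, then $N\setminus K$ in any order. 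In both cases, since $q_i-p_i<0$ whenever $p_i\ge 1$, the value $f\!\left(\mathbf{p}+\sum_{j\le i}\mathbf{1}^{\delta(j)}\right)$ never depends on the coordinates outside $K$; hence all their marginal differences in \eqref{eq:SEPI} vanish, and the variables $x_i$, $i\notin K$, drop out. On $K$ the marginal differences telescope (using $f(\mathbf{p})=\max_{i\in K}q_i\vee 0=q_{\sigma(1)}$ in the first case and $f(\mathbf{p})=q_{\sigma(|K|)}+1$ in the second), and after substituting $x_{\delta(i)}-p_{\delta(i)}$ one gets exactly \eqref{eq:mix_a}, respectively \eqref{eq:mix_b}. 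This is a direct finite computation.

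\textbf{Every SEPI is a mixing inequality.} For the reverse inclusion I would argue softly rather than by a second case analysis. The function $f$ in \eqref{eq:f_mix} is L$^\natural$‑convex (preceding lemma) and $\mathcal{X}=\mathbb{Z}^n\supseteq\mathcal{B}_{\mathbf{q}}$ for every $\mathbf{q}$, so Proposition \ref{prop:SEPI_FD} applies: $\conv{\mathcal{P}^f_{\mathbb{Z}^n}}=\conv{\mathcal{P}^{\text{MIX}}}$ is full dimensional and every SEPI is facet‑defining for it. By the classical result of \citet{pochet1994polyhedra} and \citet{gunluk2001mixing}, this same polyhedron is the intersection of the halfspaces \eqref{eq:mix_a}, \eqref{eq:mix_b} over all $K\subseteq N$ and descending orders together with the trivial bound $w\ge 0$; since $w\ge 0$ is itself the $K=\emptyset$ case of \eqref{eq:mix_a}, the polyhedron is fully described by mixing inequalities. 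Now $\conv{\mathcal{P}^{\text{MIX}}}$ admits no nontrivial $\mathbf{x}$‑only valid inequality (as $\mathbf{x}$ ranges over all of $\mathbb{Z}^n$) and its recession cone contains $(\mathbf{0},1)$, so each facet can be written with $w$‑coefficient $1$; and for a full‑dimensional polyhedron each facet is induced, up to a positive scalar, by exactly one inequality in any valid description, hence (after normalizing the $w$‑coefficient) by exactly one mixing inequality. Thus every SEPI, being facet‑defining, equals a mixing inequality, and combining the two inclusions gives the claim.

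\textbf{Main obstacle.} The only laborious part is the explicit reduction in the first inclusion: verifying that the two prescribed families of pairs $(\mathbf{p},\boldsymbol{\delta})$ generate \emph{exactly} \eqref{eq:mix_a} and \eqref{eq:mix_b}, with careful bookkeeping of how the outer $\vee\,0$ and the shifted origin $\mathbf{p}$ interact inside the marginal differences of \eqref{eq:SEPI}. The delicate case is \eqref{eq:mix_b}, where placing $\sigma(|K|)$ \emph{first} in $\boldsymbol{\delta}$ together with the shift $p_{\sigma(|K|)}=-1$ is precisely what produces the extra term $-(1-q_{\sigma(1)})x_{\sigma(|K|)}$. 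An alternative that sidesteps invoking the classical mixing‑set description altogether is to compute the SEPI for an \emph{arbitrary} $(\mathbf{p},\boldsymbol{\delta})$ directly---discard the coordinates with $q_i-p_i<0$ (they receive coefficient $0$), track the nonincreasing sequence of running maxima $f\!\left(\mathbf{p}+\sum_{j\le i}\mathbf{1}^{\delta(j)}\right)$, read off the ``record'' coordinates as the support $K$, and check that $\min_i p_i\ge 0$ on these coordinates yields \eqref{eq:mix_a} while $\min_i p_i\le -1$ yields \eqref{eq:mix_b}---but this route is more computational, and I would keep it in reserve.
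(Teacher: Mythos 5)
Your proposal is correct, and both inclusions hold as you argue them, but your route differs from the paper's in the direction ``every SEPI is a mixing inequality.'' The paper handles that direction constructively: for an arbitrary $\mathbf{p}\in\mathbb{Z}^n$ and $\boldsymbol{\delta}\in\mathfrak{S}(N)$ it builds the support set $K$ explicitly (Algorithm \ref{alg:build_K}, tracking the running maximizers of $f(\mathbf{p}^k)$) and verifies by direct computation that the SEPI coincides with $w\ge 0$, \eqref{eq:mix_a}, or \eqref{eq:mix_b} for that $K$. You instead argue softly: every SEPI is facet-defining (Proposition \ref{prop:SEPI_FD}), the mixing inequalities (with $K=\emptyset$ giving $w\ge 0$) describe $\conv{\mathcal{P}^{\text{MIX}}}$ by \citet{pochet1994polyhedra,gunluk2001mixing}, a facet of a full-dimensional polyhedron admits a defining inequality that is unique up to positive scaling, and both families are normalized with $w$-coefficient $1$, so the scalar is $1$. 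This is sound and much shorter, but it buys less: it leans on the classical convex-hull theorem for the mixing set, so it cannot serve the paper's stated purpose of \emph{recovering} that result from Theorem \ref{thm:conv_SEPI} (that use would be circular), and it does not exhibit which mixing inequality a given $(\mathbf{p},\boldsymbol{\delta})$ produces --- the explicit map via Algorithm \ref{alg:build_K} is what the paper later uses to argue that only finitely many SEPIs are distinct and hence that $\conv{\mathcal{P}^f_{\mathcal{X}}}$ is polyhedral despite the unbounded domain. For the forward inclusion your explicit choices differ from the paper's (Proposition \ref{prop:mix_a} and its companion use $p_i\in\{0,1\}$ or $p_i\in\{-1,1\}$ with the natural ordering, whereas for \eqref{eq:mix_b} you set $p_{\sigma(|K|)}=-1$, $p_i=0$ on $K\setminus\{\sigma(|K|)\}$, $p_i=1$ off $K$, and list $\sigma(|K|)$ first); I checked that your construction also reproduces \eqref{eq:mix_b} exactly, which is consistent with the paper's own observation that distinct $(\mathbf{p},\boldsymbol{\delta})$ pairs can yield identical SEPIs.
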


In the next two propositions, we establish the correspondence between the mixing inequalities and the SEPIs.

\begin{proposition}
\label{prop:mix_a}
The mixing inequality of form \eqref{eq:mix_a} with respect to $K\subseteq N$ is identical to the SEPI associated with $\mathbf{p}\in\mathbb{Z}^n$, where 
\[p_i = \begin{cases}
0, & i\in K, \\
1, & \text{otherwise,}
\end{cases}\]
and the natural ordering $\boldsymbol{\delta} = (1,\dots, n)$. 
\end{proposition}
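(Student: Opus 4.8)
The plan is to write out both inequalities explicitly and check that they have identical coefficients term by term. Fix $K \subseteq N$ and the natural ordering $\boldsymbol{\delta} = (1,\dots,n)$. With $\mathbf{p}$ as defined — $p_i = 0$ for $i \in K$ and $p_i = 1$ otherwise — I first need to confirm $\mathbf{p} \in \underline{\mathcal{X}}$; here $\mathcal{X} = \mathbb{Z}^n$, so $\underline{\mathcal{X}} = \mathbb{Z}^n$ and this is automatic, so the SEPI in question is well-defined. The key observation is that $f$ from \eqref{eq:f_mix} evaluated at the points $\mathbf{p} + \sum_{j=1}^{i}\mathbf{1}^{j}$ takes a simple closed form: since $q_\ell < 1 \le p_\ell + (\text{any positive shift})$ for $\ell \notin K$, only the coordinates in $K$ that have not yet been incremented contribute to the max, so that $f\!\left(\mathbf{p} + \sum_{j=1}^{i}\mathbf{1}^{j}\right) = \max\{0\} \vee \max\{q_\ell : \ell \in K,\ \ell > i\}$.

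The main computation is then to simplify the coefficient $f(\mathbf{p}^i) - f(\mathbf{p}^{i-1})$ for each $i$, where $\mathbf{p}^i := \mathbf{p} + \sum_{j=1}^i \mathbf{1}^{j}$. When $i \notin K$, incrementing coordinate $i$ changes nothing in the relevant max (coordinate $i$ was already at value $1 > q_i$), so the coefficient is $0$ — meaning the variable $x_i$ for $i \notin K$ does not appear in the SEPI, matching \eqref{eq:mix_a}, which only involves $x_{\sigma(k)}$ for $\sigma(k) \in K$. When $i \in K$, writing $K = \{\sigma(1) < \sigma(2) < \cdots\}$ under the natural order (so that $q_{\sigma(1)} \ge q_{\sigma(2)} \ge \cdots$ by the sortedness assumption on $\mathbf{q}$), the coefficient of $(x_i - p_i) = x_i$ telescopes to $-(q_{\sigma(k)} - q_{\sigma(k+1)})$ where $\sigma(k) = i$ and $q_{\sigma(|K|+1)} := 0$; this uses that after incrementing through $\sigma(k)$ the running max drops from $q_{\sigma(k)}$ to $q_{\sigma(k+1)}$. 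I also need the constant term: $f(\mathbf{p}) = \max_{\ell \in K} q_\ell = q_{\sigma(1)}$ (or $0$ if $K = \emptyset$), and combining $f(\mathbf{p})$ with $\sum_i [\cdots](0 - p_{\sigma(i)})$ — which vanishes since $p_i = 0$ on $K$ and the coefficient is $0$ off $K$ — the SEPI becomes $w \ge q_{\sigma(1)} - \sum_{k=1}^{|K|}(q_{\sigma(k)} - q_{\sigma(k+1)})x_{\sigma(k)}$, and since $q_{\sigma(1)} = \sum_{k=1}^{|K|}(q_{\sigma(k)} - q_{\sigma(k+1)})$ this rearranges exactly to \eqref{eq:mix_a}.

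I expect the only real obstacle to be bookkeeping: carefully matching the indexing convention (the SEPI uses the ambient permutation $\boldsymbol{\delta}$ over all of $N$ while the mixing inequality uses $\boldsymbol{\sigma}$ over $K$) and verifying that the restriction of the natural order to $K$ indeed realizes the descending order of $q$-values required in \eqref{eq:mix_a}, which follows from the global sortedness $q_1 \ge \cdots \ge q_n$ assumed at the outset. Handling ties in the $q_i$ and the boundary conventions $q_{\sigma(|K|+1)} = 0$, together with the edge case $f \equiv 0$ when all relevant $q$-values are dominated by $0$, rounds out the argument, but none of this is conceptually deep — it is a direct term-by-term identification once the closed form for $f$ on the chain $\{\mathbf{p}^i\}$ is in hand.
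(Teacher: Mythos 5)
Your proposal is correct and follows essentially the same route as the paper's proof: compute $f$ along the chain $\{\mathbf{p}^i\}$ (where only the not-yet-incremented coordinates in $K$ contribute), observe that coefficients for $i\notin K$ vanish, and telescope the remaining terms to recover \eqref{eq:mix_a}. The paper phrases the middle step as a reduction to the SEPI of the restricted function $f'(\mathbf{x})=\max_{i\in K}\{q_i-x_i\}\vee 0$ at $\mathbf{p}'=\mathbf{0}$, but this is only a bookkeeping device for the same term-by-term computation you carry out directly.
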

\begin{proof}
Given that $\boldsymbol{\delta}$ is the natural ordering, we drop it from the indices. For any $i\in N$ and $k\in N\cup\{0\}$,
\[\left(\mathbf{p} + \sum_{j=1}^k\mathbf{1}^k\right)_i =\begin{cases}
0, &  \text{if } i\in K\cap \{k+1, \dots, n\}, \\
2, & \text{if } i\in \{1,\dots, k\}\setminus K, \\
1, & \text{otherwise.}
\end{cases}
\]
Thus, for every $k\in N\cup\{0\}$, 
\[f\left(\mathbf{p} + \sum_{j=1}^k\mathbf{1}^j\right) = \begin{cases}
    q_{\min\{j\in K\cap \{k+1,\dots,n\}\}\}}, & \text{if } K\cap \{k+1,\dots,n\} \neq \emptyset, \\
    0, & \text{otherwise.}
\end{cases}\]
Notice further that all $i\in N\setminus K$ are never the maximizer of $f\left(\mathbf{p} + \sum_{j=1}^k\mathbf{1}^j\right)$ for any $k\in N\cup\{0\}$, so all the terms in the SEPI involving $x_i$, $i\in N\setminus K$, are
\[\left[f\left(\mathbf{p} + \sum_{j=1}^{i}\mathbf{1}^j\right) - f\left(\mathbf{p}+ \sum_{j=1}^{i-1}\mathbf{1}^j\right)\right](x_i - 1) = 0(x_i - 1) = 0. \]
Therefore, the SEPI for $f$ associated with $\mathbf{p}$ and $\boldsymbol{\delta}$ is identical to the SEPI for 
\[f'(\mathbf{x}) := \max_{i\in K} \{q_i - x_i\}\vee 0 \]
associated with $\mathbf{p}' = \mathbf{0}$ and $\boldsymbol{\sigma}\in\mathfrak{S}(K)$ such that 
$q_{\sigma(1)} \geq \dots \geq q_{\sigma(|K|)}$. This SEPI is 
\begin{align*}
w & \geq f'\left(\mathbf{0}\right) + \sum_{i=1}^{|K|} \left[f'\left(\sum_{j=1}^{i} \mathbf{1}^{\sigma(j)}\right) - f'\left(\sum_{j=1}^{i-1} \mathbf{1}^{\sigma(j)}\right)\right]x_{\sigma(i)} \\
& = \max_{\ell\in K}\{q_\ell\} + \sum_{i=1}^{|K|} \left[0\vee\max_{j \in\{i+1,\dots,|K|\}}\{q_{\sigma(j)}\} - 0\vee \max_{j \in \{i,\dots,|K|\}}\{q_{\sigma(j)}\}\right]x_{\sigma(i)} \\
& = q_{\sigma(1)} + \sum_{i=1}^{|K|} (q_{\sigma(i+1)} - q_{\sigma(i)})x_{\sigma(i)} \\
 & = \sum_{i=1}^{|K|} (q_{\sigma(i)} - q_{\sigma(i+1)}) + \sum_{i=1}^{|K|} (q_{\sigma(i)} - q_{\sigma(i+1)})(-x_{\sigma(i)}) \\
 & = \sum_{i=1}^{|K|} (q_{\sigma(i)} - q_{\sigma(i+1)})(1-x_{\sigma(i)}). 
\end{align*}
This is exactly \eqref{eq:mix_a}.
\end{proof}

\begin{proposition}
The mixing inequality of form \eqref{eq:mix_b} with respect to $K\subseteq N$ is identical to the SEPI associated with $\mathbf{p}\in\mathbb{Z}^n$, where 
\[p_i = \begin{cases}
-1, & i\in K, \\
1, & \text{otherwise,}
\end{cases}\]
and the natural ordering $\boldsymbol{\delta} = (1,\dots, n)$. 
\end{proposition}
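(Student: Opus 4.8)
The plan is to mirror the proof of Proposition~\ref{prop:mix_a}, now with the shifted origin $\mathbf{p}$ satisfying $p_i=-1$ for $i\in K$. Since $\boldsymbol{\delta}$ is the natural ordering, I drop it from the subscripts. First I would record the coordinates of the evaluation points: for $k\in N\cup\{0\}$, the entry $\left(\mathbf{p}+\sum_{j=1}^{k}\mathbf{1}^{j}\right)_i$ equals $0$ if $i\in K$ and $i\le k$, equals $2$ if $i\notin K$ and $i\le k$, equals $-1$ if $i\in K$ and $i>k$, and equals $1$ if $i\notin K$ and $i>k$. Evaluating $f$ from \eqref{eq:f_mix} at such a point, the quantities $q_i-x_i$ equal $q_i\in[0,1)$ for $i\in K$ with $i\le k$, equal $q_i+1\ge 1$ for $i\in K$ with $i>k$, and are strictly negative for every $i\notin K$; hence (assuming $K\neq\emptyset$) the maximum is attained on $K\cap\{k+1,\dots,n\}$ whenever that set is nonempty, giving $f\left(\mathbf{p}+\sum_{j=1}^{k}\mathbf{1}^{j}\right)=q_{\min(K\cap\{k+1,\dots,n\})}+1$, and otherwise it equals $\max_{i\in K}q_i$. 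In particular the outer $\vee\,0$ is never active here.

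Second, I would show that every SEPI term indexed by $i\notin K$ drops out: passing from $\mathbf{p}+\sum_{j=1}^{i-1}\mathbf{1}^{j}$ to $\mathbf{p}+\sum_{j=1}^{i}\mathbf{1}^{j}$ only raises coordinate $i$ from $1$ to $2$, and since $i\notin K$ that coordinate never attains the maximum in \eqref{eq:f_mix} (its contribution stays negative before and after), so the two $f$-values coincide and the coefficient multiplying $x_i-p_i$ is zero. Thus the SEPI collapses to the SEPI for $f'(\mathbf{x}):=\max_{i\in K}\{q_i-x_i\}\vee 0$ associated with the origin $\mathbf{p}'=-\mathbf{1}$ on the coordinates indexed by $K$ and with the permutation $\boldsymbol{\sigma}\in\mathfrak{S}(K)$ that restricts the natural ordering of $N$ to $K$; since $q_1\ge\cdots\ge q_n$, this $\boldsymbol{\sigma}$ is exactly a descending order of $\{q_i\}_{i\in K}$, matching the statement.

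Third, applying the first step with origin $-\mathbf{1}$, I would record $f'\left(\mathbf{p}'+\sum_{j=1}^{k}\mathbf{1}^{\sigma(j)}\right)=q_{\sigma(k+1)}+1$ for $k=0,1,\dots,|K|-1$ and $f'(\mathbf{p}'+\mathbf{1})=q_{\sigma(1)}$, so that the SEPI coefficients become $q_{\sigma(i+1)}-q_{\sigma(i)}$ for $i=1,\dots,|K|-1$ and $q_{\sigma(1)}-q_{\sigma(|K|)}-1$ for $i=|K|$. Substituting into \eqref{eq:SEPI} with $x_{\sigma(i)}-p'_{\sigma(i)}=x_{\sigma(i)}+1$ and simplifying, the telescoping of these coefficients (using the convention $q_{\sigma(|K|+1)}=0$) produces the constant term $q_{\sigma(1)}$ of \eqref{eq:mix_b}, the coefficient of each $x_{\sigma(i)}$ with $i<|K|$ equals $-(q_{\sigma(i)}-q_{\sigma(i+1)})$, and the coefficient of $x_{\sigma(|K|)}$ equals $-(q_{\sigma(|K|)}-0)-(1-q_{\sigma(1)})$, which is precisely the coefficient appearing in \eqref{eq:mix_b}. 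The only delicate points I anticipate are the degenerate cases: $K=\emptyset$, where both inequalities reduce to the trivial bound $w\ge 0$, and $|K|=1$, where the empty range $i=1,\dots,|K|-1$ disappears and the single term yields $w\ge q_{\sigma(1)}-x_{\sigma(1)}$; these are checked directly. Everything else is bookkeeping parallel to Proposition~\ref{prop:mix_a}.
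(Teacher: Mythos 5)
Your proposal is correct and follows essentially the same route as the paper: drop the zero-coefficient terms for $i\notin K$ (exactly as in Proposition~\ref{prop:mix_a}), reduce to the SEPI for $f'(\mathbf{x})=\max_{i\in K}\{q_i-x_i\}\vee 0$ at origin $-\mathbf{1}$ with the descending order $\boldsymbol{\sigma}$, and then simplify the resulting telescoping expression to recover \eqref{eq:mix_b}; your computed values $f'\bigl(\mathbf{p}'+\sum_{j=1}^{k}\mathbf{1}^{\sigma(j)}\bigr)=q_{\sigma(k+1)}+1$ and $f'(\mathbf{p}'+\mathbf{1})=q_{\sigma(1)}$ match the paper's. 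Your explicit treatment of the degenerate cases $K=\emptyset$ and $|K|=1$ is a minor extra that the paper leaves implicit.
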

\begin{proof}
With the same argument for Proposition \ref{prop:mix_a}, the SEPI for $f$ associated with $\mathbf{p}$ and $\boldsymbol{\delta}$ is identical to the SEPI for 
\[f'(\mathbf{x}) := \max_{i\in K} \{q_i - x_i\}\vee 0 \]
associated with $\mathbf{p}' = -\mathbf{1}$ and $\boldsymbol{\sigma}\in\mathfrak{S}(K)$ such that 
$q_{\sigma(1)} \geq \dots \geq q_{\sigma(|K|)}$. This SEPI is  
\begingroup
\allowdisplaybreaks
\begin{align*}
w & \geq f'\left(\mathbf{p}'\right) + \sum_{i=1}^{|K|} \left[f'\left(\mathbf{p}' + \sum_{j=1}^{i} \mathbf{1}^{\sigma(j)}\right) - f\left(\mathbf{p}' + \sum_{j=1}^{i-1} \mathbf{1}^{\sigma(j)}\right)\right](x_{\sigma(i)} + 1) \\
& = \max_{\ell\in K}\{1 + q_\ell\} \\
& \quad + \sum_{i=1}^{|K|-1} \left[\max_{j\in \{1,\dots, i\}}\{q_{\sigma(j)}\} \vee \max_{j \in\{i+1,\dots,|K|\}}\{1 + q_{\sigma(j)}\} - \max_{j\in \{1,\dots, i-1\}}\{q_{\sigma(j)}\} \vee \max_{j \in \{i,\dots,|K|\}}\{1 + q_{\sigma(j)}\}\right](x_{\sigma(i)} + 1) \\
& \quad + \left[\max_{\ell\in K}\{q_\ell\} - \max_{j\in \{1,\dots, |K|\}}\{q_{\sigma(j)}\} \vee \{1+ q_{\sigma(|K|)}\}\right](x_{\sigma(|K|)} + 1) \\
& = 1+q_{\sigma(1)} + \sum_{i=1}^{|K|-1} [(1+q_{\sigma(i+1)}) - (1+q_{\sigma(i)})](x_{\sigma(i)} + 1) + [q_{\sigma(1)} - (1+q_{\sigma(|K|)})](x_{\sigma(|K|)} + 1)\\
& = 1+q_{\sigma(1)} + \sum_{i=1}^{|K|-1} (q_{\sigma(i+1)} - q_{\sigma(i)})(x_{\sigma(i)} + 1) + (0 - q_{\sigma(|K|)})(x_{\sigma(|K|)} + 1) + (q_{\sigma(1)} - 1) (x_{\sigma(|K|)} + 1)\\
& = \sum_{i=1}^{|K|} (q_{\sigma(i+1)} - q_{\sigma(i)})(x_{\sigma(i)} + 1) + (q_{\sigma(1)} - 1)x_{\sigma(|K|)} + 2q_{\sigma(1)} \\
& = \sum_{i=1}^{|K|} (q_{\sigma(i+1)} - q_{\sigma(i)})(x_{\sigma(i)} - 1) + 2\sum_{i=1}^{|K|} (q_{\sigma(i+1)} - q_{\sigma(i)}) + (q_{\sigma(1)} - 1)x_{\sigma(|K|)} + 2q_{\sigma(1)} \\
& = \sum_{i=1}^{|K|} (q_{\sigma(i+1)} - q_{\sigma(i)})(x_{\sigma(i)} - 1) - 2q_{\sigma(1)} + (q_{\sigma(1)} - 1)x_{\sigma(|K|)} + 2q_{\sigma(1)} \\
& =  \sum_{i=1}^{|K|} (q_{\sigma(i)} - q_{\sigma(i+1)})(1- x_{\sigma(i)}) - (1- q_{\sigma(1)})x_{\sigma(|K|)}. 
\end{align*}
\endgroup
This is exactly \eqref{eq:mix_b}.
\end{proof}

We include the following example to illustrate the correspondence between mixing inequalities and the SEPIs.

\begin{example}
Consider the instance of $f(\mathbf{x}) = \max\{0, \max\{0.8 - x_1, 0.5 - x_2, 0.2 - x_3\}\}$. Let $K = \{1,2\}$. The corresponding mixing inequalities \eqref{eq:mix_a} and \eqref{eq:mix_b} are 
\[w \geq 0.3(1-x_1) + 0.5(1-x_2) = 0.8 - 0.3x_1 - 0.5x_2,\]
and 
\[w \geq 0.3(1-x_1) + 0.5(1-x_2) - (1-0.8) x_2 = 0.8 - 0.3x_1 - 0.7x_2,\]
respectively. 
Let $\mathbf{p}^1 = (0, 0, 1)$, $\mathbf{p}^2 = (-1, -1, 1)$, and $\boldsymbol{\delta} = (1,2,3)$. The SEPI associated with $\mathbf{p}^1$ and $\boldsymbol{\delta}$ for $\mathcal{P}^f_{\mathbb{Z}^n}$ is 
\begin{align*}
w & \geq f(0, 0, 1) + [f(1, 0, 1) - f(0,0,1)]x_1 \\
& \quad \quad [f(1,1,1) - f(1,0,1)]x_2 \\
& \quad \quad [f(1,1,2) - f(1,1,1)](x_3-1) \\
& = 0.8 + (0.5 - 0.8)x_1 + (0 - 0.5)x_2 + (0-0)(x_3-1) \\
& = 0.8 - 0.3x_1 - 0.5x_2,
\end{align*}
which is identical to the first mixing inequality. 
The SEPI associated with $\mathbf{p}^2$ and $\boldsymbol{\delta}$ is 
\begin{align*}
w & \geq f(-1, -1, 1) + [f(0, -1, 1) - f(-1, -1, 1)](x_1 + 1) \\
& \quad \quad \quad + [f(0,0,1)-f(0,-1,1)](x_2+1) \\
& \quad \quad \quad +  [f(0,0,2) - f(0,0,1)](x_3-1) \\
& = 1.8 + (1.5-1.8)(x_1 + 1) + (0.8-1.5)(x_2 + 1) + (0.8-0.8)(x_3 - 1) \\
& = 0.8 -0.3x_1 - 0.7x_2.
\end{align*}
This SEPI coincides with the second mixing inequality. 
\end{example}

There are finitely many mixing inequalities, so $\conv{P^{\text{MIX}}}$ is polyhedral. On the other hand, Proposition \ref{prop:SEPI_FD} shows that all SEPIs are facet-defining for $\conv{\mathcal{P}^{\text{MIX}}}$, and there appear to be infinitely many SEPIs due to the unboundedness of $\mathbb{Z}^n$. This seems to be a contradiction. We highlight that, in fact, only finitely many SEPIs are distinct in this case. This is one class of $f$ where $\conv{\mathcal{P}^f_{\mathcal{X}}}$ is polyhedral despite the fact that $\mathcal{X}$ is unbounded as alluded to in Remark \ref{remark:poly_conv}. We provide an example to illustrate this observation. 

\begin{example}
Consider again $f(\mathbf{x}) = \max\{0, \max\{0.8 - x_1, 0.5 - x_2, 0.2 - x_3\}\}$. Recall that the SEPI associated with $\mathbf{p}^2 = (-1, -1, 1)$ and $\boldsymbol{\delta} = (1,2,3)$ is $w \geq 0.8 -0.3x_1 - 0.7x_2$. 
Let $\mathbf{p}^3 = (-2, -3, -1)$ and $\boldsymbol{\delta}' = (3,2,1)$, the associated SEPI is 
\begin{align*}
w & \geq f(-2,-3,-1) + [f(-2,-3,0) - f(-2,-3,1)](x_3 + 1) \\
& \quad \quad \quad + [f(-2,-2,0)-f(-2,-3,0)](x_2+3) \\
& \quad \quad \quad +  [f(-1,-2,0) - f(-2,-2,0)](x_1+2) \\
& =  3.5 + (3.5 - 3.5) (x_3 + 1) + (2.8 - 3.5)(x_2 + 3) + (2.5 - 2.8)(x_1 + 2) \\
& = 0.8 -0.3x_1 - 0.7x_2.
\end{align*}
Even though $\mathbf{p}^2 \neq \mathbf{p}^3$ and $\boldsymbol{\delta} \neq \boldsymbol{\delta}'$, the corresponding SEPIs are identical. 
\end{example}

We next formally show that only finitely many SEPIs are distinct by relating an arbitrary SEPI to a mixing inequality. Let any $\mathbf{p}\in\mathbb{Z}^n$ and $\boldsymbol{\delta}\in\mathfrak{S}(N)$   be given. We construct $K\subseteq N$ using Algorithm \ref{alg:build_K}. Throughout, we use the notation $\delta^{-1}(i)$, $i\in N$, to denote the ordering of $i$ in $\boldsymbol{\delta}$. In other words, $\delta^{-1}(i) = j$ means that $\delta(j) = i$.

\vspace{0.2cm}\begin{algorithm}[H]
\caption{\texttt{Build\_K}}
\label{alg:build_K}
\begin{algorithmic}[1]
\STATE \textbf{Input} $\mathbf{p}\in\mathbb{Z}^n$,   $\boldsymbol{\delta}\in\mathfrak{S}(N)$ \;
\STATE $p_{\min} \leftarrow \min_{i\in N} \{p_i\}$\;
\IF {$p_{\min} \geq 1$}
    \STATE $K \leftarrow \emptyset$\;
\ELSE
    \STATE $N_{p_{\min}} \leftarrow \{i\in N: p_i = p_{\min}\}$, $k_1 \leftarrow \min N_{p_{\min}}$, $K\leftarrow\{k_1\}$, $\kappa \leftarrow 1$\; 
    \WHILE {$\{i\in N_{p_{\min}}: \delta^{-1}(i) > \delta^{-1}(k_\kappa)\} \neq \emptyset$}
        \STATE $k_{\kappa+1} \leftarrow \min\{i\in N_{p_{\min}}: \delta^{-1}(i) > \delta^{-1}(K_\kappa)\}$, $K\leftarrow K \cup\{k_{\kappa+1}\}$, $\kappa \leftarrow \kappa + 1$\;
    \ENDWHILE
    \STATE $k_L \leftarrow k_{\kappa - 1}$\; 
    \IF {$p_{\min} \leq -1$ }
        \STATE $N_{p_{\min}+1} \leftarrow \{i\in N : p_i = p_{\min} + 1\}$, $\kappa' \leftarrow 0$\;
        \IF {$\{i\in N_{p_{\min}+1} : \delta^{-1}(i) > \delta^{-1}(k_{L}), i < k_1\} \neq \emptyset$}
            \STATE $k'_1 \leftarrow \min\{i\in N_{p_{\min}+1} : \delta^{-1}(i) > \delta^{-1}(k_{L}), i < k_1\}$, $K\leftarrow\{k'_1\}$, $\kappa' \leftarrow 1$\;
            \WHILE {$\{i\in N_{p_{\min}+1} : \delta^{-1}(i) > \delta^{-1}(k'_{\kappa'}), i < k_1\} \neq \emptyset$}
                \STATE $k'_{\kappa'+1} \leftarrow \min\{i\in N_{p_{\min}+1} : \delta^{-1}(i) > \delta^{-1}(k'_{\kappa'}), i < k_1\}$, $K\leftarrow K \cup\{k'_{\kappa'+1}\}$, $\kappa' \leftarrow \kappa' + 1$\;
            \ENDWHILE
           \STATE $k'_{L'} \leftarrow k'_{\kappa'-1}$\; 
        \ENDIF
    \ENDIF
\ENDIF
\STATE  \textbf{Output} $K\subseteq N$. When $K\neq \emptyset$, return the mixing inequality \eqref{eq:mix_a} if $p_{\min} = 0$, \eqref{eq:mix_b} if $p_{\min} \leq -1$. 
\end{algorithmic}
\end{algorithm}

For ease of notation, we let $\mathbf{p}^k = \mathbf{p} + \sum_{j=1}^{k} \mathbf{1}^{\delta(j)}$ for $k\in N\cup\{0\}$. Intuitively, the set $K$ constructed by Algorithm \ref{alg:build_K} is the set of maximizers of $\{f(\mathbf{p}^k)\}_{k=0}^n$. In lines 3--4, $\mathbf{p}^k \geq \mathbf{1}$ and $f(\mathbf{p}^k) = 0$ for all $k\in N\cup\{0\}$. The corresponding SEPI is $w\geq 0$, which maps to the trivial mixing inequality with $K = \emptyset$. Line 6 of Algorithm \ref{alg:build_K} identifies the maximizer $k_1$ of $f(\mathbf{p}) = q_{k_1} - p_{k_1}$. Here, $p_{k_1} = p_{\min} \leq 0$, and $q_{k_1}$ is the largest among all entries $i\in N$ with $p_i = p_{\min}$. In fact, $f(\mathbf{p}^k) = q_{k_1} - p_{k_1}$ for all $k = 0, \dots, \delta^{-1}(k_1)-1$. Lines 7--9 iteratively determine the maximizers $k_\kappa$ of $f(\mathbf{p}^k)$, for $k = \delta^{-1}(k_1), \dots, \delta^{-1}(k_{L})-1$, where $p_{k_\kappa} = p_{\min}$. At $k = \delta^{-1}(k_{L})$, the minimum value in $p^k$ becomes $p_{\min} + 1$. If $p_{\min} = 0$, then $f(\mathbf{p}^k) = 0$ for all $k = \delta^{-1}(k_{L}), \dots, n$. If $p_{\min}\leq -1$ and $N_{p_{\min}+1} = \emptyset$, then $f(\mathbf{p}^k) = q_{k_1} - p_{k_1} - 1$ for all $k = \delta^{-1}(k_{L}), \dots, n$. Otherwise, there could exist $i\in N$ such that $p_{i} = p_{\min} + 1$, that comes after $k_{L}$ according to $\boldsymbol{\delta}$ (i.e., $\delta^{-1}(i) > \delta^{-1}(k_{L})$), and that $q_{i} > q_{k_1}$ (i.e., $i < k_1$). Lines 13--17 iteratively search for such maximizers of $f(\mathbf{p}^k)$, where $k = \delta^{-1}(k_{L}), \dots, n$. We demonstrate how Algorithm \ref{alg:build_K} works in the next example. 

\begin{example}
Consider an instance of $f$, where $\mathbf{q} = (0.9, 0.8, 0.7, 0.6, 0.5,0.4,0.3,0.2,0.1)$. Suppose $\mathbf{p} = (1, 1, 0, 0, 1, -1, 0, 0, -1)$ and $\boldsymbol{\delta}=(1,7,6,2,9,3,8,5,4)$. Here, $p_{\min} = -1$, and $N_{-1} = \{6,9\}$. Thus, $k_1 = 6$. The set $\{i\in N_{-1} : \delta^{-1}(i) > \delta^{-1}(k_1)\} = \{9\}$ because $\delta^{-1}(9) = 5 > 3 = \delta^{-1}(6)$. We obtain $k_{L} = k_2 = 9$.  Note that $N_0 = \{3,4,7,8\}$. Given that $\delta^{-1}(k_{L}) = 5$ and $k_1 = 6$, $k'_1 = \min\{3,4\} = 3$, and $k'_2 = k'_{L'} = 4$. Therefore, $K = \{3,4,6,9\}$. \\

We list the maximizers of $f(\mathbf{p}^k)$, $k = 0, \dots, n$, as follows. 
\begingroup
\allowdisplaybreaks
\begin{align*}
& \mathbf{p}^0 = (1, 1, 0, 0, 1, -1, 0, 0, -1), \quad f(\mathbf{p}^0) = 1.4 = q_6 - p_6, \\
& \mathbf{p}^1 = (2, 1, 0, 0, 1, -1, 0, 0, -1), \quad f(\mathbf{p}^1) = 1.4 = q_6 - p^1_6 = q_6 - p_6, \\
& \mathbf{p}^2 = (2, 1, 0, 0, 1, -1, 1, 0, -1), \quad f(\mathbf{p}^2) = 1.4 = q_6 - p^2_6 = q_6 - p_6, \\
& \mathbf{p}^3 = (2, 1, 0, 0, 1, 0, 1, 0, -1), \quad f(\mathbf{p}^3) = 1.1 = q_9 - p^3_9 = q_9 - p_9,  \quad \delta(3) = 6,\\
& \mathbf{p}^4 = (2, 2, 0, 0, 1, 0, 1, 0, -1), \quad f(\mathbf{p}^4) = 1.1 = q_9 - p^4_9 = q_9 - p_9, \\
& \mathbf{p}^5 = (2, 2, 0, 0, 1, 0, 1, 0, 0), \quad f(\mathbf{p}^5) = 0.7 = q_3 - p^5_3 = q_3 - p_3, \quad \delta(5) = 9,\\
& \mathbf{p}^6 = (2, 2, 1, 0, 1, 0, 1, 0, 0), \quad f(\mathbf{p}^6) = 0.6 = q_4 - p^6_4 = q_4 - p_4, \quad \delta(6) = 3, \\
& \mathbf{p}^7 = (2, 2, 1, 0, 1, 0, 1, 1, 0), \quad f(\mathbf{p}^7) = 0.6 = q_4 - p^7_4 = q_4 - p_4, \\
& \mathbf{p}^8 = (2, 2, 1, 0, 2, 0, 1, 1, 0), \quad f(\mathbf{p}^8) = 0.6 = q_4 - p^8_4 = q_4 - p_4, \\
& \mathbf{p}^9 = (2, 2, 1, 1, 2, 0, 1, 1, 0), \quad f(\mathbf{p}^9) = 0.4 = q_6 - p^9_6 = q_6 - (p_6 + 1), \quad \delta(9) = 4. 
\end{align*}
\endgroup
The maximizers are exactly $K = \{3, 4, 6, 9\}$. The resulting SEPI is 
\begin{align*}
    w & \geq 1.4 + (1.1-1.4)(x_6+1) + (0.7-1.1)(x_9+1) + (0.6-0.7)x_3 + (0.4-0.6)x_4 \\
    & = 0.7 -0.1x_3-0.2x_4-0.3x_6 - 0.4x_9.
\end{align*}
The mixing inequality of form \eqref{eq:mix_b} is also
\begin{align*}
    w & \geq 0.1(1-x_3) + 0.2(1-x_4) + 0.3(1-x_6) + 0.1(1-x_9) - 0.3x_9 \\
    & = 0.7 -0.1x_3 - 0.2x_4 - 0.3x_6 - 0.4x_9. 
\end{align*}
\end{example}

\begin{proposition}
For any $\mathbf{p}\in\mathbb{Z}^n$ and $\boldsymbol{\delta}\in\mathfrak{S}(N)$, let $K$ be the output of Algorithm \ref{alg:build_K}. The SEPI associated with $\mathbf{p}$ and $\boldsymbol{\delta}$ is
\begin{itemize}
    \item $w \geq 0$, the mixing inequality with respect to $K = \emptyset$, if $p_{\min} \geq 1$; 
    \item \eqref{eq:mix_a} with respect to $K$, if $p_{\min} = 0$; 
    \item \eqref{eq:mix_b} with respect to $K$, otherwise. 
\end{itemize}
\end{proposition}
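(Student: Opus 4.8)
The plan is to work directly with the telescoped form of the SEPI,
\[ w \;\ge\; f(\mathbf{p}) + \sum_{i=1}^{n} \left[ f\!\left(\mathbf{p}^{i}\right) - f\!\left(\mathbf{p}^{i-1}\right) \right]\left(x_{\delta(i)} - p_{\delta(i)}\right), \]
recalling $\mathbf{p}^{k} = \mathbf{p} + \sum_{j=1}^{k} \mathbf{1}^{\delta(j)}$. First I would evaluate $f$ along the chain $\mathbf{p}^{0},\dots,\mathbf{p}^{n}$, then identify the coordinates on which the coefficient $f(\mathbf{p}^{i}) - f(\mathbf{p}^{i-1})$ is nonzero, show these are exactly the elements of the set $K$ returned by Algorithm \ref{alg:build_K}, and finish by carrying out the remaining telescoping and matching it term-by-term to the claimed mixing inequality.

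The case $p_{\min}\ge 1$ is immediate: then $\mathbf{p}^{k}\ge\mathbf{1}$ componentwise for every $k$, so $q_i - (\mathbf{p}^{k})_i \le q_i - 1 < 0$ for all $i$, hence $f(\mathbf{p}^{k}) = 0$ throughout; all SEPI coefficients vanish and the SEPI is $w\ge 0$, the mixing inequality for $K=\emptyset$. For $p_{\min}\le 0$ I would use $q_i\in[0,1)$ and $\mathbf{p}\in\mathbb{Z}^n$ to argue that in $f(\mathbf{p}^{k}) = \max_i\{q_i - (\mathbf{p}^{k})_i\}\vee 0$ the maximum can only be attained, as long as $N_{p_{\min}}$ still has a non-incremented coordinate, by indices in $N_{p_{\min}}$, and (when $p_{\min}\le -1$) by indices in $N_{p_{\min}+1}$ afterward, because every other coordinate contributes strictly less than $-p_{\min}$, which is a lower bound for the running maximum. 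This reduces the computation of $f(\mathbf{p}^{k})$ to tracking the largest $q_i$ over the not-yet-incremented coordinates of $N_{p_{\min}}$, and then of $N_{p_{\min}+1}$.

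The heart of the argument is matching this bookkeeping with Algorithm \ref{alg:build_K}. Since $q_1\ge\cdots\ge q_n$, the ``$\min$ index'' rule in lines 6, 8, 14, 16 selects the eligible coordinate of largest $q$-value, and one checks by induction that $k_1,\dots,k_L$ are precisely the successive maximizers of $f(\mathbf{p}^{k})$ for $k=0,\dots,\delta^{-1}(k_L)-1$, with $q_{k_1}\ge\cdots\ge q_{k_L}$, that $k_L$ is the $\boldsymbol{\delta}$-last element of $N_{p_{\min}}$, and that when $p_{\min}\le -1$ the coordinates $k'_1,\dots,k'_{L'}$ are the maximizers thereafter, being exactly those elements of $N_{p_{\min}+1}$ that follow $k_L$ in $\boldsymbol{\delta}$ with $q$-value exceeding $q_{k_1}$, again in decreasing $q$-order. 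Hence $f(\mathbf{p}^{i}) - f(\mathbf{p}^{i-1}) = 0$ unless $\delta(i)\in K$, so the SEPI involves only $x_i$, $i\in K$, and coincides with the SEPI for $f'(\mathbf{x}):=\max_{i\in K}\{q_i - x_i\}\vee 0$ at the point obtained by restricting $\mathbf{p}$ to $K$, with the $\boldsymbol{\delta}$-induced order. Telescoping then gives coefficient $q_{k_{\kappa+1}} - q_{k_\kappa}$ on $x_{k_\kappa}$ for $\kappa<L$, coefficient $q_{k'_{\kappa'+1}} - q_{k'_{\kappa'}}$ on $x_{k'_{\kappa'}}$ for $\kappa'<L'$, coefficient $q_{k_1} - q_{k'_{L'}}$ on $x_{k'_{L'}}$, a coefficient on $x_{k_L}$ that picks up an extra $-1$ when $p_{\min}\le -1$ (and is just $-q_{k_L}$ when $p_{\min}=0$), and, after the dependence on $p_{\min}$ cancels, constant term $\max_{i\in K}q_i$. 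Expanding \eqref{eq:mix_a} (when $p_{\min}=0$) and \eqref{eq:mix_b} (when $p_{\min}\le -1$) in the descending $q$-order on $K$ — which is $(k'_1,\dots,k'_{L'},k_1,\dots,k_L)$ in the latter case — yields exactly these coefficients, so the two inequalities are identical; this algebra mirrors the computations already done in the proof of Proposition \ref{prop:mix_a} and of its analogue for form \eqref{eq:mix_b}.

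The main obstacle is the combinatorial verification that Algorithm \ref{alg:build_K} produces precisely the sequence of successive chain-maximizers, in particular the handoff at $k=\delta^{-1}(k_L)$ from $N_{p_{\min}}$ to $N_{p_{\min}+1}$ and the correct treatment of the boundary coordinates $k_L$ and $k'_{L'}$, which are responsible for the ``$-1$'' term and the ``wrap-around to $q_{k_1}$'' term distinguishing \eqref{eq:mix_b} from \eqref{eq:mix_a}. Ties among the $q_i$ also need a short remark: a tie makes some $k_\kappa$ contribute a zero coefficient in the SEPI, but the corresponding term of the mixing inequality then vanishes too, so the identification is unaffected.
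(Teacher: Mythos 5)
Your proposal is correct and follows essentially the same route as the paper's proof: evaluate $f$ along the unit chain $\mathbf{p}^0,\dots,\mathbf{p}^n$, verify that the value changes exactly at the successive maximizers produced by Algorithm \ref{alg:build_K} (including the handoff from $N_{p_{\min}}$ to $N_{p_{\min}+1}$ and the terminal wrap-around to $q_{k_1}$ when $p_{\min}\le -1$), and then telescope to match the coefficients of \eqref{eq:mix_a} or \eqref{eq:mix_b}. The only difference is organizational—you reduce to the restricted function on $K$ and reuse the algebra of Proposition \ref{prop:mix_a}, whereas the paper carries out the full telescoping with the $p_{\min}$ terms cancelling directly—which does not change the substance of the argument.
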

\begin{proof}
The first case is trivial. We prove the second and the third cases. 
Suppose $p_{\min} = 0$. We index the elements in $K$ by $\boldsymbol{\sigma}(K)$ such that $q_{\sigma(1)} \geq \dots \geq q_{\sigma(|K|)}$. It must be that $f(\mathbf{p}^k) = q_{\sigma(1)}$ for $k = 0, \dots, \delta^{-1}(\sigma(1))-1$. Similarly, $f(\mathbf{p}^k) = q_{\sigma(j)}$ for $k = \delta^{-1}(\sigma(j-1)), \dots, \delta^{-1}(\sigma(j))-1$. Lastly, $f(\mathbf{p}^k) = 0$ for $k = \delta^{-1}(\sigma(|K|)), \dots, n$ because $\mathbf{p}^k \geq \mathbf{1}$ for all such $k$. Recall that $q_{\sigma(|K|+1)} = 0$. Therefore, the SEPI is 
\begin{align*}
w & \geq  q_{\sigma(1)} + \sum_{j=1}^{|K|} (q_{\sigma(j+1)} - q_{\sigma(j)})x_{\sigma(j)}  \\
& = \sum_{j=1}^{|K|} (q_{\sigma(j)} - q_{\sigma(j+1)}) + \sum_{j=1}^{|K|} (q_{\sigma(j)} - q_{\sigma(j+1)})(-x_{\sigma(j)}) \\
& = \sum_{j=1}^{|K|} (q_{\sigma(j)} - q_{\sigma(j+1)})(1-x_{\sigma(j)}),
\end{align*}
which matches \eqref{eq:mix_a} with respect to $K$. \\

Suppose $p_{\min} \leq 1$. Recall that $K = \{k_1, \dots, k_L\} \cup \{k'_1, \dots, k'_{L'}\}$, consistent with the notation in Algorithm \ref{alg:build_K}.  Notice that $k'_1 > \dots > k'_{L'} > k_1 > \dots > k_L$, and
\[q_{k'_1} \geq \dots \geq q_{k'_{L'}} \geq q_{k_1} \geq \dots \geq q_{k_L}\]
by construction. We will denote $q_{k_{L+1}} = 0$. We observe that $f(\mathbf{p}^k) = q_{k_1} - p_{\min}$ for $k = 0, \dots, \delta^{-1}(k_1)-1$. Similarly, $f(\mathbf{p}^k) = q_{k_\kappa} - p_{\min}$ for $k = \delta^{-1}(k_{\kappa -1}), \dots, \delta^{-1}(k_\kappa)-1$, where $\kappa\in\{2,\dots, L\}$. If $L' = 0$, then  $f(\mathbf{p}^k) = q_{k_1} - p_{\min} - 1$ for all $k = \delta^{-1}(k_L), \dots, n$. The SEPI is 
\begin{align*}
w & \geq q_{k_1} - p_{\min} + \sum_{\kappa = 1}^{L-1} [(q_{k_{\kappa+1}}- p_{\min}) - (q_{k_{\kappa}}- p_{\min})](x_{k_\kappa} - p_{\min}) + [(q_{k_1} - p_{\min} - 1) - (q_{k_L}- p_{\min})](x_{k_L} - p_{\min}) \\
& = q_{k_1} - p_{\min} + \sum_{\kappa = 1}^{L-1} (q_{k_{\kappa+1}} - q_{k_{\kappa}})(x_{k_\kappa} - p_{\min}) + (q_{k_1} - 1 - q_{k_L})(x_{k_L} - p_{\min}) \\
& = q_{k_1} - p_{\min} + \sum_{\kappa = 1}^{L} (q_{k_{\kappa+1}} - q_{k_{\kappa}})(x_{k_\kappa} - p_{\min}) + (q_{k_1} - 1)(x_{k_L} - p_{\min}) \\
& = q_{k_1} - p_{\min} + \sum_{\kappa = 1}^{L} (q_{k_{\kappa}} - q_{k_{\kappa+1}})(p_{\min} - 1 + 1 - x_{k_\kappa}) + (q_{k_1} - 1)x_{k_L} - (q_{k_1} - 1)p_{\min}\\
& =  q_{k_1} - p_{\min} + q_{k_1}(p_{\min} - 1) + \sum_{\kappa = 1}^{L} (q_{k_{\kappa}} - q_{k_{\kappa+1}})(1 - x_{k_\kappa}) + (q_{k_1} - 1)x_{k_L} - (q_{k_1} - 1)p_{\min} \\
& = - p_{\min} + q_{k_1}p_{\min} + \sum_{\kappa = 1}^{L} (q_{k_{\kappa}} - q_{k_{\kappa+1}})(1 - x_{k_\kappa}) + (q_{k_1} - 1)x_{k_L} - q_{k_1}p_{\min} + p_{\min} \\
& = \sum_{\kappa = 1}^{L} (q_{k_{\kappa}} - q_{k_{\kappa+1}})(1 - x_{k_\kappa}) - (1- q_{k_1})x_{k_L},
\end{align*}
which matches \eqref{eq:mix_b} with respect to $K = \{k_1, \dots, k_L\}$. \\

On the other hand, suppose $L' \geq 1$. Our discussion about the values of $f(\mathbf{p}^k)$ for $k = 0, \dots, \delta^{-1}(k_L)-1$ still holds. For $k = \delta^{-1}(k_L), \dots, \delta^{-1}(k'_1)-1$, $f(\mathbf{p}^k) = q_{k'_1} - p_{\min} - 1$. Similarly,  $f(\mathbf{p}^k) = q_{k'_{\kappa'}} - p_{\min} - 1$ for $k = \delta^{-1}(k'_{\kappa'-1}), \dots, \delta^{-1}(k'_{\kappa'})-1$ where $\kappa'\in\{2, \dots, L'\}$. Lastly, $f(\mathbf{p}^k) = q_{k_1} - p_{\min} - 1$ for all $k = \delta^{-1}(k'_{L'}), \dots, n$. Recall that $q_{k_{L+1}} = 0$. We construct the SEPI as follows. 
\begingroup
\allowdisplaybreaks
\begin{align*}
w & \geq q_{k_1} - p_{\min} + \sum_{\kappa = 1}^{L-1} [(q_{k_{\kappa+1}}- p_{\min}) - (q_{k_{\kappa}}- p_{\min})](x_{k_\kappa} - p_{\min}) \\
& \quad \quad + [(q_{k'_1} - p_{\min} - 1) - (q_{k_L}- p_{\min})](x_{k_L} - p_{\min}) \\
& \quad \quad + \sum_{\kappa' = 1}^{L'-1} [(q_{k'_{\kappa'+1}}- p_{\min} - 1) - (q_{k'_{\kappa'}}- p_{\min} - 1)](x_{k'_{\kappa'}} - p_{\min} - 1) \\
& \quad \quad + [(q_{k_1} - p_{\min} - 1) - (q_{k'_{L'}}- p_{\min}- 1)](x_{k'_{L'}} - p_{\min} - 1) \\
& = q_{k_1} - p_{\min} + \sum_{\kappa = 1}^{L-1} (q_{k_{\kappa+1}} - q_{k_{\kappa}})(x_{k_\kappa} - p_{\min}) \\
& \quad \quad + \sum_{\kappa' = 1}^{L'-1} (q_{k'_{\kappa'+1}} - q_{k'_{\kappa'}})(x_{k'_{\kappa'}} - p_{\min} - 1) \\
& \quad \quad + (q_{k_1} - q_{k'_{L'}})(x_{k'_{L'}} - p_{\min} - 1) \\
& \quad \quad + (q_{k'_1}  - 1)(x_{k_L} - p_{\min}) + (q_{k_{L+1}} - q_{k_L})(x_{k_L} - p_{\min}) \\
& = q_{k_1} - p_{\min} + \sum_{\kappa = 1}^{L} (q_{k_{\kappa+1}} - q_{k_{\kappa}})(x_{k_\kappa} - p_{\min}) \\
& \quad \quad + \sum_{\kappa' = 1}^{L'-1} (q_{k'_{\kappa'+1}} - q_{k'_{\kappa'}})(x_{k'_{\kappa'}} - p_{\min}) + (q_{k_1} - q_{k'_{L'}})(x_{k'_{L'}} - p_{\min}) \\
& \quad \quad + \sum_{\kappa' = 1}^{L'-1} (q_{k'_{\kappa'}} - q_{k'_{\kappa'+1}}) + (q_{k'_{L'}}- q_{k_1}) + (q_{k'_1}  - 1)(x_{k_L} - p_{\min}) \\
& = q_{k_1} - p_{\min} + \sum_{\kappa = 1}^{L} (q_{k_{\kappa}} - q_{k_{\kappa+1}})(p_{\min}-1+1 - x_{k_\kappa}) \\
& \quad \quad + \sum_{\kappa' = 1}^{L'-1} (q_{k'_{\kappa'}} - q_{k'_{\kappa'+1}})(p_{\min}-1+1 -x_{k'_{\kappa'}}) + (q_{k'_{L'}} - q_{k_1})(p_{\min}-1+1 -x_{k'_{L'}}) \\
& \quad \quad + (q_{k'_{1}}- q_{k_1}) + (q_{k'_1}  - 1)x_{k_L} - (q_{k'_1}  - 1)p_{\min} \\
& = q_{k_1} - p_{\min} + \sum_{\kappa = 1}^{L} (q_{k_{\kappa}} - q_{k_{\kappa+1}})(1 - x_{k_\kappa})  + \sum_{\kappa' = 1}^{L'-1} (q_{k'_{\kappa'}} - q_{k'_{\kappa'+1}})(1 -x_{k'_{\kappa'}}) + (q_{k'_{L'}} - q_{k_1})(1 -x_{k'_{L'}}) \\
& \quad \quad (q_{k_1} + q_{k'_1} - q_{k'_{L'}} + q_{k'_{L'}} - q_{k_1})(p_{\min}-1) + (q_{k'_{1}}- q_{k_1}) + (q_{k'_1}  - 1)x_{k_L} - (q_{k'_1}  - 1)p_{\min} \\
& = \sum_{\kappa = 1}^{L} (q_{k_{\kappa}} - q_{k_{\kappa+1}})(1 - x_{k_\kappa})  + \sum_{\kappa' = 1}^{L'-1} (q_{k'_{\kappa'}} - q_{k'_{\kappa'+1}})(1 -x_{k'_{\kappa'}}) + (q_{k'_{L'}} - q_{k_1})(1 -x_{k'_{L'}}) + (q_{k'_1}  - 1)x_{k_L}\\
& \quad \quad + q_{k_1} - p_{\min} + q_{k'_1}p_{\min}-q_{k'_1} + q_{k'_{1}}- q_{k_1} - q_{k'_1} p_{\min} + p_{\min} \\
& = \sum_{\kappa' = 1}^{L'-1} (q_{k'_{\kappa'}} - q_{k'_{\kappa'+1}})(1 -x_{k'_{\kappa'}}) + (q_{k'_{L'}} - q_{k_1})(1 -x_{k'_{L'}}) + \sum_{\kappa = 1}^{L} (q_{k_{\kappa}} - q_{k_{\kappa+1}})(1 - x_{k_\kappa})  + (q_{k'_1}  - 1)x_{k_L}. 
\end{align*}
\endgroup
The last inequality is exactly \eqref{eq:mix_b}. To see this, 
recall that $|K| = L' + L$ and  $q_{k'_1} \geq \dots \geq q_{k'_{L'}} \geq q_{k_1} \geq \dots \geq q_{k_L}$, which gives $\boldsymbol{\sigma}\in\mathfrak{S}(K)$ where $\sigma(j) = k'_j$ for $j = 1,\dots, L'$, and $\sigma(j) = k_j$ for $j = 1+L',\dots,L+L'$. With this alternative notation, the SEPI is 
\[w \geq \sum_{j=1}^{|K|} (q_{\sigma(j)} - q_{\sigma(j+1)})(1 -x_{\sigma(j)}) + (q_{\sigma(1)}  - 1)x_{\sigma(|K|)}. \]
\end{proof}

In sum, we have uncovered the hidden L$^\natural$-convexity in an important mixed-integer structure, namely the general integer mixing set $\mathcal{P}^{\text{MIX}}$. Our results subsume $\conv{\mathcal{P}^{\text{MIX}}}$, and this convex hull serves as an example of $\conv{\mathcal{P}^f_{\mathcal{X}}}$ that is polyhedral (i.e., requires only \emph{finitely} many SEPIs), despite the unbounded domain $\mathcal{X}$.

\section{Mixed-Integer Sets Described by Multiple L$^\natural$-Convex Functions with Common Variables}
\label{sect:multi_LC}

In the previous section, we focused on the mixed-integer set defined by a single L$^\natural$-convex function, namely its epigraph. In what follows, we further explore mixed-integer sets constrained by multiple L$^\natural$-convex functions simultaneously. Throughout, $K = \{1, \dots, k\}$ denotes a non-empty finite index set, and $f_i, g_i:\mathcal{X}\rightarrow\mathbb{R}$ are L$^\natural$-convex functions over a common domain $\mathcal{X}$, for every $i\in K$. The sets that we aim to convexify are as follows: 
\begin{equation}
\label{eq:Q}
\mathcal{Q} := \left\{(\mathbf{w},\mathbf{x})\in \mathbb{R}^k \times \mathcal{X} : w_i \geq f_i(\mathbf{x}), \; \forall\, i\in K\right\}, 
\end{equation}
and 
\begin{equation}
\label{eq:Q'}
\mathcal{Q}' := \left\{(\boldsymbol{\eta}, \mathbf{w},\mathbf{x})\in \mathbb{R}^{2k} \times \mathcal{X} : 
\begin{aligned}
& \eta_i \geq g_i(\mathbf{x}), \; \forall\, i\in K,\\
& w_i \geq f_i(\mathbf{x}), \; \forall\, i\in K, \\
& \eta_i - w_i = \eta_{i'} - w_{i'}, \; \forall\, i, i'\in K
\end{aligned}
\right\}. 
\end{equation}
The first set $\mathcal{Q}$ is defined by $k$ L$^\natural$-convex functions, namely $\{f_i\}_{i\in K}$, which share the same variables $\mathbf{x}\in\mathcal{X}$. The second set $\mathcal{Q}'$ is defined by $k$ \emph{pairs} of L$^\natural$-convex functions, $\{f_i, g_i\}_{i\in K}$, with common $\mathbf{x}\in\mathcal{X}$. This set is further restricted with a set of linear equality constraints, such that the upper bounds $w_i$ and $\eta_i$ on $f_i$ and $g_i$, respectively, have the same difference across all $i\in K$. These two sets play a crucial role in our exploration of a mixed-integer extension of L-$^\natural$-convex functions, detailed in Section \ref{sect:mi_extension}. We present the convex hull results for $\mathcal{Q}$ and $\mathcal{Q}'$ in Proposition \ref{prop:inter1} and Proposition \ref{prop:inter2}, respectively. It is surprising that, under L$^\natural$-convexity, convexifying every individual L$^\natural$-convex function's epigraph and taking their intersection sufficiently defines the convex hull of the mixed-integer sets of interest.

\begin{proposition}
\label{prop:inter1}
Recall the set $\mathcal{Q}$ given by \eqref{eq:Q}, where $\{f_i\}_{i\in K}$ are L$^\natural$-convex. Let 
\begin{equation}
\label{eq:CQ}
CQ := \left\{(\mathbf{w}, \mathbf{x}) \in \mathbb{R}^k \times \overline{\mathcal{X}} : (w_i, \mathbf{x})\in \conv{\mathcal{P}^{f_i}_\mathcal{X}}, \forall\, i\in K\right\}. 
\end{equation}
Then $\conv{\mathcal{Q}} = \mathcal{CQ}$. 
\end{proposition}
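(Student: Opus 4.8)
The plan is to prove the two inclusions separately, the easy one by validity and the hard one by an explicit convex decomposition. For $\conv{\mathcal{Q}} \subseteq \mathcal{CQ}$: every inequality appearing in the description of $\conv{\mathcal{P}^{f_i}_\mathcal{X}}$ given by Theorem~\ref{thm:conv_SEPI} (a trivial bound or an SEPI for $f_i$) involves only $\mathbf{x}$ and $w_i$, and is valid for $\mathcal{Q}$ because $\mathcal{Q}$ projects into $\mathcal{P}^{f_i}_\mathcal{X}$ on the $(\mathbf{x}, w_i)$-coordinates. Hence $\mathcal{CQ}$, being an intersection of halfspaces each valid for $\mathcal{Q}$, is a convex superset of $\mathcal{Q}$, so it contains $\conv{\mathcal{Q}}$.

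For the reverse inclusion I would fix an arbitrary $(\hat{\mathbf{w}}, \hat{\mathbf{x}}) \in \mathcal{CQ}$ and write it as a convex combination of points of $\mathcal{Q}$ plus a nonnegative combination of the obvious recession directions $(\mathbf{1}^i, \mathbf{0})$, $i \in K$ (increasing $w_i$ preserves membership in $\mathcal{Q}$). The crucial observation is that the decomposition of $\hat{\mathbf{x}}$ produced in the proof of Theorem~\ref{thm:conv_SEPI} via Algorithm~\ref{alg:frac_greedy} depends only on $\hat{\mathbf{x}}$, not on any particular function. Concretely, run Algorithm~\ref{alg:frac_greedy} on $\hat{\mathbf{x}}$ (which lies in $\overline{\mathcal{X}}$ by definition of $\mathcal{CQ}$) to obtain $\overline{\mathbf{p}} \in \underline{\mathcal{X}}$ and $\overline{\boldsymbol{\delta}} \in \mathfrak{S}(N)$; set $\mathbf{p}^l := \overline{\mathbf{p}} + \sum_{j=1}^l \mathbf{1}^{\overline{\delta}(j)}$ for $l = 0, \dots, n$, so that $\mathbf{p}^0 = \overline{\mathbf{p}}$, $\mathbf{p}^n = \overline{\mathbf{p}} + \mathbf{1}$, and all $\mathbf{p}^l \in \mathcal{X}$; and define the multipliers $\lambda_0, \dots, \lambda_n$ exactly as in the proof of Theorem~\ref{thm:conv_SEPI}. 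As established there, they are nonnegative, sum to $1$, and satisfy $\sum_{l=0}^n \lambda_l \mathbf{p}^l = \hat{\mathbf{x}}$.

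Now for each $i \in K$, since $(\hat{w}_i, \hat{\mathbf{x}}) \in \conv{\mathcal{P}^{f_i}_\mathcal{X}}$, Theorem~\ref{thm:conv_SEPI} shows it satisfies the SEPI for $f_i$ associated with $\overline{\mathbf{p}}$ and $\overline{\boldsymbol{\delta}}$; and the computation in that proof shows the right-hand side of this SEPI, evaluated at $\hat{\mathbf{x}}$, equals $\sum_{l=0}^n \lambda_l f_i(\mathbf{p}^l)$. Hence $\hat{w}_i \geq \sum_{l=0}^n \lambda_l f_i(\mathbf{p}^l)$ for every $i \in K$, and therefore
\[
(\hat{\mathbf{w}}, \hat{\mathbf{x}}) = \sum_{l=0}^n \lambda_l \big((f_1(\mathbf{p}^l), \dots, f_k(\mathbf{p}^l)), \mathbf{p}^l\big) + \sum_{i\in K} \Big(\hat{w}_i - \sum_{l=0}^n \lambda_l f_i(\mathbf{p}^l)\Big)(\mathbf{1}^i, \mathbf{0}),
\]
where each point $\big((f_1(\mathbf{p}^l), \dots, f_k(\mathbf{p}^l)), \mathbf{p}^l\big)$ belongs to $\mathcal{Q}$, each coefficient $\hat{w}_i - \sum_{l=0}^n \lambda_l f_i(\mathbf{p}^l)$ is nonnegative, and each $(\mathbf{1}^i, \mathbf{0})$ is a recession direction of $\mathcal{Q}$. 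Thus $(\hat{\mathbf{w}}, \hat{\mathbf{x}}) \in \conv{\mathcal{Q}}$, completing the inclusion $\mathcal{CQ} \subseteq \conv{\mathcal{Q}}$.

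I do not expect a genuine obstacle here: essentially all of the work is reused from the proof of Theorem~\ref{thm:conv_SEPI}. The single point that must be made carefully — and which is the whole reason the intersection of the individual epigraph convex hulls already yields $\conv{\mathcal{Q}}$, unlike the general situation for mixed-integer sets — is that the lattice points $\mathbf{p}^l$ and the multipliers $\lambda_l$ coming out of Algorithm~\ref{alg:frac_greedy} are determined by $\hat{\mathbf{x}}$ alone and therefore serve simultaneously as a certificate of membership for all $k$ functions $f_i$.
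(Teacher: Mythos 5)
Your proposal is correct and follows essentially the same route as the paper's proof: run Algorithm \ref{alg:frac_greedy} on $\hat{\mathbf{x}}$ alone to get a single pair $(\overline{\mathbf{p}},\overline{\boldsymbol{\delta}})$ and multipliers $\lambda_l$, use validity of the corresponding SEPI for each $f_i$ to bound $\hat{w}_i$ by $\sum_l \lambda_l f_i(\mathbf{p}^l)$, and decompose $(\hat{\mathbf{w}},\hat{\mathbf{x}})$ into points $((f_1(\mathbf{p}^l),\dots,f_k(\mathbf{p}^l)),\mathbf{p}^l)\in\mathcal{Q}$ plus the recession directions raising the $w_i$. The only cosmetic difference is that you skip the paper's separate treatment of $\hat{\mathbf{x}}\in\mathcal{X}$, which your argument handles uniformly anyway.
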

\begin{proof}
Every $(\overline{\mathbf{w}}, \overline{\mathbf{x}}) \in \mathcal{Q}$ satisfies $(\overline{w}_i, \overline{\mathbf{x}})\in \conv{\mathcal{P}^{f_i}_{\mathcal{X}}}$ for all $i\in K$. Thus, $\conv{\mathcal{Q}} \subseteq \mathcal{CQ}$. Next, we show that $\conv{\mathcal{Q}} \supseteq \mathcal{CQ}$. Consider any $(\overline{\mathbf{w}}, \overline{\mathbf{x}}) \in \mathcal{CQ}$. If $\overline{\mathbf{x}}\in\mathcal{X}$, then $(\overline{w}_i, \overline{\mathbf{x}})\in \mathcal{P}^{f_i}_\mathcal{X}$ for all $i\in K$. Therefore, $(\overline{\mathbf{w}}, \overline{\mathbf{x}}) \in \mathcal{Q}\subset \conv{\mathcal{Q}}$. On the other hand, suppose $\overline{\mathbf{x}}\in\overline{\mathcal{X}}\setminus\mathcal{X}$. We apply Algorithm \ref{alg:frac_greedy} on $\overline{\mathbf{x}}$ to obtain $\boldsymbol{\delta}\in\mathfrak{S}(N)$ and $\mathbf{p}\in\underline{\mathcal{X}}$. Note that for all $f_i$, $i\in K$, we obtain the same $\boldsymbol{\delta}$ and $\mathbf{p}$. Let 
\[r_{\delta(j)} = \overline{x}_{\delta(j)} - p_{\delta(j)},\]
 and $\mathbf{p}^j = \mathbf{p} + \sum_{i=1}^j\mathbf{1}^{\delta(i)}$ for $j\in N\cup\{0\}$. In addition, let 
\[\lambda_j = \begin{cases} 
1 - r_{\delta(1)},  & j = 0, \\
r_{\delta(j)} - r_{\delta(j+1)}, & j = 1, \dots, n-1, \\
r_{\delta(n)}, & j = n.
\end{cases}\]
Notice that $\boldsymbol{\lambda}^\top\mathbf{1} = 1$ and $\boldsymbol{\lambda}\geq \mathbf{0}$ by the construction of $\boldsymbol{\delta}$. Thus, $\overline{\mathbf{x}}$ is given by the convex combination of $\{\mathbf{p}^{j}\}_{j=0}^n$ as the following: 
\[\overline{\mathbf{x}} = \mathbf{p}^0 + \sum_{j=1}^n r_{\delta(j)}\left(\mathbf{p}^{j} - \mathbf{p}^{j-1}\right) = \sum_{j=0}^n \lambda_j \mathbf{p}^{j}. \]
In addition, for all $i\in K$, by validity of the SEPI associated with $\boldsymbol{\delta}$ and $\mathbf{p}$ for $\mathcal{P}^{f_i}_\mathcal{X}$, 
\[w_i \geq f_i(\mathbf{p}^0) + \sum_{j=1}^n r_{\delta(j)}\left[f_i\left(\mathbf{p}^{j}\right) - f_i\left(\mathbf{p}^{j-1}\right)\right] = \sum_{j=0}^n \lambda_j f_i\left(\mathbf{p}^{j}\right), \]
for all $(w_i, \overline{\mathbf{x}}) \in \conv{\mathcal{P}^{f_i}_\mathcal{X}}$. Thus, for all $i\in K$, given that $(\overline{w}_i, \overline{\mathbf{x}})\in \conv{\mathcal{P}^{f_i}_\mathcal{X}}$, 
\[\overline{w}_i \geq \sum_{j=0}^n \lambda_j f_i\left(\mathbf{p}^{j}\right). \]
Consider $(\mathbf{w}^j, \mathbf{x}^j) \in \mathbb{R}^k \times \mathcal{X}$ for $j\in N\cup\{0\}$, where 
\[w^j_i = f_i\left(\mathbf{p}^{j}\right), \; \forall\, i\in K, \]
and 
\[\mathbf{x}^j = \mathbf{p}^{j}.\]
We observe that $(\mathbf{w}^j, \mathbf{x}^j) \in \mathcal{Q}$ for all $j\in N\cup\{0\}$, 
$\overline{\mathbf{x}} = \sum_{j=0}^n \lambda_j \mathbf{x}^j$,
and 
$\overline{\mathbf{w}} \geq \sum_{j=0}^n \lambda_j \mathbf{w}^j$. 
This implies that $(\overline{\mathbf{w}}, \overline{\mathbf{x}}) \in \conv{\mathcal{Q}}$. Hence, $\conv{\mathcal{Q}}\supseteq\mathcal{CQ}$, and we conclude that $\conv{\mathcal{Q}} = \mathcal{CQ}$. 
\end{proof}

\begin{remark}
Recall the fact that submodular functions form a subclass of L$^\natural$-convex functions. Proposition \ref{prop:inter1} subsumes a result by \citet{baumann2013exact} as a special case, where $\mathcal{X} = \{0,1\}^n$, and $f_i$ is a submodular function for every $i\in K$. 
\end{remark}

\begin{proposition}
\label{prop:inter2}
Recall the set $\mathcal{Q}'$ defined by \eqref{eq:Q'}, where $\{g_i, f_i\}_{i\in K}$ are L$^\natural$-convex.
Let 
\begin{equation}
\label{eq:CQ'}
CQ' := \left\{(\boldsymbol{\eta}, \mathbf{w},\mathbf{x})\in \mathbb{R}^{2k} \times \overline{\mathcal{X}} : 
\begin{aligned}
& (\eta_i, \mathbf{x}) \in \conv{\mathcal{P}^{g_i}_\mathcal{X}}, \; \forall\, i\in K, \\
& (w_i, \mathbf{x}) \in \conv{\mathcal{P}^{f_i}_\mathcal{X}}, \; \forall\, i\in K, \\
& \eta_i - w_i = \eta_{i'} - w_{i'}, \; \forall\, i, i'\in K
\end{aligned}
\right\}. 
\end{equation}
If there exists $h:\mathcal{X}\rightarrow\mathbb{R}$ with $h(\mathbf{x}) = g_i(\mathbf{x}) - f_i(\mathbf{x})$ for all $\mathbf{x}\in\mathcal{X}$ and $i\in K$, then $\conv{\mathcal{Q}'} = \mathcal{CQ}'$. 
\end{proposition}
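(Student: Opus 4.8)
The plan is to follow the template of Proposition~\ref{prop:inter1}, with the linking equalities $\eta_i-w_i=\eta_{i'}-w_{i'}$ handled by a recession-cone argument. The inclusion $\conv{\mathcal{Q}'}\subseteq\mathcal{CQ}'$ is immediate: $\mathcal{CQ}'$ is an intersection of convex sets---the lifted epigraph hulls $\conv{\mathcal{P}^{g_i}_\mathcal{X}}$ and $\conv{\mathcal{P}^{f_i}_\mathcal{X}}$ together with the affine subspace $\{\eta_i-w_i=\eta_{i'}-w_{i'}\}$---hence convex, and it contains $\mathcal{Q}'$ because every point of $\mathcal{Q}'$ lies in each $\mathcal{P}^{g_i}_\mathcal{X}\subseteq\conv{\mathcal{P}^{g_i}_\mathcal{X}}$ and each $\mathcal{P}^{f_i}_\mathcal{X}\subseteq\conv{\mathcal{P}^{f_i}_\mathcal{X}}$. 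So the real work is proving $\mathcal{CQ}'\subseteq\conv{\mathcal{Q}'}$.

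Fix $(\overline{\boldsymbol{\eta}},\overline{\mathbf{w}},\overline{\mathbf{x}})\in\mathcal{CQ}'$ and let $\overline{t}$ be the common value of $\overline{\eta}_i-\overline{w}_i$. If $\overline{\mathbf{x}}\in\mathcal{X}$, then arguing exactly as in Proposition~\ref{prop:inter1} (via Theorem~\ref{thm:conv_SEPI}), membership of $(\overline{\eta}_i,\overline{\mathbf{x}})$ and $(\overline{w}_i,\overline{\mathbf{x}})$ in the respective epigraph hulls forces $\overline{\eta}_i\ge g_i(\overline{\mathbf{x}})$ and $\overline{w}_i\ge f_i(\overline{\mathbf{x}})$, so $(\overline{\boldsymbol{\eta}},\overline{\mathbf{w}},\overline{\mathbf{x}})\in\mathcal{Q}'\subseteq\conv{\mathcal{Q}'}$. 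Otherwise $\overline{\mathbf{x}}\in\overline{\mathcal{X}}\setminus\mathcal{X}$; run Algorithm~\ref{alg:frac_greedy} on $\overline{\mathbf{x}}$ to obtain $\mathbf{p}\in\underline{\mathcal{X}}$ and $\boldsymbol{\delta}\in\mathfrak{S}(N)$---the \emph{same} $\mathbf{p},\boldsymbol{\delta}$ for all $g_i$ and all $f_i$, since the algorithm sees only $\overline{\mathbf{x}}$ and the bounds. Put $\mathbf{p}^j=\mathbf{p}+\sum_{m=1}^{j}\mathbf{1}^{\delta(m)}$ and define the multipliers $\lambda_0,\dots,\lambda_n$ from $\overline{\mathbf{x}}$ exactly as in the proof of Theorem~\ref{thm:conv_SEPI}; then $\lambda_j\ge0$, $\sum_j\lambda_j=1$, and $\overline{\mathbf{x}}=\sum_{j=0}^{n}\lambda_j\mathbf{p}^j$. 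Evaluating the SEPIs associated with $(\mathbf{p},\boldsymbol{\delta})$---valid for $\conv{\mathcal{P}^{f_i}_\mathcal{X}}$ and $\conv{\mathcal{P}^{g_i}_\mathcal{X}}$ by Proposition~\ref{prop:valid_SEPI}---at $\overline{\mathbf{x}}$ and applying the same telescoping rearrangement as in Theorem~\ref{thm:conv_SEPI} yields $\overline{w}_i\ge\sum_{j=0}^{n}\lambda_j f_i(\mathbf{p}^j)$ and $\overline{\eta}_i\ge\sum_{j=0}^{n}\lambda_j g_i(\mathbf{p}^j)$ for every $i\in K$.

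Here the hypothesis on $h$ is used. For each $j$ set $\eta^j_i=g_i(\mathbf{p}^j)$, $w^j_i=f_i(\mathbf{p}^j)$; the point $(\boldsymbol{\eta}^j,\mathbf{w}^j,\mathbf{p}^j)$ lies in $\mathcal{Q}'$ because $\mathbf{p}^j\in\mathcal{X}$, the epigraph inequalities hold with equality, and $\eta^j_i-w^j_i=g_i(\mathbf{p}^j)-f_i(\mathbf{p}^j)=h(\mathbf{p}^j)$ does not depend on $i$. Define the residual $(\mathbf{s}^{\eta},\mathbf{s}^{w}):=\bigl(\overline{\boldsymbol{\eta}}-\sum_j\lambda_j\boldsymbol{\eta}^j,\ \overline{\mathbf{w}}-\sum_j\lambda_j\mathbf{w}^j\bigr)$. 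Its entries are nonnegative by the two bounds just derived, and $s^{\eta}_i-s^{w}_i=(\overline{\eta}_i-\overline{w}_i)-\sum_j\lambda_j\bigl(g_i(\mathbf{p}^j)-f_i(\mathbf{p}^j)\bigr)=\overline{t}-\sum_j\lambda_j h(\mathbf{p}^j)$ equals one and the same scalar $\Delta$ for all $i$. Consequently $(\mathbf{s}^{\eta},\mathbf{s}^{w},\mathbf{0})$ is a recession direction of $\mathcal{Q}'$---adding it to any point of $\mathcal{Q}'$ preserves all epigraph inequalities and shifts every difference $\eta_i-w_i$ by the common $\Delta$---so $(\overline{\boldsymbol{\eta}},\overline{\mathbf{w}},\overline{\mathbf{x}})=\sum_{j=0}^{n}\lambda_j(\boldsymbol{\eta}^j,\mathbf{w}^j,\mathbf{p}^j)+(\mathbf{s}^{\eta},\mathbf{s}^{w},\mathbf{0})\in\conv{\mathcal{Q}'}$, completing the reverse inclusion. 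The main obstacle is precisely this last consistency check: in contrast to Proposition~\ref{prop:inter1}, where slack can be added to each epigraph variable independently, here the residual must respect the equality constraints, and it is the uniformity of $h$ (equivalently, that $g_i-f_i$ is independent of $i$) that simultaneously makes the candidate integral points $(\boldsymbol{\eta}^j,\mathbf{w}^j,\mathbf{p}^j)$ feasible for $\mathcal{Q}'$ and forces the residual into the recession cone of $\mathcal{Q}'$.
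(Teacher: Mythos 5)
Your proof is correct and follows essentially the same route as the paper: the easy inclusion by relaxation, and for the reverse inclusion the same use of Algorithm~\ref{alg:frac_greedy} to get a common $(\mathbf{p},\boldsymbol{\delta})$, the same multipliers $\lambda_j$, the same candidate points $([g_i(\mathbf{p}^j)]_i,[f_i(\mathbf{p}^j)]_i,\mathbf{p}^j)$ made feasible for $\mathcal{Q}'$ by the hypothesis on $h$. The only difference is that you spell out explicitly, via the recession-cone argument for the residual $(\mathbf{s}^{\eta},\mathbf{s}^{w},\mathbf{0})$ with constant difference $\Delta$, the final step that the paper leaves implicit; this is a welcome clarification but not a different approach.
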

\begin{proof}
As a relaxation, $\mathcal{Q}' \subseteq \mathcal{CQ}'$, thus $\conv{\mathcal{Q}'}\subseteq \mathcal{CQ}'$. For the reverse containment, we consider any element $(\overline{\boldsymbol{\eta}}, \overline{\mathbf{w}}, \overline{\mathbf{x}})\in\mathcal{CQ}'$. If $\overline{\mathbf{x}}\in\mathcal{X}$, then $(\overline{\boldsymbol{\eta}}, \overline{\mathbf{w}}, \overline{\mathbf{x}})\in\mathcal{Q}' \subseteq \conv{\mathcal{Q}'}$. Otherwise, $\overline{\mathbf{x}}\in\overline{\mathcal{X}}\setminus \mathcal{X}$. Following the argument for Proposition \ref{prop:inter1}, we obtain $\mathbf{p}\in\underline{\mathcal{X}}$ and $\boldsymbol{\delta}\in\mathfrak{S}(N)$ with respect to $\overline{\mathbf{x}}$. Then we construct 
\[(\boldsymbol{\eta}^j, \mathbf{w}^j, \mathbf{x}^j) = ([g_i(\mathbf{p}^{j})]_{i\in K},\; [f_i(\mathbf{p}^{j})]_{i\in K}, \;\mathbf{p}^{j})\]
 for all $j\in N\cup\{0\}$, and $\boldsymbol{\lambda}\in \mathbb{R}^{n+1}_+$ such that 
\[\boldsymbol{\lambda}^\top\mathbf{1} = 1, \]
\[\overline{\mathbf{x}} = \sum_{j=0}^n \lambda_j \mathbf{x}^j,\]
and 
\[[\overline{\boldsymbol{\eta}} \;\; \overline{\mathbf{w}}] \geq \sum_{j=0}^n \lambda_j [\boldsymbol{\eta}^j \; \mathbf{w}^j].\]
Moreover, for every $j\in N\cup\{0\}$ and any pair of $i, i'\in K$, 
\[\eta^j_i - w^j_i = g_i(\mathbf{p}^{j}) - f_i(\mathbf{p}^{j}) = h(\mathbf{p}^{j}) = g_{i'}(\mathbf{p}^{j}) - f_{i'}(\mathbf{p}^{j}) = \eta^j_{i'} - w^j_{i'},\]
so $(\boldsymbol{\eta}^j, \mathbf{w}^j, \mathbf{x}^j)\in\mathcal{Q}'$. Hence, $\conv{\mathcal{Q}'}\supseteq \mathcal{CQ}'$. With the two-way containment, we conclude that  $\conv{\mathcal{Q}'} = \mathcal{CQ}'$.
\end{proof}

Until now, we have explored the mixed-integer sets described by one or more L$^\natural$-convex functions, which are defined over \emph{pure integer} domains. In what follows, we will examine an extension of L$^\natural$-convexity that concerns the functions defined over \emph{mixed-integer} domains.

\section{A Mixed-Integer Extension of L$^\natural$-Convexity}
\label{sect:mi_extension}

To the best of our knowledge, L$^\natural$-convexity is not formally defined for or naturally extendable to functions with mixed-integer arguments. Thus, we will focus on a structured class of functions with properties that resemble L$^\natural$-convexity and derive the characterizations of its epigraph. In particular, we consider the class of functions $H:\mathcal{X}\times\mathbb{R}_+ \rightarrow\mathbb{R}$ defined by 
\begin{equation}
\label{eq:H}
H(\mathbf{x}, \mathbf{y}):= \max_{i\in N}\{h^i(\mathbf{x}) - y_i\}, 
\end{equation}
where $h^i:\mathcal{X}\rightarrow\mathbb{R}$ for all $i\in N$. Here, $\mathbf{x}$ is a vector of pure integer variables.  In contrast to the previous sections, we allow additional variables $\mathbf{y}$ to be continuous. Our goal is to understand the epigraph of $H$, namely 
\[\mathcal{P}^H_{\mathbb{R}_+^n\times\mathcal{X}} := \left\{(w, \mathbf{y}, \mathbf{x})\in\mathbb{R}\times\mathbb{R}_+^n\times\mathcal{X} : w \geq h^i(\mathbf{x}) - y_i, \forall\,i\in N\right\}.\] 
For brevity, we use $\mathcal{P}^H_{\mathbb{R}_+^n\times\mathcal{X}}$ and $\mathcal{P}^H$ interchangeably.

\subsection{Properties of $\mathcal{P}^H$ for general $H$}
\label{sect:properties_PH}

In this section, we explore the properties of the epigraph, $\mathcal{P}^H$, without additional assumptions on $H$. We first consider the following polyhedron: 
\begin{equation}
\label{eq:Dx}
\mathcal{D}(\overline{\mathbf{x}}) := \left\{(w, \mathbf{y}) \in \mathbb{R}\times \mathbb{R}^n_+ : w \geq h^i(\overline{\mathbf{x}}) - y_i, \, \forall\, i\in N \right\}, 
\end{equation}
for every $\overline{\mathbf{x}}\in\mathcal{X}$. Note that 
\[\mathcal{P}^H = \left\{(w, \mathbf{y}, \mathbf{x})\in \mathbb{R}^{n+1}\times\mathcal{X} : (w, \mathbf{y}) \in \mathcal{D}(\mathbf{x})\right\}.\] 
For every $j\in N$, we further define $(w^j(\overline{\mathbf{x}}), \mathbf{y}^j(\overline{\mathbf{x}}))$ where
\begin{equation}
\label{eq:wyj}
w^j(\overline{\mathbf{x}}) := h^j(\overline{\mathbf{x}}), \quad y^j_i(\overline{\mathbf{x}}) := \max\{0, h^i(\overline{\mathbf{x}}) - h^j(\overline{\mathbf{x}})\},\, \forall\, i\in N.
\end{equation}
This arbitrary $\overline{\mathbf{x}}$ is fixed in what follows, unless mentioned otherwise. For brevity, we will use $(w^j(\overline{\mathbf{x}}), \mathbf{y}^j(\overline{\mathbf{x}}))$ and $(w^j, \mathbf{y}^j)$ interchangeably in this section. We observe that $\mathcal{D}(\overline{\mathbf{x}})$ contains the rays $(-1, \mathbf{1})$, $(1, \mathbf{0})$, and $(0, \mathbf{1}^i)$ for every $i\in N$. Moreover, $(w^j, \mathbf{y}^j) \in \mathcal{D}(\overline{\mathbf{x}})$ for all $j\in N$. These observations suggest that all convex combinations of these points and conical combinations of the rays form a subset of $\mathcal{D}(\overline{\mathbf{x}})$. In fact, they are sufficient to generate \emph{all} the elements of $\mathcal{D}(\overline{\mathbf{x}})$. We justify this claim in Lemma \ref{lem:Dx_gen_helper} and Proposition \ref{prop:Dx_gen}. First, in Lemma \ref{lem:Dx_gen_helper}, we prove that a subset of $\mathcal{D}(\overline{\mathbf{x}})$, more precisely, 
\begin{equation*}
\tilde{\mathcal{D}}(\overline{\mathbf{x}}) := \left\{(w, \mathbf{y})\in \mathcal{D}(\overline{\mathbf{x}}) : 
w  = \max_{i\in N} \{h^i(\overline{\mathbf{x}}) - y_i\},  
\{i\in N : y_i = 0\} \neq \emptyset
\right\}
\end{equation*}
can be generated using $\{(w^j, \mathbf{y}^j)\}_{j\in N}$ and the rays $\{(0, \mathbf{1}^i)\}_{i\in N}$.

\begin{lemma}
\label{lem:Dx_gen_helper}
For any $(\overline{w}, \overline{\mathbf{y}})\in \tilde{\mathcal{D}}(\overline{\mathbf{x}})$, there exists $\boldsymbol{\lambda}\in \mathbb{R}^{n}_+$ such that $\boldsymbol{\lambda}^\top\mathbf{1} = 1$, $\overline{w} =  \sum_{j\in N} \lambda_jw^j$, and $\overline{\mathbf{y}} \geq \sum_{j\in N} \lambda_j\mathbf{y}^j$. 
\end{lemma}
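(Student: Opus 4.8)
The plan is to work at the fixed point $\overline{\mathbf{x}}$ and abbreviate $a_i := h^i(\overline{\mathbf{x}})$, so that $w^j = a_j$ and $y^j_i = \max\{0, a_i - a_j\}$ for all $i,j\in N$. First I would extract from the hypothesis $(\overline{w},\overline{\mathbf{y}})\in\tilde{\mathcal{D}}(\overline{\mathbf{x}})$ two facts. Since $\tilde{\mathcal{D}}(\overline{\mathbf{x}})\subseteq\mathcal{D}(\overline{\mathbf{x}})$ and $\overline{\mathbf{y}}\geq\mathbf{0}$, we get $(a)$: $\overline{y}_i \geq \max\{0, a_i - \overline{w}\}$ for every $i\in N$. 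Next, the value $\overline{w}$ is sandwiched, $(b)$: $\min_{j\in N} a_j \leq \overline{w}\leq\max_{j\in N} a_j$. For the upper bound in $(b)$ use the tightness condition $\overline{w}=\max_{i\in N}\{a_i-\overline{y}_i\}\leq\max_{i\in N} a_i$; for the lower bound use the other defining condition of $\tilde{\mathcal{D}}(\overline{\mathbf{x}})$, namely the existence of $i_0$ with $\overline{y}_{i_0}=0$, which forces $\overline{w}\geq a_{i_0}-\overline{y}_{i_0}=a_{i_0}\geq\min_{j\in N} a_j$. Thus both nontrivial conditions in the definition of $\tilde{\mathcal{D}}(\overline{\mathbf{x}})$ are used, and they are used precisely to localize $\overline{w}$ inside the range of the $a_j$'s.

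Next I would relabel the elements of $N$ so that $a_1\geq a_2\geq\dots\geq a_n$ and let $m$ be the largest index with $a_m\geq\overline{w}$ (this exists by $(b)$). In the generic case $m<n$ we have $a_m\geq\overline{w}>a_{m+1}$, so $a_m>a_{m+1}$ and $\overline{w}$ is a genuine convex combination $\overline{w}=\mu a_m+(1-\mu)a_{m+1}$ with $\mu=(\overline{w}-a_{m+1})/(a_m-a_{m+1})\in(0,1]$. I would then take $\boldsymbol{\lambda}\in\mathbb{R}^n_+$ supported on the two indices $m$ and $m+1$, with weights $\mu$ and $1-\mu$; it sums to $1$ and satisfies $\sum_{j\in N}\lambda_j w^j=\mu a_m+(1-\mu)a_{m+1}=\overline{w}$ by construction. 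The degenerate case $m=n$ forces $\overline{w}=a_n=\min_{j\in N}a_j$ (combining $a_n\geq\overline{w}$ with $(b)$), and there I would take $\boldsymbol{\lambda}=\mathbf{1}^n$, i.e. all weight on index $n$, so again $\sum_{j\in N}\lambda_j w^j=a_n=\overline{w}$.

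It then remains to verify $\overline{y}_i\geq\sum_{j\in N}\lambda_j y^j_i$ coordinatewise, which reduces to a short case split on the position of $i$. For $i\leq m$ we have $a_i\geq a_m\geq a_{m+1}$, so both $\max\{0,\cdot\}$ operators are inactive and the weighted sum telescopes: $\sum_{j\in N}\lambda_j y^j_i=\mu(a_i-a_m)+(1-\mu)(a_i-a_{m+1})=a_i-\overline{w}$, which equals $\max\{0,a_i-\overline{w}\}\leq\overline{y}_i$ by $(a)$. For $i\geq m+1$ we have $a_i\leq a_{m+1}\leq a_m$, so $y^m_i=y^{m+1}_i=0$ and the sum is $0\leq\overline{y}_i$; the degenerate case $m=n$ is analogous and easier. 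The main — though mild — obstacle I anticipate is the conceptual observation that only two of the $n$ generating points $(w^j,\mathbf{y}^j)$ are ever needed and that the rays $(0,\mathbf{1}^i)$ play no role in this particular lemma, since the inequality ``$\geq$'' in the conclusion absorbs any excess in $\overline{\mathbf{y}}$; once the right two points are singled out by sorting, the verification is mechanical. A secondary point of care is handling ties among the $a_j$ and the boundary case $\overline{w}=\min_{j\in N}a_j$, both of which are subsumed by defining $m$ as the largest index with $a_m\geq\overline{w}$.
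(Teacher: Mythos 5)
Your proposal is correct and follows essentially the same route as the paper's proof: sort the values $h^i(\overline{\mathbf{x}})$ in descending order, use the tightness condition and the zero coordinate of $\overline{\mathbf{y}}$ to sandwich $\overline{w}$ between two consecutive sorted values, place all convex weight on those two indices, and verify $\overline{y}_i \geq \sum_j \lambda_j y^j_i$ coordinatewise via $\overline{y}_i \geq \max\{0, h^i(\overline{\mathbf{x}}) - \overline{w}\}$. Your explicit treatment of the degenerate case $\overline{w} = \min_j h^j(\overline{\mathbf{x}})$ and of ties is a minor bookkeeping refinement of the same argument, not a different method.
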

\begin{proof}
We sort $\{h^i(\overline{\mathbf{x}})\}_{i\in N}$ in descending order to obtain a permutation $\boldsymbol{\sigma}\in \mathfrak{S}(N)$. In other words, $h^{\sigma(1)}(\overline{\mathbf{x}}) \geq \dots \geq h^{\sigma(n)}(\overline{\mathbf{x}})$. Under the condition that $\overline{w} = \max_{i\in N} \{h^i(\overline{\mathbf{x}}) - \overline{y}_i\}$, $\overline{w} \leq \max_{i\in N} \{h^i(\overline{\mathbf{x}})\} = h^{\sigma(1)}(\overline{\mathbf{x}})$ because $\overline{\mathbf{y}}\geq \mathbf{0}$. In addition, there exists $\hat{\iota}\in N$ with $y_{\hat{\iota}} = 0$, so $\overline{w} \geq h^{\hat{\iota}}(\overline{\mathbf{x}}) - y_{\hat{\iota}} = h^{\hat{\iota}}(\overline{\mathbf{x}})\geq h^{\sigma(n)}(\overline{\mathbf{x}})$. There must exist $\ell \in \{1, \dots, n-1\}$ such that $h^{\sigma(\ell)}(\overline{\mathbf{x}}) \geq \overline{w} \geq h^{\sigma(\ell+1)}(\overline{\mathbf{x}})$. There is an associated pair of convex combination coefficients $(\lambda_{\sigma(\ell)}, \lambda_{\sigma(\ell+1)}) \geq \mathbf{0}$ such that $\lambda_{\sigma(\ell)} + \lambda_{\sigma(\ell+1)} = 1$ and 
\[\overline{w} = \lambda_{\sigma(\ell)}h^{\sigma(\ell)}(\overline{\mathbf{x}}) +\lambda_{\sigma(\ell+1)}h^{\sigma(\ell+1)}(\overline{\mathbf{x}}).\] We complete the vector $\boldsymbol{\lambda}\in \mathbb{R}^{n}_+$ with zeros in all other entries. Observe that for $k \in \{1, \dots, \ell-1\}$, 
\begin{align*}
\lambda_{\sigma(\ell)}y_{\sigma(k)}^{\sigma(\ell)} +\lambda_{\sigma(\ell+1)}y_{\sigma(k)}^{\sigma(\ell+1)} 
& = \lambda_{\sigma(\ell)} \left[h^{\sigma(k)}(\overline{\mathbf{x}}) - h^{\sigma(\ell)}(\overline{\mathbf{x}})\right] +  \lambda_{\sigma(\ell+1)} \left[h^{\sigma(k)}(\overline{\mathbf{x}}) - h^{\sigma(\ell+1)}(\overline{\mathbf{x}}) \right] \\
& = h^{\sigma(k)}(\overline{\mathbf{x}}) - \left[\lambda_{\sigma(\ell)}h^{\sigma(\ell)}(\overline{\mathbf{x}}) +\lambda_{\sigma(\ell+1)}h^{\sigma(\ell+1)}(\overline{\mathbf{x}})\right] \\
& =  h^{\sigma(k)}(\overline{\mathbf{x}})- \overline{w} \\
& = \max \{0, h^{\sigma(k)}(\overline{\mathbf{x}})- \overline{w} \}. 
\end{align*}
When $k = \ell$, 
\begin{align*}
\lambda_{\sigma(\ell)}y_{\sigma(k)}^{\sigma(\ell)} +\lambda_{\sigma(\ell+1)}y_{\sigma(k)}^{\sigma(\ell+1)} & = 0 + \lambda_{\sigma(\ell+1)}\left[h^{\sigma(\ell)}(\overline{\mathbf{x}}) - h^{\sigma(\ell+1)}(\overline{\mathbf{x}}) \right] \\
& = (1 - \lambda_{\sigma(\ell)}) h^{\sigma(\ell)}(\overline{\mathbf{x}}) - \lambda_{\sigma(\ell+1)}h^{\sigma(\ell+1)}(\overline{\mathbf{x}}) \\
 & = h^{\sigma(\ell)}(\overline{\mathbf{x}}) - \left[\lambda_{\sigma(\ell)}h^{\sigma(\ell)}(\overline{\mathbf{x}}) +\lambda_{\sigma(\ell+1)}h^{\sigma(\ell+1)}(\overline{\mathbf{x}})\right] \\
 & = h^{\sigma(\ell)}(\overline{\mathbf{x}}) - \overline{w}\\
 & = \max \{0, h^{\sigma(\ell)}(\overline{\mathbf{x}})- \overline{w} \}. 
\end{align*}
For $k \in \{\ell+1, \dots, n\}$, 
\[\lambda_{\sigma(\ell)}y_{\sigma(k)}^{\sigma(\ell)} +\lambda_{\sigma(\ell+1)}y_{\sigma(k)}^{\sigma(\ell+1)} = 0 = \max \{0, h^{\sigma(k)}(\overline{\mathbf{x}})- \overline{w} \}.\] Therefore, we have constructed $\boldsymbol{\lambda}\in \mathbb{R}^{n}_+$ with $\boldsymbol{\lambda}^\top\mathbf{1} = 1$, such that \[\overline{w} = \sum_{j\in N}\lambda_jw^j,\]
and for every $i\in N$, 
\[\sum_{j\in N}\lambda_jy_i^j = \max \{0, h^i(\overline{\mathbf{x}}) - \overline{w} \} \leq \overline{y}_i.\]
The last inequality follows from the definition of $\mathcal{D}(\overline{\mathbf{x}})$. 
\end{proof}

In the next proposition, we formalize an alternative representation of $\mathcal{D}(\overline{\mathbf{x}})$. 

\begin{proposition}
\label{prop:Dx_gen}
For any $\overline{\mathbf{x}}\in\mathcal{X}$, 
\[\mathcal{D}(\overline{\mathbf{x}}) = \conv{\left\{\left(w^j, \mathbf{y}^j\right)\right\}_{j\in N} } + \cone{\left\{(-1, \mathbf{1}), (1, \mathbf{0})\right\} \cup \left\{(0, \mathbf{1}^i)\right\}_{i\in N}},\]
where $(w^j, \mathbf{y}^j)$ for all $j\in N$ are given by \eqref{eq:wyj}.
\end{proposition}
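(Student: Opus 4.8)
The plan is to establish the two inclusions separately; the nontrivial one reduces to Lemma~\ref{lem:Dx_gen_helper} after ``peeling off'' two rays from an arbitrary point of $\mathcal{D}(\overline{\mathbf{x}})$.

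The inclusion ``$\supseteq$'' is routine. Each point $(w^j,\mathbf{y}^j)$ lies in $\mathcal{D}(\overline{\mathbf{x}})$ (as already observed in the text preceding the statement), and each of $(-1,\mathbf{1})$, $(1,\mathbf{0})$, and $(0,\mathbf{1}^i)$ is a recession direction of the polyhedron $\mathcal{D}(\overline{\mathbf{x}})$ --- this follows immediately from the defining inequalities $w\ge h^i(\overline{\mathbf{x}})-y_i$ and $y_i\ge 0$. Since $\mathcal{D}(\overline{\mathbf{x}})$ is convex and closed under addition of recession directions, any convex combination of the $(w^j,\mathbf{y}^j)$ plus any nonnegative combination of these directions stays in $\mathcal{D}(\overline{\mathbf{x}})$, which gives the containment.

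For ``$\subseteq$'', fix an arbitrary $(\overline{w},\overline{\mathbf{y}})\in\mathcal{D}(\overline{\mathbf{x}})$ and set $w^\ast:=\max_{i\in N}\{h^i(\overline{\mathbf{x}})-\overline{y}_i\}$ and $\mu:=\min_{i\in N}\overline{y}_i$. Feasibility of $(\overline{w},\overline{\mathbf{y}})$ gives $\overline{w}-w^\ast\ge 0$, and $\overline{\mathbf{y}}\ge\mathbf{0}$ gives $\mu\ge 0$. The first claim is the identity
\[
(\overline{w},\overline{\mathbf{y}}) \;=\; (w^\ast+\mu,\ \overline{\mathbf{y}}-\mu\mathbf{1}) \;+\; \mu\,(-1,\mathbf{1}) \;+\; (\overline{w}-w^\ast)\,(1,\mathbf{0}),
\]
which is a one-line check of both coordinates. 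The second claim is that $(w^\ast+\mu,\ \overline{\mathbf{y}}-\mu\mathbf{1})\in\tilde{\mathcal{D}}(\overline{\mathbf{x}})$: indeed $\overline{\mathbf{y}}-\mu\mathbf{1}\ge\mathbf{0}$ with a zero entry at the minimizing index, and for every $i\in N$,
\[
(w^\ast+\mu)-\bigl(h^i(\overline{\mathbf{x}})-(\overline{y}_i-\mu)\bigr) \;=\; w^\ast-\bigl(h^i(\overline{\mathbf{x}})-\overline{y}_i\bigr)\;\ge\;0,
\]
with equality at the maximizing index, so $w^\ast+\mu=\max_{i\in N}\{h^i(\overline{\mathbf{x}})-(\overline{y}_i-\mu)\}$ and the point lies in $\mathcal{D}(\overline{\mathbf{x}})$, hence in $\tilde{\mathcal{D}}(\overline{\mathbf{x}})$.

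Finally, applying Lemma~\ref{lem:Dx_gen_helper} to $(w^\ast+\mu,\ \overline{\mathbf{y}}-\mu\mathbf{1})$ yields $\boldsymbol{\lambda}\in\mathbb{R}^n_+$ with $\boldsymbol{\lambda}^\top\mathbf{1}=1$, $w^\ast+\mu=\sum_{j\in N}\lambda_j w^j$, and $\overline{\mathbf{y}}-\mu\mathbf{1}\ge\sum_{j\in N}\lambda_j\mathbf{y}^j$. Writing $\mathbf{s}:=\overline{\mathbf{y}}-\mu\mathbf{1}-\sum_{j\in N}\lambda_j\mathbf{y}^j\ge\mathbf{0}$, we obtain $(w^\ast+\mu,\ \overline{\mathbf{y}}-\mu\mathbf{1})=\sum_{j\in N}\lambda_j(w^j,\mathbf{y}^j)+(0,\mathbf{s})$ with $(0,\mathbf{s})\in\cone{\{(0,\mathbf{1}^i)\}_{i\in N}}$. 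Substituting this back into the displayed identity expresses $(\overline{w},\overline{\mathbf{y}})$ as a convex combination of $\{(w^j,\mathbf{y}^j)\}_{j\in N}$ plus a nonnegative combination of $(-1,\mathbf{1})$, $(1,\mathbf{0})$, and the $(0,\mathbf{1}^i)$, which is the desired representation. I expect the only delicate point to be the verification that the shifted point $(w^\ast+\mu,\ \overline{\mathbf{y}}-\mu\mathbf{1})$ genuinely lands in $\tilde{\mathcal{D}}(\overline{\mathbf{x}})$ --- that is, that lowering all $y_i$ by $\mu$ while raising $w$ by $\mu$ preserves the ``$w$ equals the pointwise maximum'' condition --- but this is exactly the short computation above, and everything else is bookkeeping with the rays.
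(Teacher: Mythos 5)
Your proof is correct and follows essentially the same route as the paper's: you peel off the ray $(-1,\mathbf{1})$ with coefficient $\mu=\min_i\overline{y}_i$ and the ray $(1,\mathbf{0})$ with the slack $\overline{w}-w^\ast$, verify the shifted point lies in $\tilde{\mathcal{D}}(\overline{\mathbf{x}})$, and invoke Lemma~\ref{lem:Dx_gen_helper}, exactly as the paper does (your $\mu$ and $w^\ast+\mu$ are the paper's $r$ and $\tilde{w}$). The only differences are presentational: you spell out the easy containment and write the decomposition as an exact identity rather than the paper's inequality $\overline{w}\geq\tilde{w}-r$.
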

\begin{proof}
Following from Lemma \ref{lem:Dx_gen_helper}, it suffices to show that any element of $\mathcal{D}(\overline{\mathbf{x}})$ is a conical combination of some element of $\tilde{\mathcal{D}}(\overline{\mathbf{x}})$ with the rays $(-1, \mathbf{1})$ and $(1, \mathbf{0})$. That is, for every $(\overline{w}, \overline{\mathbf{y}})\in \mathcal{D}(\overline{\mathbf{x}})$, we find $(\tilde{w}, \tilde{\mathbf{y}})\in \tilde{\mathcal{D}}(\overline{\mathbf{x}})$ and $r \geq 0$, such that $\overline{w} \geq \tilde{w} - r$, and $\overline{\mathbf{y}} = \tilde{\mathbf{y}} + r\mathbf{1}$. Now, we construct $r = \min_{i\in N} \{\overline{y}_i\}$, $\tilde{\mathbf{y}} = \overline{\mathbf{y}} - r\mathbf{1}$, and $\tilde{w} = \max_{i\in N} \{h^i(\overline{\mathbf{x}}) - \tilde{y}_i\}$. Note that $r \geq 0$, because $\overline{\mathbf{y}}\geq \mathbf{0}$. By construction of $r$, we have  $\tilde{\mathbf{y}} \geq 0$, and $\tilde{y}_{\hat{\iota}} = 0$ where $\hat{\iota} \in \arg\min_{i\in N} \{\overline{y}_i\}$. By definition of $\tilde{w}$, we know that $(\tilde{w}, \tilde{\mathbf{y}})\in \tilde{\mathcal{D}}(\overline{\mathbf{x}})$. Lastly, 
\[
 \overline{w} \geq \max_{i\in N} \{h^i(\overline{\mathbf{x}}) - \overline{y}_i\} 
= \max_{i\in N} \{h^i(\overline{\mathbf{x}}) - \overline{y}_i + r\} - r 
= \max_{i\in N} \{h^i(\overline{\mathbf{x}}) - \tilde{y}_i\} - r 
= \tilde{w} - r. \]
 This completes the proof. 
\end{proof}

Recall \[\mathcal{P}^H = \left\{(w, \mathbf{y}, \mathbf{x})\in \mathbb{R}^{n+1}\times\mathcal{X} : (w, \mathbf{y}) \in \mathcal{D}(\mathbf{x})\right\}.\] 
It follows from Proposition \ref{prop:Dx_gen} that $\conv{\mathcal{P}^H}$ is generated by the points $(w^j(\mathbf{x}), \mathbf{y}^j(\mathbf{x}), \mathbf{x})$ for all $\mathbf{x}\in\mathcal{X}$ and all $j\in N$, along with the rays $(-1, \mathbf{1}, \mathbf{0})$, $(1, \mathbf{0}, \mathbf{0})$ and $\{(0, \mathbf{1}^i, \mathbf{0})\}_{i\in N}$. We formalize this result in the next corollary. 
\begin{corollary}
\label{coro:cv_alt}
For $j\in N$, let 
\begin{equation}
\label{eq:Dj}
\mathcal{D}^j := \bigcup_{\mathbf{x}\in\mathcal{X}} \left\{ (w^j(\mathbf{x}), \mathbf{y}^j(\mathbf{x}), \mathbf{x}) +\cone{\left\{(-1, \mathbf{1}, \mathbf{0}), (1, \mathbf{0}, \mathbf{0})\right\} \cup \left\{(0, \mathbf{1}^i, \mathbf{0})\right\}_{i\in N}}   \right\}.
\end{equation}
Then $\conv{\mathcal{P}^H} = \conv{\bigcup_{j\in N} \mathcal{D}^j }$. 
\end{corollary}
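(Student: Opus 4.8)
The plan is to prove Corollary~\ref{coro:cv_alt} as a direct consequence of Proposition~\ref{prop:Dx_gen}, essentially by interchanging the convex hull operator with the union over $\mathbf{x}\in\mathcal{X}$ and then invoking the generator description of each $\mathcal{D}(\mathbf{x})$. First I would observe that by definition,
\[
\mathcal{P}^H = \bigcup_{\mathbf{x}\in\mathcal{X}} \big(\mathcal{D}(\mathbf{x}) \times \{\mathbf{x}\}\big),
\]
so that $\conv{\mathcal{P}^H} = \conv{\bigcup_{\mathbf{x}\in\mathcal{X}} \big(\mathcal{D}(\mathbf{x}) \times \{\mathbf{x}\}\big)}$. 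Next I would apply Proposition~\ref{prop:Dx_gen} to rewrite each slice: for fixed $\mathbf{x}$,
\[
\mathcal{D}(\mathbf{x}) \times \{\mathbf{x}\} = \conv{\{(w^j(\mathbf{x}), \mathbf{y}^j(\mathbf{x}), \mathbf{x})\}_{j\in N}} + \cone{\{(-1,\mathbf{1},\mathbf{0}),(1,\mathbf{0},\mathbf{0})\}\cup\{(0,\mathbf{1}^i,\mathbf{0})\}_{i\in N}},
\]
where I note that appending the fixed coordinate block $\mathbf{x}$ turns the recession rays of $\mathcal{D}(\mathbf{x})$ into rays with a zero $\mathbf{x}$-block, which are exactly the rays appearing in the definition of $\mathcal{D}^j$. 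Hence $\mathcal{D}(\mathbf{x})\times\{\mathbf{x}\} = \bigcup_{j\in N}\big( \{(w^j(\mathbf{x}),\mathbf{y}^j(\mathbf{x}),\mathbf{x})\} + \cone{\cdots}\big) \subseteq \bigcup_{j\in N}\mathcal{D}^j$ after taking convex hull; more precisely, $\mathcal{D}(\mathbf{x})\times\{\mathbf{x}\}\subseteq \conv{\bigcup_{j\in N}\mathcal{D}^j}$ since a sum of a convex hull of finitely many points with a cone is contained in the convex hull of those points together with the cone.

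With this in hand, the containment $\conv{\mathcal{P}^H} \subseteq \conv{\bigcup_{j\in N}\mathcal{D}^j}$ follows: $\mathcal{P}^H \subseteq \conv{\bigcup_{j\in N}\mathcal{D}^j}$ (taking the union over $\mathbf{x}$ of the slice containments), and the right-hand side is convex, so its convex hull closure gives the inclusion. For the reverse containment, I would show each $\mathcal{D}^j \subseteq \conv{\mathcal{P}^H}$. Indeed, for any $\mathbf{x}\in\mathcal{X}$, the point $(w^j(\mathbf{x}),\mathbf{y}^j(\mathbf{x}),\mathbf{x})$ lies in $\mathcal{P}^H$ because $w^j(\mathbf{x}) = h^j(\mathbf{x}) \geq h^i(\mathbf{x}) - y^j_i(\mathbf{x})$ for every $i\in N$ (this is immediate from $y^j_i(\mathbf{x}) = \max\{0, h^i(\mathbf{x})-h^j(\mathbf{x})\} \geq h^i(\mathbf{x})-h^j(\mathbf{x})$), and $\mathbf{y}^j(\mathbf{x})\geq\mathbf{0}$, so $(w^j(\mathbf{x}),\mathbf{y}^j(\mathbf{x}))\in\mathcal{D}(\mathbf{x})$. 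Moreover the three families of rays $(-1,\mathbf{1},\mathbf{0})$, $(1,\mathbf{0},\mathbf{0})$, $(0,\mathbf{1}^i,\mathbf{0})$ are recession directions of $\mathcal{P}^H$ (each constraint $w - h^i(\mathbf{x}) + y_i \geq 0$ and $y_i\geq 0$ is preserved under adding any of them, and the $\mathbf{x}$-block is unchanged so $\mathbf{x}$ stays in $\mathcal{X}$). Therefore $\mathcal{D}^j\subseteq \conv{\mathcal{P}^H}$ for every $j$, and since $\conv{\mathcal{P}^H}$ is convex, $\conv{\bigcup_{j\in N}\mathcal{D}^j}\subseteq\conv{\mathcal{P}^H}$.

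Combining the two inclusions yields $\conv{\mathcal{P}^H} = \conv{\bigcup_{j\in N}\mathcal{D}^j}$. The argument is essentially bookkeeping, so I do not anticipate a genuine obstacle; the one point that requires a little care is making sure that adjoining the fixed $\mathbf{x}$-block to the Minkowski-sum decomposition of $\mathcal{D}(\mathbf{x})$ in Proposition~\ref{prop:Dx_gen} is handled correctly — namely that the recession cone of $\mathcal{D}(\mathbf{x})\times\{\mathbf{x}\}$ is $\{\mathbf 0\}$ in the $\mathbf{x}$-coordinates, matching the cone used in the definition of $\mathcal{D}^j$ — and that one should be slightly careful about whether $\conv{\bigcup_j \mathcal D^j}$ needs a closure (here all the sets involved are finitely generated polyhedra or unions thereof, and since $\mathcal{X}$ need not be finite the union is over possibly infinitely many polyhedra, but the convex hull operation as used throughout the paper does not take closures, so I would keep the statement as an exact equality of convex hulls without closure, consistent with how $\conv{\cdot}$ is used elsewhere in the paper).
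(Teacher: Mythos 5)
Your proof is correct and follows the same route as the paper, which treats this corollary as an immediate consequence of Proposition \ref{prop:Dx_gen}: fix $\mathbf{x}\in\mathcal{X}$, use the generator description of each slice $\mathcal{D}(\mathbf{x})$, and note that the generating points lie in $\bigcup_{j\in N}\mathcal{D}^j$ while the rays (with zero $\mathbf{x}$-block) are recession directions of $\mathcal{P}^H$, giving both containments of convex hulls. Your extra care about the recession cone in the $\mathbf{x}$-coordinates and the absence of closures matches how the paper uses $\conv{\cdot}$, so there is nothing to add.
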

We remark that the generating points are grouped and combined with the cone in a special way, which enables us to exploit certain properties for convexification. \\

We now explore the structure of $\mathcal{D}^j$ for any $j\in N$, by first understanding the set 
\[\mathcal{D}^j(\overline{\mathbf{x}}) := (w^j(\overline{\mathbf{x}}), \mathbf{y}^j(\overline{\mathbf{x}})) +\cone{\left\{(-1, \mathbf{1}), (1, \mathbf{0})\right\} \cup \left\{(0, \mathbf{1}^i)\right\}_{i\in N}} \]
for every fixed $\overline{\mathbf{x}}\in\mathcal{X}$. In what follows, we will use $h^j(\overline{\mathbf{x}})$ and $\max\{0, h^i(\overline{\mathbf{x}}) - h^j(\overline{\mathbf{x}})\}$ to replace $w^j(\overline{\mathbf{x}})$ and $y^j_i(\overline{\mathbf{x}})$, $i\in N$, respectively, to avoid confusion between the variables $(w, \mathbf{y})$ and these parameters. We can equivalently state $\mathcal{D}^j(\overline{\mathbf{x}})$ as 
\begin{equation}
\label{eq:Djx}
\mathcal{D}^j(\overline{\mathbf{x}}) = \left\{ (w, \mathbf{y}) \in \mathbb{R}^{n+1} : \exists \, r \geq 0, \text{ s.t. } 
\begin{aligned}
w & \geq h^j(\overline{\mathbf{x}}) - r, \\
y_i & \geq \max\{0, h^i(\overline{\mathbf{x}}) - h^j(\overline{\mathbf{x}})\} + r,  \, \forall\, i\in N 
\end{aligned}
\right\},
\end{equation}
where $r$ represents the coefficient of the ray $(-1, \mathbf{1})$, while the inequalities capture the rest of the rays. In the next lemma, we eliminate the auxiliary variable $r$. 

\begin{lemma}
\label{lem:Djx_alt}
For any $j\in N$ and any $\overline{\mathbf{x}}\in\mathcal{X}$, 
\begin{equation}
\label{eq:Djx_alt}
\mathcal{D}^j(\overline{\mathbf{x}}) = \left\{(w,\mathbf{y})\in\mathbb{R}^{n+1} : 
\begin{aligned}
& w + y_i \geq \max \{h^j(\overline{\mathbf{x}}), h^i(\overline{\mathbf{x}})\},  \\
& y_i \geq \max \{0, h^i(\overline{\mathbf{x}}) - h^j(\overline{\mathbf{x}})\}, 
\end{aligned}
\; \; \forall\, i\in N
 \right\}.
 \end{equation}
\end{lemma}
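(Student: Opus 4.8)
The plan is to establish the set equality in \eqref{eq:Djx_alt} by projecting out the auxiliary variable $r$ from the description \eqref{eq:Djx} via Fourier--Motzkin elimination, and then simplifying the resulting inequality system. Write $a_j := h^j(\overline{\mathbf{x}})$ and $b_i := \max\{0, h^i(\overline{\mathbf{x}}) - a_j\}$ for brevity (these are constants once $\overline{\mathbf{x}}$ and $j$ are fixed). The variable $r$ appears in \eqref{eq:Djx} only through the constraints $r \geq a_j - w$ (from the $w$-inequality) and $r \geq 0$ and $r \leq y_i - b_i$ for each $i \in N$ (from the $\mathbf{y}$-inequalities). A point $(w, \mathbf{y})$ lies in $\mathcal{D}^j(\overline{\mathbf{x}})$ if and only if there is some $r$ satisfying all of these, which happens exactly when the largest lower bound does not exceed the smallest upper bound, i.e.
\[
\max\{a_j - w,\; 0\} \;\leq\; \min_{i \in N}\{y_i - b_i\}.
\]
Unpacking this single condition into pairwise inequalities (one side of the $\max$ against one side of the $\min$) gives: $a_j - w \leq y_i - b_i$ and $0 \leq y_i - b_i$ for every $i \in N$.

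\textbf{Carrying out the simplification.} The second family of inequalities reads $y_i \geq b_i = \max\{0, h^i(\overline{\mathbf{x}}) - h^j(\overline{\mathbf{x}})\}$, which is precisely the second line of \eqref{eq:Djx_alt}. For the first family, rearrange $a_j - w \leq y_i - b_i$ to $w + y_i \geq a_j + b_i$, and observe that
\[
a_j + b_i = h^j(\overline{\mathbf{x}}) + \max\{0, h^i(\overline{\mathbf{x}}) - h^j(\overline{\mathbf{x}})\} = \max\{h^j(\overline{\mathbf{x}}), h^i(\overline{\mathbf{x}})\},
\]
which gives the first line of \eqref{eq:Djx_alt}. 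Thus the Fourier--Motzkin projection produces exactly the claimed system, establishing both inclusions at once.

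\textbf{The main obstacle.} There is no deep difficulty here; the one point requiring a little care is the bookkeeping in the Fourier--Motzkin step. Because both a genuine $\mathbf{y}$-inequality ($r \leq y_i - b_i$) and the sign constraint ($r \geq 0$) bound $r$, I must confirm that eliminating $r$ yields the two separate families above and not some tangled combination; writing the feasibility of $r$ as the single $\max \leq \min$ condition displayed above makes this transparent. A secondary subtlety is to double-check the identity $h^j(\overline{\mathbf{x}}) + \max\{0, h^i(\overline{\mathbf{x}}) - h^j(\overline{\mathbf{x}})\} = \max\{h^j(\overline{\mathbf{x}}), h^i(\overline{\mathbf{x}})\}$, which is immediate from $c + \max\{0, d-c\} = \max\{c, d\}$. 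With these two observations in place, the proof is a short verification in both directions, and I would present it essentially as the two paragraphs above.
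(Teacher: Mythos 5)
Your proof is correct and follows essentially the same route as the paper: both eliminate the auxiliary variable $r$ from \eqref{eq:Djx} via Fourier--Motzkin elimination and use the identity $c + \max\{0, d-c\} = \max\{c,d\}$ to simplify the resulting right-hand sides. Your explicit ``$\max \leq \min$'' feasibility condition is just a transparent way of phrasing the same projection step.
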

\begin{proof}
Consider the following linear system from \eqref{eq:Djx}: 
\begin{equation*}
\begin{aligned}
  - r & \leq w - h^j(\overline{\mathbf{x}}) ,  \\
 r &\leq y_i -  \max \{0, h^i(\overline{\mathbf{x}}) - h^j(\overline{\mathbf{x}})\}, \quad  \forall\, i\in N,  \\
 r &\geq 0. 
\end{aligned}
\end{equation*}
We eliminate $r$ using Fourier-Motzkin elimination and obtain the equivalent system: 
\begin{align*}
 w + y_i & \geq \max \{0, h^i(\overline{\mathbf{x}}) - h^j(\overline{\mathbf{x}})\} + h^j(\overline{\mathbf{x}}) = \max \{h^j(\overline{\mathbf{x}}), h^i(\overline{\mathbf{x}})\},  \\
y_i & \geq \max \{0, h^i(\overline{\mathbf{x}}) - h^j(\overline{\mathbf{x}})\}, \forall\, i\in N. 
\end{align*} 
\end{proof}

Following from Lemma \ref{lem:Djx_alt}, we observe that the set $\mathcal{D}^j$ given by \eqref{eq:Dj} is, in fact, equivalent to 
\begin{equation}
\mathcal{D}^j = \left\{(w,\mathbf{y}, \mathbf{x})\in\mathbb{R}^{n+1} \times\mathcal{X} : 
\begin{aligned}
 w +  y_i &\geq \max \{h^j(\mathbf{x}), h^i(\mathbf{x})\},  \\
 y_i &\geq \max \{0, h^i(\mathbf{x}) - h^j(\mathbf{x})\}, 
\end{aligned}
\; \; \forall\, i\in N.
 \right\}
\end{equation}

Based on Corollary \ref{coro:cv_alt}, our goal of obtaining $\conv{\mathcal{P}^H}$ can be achieved by convexifying $\bigcup_{j\in N} \mathcal{D}^j$. However, this task is challenging in general due to the mixed-integer nature of $\mathcal{D}^j$ for every $j\in N$ and the nonlinearity in the terms involving $\mathbf{x}$. Therefore, in the next section, we explore $\conv{\mathcal{P}^H}$ when certain relevant functions are L$^\natural$-convex. In particular, we will first convexify $\mathcal{D}^j$ for every $j\in N$ and then discuss the convex hull of their union.

\subsection{Convexify $\mathcal{P}^H$ with L$^\natural$-convexity}
For ease of notation, we define $g^j_i:\mathcal{X}\rightarrow\mathbb{R}$ by 
\[g^j_i (\mathbf{x}) := \max\{0, h^i(\mathbf{x})-h^j(\mathbf{x})\}, \]
for $i,j\in N$. 
With this new notation, we have 
\begin{equation}
\label{eq:Dj_alt}
\mathcal{D}^j = \left\{(w,\mathbf{y}, \mathbf{x})\in\mathbb{R}^{n+1} \times\mathcal{X} : 
\begin{aligned}
 w +  y_i &\geq g^j_i(\mathbf{x}) + h^j(\mathbf{x}),  \\
 y_i &\geq g^j_i(\mathbf{x}), 
\end{aligned}
\; \; \forall\, i\in N,
 \right\}
\end{equation}
for every $j\in N$. We impose Assumption \hyperlink{A1}{1} in this section. \\

\textbf{Assumption \hypertarget{A1}{1}.} The functions $g^j_i$ and $h^j$ are L$^\natural$-convex for $i, j \in N$. \\

\begin{remark}
\label{remark:A1_eg}
We include examples where Assumption \hyperlink{A1}{1} is satisfied. When $h^j$ is a monotone univariate function for every $j\in N$, $h^j$ and $-h^j$ are trivially lattice submodular. By Lemmas \ref{lemma:monotone_composite_sub}--\ref{lemma:continuous_max}, $g^j_i(\mathbf{x}) = \max(h^j(\mathbf{x}), h^i(\mathbf{x})) - h^j(\mathbf{x})$, for $i, j\in N$, is also lattice submodular. If $\mathcal{X} = \mathcal{B}_\mathbf{0}$, $g^j_i$ and $h^j$ are submodular, and thus L$^\natural$-convex, for all $i, j \in N$. For any $\mathcal{X}\subseteq \mathbb{Z}^n$, if additionally, $h^j$ for all $j\in N$ are univariate linear functions with a common slope, then $g^j_i$ and $h^j$ are L-convex, and thus L$^\natural$-convex, for all $i, j \in N$.
\end{remark}

We characterize $\conv{\mathcal{D}^j}$ in the next proposition. 

\begin{proposition}
\label{prop:conv_dj}
Let any $j\in N$ be fixed. Under Assumption \hyperlink{A1}{1}, $\conv{\mathcal{D}^j} = \mathcal{CD}^j$, where 
\begin{equation}
\label{eq:CDj_def}
\mathcal{CD}^j := \left\{(w,\mathbf{y}, \mathbf{x})\in\mathbb{R}^{n+1}\times\overline{\mathcal{X}} : \;
\begin{aligned}
(w+y_i, \mathbf{x}) & \in \conv{\mathcal{P}^{g^j_i+h^j}_\mathcal{X}}, \\
(y_i, \mathbf{x}) & \in \conv{\mathcal{P}^{g^j_i}_\mathcal{X}},
\end{aligned}
\, \forall\, i\in N.
\right\}
\end{equation}
\end{proposition}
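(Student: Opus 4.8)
The plan is to establish the two inclusions $\conv{\mathcal{D}^j}\subseteq\mathcal{CD}^j$ and $\mathcal{CD}^j\subseteq\conv{\mathcal{D}^j}$ separately, following the template of the proof of Proposition~\ref{prop:inter1}. Since the sum of two L$^\natural$-convex functions is again L$^\natural$-convex, Assumption~\hyperlink{A1}{1} ensures that $g^j_i$ and $g^j_i+h^j$ are L$^\natural$-convex for every $i\in N$, so Theorem~\ref{thm:conv_SEPI} applies to both of their epigraphs. The inclusion $\conv{\mathcal{D}^j}\subseteq\mathcal{CD}^j$ is the routine direction: each constraint defining $\mathcal{CD}^j$ is the preimage of a convex set under an affine map, so $\mathcal{CD}^j$ is convex, and by the description \eqref{eq:Dj_alt} every point of $\mathcal{D}^j$ has integral $\mathbf{x}$ with $(w+y_i,\mathbf{x})\in\mathcal{P}^{g^j_i+h^j}_\mathcal{X}\subseteq\conv{\mathcal{P}^{g^j_i+h^j}_\mathcal{X}}$ and $(y_i,\mathbf{x})\in\mathcal{P}^{g^j_i}_\mathcal{X}\subseteq\conv{\mathcal{P}^{g^j_i}_\mathcal{X}}$; hence $\mathcal{D}^j\subseteq\mathcal{CD}^j$ and therefore $\conv{\mathcal{D}^j}\subseteq\mathcal{CD}^j$.

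For the reverse inclusion I would take an arbitrary $(\overline{w},\overline{\mathbf{y}},\overline{\mathbf{x}})\in\mathcal{CD}^j$. If $\overline{\mathbf{x}}\in\mathcal{X}$, then $(\overline{w}+\overline{y}_i,\overline{\mathbf{x}})\in\mathcal{P}^{g^j_i+h^j}_\mathcal{X}$ and $(\overline{y}_i,\overline{\mathbf{x}})\in\mathcal{P}^{g^j_i}_\mathcal{X}$ (as in the proof of Proposition~\ref{prop:inter1}), so $(\overline{w},\overline{\mathbf{y}},\overline{\mathbf{x}})\in\mathcal{D}^j$ by \eqref{eq:Dj_alt}. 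Otherwise $\overline{\mathbf{x}}\in\overline{\mathcal{X}}\setminus\mathcal{X}$, and the key step is to run Algorithm~\ref{alg:frac_greedy} on $\overline{\mathbf{x}}$ to obtain $\mathbf{p}\in\underline{\mathcal{X}}$ and $\boldsymbol{\delta}\in\mathfrak{S}(N)$; because of L$^\natural$-convexity, this single pair $(\mathbf{p},\boldsymbol{\delta})$ yields a valid SEPI for each of $\conv{\mathcal{P}^{g^j_i}_\mathcal{X}}$ and $\conv{\mathcal{P}^{g^j_i+h^j}_\mathcal{X}}$ simultaneously (Proposition~\ref{prop:valid_SEPI}, Theorem~\ref{thm:conv_SEPI}). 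Writing $\mathbf{p}^k:=\mathbf{p}+\sum_{\ell=1}^k\mathbf{1}^{\delta(\ell)}\in\mathcal{X}$ for $k\in N\cup\{0\}$ and defining the multipliers $\boldsymbol{\lambda}\in\mathbb{R}^{n+1}_+$ exactly as in the proof of Theorem~\ref{thm:conv_SEPI}, one has $\boldsymbol{\lambda}^\top\mathbf{1}=1$, $\overline{\mathbf{x}}=\sum_{k=0}^n\lambda_k\mathbf{p}^k$, and the algebraic identity used there rewrites each valid SEPI at $\overline{\mathbf{x}}$ as a convex combination, giving
\[
\overline{w}+\overline{y}_i \;\geq\; \sum_{k=0}^n\lambda_k\bigl(g^j_i+h^j\bigr)(\mathbf{p}^k)=\widetilde{w}+\widetilde{y}_i, \qquad \overline{y}_i \;\geq\; \sum_{k=0}^n\lambda_k g^j_i(\mathbf{p}^k)=\widetilde{y}_i \qquad\forall\,i\in N,
\]
where $\widetilde{w}:=\sum_{k=0}^n\lambda_k h^j(\mathbf{p}^k)$ and $\widetilde{y}_i:=\sum_{k=0}^n\lambda_k g^j_i(\mathbf{p}^k)$.

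To conclude, I would introduce the points $(\widehat{w}^k,\widehat{\mathbf{y}}^k,\mathbf{p}^k):=\bigl(w^j(\mathbf{p}^k),\mathbf{y}^j(\mathbf{p}^k),\mathbf{p}^k\bigr)$, which belong to $\mathcal{D}^j$ by \eqref{eq:Dj} and satisfy $(\widetilde{w},\widetilde{\mathbf{y}})=\sum_{k=0}^n\lambda_k(\widehat{w}^k,\widehat{\mathbf{y}}^k)$. The two displayed inequality families are precisely the condition that $(\overline{w}-\widetilde{w},\overline{\mathbf{y}}-\widetilde{\mathbf{y}})$ lies in $\cone{\{(-1,\mathbf{1}),(1,\mathbf{0})\}\cup\{(0,\mathbf{1}^i)\}_{i\in N}}$: a short direct check (or Fourier--Motzkin elimination of the ray coefficients, as in Lemma~\ref{lem:Djx_alt}) shows that $(a,\mathbf{b})$ belongs to this cone if and only if $\mathbf{b}\geq\mathbf{0}$ and $a+b_i\geq 0$ for all $i\in N$. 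Hence
\[
(\overline{w},\overline{\mathbf{y}},\overline{\mathbf{x}}) = \sum_{k=0}^n\lambda_k(\widehat{w}^k,\widehat{\mathbf{y}}^k,\mathbf{p}^k) + \bigl(\overline{w}-\widetilde{w},\,\overline{\mathbf{y}}-\widetilde{\mathbf{y}},\,\mathbf{0}\bigr),
\]
which exhibits $(\overline{w},\overline{\mathbf{y}},\overline{\mathbf{x}})$ as a convex combination of points of $\mathcal{D}^j$ plus an element of the recession cone common to all translates in \eqref{eq:Dj}, hence a recession direction of $\conv{\mathcal{D}^j}$; thus $(\overline{w},\overline{\mathbf{y}},\overline{\mathbf{x}})\in\conv{\mathcal{D}^j}$, which gives $\mathcal{CD}^j\subseteq\conv{\mathcal{D}^j}$ and completes the proof.

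The main obstacle is not any single computation but the conceptual point that relaxing $\mathcal{D}^j$ — whose definition couples the $2n$ inequalities in $(w,y_i)$ over an integer lattice — to the $2n$ individual epigraph convex hulls loses nothing. Making this precise requires producing one decomposition $\overline{\mathbf{x}}=\sum_k\lambda_k\mathbf{p}^k$ that simultaneously certifies all of these constraints, and it is exactly L$^\natural$-convexity (via the greedy output of Algorithm~\ref{alg:frac_greedy}) that supplies such a common certificate, together with the observation that the two derived inequality families match verbatim the recession cone appearing in \eqref{eq:Dj}. Everything else — validity of the SEPIs, the convex-combination rewriting, and the cone-membership characterization — is bookkeeping imported from the proofs of Proposition~\ref{prop:valid_SEPI}, Theorem~\ref{thm:conv_SEPI}, and Proposition~\ref{prop:inter1}.
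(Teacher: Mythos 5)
Your argument is correct, but it is packaged differently from the paper's. The paper proves this proposition in two lines: it substitutes $\eta_i = w + y_i$, observes that the resulting description \eqref{eq:Dj_aux} of $\mathcal{D}^j$ has exactly the form \eqref{eq:Q'} with the common difference $h=h^j$ (since $(g^j_i+h^j)-g^j_i=h^j$ for every $i$), and then invokes Proposition \ref{prop:inter2} before substituting back. You instead bypass Proposition \ref{prop:inter2} and argue directly on $\mathcal{D}^j$: you use the same essential engine (Algorithm \ref{alg:frac_greedy} applied to $\overline{\mathbf{x}}$ produces a single pair $(\mathbf{p},\boldsymbol{\delta})$, hence a single decomposition $\overline{\mathbf{x}}=\sum_k\lambda_k\mathbf{p}^k$ certifying all $2n$ epigraph constraints simultaneously, exactly as in Propositions \ref{prop:inter1}--\ref{prop:inter2}), but you then absorb the slack via the generator-plus-cone description \eqref{eq:Dj}, using the explicit characterization that $(a,\mathbf{b})\in\cone{\{(-1,\mathbf{1}),(1,\mathbf{0})\}\cup\{(0,\mathbf{1}^i)\}_{i\in N}}$ iff $\mathbf{b}\geq\mathbf{0}$ and $a+b_i\geq 0$ for all $i$ (which is correct, and indeed mirrors Lemma \ref{lem:Djx_alt}), together with the fact that $\mathcal{D}^j+\text{cone}\subseteq\mathcal{D}^j$ so the cone consists of recession directions of $\conv{\mathcal{D}^j}$. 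The paper's route buys brevity and reuse of the already-established joint-epigraph result; your route is self-contained, avoids the auxiliary variables and the linking equalities of $\mathcal{Q}'$ altogether, and makes explicit why the recession cone built into \eqref{eq:Dj} is exactly what the two families of SEPI-derived inequalities certify. Both are valid proofs; the only place where you lean on the paper's level of informality is the case $\overline{\mathbf{x}}\in\mathcal{X}$, where membership of $(\overline{w}+\overline{y}_i,\overline{\mathbf{x}})$ in $\conv{\mathcal{P}^{g^j_i+h^j}_\mathcal{X}}$ is asserted to imply $\overline{w}+\overline{y}_i\geq g^j_i(\overline{\mathbf{x}})+h^j(\overline{\mathbf{x}})$; this is true (take the SEPI from Algorithm \ref{alg:frac_greedy} at the integral point, which telescopes to the exact function value), and the paper's own proof of Proposition \ref{prop:inter1} makes the same claim with the same brevity, so no gap results.
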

Recall that $\overline{\mathcal{X}}$ represents the continuous relaxation of the hyperrectangle $\mathcal{X}$, and $\mathcal{P}^\cdot_\mathcal{X}$ represents the epigraph of the given function over $\mathcal{X}$. The same result holds as long as $g^j_i+h^j$ and $g^j_i$ are L$^\natural$-convex for all $i\in N$. 
\begin{proof}
We first transform $\mathcal{D}^j$ by introducing auxiliary variables $\boldsymbol{\eta} \in \mathbb{R}^n$ such that $\eta_i = w + y_i$, for all $i\in N$, and we obtain
\begin{equation}
\label{eq:Dj_aux}
\left\{(\boldsymbol{\eta}, \mathbf{y}, \mathbf{x})\in\mathbb{R}^{2n} \times\mathcal{X} : 
\begin{aligned}
& \eta_i \geq g^j_i(\mathbf{x})+h^j(\mathbf{x}), \; \; \forall\, i\in N, \\
 & y_i \geq g^j_i(\mathbf{x}), \; \; \forall\, i\in N,\\
& \eta_i - y_i = \eta_{i'} - y_{i'}, \;\; \forall\, i, i'\in N.
\end{aligned}
 \right\}
\end{equation}
Notice that this new representation is in the form of \eqref{eq:CQ'}. Additionally, with L$^\natural$-convexity of $\{g^j_i+h^j, g^j_i\}_{i\in N}$, the convex hull of \eqref{eq:Dj_aux} is 
\begin{equation}
\left\{(\boldsymbol{\eta}, \mathbf{y}, \mathbf{x})\in\mathbb{R}^{2n} \times\overline{\mathcal{X}} : 
\begin{aligned}
& (\eta_i, \mathbf{x}) \in \conv{\mathcal{P}^{g^j_i+h^j}_\mathcal{X}}, \; \; \forall\, i\in N, \\
 & (y_i, \mathbf{x}) \in \conv{\mathcal{P}^{g^j_i}_\mathcal{X}} \; \; \forall\, i\in N,\\
& \eta_i - y_i = \eta_{i'} - y_{i'}, \;\; \forall\, i, i'\in N,
\end{aligned}
 \right\}
\end{equation}
following from Proposition \ref{prop:inter2}. Replacing $\eta_i$ by $w + y_i$ for all $i\in N$ completes the proof.
\end{proof}

From the previous sections, we know that $\mathcal{CD}^j$ for every $j\in N$ is described by the trivial inequalities and the SEPIs associated with all points $\mathbf{p}\in\underline{\mathcal{X}}$ and permutations $\boldsymbol{\delta}\in\mathfrak{S}(N)$ with respect to $g^j_i+h^j$ and $g^j_i$ for all $i\in N$. Let $(s_0^{j, i, \mathbf{p}, \boldsymbol{\delta}}, \mathbf{s}^{j, i, \mathbf{p}, \boldsymbol{\delta}}) \in \mathbb{R}^{n+1}$ denote the parameters of the right-hand side of the SEPI associated with $\mathbf{p}$ and $\boldsymbol{\delta}$ for $g^j_i$, where $i,j\in N$. Similarly, let $(t_0^{j, \mathbf{p}, \boldsymbol{\delta}}, \mathbf{t}^{j, \mathbf{p}, \boldsymbol{\delta}}) \in \mathbb{R}^{n+1}$ denote the parameters of the right-hand side of the SEPI associated with $\mathbf{p}$ and $\boldsymbol{\delta}$ for $h^j$, $j\in N$. We remark that every SEPI, associated with $\mathbf{p}$ and $\boldsymbol{\delta}$, for the epigraph $\{(\eta, \mathbf{x}) \in \mathbb{R}\times\mathcal{X} : \eta \geq g^j_i(\mathbf{x})+h^j(\mathbf{x})\}$, for any $i,j\in N$, is exactly $\eta \geq s_0^{j, i, \mathbf{p}, \boldsymbol{\delta}} + t_0^{j, \mathbf{p}, \boldsymbol{\delta}} + \left( \mathbf{s}^{j, i, \mathbf{p}, \boldsymbol{\delta}} + \mathbf{t}^{j, \mathbf{p}, \boldsymbol{\delta}}\right) ^\top\mathbf{x}$. 

Then 
\begin{equation}
\label{eq:CDj_form}
\mathcal{CD}^j = \left\{(w,\mathbf{y}, \mathbf{x})\in\mathbb{R}^{n+1}\times\overline{\mathcal{X}} : \;
\begin{aligned}
w+y_i & \geq \left( \mathbf{s}^{j, i, \mathbf{p}, \boldsymbol{\delta}} + \mathbf{t}^{j, \mathbf{p}, \boldsymbol{\delta}}\right) ^\top\mathbf{x} +  s_0^{j, i, \mathbf{p}, \boldsymbol{\delta}} + t_0^{j, \mathbf{p}, \boldsymbol{\delta}}, \\ 
y_i & \geq \mathbf{s}^{{j, i, \mathbf{p}, \boldsymbol{\delta}}^\top}\mathbf{x} + s_0^{j, i, \mathbf{p}, \boldsymbol{\delta}},  \\
&  \forall\, i\in N, \, \mathbf{p}\in\underline{\mathcal{X}}, \,\boldsymbol{\delta}\in\mathfrak{S}(N).
\end{aligned}
\right\}
\end{equation}
We note that all inequalities that define $\mathcal{CD}^j$ are linear. When there are finitely many distinct SEPIs, the convex hull, $\mathcal{CD}^j$, is polyhedral.  \\

In what follows, we will convexify the union $\bigcup_{j\in N} \mathcal{D}^j$ by characterizing the intersection of the polars of $\mathcal{CD}^j$ for $j\in N$. For any $j\in N$, we denote the polar of $\mathcal{CD}^j$ by 
\begin{equation}
\Pi^{\mathcal{CD}^j} := \left\{ (u_0, \mathbf{u}, \boldsymbol{\pi}, \pi_0) \in \mathbb{R}^{2n+2} : u_0 w + \mathbf{u}^\top\mathbf{y} \geq \boldsymbol{\pi}^\top\mathbf{x} + \pi_0, \, \forall\, (w,\mathbf{y}, \mathbf{x})\in \mathcal{CD}^j \right\}.
\end{equation}
Following from \eqref{eq:CDj_form}, for every $j\in N$, 
\begin{equation}
\Pi^{\mathcal{CD}^j} = \left\{ (u_0, \mathbf{u}, \boldsymbol{\pi}, \pi_0) \in \mathbb{R}^{2n+2} : \, \exists \, \mathbf{a}^j, \mathbf{b}^j \geq \mathbf{0} \text{ s.t. } \eqref{eq:pi_dj_constr1}\text{--}\eqref{eq:pi_dj_constr4}\right\},
\end{equation}
where 
\begin{subequations}
\begin{align}
u_0 & = \sum_{i\in N} \sum_{\mathbf{p}\in\underline{\mathcal{X}}}\sum_{\boldsymbol{\delta}\in\mathfrak{S}(N)} a^{j,i,\mathbf{p},\boldsymbol{\delta}},  \label{eq:pi_dj_constr1}\\
u_i & = \sum_{\mathbf{p}\in\underline{\mathcal{X}}}\sum_{\boldsymbol{\delta}\in\mathfrak{S}(N)} \left(a^{j,i,\mathbf{p},\boldsymbol{\delta}} + b^{j,i,\mathbf{p},\boldsymbol{\delta}}\right), \quad \forall\, i\in N, \\
\pi_k & = \sum_{i\in N} \sum_{\mathbf{p}\in\underline{\mathcal{X}}}\sum_{\boldsymbol{\delta}\in\mathfrak{S}(N)} s^{j,i,\mathbf{p},\boldsymbol{\delta}}_k \left(a^{j,i,\mathbf{p},\boldsymbol{\delta}} + b^{j,i,\mathbf{p},\boldsymbol{\delta}}\right) + \sum_{\mathbf{p}\in\underline{\mathcal{X}}}\sum_{\boldsymbol{\delta}\in\mathfrak{S}(N)}  t^{j,\mathbf{p},\boldsymbol{\delta}}_k \sum_{i\in N}  a^{j,i,\mathbf{p},\boldsymbol{\delta}}, \quad \forall\, k\in N,   \\ 
\pi_0 & \leq \sum_{i\in N} \sum_{\mathbf{p}\in\underline{\mathcal{X}}}\sum_{\boldsymbol{\delta}\in\mathfrak{S}(N)} s^{j,i,\mathbf{p},\boldsymbol{\delta}}_0 \left(a^{j,i,\mathbf{p},\boldsymbol{\delta}} + b^{j,i,\mathbf{p},\boldsymbol{\delta}}\right) + \sum_{\mathbf{p}\in\underline{\mathcal{X}}}\sum_{\boldsymbol{\delta}\in\mathfrak{S}(N)}  t^{j,\mathbf{p},\boldsymbol{\delta}}_0 \sum_{i\in N}  a^{j,i,\mathbf{p},\boldsymbol{\delta}}.\label{eq:pi_dj_constr4}
\end{align}
\end{subequations}

That is, $\Pi^{\mathcal{CD}^j}$ contains all the conical combinations of the SEPI parameters given in \eqref{eq:CDj_form} and the ray $(0, \mathbf{0}, \mathbf{0}, -1)$. We next discuss a few properties of $\Pi^{\mathcal{CD}^j}$ for every $j\in N$. 

\begin{observation}
\label{obs:u0_u_prop}
We notice that $(u_0, \mathbf{u})\geq \mathbf{0}$ by non-negativity of $\mathbf{a}^j$ and $\mathbf{b}^j$. Furthermore, 
\[\sum_{i\in N} u_i = \sum_{i\in N} \sum_{\mathbf{p}\in\underline{\mathcal{X}}}\sum_{\boldsymbol{\delta}\in\mathfrak{S}(N)} \left(a^{j,i,\mathbf{p},\boldsymbol{\delta}} + b^{j,i,\mathbf{p},\boldsymbol{\delta}}\right) = u_0 + \sum_{i\in N} \sum_{\mathbf{p}\in\underline{\mathcal{X}}}\sum_{\boldsymbol{\delta}\in\mathfrak{S}(N)} b^{j,i,\mathbf{p},\boldsymbol{\delta}}, \; \forall\, j\in N.\] 
Thus, $u_0 \leq \sum_{i\in N} u_i$. 
\end{observation}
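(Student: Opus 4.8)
The statement to be proven is Observation~\ref{obs:u0_u_prop}, which asserts two facts about the polar cone parameters: first, that $(u_0, \mathbf{u}) \geq \mathbf{0}$, and second, that $u_0 \leq \sum_{i\in N} u_i$. Both facts follow directly by reading off the defining equations \eqref{eq:pi_dj_constr1}--\eqref{eq:pi_dj_constr4} for $\Pi^{\mathcal{CD}^j}$ together with the sign conditions $\mathbf{a}^j, \mathbf{b}^j \geq \mathbf{0}$.

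The plan is as follows. For the nonnegativity of $(u_0, \mathbf{u})$, I would substitute directly: equation \eqref{eq:pi_dj_constr1} expresses $u_0$ as a sum of the coefficients $a^{j,i,\mathbf{p},\boldsymbol{\delta}}$, each of which is a component of $\mathbf{a}^j \geq \mathbf{0}$, so $u_0 \geq 0$; likewise each $u_i$ is, by its defining equation, a sum of terms $a^{j,i,\mathbf{p},\boldsymbol{\delta}} + b^{j,i,\mathbf{p},\boldsymbol{\delta}}$, each a sum of a nonnegative component of $\mathbf{a}^j$ and a nonnegative component of $\mathbf{b}^j$, hence $u_i \geq 0$. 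For the inequality $u_0 \leq \sum_{i\in N} u_i$, I would sum the equations defining $u_i$ over $i \in N$, interchange the order of summation so that the term $\sum_{i\in N}\sum_{\mathbf{p}}\sum_{\boldsymbol{\delta}} a^{j,i,\mathbf{p},\boldsymbol{\delta}}$ is recognized as exactly $u_0$ via \eqref{eq:pi_dj_constr1}, and observe that the remaining term $\sum_{i\in N}\sum_{\mathbf{p}}\sum_{\boldsymbol{\delta}} b^{j,i,\mathbf{p},\boldsymbol{\delta}}$ is a sum of nonnegative quantities and hence $\geq 0$. This gives $\sum_{i\in N} u_i = u_0 + (\text{nonnegative})$, which is precisely the displayed chain of equalities in the statement, and therefore $u_0 \leq \sum_{i\in N} u_i$.

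There is essentially no obstacle here: the observation is a bookkeeping consequence of the explicit parametrization of the polar cone. The only minor care needed is to make sure the index sets ($i\in N$, $\mathbf{p}\in\underline{\mathcal{X}}$, $\boldsymbol{\delta}\in\mathfrak{S}(N)$) are handled consistently when swapping summation order, but since these are finite (or at worst countable, with all summands nonnegative so that the interchange is unconditionally valid) sums over fixed index sets, Fubini/Tonelli-style reordering is immediate and no convergence subtlety arises. I would present the proof in two short paragraphs mirroring the two claims, each a one-line substitution.

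Note: since Observation~\ref{obs:u0_u_prop} as displayed in the excerpt already contains the full argument inline (the chain of equalities is written out in the statement itself), the ``proof'' amounts to affirming that each step of that chain is justified by the cited defining equation and the sign constraints; I would simply state this and cite \eqref{eq:pi_dj_constr1} and the defining equation for $u_i$ explicitly.
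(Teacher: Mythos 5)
Your proof is correct and matches the paper's argument exactly: the observation is indeed just a direct readout of the defining equations \eqref{eq:pi_dj_constr1}--\eqref{eq:pi_dj_constr4} together with $\mathbf{a}^j,\mathbf{b}^j\geq\mathbf{0}$, with the summation interchange and the identification of $\sum_{i\in N}\sum_{\mathbf{p}}\sum_{\boldsymbol{\delta}} a^{j,i,\mathbf{p},\boldsymbol{\delta}}$ as $u_0$ being the only steps. Nothing is missing.
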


For ease of notation, we define 
\begin{equation}
\label{eq:U}
 \mathcal{U} := \left\{ (u_0, \mathbf{u})\in\mathbb{R}_+^{n+1} : u_0 \leq \sum_{i\in N} u_i\right\}. 
 \end{equation}
In addition, for any $j\in N$ and any $(u_0, \mathbf{u}) \in \mathcal{U}$, we define $f^j_{u_0, \mathbf{u}}: \mathcal{X}\rightarrow\mathbb{R}$ by 
\[f^j_{u_0, \mathbf{u}}(\mathbf{x}) := u_0h^j(\mathbf{x}) + \sum_{i\in N}u_i g^j_i(\mathbf{x}).  \]
Given that $f^j_{u_0, \mathbf{u}}$ is a nonnegative combination of L$^\natural$-convex functions, $f^j_{u_0, \mathbf{u}}$ itself is L$^\natural$-convex. Thus, the convex hull of its epigraph, $\mathcal{P}^{f^j_{u_0, \mathbf{u}}}$, is described fully by SEPIs. We let 
\[\Pi^{\conv{\mathcal{P}^{f^j_{u_0, \mathbf{u}}}}} := \left\{(\boldsymbol{\xi}, \xi_0) \in \mathbb{R}^{n+1} : \eta \geq \boldsymbol{\xi}^\top\mathbf{x} + \xi_0, \, \forall\, (\eta,\mathbf{x})\in\conv{\mathcal{P}^{f^j_{u_0, \mathbf{u}}}}\right\} \]
represent the polar of $\conv{\mathcal{P}^{f^j_{u_0, \mathbf{u}}}}$ for all $(u_0, \mathbf{u}) \in \mathcal{U}$. 

\begin{lemma}
\label{lem:fju_prop}
Given any $j\in N$ and any $(u_0, \mathbf{u}) \in \mathcal{U}$, 
\[\left(u_0 w + \sum_{i\in N} u_i y_i, \mathbf{x}\right) \in \conv{\mathcal{P}^{f^j_{u_0, \mathbf{u}}}},\]
for all $(w, \mathbf{y}, \mathbf{x})\in\mathcal{CD}^j$. 
\end{lemma}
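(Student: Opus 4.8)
The plan is to show that the linear functional $(w,\mathbf{y},\mathbf{x}) \mapsto u_0 w + \sum_{i\in N} u_i y_i$, restricted to $\mathcal{CD}^j$, is bounded below by the value $f^j_{u_0,\mathbf{u}}(\mathbf{x})$ of the L$^\natural$-convex function, and that this lower envelope is exactly $\conv{\mathcal{P}^{f^j_{u_0,\mathbf{u}}}}$. Recall from Proposition~\ref{prop:conv_dj} that $\mathcal{CD}^j$ is described by the inequalities $(w + y_i,\mathbf{x}) \in \conv{\mathcal{P}^{g^j_i + h^j}_\mathcal{X}}$ and $(y_i,\mathbf{x}) \in \conv{\mathcal{P}^{g^j_i}_\mathcal{X}}$ for all $i\in N$, together with $\mathbf{x}\in\overline{\mathcal{X}}$. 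The key algebraic identity is that, for any $(u_0,\mathbf{u})\in\mathcal{U}$, we can write
\[
u_0 w + \sum_{i\in N} u_i y_i = \sum_{i\in N} u_0 (w + y_i)\cdot\tfrac{1}{?} \dots
\]
— more carefully, since $\sum_{i\in N} u_i \geq u_0$, we can pick nonnegative scalars $\alpha_i$ with $\sum_{i\in N}\alpha_i = u_0$ and $\alpha_i \le u_i$, and decompose $u_0 w + \sum_{i\in N} u_i y_i = \sum_{i\in N}\alpha_i(w+y_i) + \sum_{i\in N}(u_i-\alpha_i) y_i$, a nonnegative combination of the ``$w+y_i$'' and ``$y_i$'' coordinates.

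First I would fix an arbitrary $(w,\mathbf{y},\mathbf{x})\in\mathcal{CD}^j$ and choose such $\alpha_i\ge 0$ with $\sum_{i\in N}\alpha_i = u_0$ and $\alpha_i\le u_i$ (possible exactly because $(u_0,\mathbf{u})\in\mathcal{U}$). Then, since $(w+y_i,\mathbf{x})\in\conv{\mathcal{P}^{g^j_i+h^j}_\mathcal{X}}$ and validity of every SEPI gives $w+y_i \ge \overline{(g^j_i+h^j)}(\mathbf{x})$ where $\overline{(\cdot)}$ denotes the convex lower envelope on $\overline{\mathcal{X}}$, and likewise $y_i \ge \overline{g^j_i}(\mathbf{x})$, I would add these up with weights $\alpha_i$ and $u_i-\alpha_i$ respectively to get
\[
u_0 w + \sum_{i\in N} u_i y_i \;\ge\; \sum_{i\in N}\alpha_i \overline{(g^j_i+h^j)}(\mathbf{x}) + \sum_{i\in N}(u_i-\alpha_i)\overline{g^j_i}(\mathbf{x}).
\]
Because all the relevant functions are L$^\natural$-convex (Assumption~1), their convex envelopes on $\overline{\mathcal{X}}$ agree with the Favati--Tardella extension $\overline{f}$, which is superadditive in the sense that $\overline{g+h}\ge\overline{g}+\overline{h}$ fails in general — so here the cleaner route is to note $\overline{(g^j_i+h^j)}(\mathbf{x}) \ge \overline{g^j_i}(\mathbf{x}) + \overline{h^j}(\mathbf{x})$ does \emph{not} hold, but rather I should instead pass through the SEPI description directly: at the fixed $\mathbf{x}$, use that $w+y_i$ and $y_i$ dominate \emph{every} SEPI right-hand side, and then recognize that a suitable nonnegative combination of SEPIs for $g^j_i+h^j$ and $g^j_i$ yields precisely a SEPI for $f^j_{u_0,\mathbf{u}} = u_0 h^j + \sum_i u_i g^j_i$, using the remark in the excerpt that the SEPI for $g^j_i+h^j$ splits as the sum of SEPIs for $g^j_i$ and $h^j$.

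The cleanest argument I would actually carry out: fix $\mathbf{x}\in\overline{\mathcal{X}}$, run Algorithm~\ref{alg:frac_greedy} on $\mathbf{x}$ to get $\mathbf{p}\in\underline{\mathcal{X}}$ and $\boldsymbol{\delta}\in\mathfrak{S}(N)$, and use the \emph{same} $(\mathbf{p},\boldsymbol{\delta})$ for all the functions $g^j_i$, $h^j$, and $f^j_{u_0,\mathbf{u}}$ simultaneously. Since $f^j_{u_0,\mathbf{u}}$ is a nonnegative combination, $f^j_{u_0,\mathbf{u}}(\mathbf{p} + \sum_{\ell\le m}\mathbf{1}^{\delta(\ell)}) = u_0 h^j(\cdots) + \sum_i u_i g^j_i(\cdots)$ for each prefix, so the SEPI right-hand side for $f^j_{u_0,\mathbf{u}}$ at $\mathbf{x}$ equals $u_0\cdot(\text{SEPI-RHS of }h^j) + \sum_i u_i\cdot(\text{SEPI-RHS of }g^j_i)$, all evaluated at $\mathbf{x}$ with this common $(\mathbf{p},\boldsymbol{\delta})$. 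Now for $(w,\mathbf{y},\mathbf{x})\in\mathcal{CD}^j$, the $i$-th constraint $(w+y_i,\mathbf{x})\in\conv{\mathcal{P}^{g^j_i+h^j}_\mathcal{X}}$ forces $w+y_i$ to be at least the SEPI-RHS of $g^j_i+h^j$ at $\mathbf{x}$, which (by the splitting remark) is $(\text{SEPI-RHS of }g^j_i) + (\text{SEPI-RHS of }h^j)$; and $(y_i,\mathbf{x})\in\conv{\mathcal{P}^{g^j_i}_\mathcal{X}}$ forces $y_i \ge (\text{SEPI-RHS of }g^j_i)$. Taking the combination $\sum_{i} \alpha_i[(w+y_i) - (\text{RHS of }g^j_i+h^j)] + \sum_i (u_i-\alpha_i)[y_i - (\text{RHS of }g^j_i)] \ge 0$ with $\sum_i\alpha_i = u_0$, $0\le\alpha_i\le u_i$, and collecting terms, the left side simplifies to $u_0 w + \sum_i u_i y_i - u_0(\text{RHS of }h^j) - \sum_i u_i(\text{RHS of }g^j_i) = u_0 w + \sum_i u_i y_i - (\text{SEPI-RHS of }f^j_{u_0,\mathbf{u}}\text{ at }\mathbf{x})$. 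Hence $u_0 w + \sum_i u_i y_i$ is at least one particular SEPI-RHS of $f^j_{u_0,\mathbf{u}}$ at $\mathbf{x}$; since $(\mathbf{x})\in\overline{\mathcal{X}}$ and the SEPI set plus trivial bounds describes $\conv{\mathcal{P}^{f^j_{u_0,\mathbf{u}}}}$ by Theorem~\ref{thm:conv_SEPI}, and since that particular SEPI (the one Algorithm~\ref{alg:frac_greedy} produces) is the \emph{most violated} one, dominating it is enough to conclude $(u_0 w + \sum_i u_i y_i, \mathbf{x})\in\conv{\mathcal{P}^{f^j_{u_0,\mathbf{u}}}}$.

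The main obstacle I anticipate is bookkeeping the $\alpha_i$ decomposition cleanly and making sure the ``splitting of the SEPI for $g^j_i + h^j$'' is invoked with a \emph{common} $(\mathbf{p},\boldsymbol{\delta})$ across all $i$ — the excerpt's remark before equation~\eqref{eq:CDj_form} gives exactly this splitting, but one must be careful that the constraint in $\mathcal{CD}^j$ only says $(w+y_i,\mathbf{x})$ lies in the convex hull, i.e.\ $w+y_i$ dominates \emph{all} SEPIs including the one with the chosen common $(\mathbf{p},\boldsymbol{\delta})$, which is what we use. A secondary subtlety is checking the choice $\sum_i\alpha_i = u_0$, $0\le\alpha_i\le u_i$ is feasible: this is a standard transportation-type feasibility, guaranteed precisely by $u_0\le\sum_i u_i$ and $\mathbf{u}\ge\mathbf{0}$, i.e.\ by $(u_0,\mathbf{u})\in\mathcal{U}$.
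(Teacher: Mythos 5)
Your proposal is correct and follows essentially the same route as the paper: the same decomposition of $u_0 w + \sum_{i\in N} u_i y_i$ via weights $\alpha_i\ge 0$ with $\sum_{i\in N}\alpha_i = u_0$ and $\alpha_i\le u_i$ (feasible precisely because $(u_0,\mathbf{u})\in\mathcal{U}$), combined with a single common greedy decomposition of $\mathbf{x}$ from Algorithm~\ref{alg:frac_greedy} and the linearity of the SEPI right-hand side in the underlying function, so that the bounds on $w+y_i$ and $y_i$ reassemble into a lower bound by $f^j_{u_0,\mathbf{u}}$. The only cosmetic difference is the final step: the paper exhibits the convex-combination certificate $\sum_k\lambda_k\bigl(f^j_{u_0,\mathbf{u}}(\mathbf{p}^k),\mathbf{p}^k\bigr)$ directly, while you conclude by dominating the most violated SEPI and invoking Theorem~\ref{thm:conv_SEPI}; both are valid since the two quantities coincide by Proposition~\ref{prop:exact_sepa_SEPI}.
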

\begin{proof}
Recall the definition of $\mathcal{CD}^j$ given by \eqref{eq:CDj_def}. Given L$^\natural$-convexity of $g^j_i$ and $g^j_i + h^j$ for all $i\in N$, there exist $\{\lambda_k, \mathbf{x}^k\}_{k\in K}$ for some $K\subset \mathbb{Z}_{++}$ with $K \neq \emptyset$, such that $\boldsymbol{\lambda} \geq \mathbf{0}$, $\boldsymbol{\lambda}^\top\mathbf{1} = 1$, $\mathbf{x}^k\in\mathcal{X}$ for $k\in K$, 
\[\mathbf{x} = \sum_{k\in K} \lambda_k \mathbf{x}^k, \]
\[w+y_i \geq \sum_{k\in K}\lambda_k \left[g^j_i(\mathbf{x}^k) + h^j(\mathbf{x}^k)\right], \quad \forall\, i\in N, \]
and 
\[y_i \geq \sum_{k\in K}\lambda_k g^j_i(\mathbf{x}^k), \quad \forall\, i\in N.\]
Take any $(\boldsymbol{\alpha}, \boldsymbol{\beta})\in\mathbb{R}_+^{2n}$ with $\sum_{i\in N} \alpha_i = u_0$ and $\alpha_i + \beta_i = u_i$ for $i\in N$. Such $(\boldsymbol{\alpha}, \boldsymbol{\beta})$ exists---for example, let 
\[\alpha_i = u_0 \frac{u_i}{\sum_{i\in N} u_i}, \quad \beta_i = u_i - u_0 \frac{u_i}{\sum_{i\in N} u_i}, \quad \forall\, i\in N. \]
Here, $\beta_i  =  u_i (\sum_{i\in N} u_i - u_0) / (\sum_{i\in N} u_i) \geq 0$ because $(u_0, \mathbf{u}) \in \mathcal{U}$. 
Then 
\[u_0 w + \sum_{i\in N} u_i y_i = w  \sum_{i\in N} \alpha_i + \sum_{i\in N} (\alpha_i + \beta_i) y_i = \sum_{i\in N}\alpha_i (w + y_i ) + \sum_{i\in N} \beta_i y_i. \]
Moreover, 
\begin{align*}
\sum_{i\in N}\alpha_i (w + y_i ) + \sum_{i\in N} \beta_i y_i & \geq \sum_{i\in N}\alpha_i \sum_{k\in K}\lambda_k \left[g^j_i(\mathbf{x}^k) + h^j(\mathbf{x}^k)\right] + \sum_{i\in N} \beta_i \sum_{k\in K}\lambda_k g^j_i(\mathbf{x}^k)  \\
& = \sum_{k\in K}\lambda_k \sum_{i\in N}\alpha_i \left[g^j_i(\mathbf{x}^k) + h^j(\mathbf{x}^k)\right] + \sum_{k\in K}\lambda_k \sum_{i\in N} \beta_i g^j_i(\mathbf{x}^k)  \\
& = \sum_{k\in K}\lambda_k \left\{\sum_{i\in N}\alpha_i h^j(\mathbf{x}^k) + \sum_{i\in N} (\alpha_i + \beta_i ) g^j_i(\mathbf{x}^k) \right\} \\
& = \sum_{k\in K}\lambda_k \left\{u_0 h^j(\mathbf{x}^k) + \sum_{i\in N} u_i g^j_i(\mathbf{x}^k) \right\} \\
& = \sum_{k\in K}\lambda_k f^j_{u_0, \mathbf{u}}(\mathbf{x}^k).
\end{align*}
Hence, $\left(u_0 w + \sum_{i\in N} u_i y_i, \mathbf{x}\right) \in \conv{\mathcal{P}^{f^j_{u_0, \mathbf{u}}}}$. 
\end{proof}

This lemma implies that, for any $j\in N$ and $(u_0, \mathbf{u}) \in \mathcal{U}$, the inequalities of the form 
\[u_0 w + \mathbf{u}^\top\mathbf{y} \geq \boldsymbol{\xi}^\top\mathbf{x} + \xi_0\]
for every $(\boldsymbol{\xi}, \xi_0) \in \Pi^{\conv{\mathcal{P}^{f^j_{u_0, \mathbf{u}}}}}$, are valid for $\mathcal{CD}^j$.

\begin{lemma}
For any $j\in N$, 
\[\Pi^{\mathcal{CD}^j} = \left\{(u_0, \mathbf{u}, \boldsymbol{\pi}, \pi_0) \in \mathbb{R}^{2n+2} : (u_0, \mathbf{u}) \in \mathcal{U}, (\pi_0, \boldsymbol{\pi})\in \Pi^{\conv{\mathcal{P}^{f^j_{u_0, \mathbf{u}}}}}\right\}.\] 
\end{lemma}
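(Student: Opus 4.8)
The plan is to prove the set equality by establishing two-way containment, treating $\Pi^{\mathcal{CD}^j}$ as a closed convex cone and exploiting the fact that $\mathcal{CD}^j$ is described by the explicit linear system \eqref{eq:CDj_form}. First I would unpack what membership in each side means. The right-hand side asserts: $(u_0,\mathbf{u})\in\mathcal{U}$, and the inequality $u_0 w + \mathbf{u}^\top\mathbf{y}\ge \boldsymbol{\pi}^\top\mathbf{x}+\pi_0$ is valid over all $(\eta,\mathbf{x})\in\conv{\mathcal{P}^{f^j_{u_0,\mathbf{u}}}}$ after substituting $\eta = u_0 w + \mathbf{u}^\top\mathbf{y}$. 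The left-hand side is exactly the polar cone of $\mathcal{CD}^j$: the set of linear inequalities valid over $\mathcal{CD}^j$ in the variables $(w,\mathbf{y},\mathbf{x})$ with the sign normalization built into the polar.

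For the containment $\Pi^{\mathcal{CD}^j}\subseteq \{\dots\}$: take $(u_0,\mathbf{u},\boldsymbol{\pi},\pi_0)\in\Pi^{\mathcal{CD}^j}$. By Observation \ref{obs:u0_u_prop} we immediately get $(u_0,\mathbf{u})\in\mathcal{U}$ (this is the payoff of that observation, and the reason $\mathcal{U}$ was singled out). It remains to show $(\pi_0,\boldsymbol{\pi})\in\Pi^{\conv{\mathcal{P}^{f^j_{u_0,\mathbf{u}}}}}$, i.e., $\eta\ge\boldsymbol{\pi}^\top\mathbf{x}+\pi_0$ for all $(\eta,\mathbf{x})\in\conv{\mathcal{P}^{f^j_{u_0,\mathbf{u}}}}$. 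By definition of the polar, since $\conv{\mathcal{P}^{f^j_{u_0,\mathbf{u}}}}$ is the intersection of the halfspaces $\eta\ge \boldsymbol{\xi}^\top\mathbf{x}+\xi_0$ over $(\boldsymbol{\xi},\xi_0)\in\Pi^{\conv{\mathcal{P}^{f^j_{u_0,\mathbf{u}}}}}$ together with $\overline{\mathcal{X}}$-bounds, I need to show the inequality $u_0 w+\mathbf{u}^\top\mathbf{y}\ge\boldsymbol{\pi}^\top\mathbf{x}+\pi_0$ is implied by the generating inequalities of $\mathcal{CD}^j$. The cleanest route: take any $(\eta,\mathbf{x})\in\mathcal{P}^{f^j_{u_0,\mathbf{u}}}$ with $\mathbf{x}\in\mathcal{X}$ and $\eta = f^j_{u_0,\mathbf{u}}(\mathbf{x}) = u_0 h^j(\mathbf{x})+\sum_i u_i g^j_i(\mathbf{x})$; then the point $(w,\mathbf{y},\mathbf{x})$ with $w = h^j(\mathbf{x})$ and $y_i = g^j_i(\mathbf{x})$ lies in $\mathcal{D}^j$ (hence in $\mathcal{CD}^j$), and $u_0 w + \mathbf{u}^\top\mathbf{y} = \eta$, so the polar inequality gives $\eta\ge\boldsymbol{\pi}^\top\mathbf{x}+\pi_0$; since this holds on all integer points of the epigraph, and $(\boldsymbol{\pi},\pi_0)$ defines a halfspace containing those points, it holds on $\conv{\mathcal{P}^{f^j_{u_0,\mathbf{u}}}}$ (an epigraph convex hull is generated by the integer-point epigraph points plus the ray $(\mathbf{0},1)$, and the inequality respects that ray because $u_0 + \sum_i u_i$-type coefficients are nonnegative; more carefully, one uses that any point in $\conv{\mathcal{P}^{f^j_{u_0,\mathbf{u}}}}$ is a convex combination of points $(f^j_{u_0,\mathbf{u}}(\mathbf{x}^k),\mathbf{x}^k)$ plus a nonnegative multiple of $(1,\mathbf{0})$, and the inequality $\eta\ge\boldsymbol{\pi}^\top\mathbf{x}+\pi_0$ is preserved under both operations).

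For the reverse containment: suppose $(u_0,\mathbf{u})\in\mathcal{U}$ and $(\pi_0,\boldsymbol{\pi})\in\Pi^{\conv{\mathcal{P}^{f^j_{u_0,\mathbf{u}}}}}$. I must show $u_0 w+\mathbf{u}^\top\mathbf{y}\ge\boldsymbol{\pi}^\top\mathbf{x}+\pi_0$ holds for every $(w,\mathbf{y},\mathbf{x})\in\mathcal{CD}^j$. This is precisely the statement proved in Lemma \ref{lem:fju_prop}: for any $(w,\mathbf{y},\mathbf{x})\in\mathcal{CD}^j$ we have $(u_0 w+\mathbf{u}^\top\mathbf{y},\mathbf{x})\in\conv{\mathcal{P}^{f^j_{u_0,\mathbf{u}}}}$, and then the defining property of $(\pi_0,\boldsymbol{\pi})$ being in the polar of that convex hull gives $u_0 w+\mathbf{u}^\top\mathbf{y}\ge\boldsymbol{\pi}^\top\mathbf{x}+\pi_0$; together with $(u_0,\mathbf{u})\ge\mathbf{0}$ (from $\mathcal{U}$) to match the polar's sign convention, this places $(u_0,\mathbf{u},\boldsymbol{\pi},\pi_0)$ in $\Pi^{\mathcal{CD}^j}$.

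The main obstacle is the first containment — showing that \emph{every} valid inequality for $\mathcal{CD}^j$ with coefficient pattern $(u_0,\mathbf{u},\boldsymbol{\pi},\pi_0)$ arises from the family $\Pi^{\conv{\mathcal{P}^{f^j_{u_0,\mathbf{u}}}}}$, i.e., that no "extra" facets of $\mathcal{CD}^j$ are missed. The subtle point is handling the nonnegativity constraints $\mathbf{a}^j,\mathbf{b}^j\ge\mathbf{0}$ in the explicit polar description \eqref{eq:pi_dj_constr1}--\eqref{eq:pi_dj_constr4}: one should verify that an arbitrary conical combination of the SEPI-generated inequalities for the $\{g^j_i\}$, $\{g^j_i+h^j\}$ epigraphs reorganizes — by collecting the coefficients $a^{j,i,\mathbf{p},\boldsymbol{\delta}}$ and $b^{j,i,\mathbf{p},\boldsymbol{\delta}}$ into the aggregate weights $u_0 = \sum a$, $u_i = \sum(a+b)$ — into exactly a valid inequality for the epigraph of the single aggregated function $f^j_{u_0,\mathbf{u}} = u_0 h^j + \sum_i u_i g^j_i$, which is L$^\natural$-convex (a nonnegative combination of L$^\natural$-convex functions) and hence has its epigraph-convex-hull polar fully generated by SEPIs. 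I would write this out by substituting the defining sums \eqref{eq:pi_dj_constr1}--\eqref{eq:pi_dj_constr4} and recognizing the resulting right-hand side as a conical combination of SEPIs for $f^j_{u_0,\mathbf{u}}$, invoking Theorem \ref{thm:conv_SEPI}; the $\pi_0$ slack inequality \eqref{eq:pi_dj_constr4} corresponds precisely to also allowing the ray $(0,\mathbf{0},\mathbf{0},-1)$, matching the structure of $\Pi^{\conv{\mathcal{P}^{f^j_{u_0,\mathbf{u}}}}}$.
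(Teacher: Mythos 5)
Your proposal is correct and follows essentially the same route as the paper: the forward containment comes from Observation \ref{obs:u0_u_prop} together with testing the polar inequality at the points $\left(h^j(\mathbf{x}), [g^j_i(\mathbf{x})]_{i\in N}, \mathbf{x}\right)\in\mathcal{CD}^j$ for $\mathbf{x}\in\mathcal{X}$ and then passing from $\mathcal{P}^{f^j_{u_0,\mathbf{u}}}$ to its convex hull, and the reverse containment is exactly Lemma \ref{lem:fju_prop}. Your closing paragraph about reorganizing the conical combination in \eqref{eq:pi_dj_constr1}--\eqref{eq:pi_dj_constr4} into SEPIs of $f^j_{u_0,\mathbf{u}}$ is unnecessary, since your ``cleanest route'' already closes that direction.
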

\begin{proof}
First, consider any $(u_0, \mathbf{u}, \boldsymbol{\pi}, \pi_0) \in \Pi^{\mathcal{CD}^j}$. By Observation \ref{obs:u0_u_prop}, $(u_0, \mathbf{u})\in\mathcal{U}$. Note that $(h^j(\mathbf{x}),$  $[g^j_i(\mathbf{x})]_{i\in N}, \mathbf{x})\in\mathcal{CD}^j$ for every $\mathbf{x}\in\mathcal{X}$. Thus, 
\begin{equation*}
\boldsymbol{\pi}^\top\mathbf{x}+ \pi_0  \leq u_0h^j(\mathbf{x}) + \sum_{i\in N} u_ig^j_i(\mathbf{x})
 = f^j_{u_0, \mathbf{u}}(\mathbf{x}) \leq \eta, 
\end{equation*}
for all $(\eta, \mathbf{x})\in\mathcal{P}^{f^j_{u_0, \mathbf{u}}}$. This means that $(\boldsymbol{\pi}, \pi_0)\in \Pi^{\conv{\mathcal{P}^{f^j_{u_0, \mathbf{u}}}}}$. Next, consider any $(u_0, \mathbf{u})\in\mathcal{U}$ and any $(\boldsymbol{\xi}, \xi_0) \in \Pi^{\conv{\mathcal{P}^{f^j_{u_0, \mathbf{u}}}}}$. The reverse containment follows from Lemma \ref{lem:fju_prop}. 
 \end{proof}

Now, we explore the intersection of the polars $\bigcap_{j\in N} \Pi^{\mathcal{CD}^j}$ to obtain all the inequalities that define $\bigcup_{i\in N} \mathcal{CD}^j$. For any $(u_0, \mathbf{u}) \in \mathcal{U}$, we define a function $F_{u_0, \mathbf{u}}:\mathcal{X}\rightarrow\mathbb{R}$ by 
\begin{equation}
\label{eq:F}
F_{u_0, \mathbf{u}}(\mathbf{x}) := \min_{j\in N} \left\{f^j_{u_0, \mathbf{u}} (\mathbf{x})\right\}. 
\end{equation}
We represent the polar of the convex hull of its epigraph by 
\[\Pi^{\conv{\mathcal{P}^{F_{u_0, \mathbf{u}}}}} := \left\{(\boldsymbol{\xi}, \xi_0) \in \mathbb{R}^{n+1} : \eta \geq \boldsymbol{\xi}^\top\mathbf{x} + \xi_0, \, \forall\, (\eta,\mathbf{x})\in\conv{\mathcal{P}^{F_{u_0, \mathbf{u}}}}\right\}.  \]

\begin{lemma}
\label{lem:polar_inter}
The intersection of the polars of $\{\mathcal{CD}^j\}_{j\in N}$ is
\[\bigcap_{j\in N} \Pi^{\mathcal{CD}^j} = \left\{(u_0, \mathbf{u}, \boldsymbol{\pi}, \pi_0) \in \mathbb{R}^{2n+2} : (u_0, \mathbf{u}) \in \mathcal{U}, (\boldsymbol{\pi}, \pi_0)\in \Pi^{\conv{\mathcal{P}^{F_{u_0, \mathbf{u}}}}}\right\}.\] 
\end{lemma}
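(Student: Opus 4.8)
The plan is to reduce the claim to a statement about the polars of the individual epigraphs and then invoke the elementary fact that the polar of a union of sets equals the intersection of their polars. By the lemma immediately preceding this one, each $\Pi^{\mathcal{CD}^j}$ is exactly the set of $(u_0,\mathbf{u},\boldsymbol{\pi},\pi_0)$ with $(u_0,\mathbf{u})\in\mathcal{U}$ and $(\boldsymbol{\pi},\pi_0)\in\Pi^{\conv{\mathcal{P}^{f^j_{u_0,\mathbf{u}}}}}$. Since the membership $(u_0,\mathbf{u})\in\mathcal{U}$ is common to all $j\in N$, it persists under the intersection, so after fixing an arbitrary $(u_0,\mathbf{u})\in\mathcal{U}$ the remaining work is to identify
\[
\bigcap_{j\in N}\Pi^{\conv{\mathcal{P}^{f^j_{u_0,\mathbf{u}}}}}.
\]

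First I would record that, for any function $g:\mathcal{X}\to\mathbb{R}$, a pair $(\boldsymbol{\pi},\pi_0)$ lies in $\Pi^{\conv{\mathcal{P}^g_\mathcal{X}}}$ if and only if $g(\mathbf{x})\ge \boldsymbol{\pi}^\top\mathbf{x}+\pi_0$ for every $\mathbf{x}\in\mathcal{X}$: the polar of a set coincides with the polar of its convex hull, and the defining condition $\eta\ge\boldsymbol{\pi}^\top\mathbf{x}+\pi_0$ over all $(\eta,\mathbf{x})\in\mathcal{P}^g_\mathcal{X}$ is binding precisely at $\eta=g(\mathbf{x})$. Applying this to each $f^j_{u_0,\mathbf{u}}$, a pair $(\boldsymbol{\pi},\pi_0)$ belongs to $\bigcap_{j\in N}\Pi^{\conv{\mathcal{P}^{f^j_{u_0,\mathbf{u}}}}}$ exactly when $f^j_{u_0,\mathbf{u}}(\mathbf{x})\ge\boldsymbol{\pi}^\top\mathbf{x}+\pi_0$ for all $j\in N$ and all $\mathbf{x}\in\mathcal{X}$, which is the same as $\min_{j\in N}f^j_{u_0,\mathbf{u}}(\mathbf{x})=F_{u_0,\mathbf{u}}(\mathbf{x})\ge\boldsymbol{\pi}^\top\mathbf{x}+\pi_0$ for all $\mathbf{x}\in\mathcal{X}$, i.e. $(\boldsymbol{\pi},\pi_0)\in\Pi^{\conv{\mathcal{P}^{F_{u_0,\mathbf{u}}}}}$. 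Equivalently, and perhaps more transparently, one notes $\mathcal{P}^{F_{u_0,\mathbf{u}}}=\bigcup_{j\in N}\mathcal{P}^{f^j_{u_0,\mathbf{u}}}$ (since $\eta\ge\min_{j}f^j_{u_0,\mathbf{u}}(\mathbf{x})$ iff $\eta\ge f^j_{u_0,\mathbf{u}}(\mathbf{x})$ for some $j$) and that the polar of a union is the intersection of the polars. Combining this with the reduction of the first paragraph yields the stated identity.

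This argument is essentially bookkeeping, so I do not anticipate a genuine obstacle; the only point requiring a little care is to keep the shared block of coordinates $(u_0,\mathbf{u})$ separate from the remaining coordinates $(\boldsymbol{\pi},\pi_0)$, because $f^j_{u_0,\mathbf{u}}$—and hence $F_{u_0,\mathbf{u}}$—depends on $(u_0,\mathbf{u})$; fixing $(u_0,\mathbf{u})\in\mathcal{U}$ at the outset and working within that slice handles this cleanly. I also emphasize that no L$^\natural$-convexity of $F_{u_0,\mathbf{u}}$ is used here—only the always-valid description of a polar in terms of the affine functions it supports—so the identity is purely formal; the separate question of whether $\conv{\mathcal{P}^{F_{u_0,\mathbf{u}}}}$ admits an explicit (SEPI) description, which relies on $F_{u_0,\mathbf{u}}$ being L$^\natural$-convex, is addressed through Proposition \ref{prop:F_lattice_sub}.
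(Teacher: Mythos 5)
Your argument is correct and follows essentially the same route as the paper: invoke the preceding lemma to write each $\Pi^{\mathcal{CD}^j}$ as slices indexed by $(u_0,\mathbf{u})\in\mathcal{U}$, interchange the intersection over $j$ with that indexing, and identify $\bigcap_{j\in N}\Pi^{\conv{\mathcal{P}^{f^j_{u_0,\mathbf{u}}}}}$ with $\Pi^{\conv{\mathcal{P}^{F_{u_0,\mathbf{u}}}}}$ via the fact that the epigraph of the pointwise minimum is the union of the epigraphs and the polar of a union is the intersection of the polars. Your explicit affine-minorant characterization is just a spelled-out version of the paper's chain of set identities, and your remark that no L$^\natural$-convexity is needed here is accurate.
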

\begin{proof} We observe that 
\begin{align*}
\bigcap_{j\in N} \Pi^{\mathcal{CD}^j} & = \bigcap_{j\in N} \left[\bigcup_{(u_0, \mathbf{u}) \in \mathcal{U}}\{(u_0, \mathbf{u})\} \times \Pi^{\conv{\mathcal{P}^{f^j_{u_0, \mathbf{u}}}}} \right] \\
& = \bigcup_{(u_0, \mathbf{u}) \in \mathcal{U}}\{(u_0, \mathbf{u})\} \times \left[ \bigcap_{j\in N} \Pi^{\conv{\mathcal{P}^{f^j_{u_0, \mathbf{u}}}}} \right] \\
& = \bigcup_{(u_0, \mathbf{u}) \in \mathcal{U}}\{(u_0, \mathbf{u})\} \times  \Pi^{\bigcup_{j\in N} \conv{\mathcal{P}^{f^j_{u_0, \mathbf{u}}}}}  \\
& = \bigcup_{(u_0, \mathbf{u}) \in \mathcal{U}}\{(u_0, \mathbf{u})\} \times \Pi^{\conv{\mathcal{P}^{F_{u_0, \mathbf{u}}}}}.
\end{align*}
\end{proof}

\begin{theorem}
\label{thm:conv_H}
Under Assumption \hyperlink{A1}{1}, the convex hull of the epigraph of $H$ is 
\begin{equation*}
\conv{\mathcal{P}^H} = \left\{(w, \mathbf{y}, \mathbf{x})\in \mathbb{R}^{n+1}\times\overline{\mathcal{X}} :  (u_0 w + \mathbf{u}^\top\mathbf{y}, \mathbf{x}) \in \conv{\mathcal{P}^{F_{u_0, \mathbf{u}}}}, \forall\, (u_0, \mathbf{u})\in\mathcal{U} \right\}. 
\end{equation*}
\end{theorem}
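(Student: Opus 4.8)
The plan is to combine the reduction already prepared with a polarity computation. By Corollary~\ref{coro:cv_alt} we have $\conv{\mathcal{P}^H}=\conv{\bigcup_{j\in N}\mathcal{D}^j}$, and since $\conv{\bigcup_{j\in N}\mathcal{D}^j}=\conv{\bigcup_{j\in N}\conv{\mathcal{D}^j}}$, Proposition~\ref{prop:conv_dj} (which uses Assumption~\hyperlink{A1}{1}) upgrades this to $\conv{\mathcal{P}^H}=\conv{\bigcup_{j\in N}\mathcal{CD}^j}$. It therefore suffices to describe the convex hull of a union of $n$ closed convex sets, and for this I would pass to polars: $\bigcap_{j\in N}\Pi^{\mathcal{CD}^j}$ records exactly the linear inequalities valid for every $\mathcal{CD}^j$, hence for their union, hence for $\conv{\bigcup_{j\in N}\mathcal{CD}^j}$. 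Lemma~\ref{lem:polar_inter} already evaluates this intersection as $\bigcup_{(u_0,\mathbf{u})\in\mathcal{U}}\{(u_0,\mathbf{u})\}\times\Pi^{\conv{\mathcal{P}^{F_{u_0,\mathbf{u}}}}}$, so the relevant valid inequalities are precisely $u_0w+\mathbf{u}^\top\mathbf{y}\ge\boldsymbol{\pi}^\top\mathbf{x}+\pi_0$ as $(u_0,\mathbf{u})$ ranges over $\mathcal{U}$ and $(\boldsymbol{\pi},\pi_0)$ over $\Pi^{\conv{\mathcal{P}^{F_{u_0,\mathbf{u}}}}}$.

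Next I would translate this back into membership statements. For fixed $(u_0,\mathbf{u})\in\mathcal{U}$, Theorem~\ref{thm:conv_SEPI} says $\conv{\mathcal{P}^{F_{u_0,\mathbf{u}}}}$ is cut out by its SEPIs (the inequalities $\eta\ge\boldsymbol{\pi}^\top\mathbf{x}+\pi_0$ with $(\boldsymbol{\pi},\pi_0)\in\Pi^{\conv{\mathcal{P}^{F_{u_0,\mathbf{u}}}}}$) together with the trivial bounds $\mathbf{x}\in\overline{\mathcal{X}}$; consequently $(u_0w+\mathbf{u}^\top\mathbf{y},\mathbf{x})\in\conv{\mathcal{P}^{F_{u_0,\mathbf{u}}}}$ holds iff $\mathbf{x}\in\overline{\mathcal{X}}$ and $u_0w+\mathbf{u}^\top\mathbf{y}\ge\boldsymbol{\pi}^\top\mathbf{x}+\pi_0$ for all $(\boldsymbol{\pi},\pi_0)\in\Pi^{\conv{\mathcal{P}^{F_{u_0,\mathbf{u}}}}}$. (The choice $(u_0,\mathbf{u})=(0,\mathbf{0})\in\mathcal{U}$ gives $F_{0,\mathbf{0}}\equiv 0$, so its condition already forces $\mathbf{x}\in\overline{\mathcal{X}}$ and the box constraint in the statement is subsumed.) Intersecting over all $(u_0,\mathbf{u})\in\mathcal{U}$, the right-hand side of the theorem is exactly the set of $(w,\mathbf{y},\mathbf{x})$ satisfying every inequality recorded in $\bigcap_{j\in N}\Pi^{\mathcal{CD}^j}$. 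The forward inclusion is then immediate: every such inequality is valid for $\bigcup_{j\in N}\mathcal{CD}^j$, hence for $\conv{\mathcal{P}^H}=\conv{\bigcup_{j\in N}\mathcal{CD}^j}$. For the reverse inclusion I would argue by separation: since the rays $(1,\mathbf{0},\mathbf{0})$, $(0,\mathbf{1}^i,\mathbf{0})$, and $(-1,\mathbf{1},\mathbf{0})$ belong to $\conv{\bigcup_{j\in N}\mathcal{CD}^j}$, any inequality separating a point from this set necessarily has the form $u_0w+\mathbf{u}^\top\mathbf{y}\ge\boldsymbol{\pi}^\top\mathbf{x}+\pi_0$ with $(u_0,\mathbf{u})\in\mathcal{U}$, hence lies in $\bigcap_{j\in N}\Pi^{\mathcal{CD}^j}$; therefore no point of the right-hand side can be separated.

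The main obstacle I anticipate is making the separation step rigorous, which needs $\conv{\bigcup_{j\in N}\mathcal{CD}^j}$ to be closed — this is where the careful bookkeeping sits. The key observation is that all $\mathcal{CD}^j$ (equivalently, all fibers $\mathcal{D}(\mathbf{x})\times\{\mathbf{x}\}$ of $\mathcal{P}^H$) share the common recession cone $\cone{\{(-1,\mathbf{1},\mathbf{0}),(1,\mathbf{0},\mathbf{0})\}\cup\{(0,\mathbf{1}^i,\mathbf{0})\}_{i\in N}}$ in the $(w,\mathbf{y})$-directions, which gives closedness of the convex hull (and polyhedrality when $\mathcal{X}$ is bounded, or more generally when only finitely many SEPIs are distinct). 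With closedness in place, the polarity identity of Lemma~\ref{lem:polar_inter} together with Theorem~\ref{thm:conv_SEPI} yields both inclusions and completes the proof; everything else is routine.
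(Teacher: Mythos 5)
Your proposal follows the paper's proof essentially step for step: reduce via Corollary \ref{coro:cv_alt} and Proposition \ref{prop:conv_dj} to describing $\conv{\bigcup_{j\in N}\mathcal{CD}^j}$, then read the description off the polar intersection computed in Lemma \ref{lem:polar_inter} and translate back into the membership conditions $(u_0w+\mathbf{u}^\top\mathbf{y},\mathbf{x})\in\conv{\mathcal{P}^{F_{u_0,\mathbf{u}}}}$. The only divergence is that you make explicit the separation/closedness step that the paper leaves implicit; that is a reasonable elaboration rather than a different route (though note that agreement of the recession directions in the $(w,\mathbf{y})$-coordinates alone is not quite a complete closedness argument when $\mathcal{X}$ is unbounded).
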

\begin{proof}
We have noted that $\conv{\mathcal{P}^H} = \conv{\bigcup_{j\in N} \mathcal{CD}^j}$. The latter is described by the inequalities $u_0 w + \mathbf{u}^\top\mathbf{y} \geq \boldsymbol{\pi}^\top\mathbf{x} + \pi_0$ for all $(u_0, \mathbf{u},  \boldsymbol{\pi}, \pi_0)\in \bigcap_{j\in N} \Pi^{\mathcal{CD}^j}$, which are given by Lemma \ref{lem:polar_inter}. This completes the proof. 
\end{proof}

\begin{corollary}
Under Assumption \hyperlink{A1}{1}, if $F_{u_0, \mathbf{u}}$ are L$^\natural$-convex for all $(u_0, \mathbf{u})\in\mathcal{U}$, then $\conv{\mathcal{P}^H}$ is completely described by the \emph{mixed-integer SEPIs (MISEPIs)}
\begin{equation}
\label{eq:MISEPI}
u_0 w + \mathbf{u}^\top\mathbf{y} \geq \mathbf{s}^{\mathbf{p},\boldsymbol{\delta}, u_0, \mathbf{u}^\top}\mathbf{x} + s_0^{\mathbf{p},\boldsymbol{\delta}, u_0, \mathbf{u}}, \; \forall\, (u_0,\mathbf{u})\in\mathcal{U}. 
\end{equation}
Here, $\eta \geq \mathbf{s}^{\mathbf{p},\boldsymbol{\delta}, u_0, \mathbf{u}^\top}\mathbf{x} + s_0^{\mathbf{p},\boldsymbol{\delta}, u_0, \mathbf{u}}$ is the SEPI associated with $\mathbf{p}\in\underline{\mathcal{X}}$ and $\boldsymbol{\delta}\in\mathfrak{S}(N)$ for the epigraph of $F_{u_0, \mathbf{u}}$.
\end{corollary}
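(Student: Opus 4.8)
The plan is to obtain this as an immediate consequence of Theorem~\ref{thm:conv_H} combined with Theorem~\ref{thm:conv_SEPI}. Theorem~\ref{thm:conv_H} already establishes, under Assumption~\hyperlink{A1}{1}, that
\[
\conv{\mathcal{P}^H} = \left\{(w, \mathbf{y}, \mathbf{x})\in \mathbb{R}^{n+1}\times\overline{\mathcal{X}} :  (u_0 w + \mathbf{u}^\top\mathbf{y}, \mathbf{x}) \in \conv{\mathcal{P}^{F_{u_0, \mathbf{u}}}}, \ \forall\, (u_0, \mathbf{u})\in\mathcal{U} \right\},
\]
so the entire task is to turn each membership constraint $(u_0 w + \mathbf{u}^\top\mathbf{y},\mathbf{x})\in\conv{\mathcal{P}^{F_{u_0,\mathbf{u}}}}$ into an explicit linear description.

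First I would record that, under the extra hypothesis, $F_{u_0,\mathbf{u}}$ is L$^\natural$-convex for every $(u_0,\mathbf{u})\in\mathcal{U}$ (it is in any event a real-valued function on $\mathcal{X}$, being the pointwise minimum of the finitely many real-valued L$^\natural$-convex functions $f^j_{u_0,\mathbf{u}}$). Then I would apply Theorem~\ref{thm:conv_SEPI} with $f = F_{u_0,\mathbf{u}}$ on the hyperrectangle $\mathcal{X}$: this says $\conv{\mathcal{P}^{F_{u_0,\mathbf{u}}}}$ is exactly the set cut out by the trivial bounds $\boldsymbol{\ell}\le\mathbf{x}\le\mathbf{u}$ together with the SEPIs associated with all $\mathbf{p}\in\underline{\mathcal{X}}$ and $\boldsymbol{\delta}\in\mathfrak{S}(N)$ for $F_{u_0,\mathbf{u}}$, where by definition of the coefficients the SEPI for $(\mathbf{p},\boldsymbol{\delta})$ is $\eta \ge \mathbf{s}^{\mathbf{p},\boldsymbol{\delta}, u_0, \mathbf{u}^\top}\mathbf{x} + s_0^{\mathbf{p},\boldsymbol{\delta}, u_0, \mathbf{u}}$ in the scalar epigraph variable $\eta$. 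Substituting $\eta = u_0 w + \mathbf{u}^\top\mathbf{y}$, the membership $(u_0 w + \mathbf{u}^\top\mathbf{y},\mathbf{x})\in\conv{\mathcal{P}^{F_{u_0,\mathbf{u}}}}$ is equivalent to $\mathbf{x}\in\overline{\mathcal{X}}$ (already imposed) together with
\[
u_0 w + \mathbf{u}^\top\mathbf{y} \;\ge\; \mathbf{s}^{\mathbf{p},\boldsymbol{\delta}, u_0, \mathbf{u}^\top}\mathbf{x} + s_0^{\mathbf{p},\boldsymbol{\delta}, u_0, \mathbf{u}} \qquad\text{for all } \mathbf{p}\in\underline{\mathcal{X}},\ \boldsymbol{\delta}\in\mathfrak{S}(N),
\]
which are precisely the MISEPIs~\eqref{eq:MISEPI} for that $(u_0,\mathbf{u})$. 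Intersecting over all $(u_0,\mathbf{u})\in\mathcal{U}$ then yields that $\conv{\mathcal{P}^H}$ is described by $\mathbf{x}\in\overline{\mathcal{X}}$ and all MISEPIs, which is the claim.

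I do not expect any genuine obstacle here: the argument is a bookkeeping step chaining two already-proved results, and the only things to verify are that the L$^\natural$-convexity hypothesis makes Theorem~\ref{thm:conv_SEPI} applicable to each $F_{u_0,\mathbf{u}}$ and that the linear map $(w,\mathbf{y})\mapsto u_0 w + \mathbf{u}^\top\mathbf{y}$ carries the SEPI family of $F_{u_0,\mathbf{u}}$ exactly onto the MISEPI family. I would add a sentence on the degenerate cases: for $(u_0,\mathbf{u}) = \mathbf{0}$ one has $F_{\mathbf{0},\mathbf{0}}\equiv 0$, whose SEPIs reduce to $\eta\ge 0$ and whose MISEPIs reduce to $0\ge 0$, so this value contributes nothing; and if $\underline{\mathcal{X}}=\emptyset$ (some coordinate of $\mathcal{X}$ is a single point) Theorem~\ref{thm:conv_SEPI} still applies with an empty SEPI family, so the description there is just the trivial bounds.
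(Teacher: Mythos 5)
Your argument is correct and matches the paper's treatment: the corollary is presented there as an immediate consequence of Theorem \ref{thm:conv_H}, obtained exactly as you do by applying Theorem \ref{thm:conv_SEPI} to each L$^\natural$-convex $F_{u_0,\mathbf{u}}$ and substituting $\eta = u_0 w + \mathbf{u}^\top\mathbf{y}$ to turn its SEPIs into the MISEPIs, with the trivial bounds $\mathbf{x}\in\overline{\mathcal{X}}$ carried along and the degenerate values $(u_0,\mathbf{u})$ contributing nothing new. The only inaccuracy is your closing aside: if $\underline{\mathcal{X}}=\emptyset$ the trivial bounds alone do \emph{not} describe $\conv{\mathcal{P}^{F_{u_0,\mathbf{u}}}}$ (the epigraph constraint would be lost), but that degenerate case lies outside the implicit scope of Theorem \ref{thm:conv_SEPI} and plays no role in the corollary.
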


\begin{remark}
\label{remark:LC_Fu0u}
Based on Proposition \ref{prop:F_lattice_sub}, if  $h^i:\mathcal{X}_i\rightarrow\mathbb{R}$ are monotonically decreasing functions for all $i\in N$ (or increasing, as long as consistent for all $i\in N$), then $F_{u_0, \mathbf{u}}$ is lattice submodular for all $(u_0, \mathbf{u})\in\mathcal{U}$. 
\begin{itemize}
\item If $\mathcal{X} = \mathcal{B}_\mathbf{0}$, then $F_{u_0, \mathbf{u}}$ for all $(u_0, \mathbf{u})\in\mathcal{U}$ are submodular functions, which are also L$^\natural$-convex. 
\item Let any $\mathbf{q}\in\mathbb{R}^n$ and $\alpha\in\mathbb{R}_+$ be given. 
If $h^i(x_i) = q_i - \alpha x_i$ for every $i\in N$, then $F_{u_0, \mathbf{u}}$ is L-convex for all $(u_0, \mathbf{u})\in\mathcal{U}$. To see this, 
\begin{align*}
F_{u_0, \mathbf{u}}(\mathbf{x}+ \mathbf{1}) & = \min_{j\in N} \left\{u_0h^j(x_j+1) + \sum_{i\in N}u_i \max \{0, h^i(x_i+1) - h^j(x_j+1)\} \right\} \\
& = \min_{j\in N} \left\{u_0(q_j - \alpha x_j - \alpha) + \sum_{i\in N}u_i \max \{0, q_i - \alpha x_i -\alpha - q_j + \alpha x_j + \alpha\} \right\} \\
& = \min_{j\in N} \left\{u_0(q_j - \alpha x_j) + \sum_{i\in N}u_i \max \{0, q_i - \alpha x_i - q_j + \alpha x_j\} \right\} - u_0\alpha\\
& = F_{u_0, \mathbf{u}}(\mathbf{x}) - u_0\alpha. 
\end{align*}
\end{itemize}
It follows that, in these examples, $\conv{\mathcal{P}^H}$ given in Theorem \ref{thm:conv_H} can be stated as a linear program with MISEPIs \eqref{eq:MISEPI}.
\end{remark}

\subsection{Properties of $F_{u_0, \mathbf{u}}$}
In this section, we explore the properties of the functions $F_{u_0, \mathbf{u}}$ for $(u_0, \mathbf{u})\in\mathcal{U}$. Let any $\overline{\mathbf{x}}\in\mathcal{X}$ be fixed. We use $h_i$ to represent $h^i(\overline{\mathbf{x}})$ for brevity, and we use $(\cdot)$ to denote the relabeled indices following any (in case of ties) descending order of $\{h_i\}_{i\in N}$; that is, 
\[h_{(1)} \geq h_{(2)} \geq \dots \geq h_{(n)}.\]
By Lemma \ref{lemma:G_partialmin}, $F_{u_0, \mathbf{u}}(\overline{\mathbf{x}})$ is equal to the optimal objective value of the following linear program (LP): 
\begin{equation*}
\begin{aligned}
\min_{t, \mathbf{r}} \quad &  u_0t + \sum_{i\in N} u_i r_i \\
\text{s.t.} \quad & r_i + t \geq h_i,\quad \forall\, i\in N,\\
& \mathbf{r} \geq \mathbf{0}. 
\end{aligned}
\end{equation*}
Note that we change $\inf$ in Lemma \ref{lemma:G_partialmin} to $\min$ because the minimum can be attained at $t\in\{h_i\}_{i\in N}$. The dual problem is 
\begin{subequations}
\label{eq:F_u0_u_dual}
\begin{align}
\max_{\boldsymbol{\nu}} \quad &  \sum_{i\in N} h_i \nu_i  \\
\text{s.t.} \quad & \sum_{i\in N} \nu_i = u_0, \\
& \nu_i \leq u_i, \quad \forall\, i\in N, \\
& \boldsymbol{\nu} \geq \mathbf{0}. 
\end{align}
\end{subequations}

These LPs give important insights into the properties of $F_{u_0, \mathbf{u}}$, which we summarize in the lemma below. 

\begin{lemma}
\label{lemma:F_eval}
For any $(u_0, \mathbf{u})\in\mathcal{U}$ with $u_0 > 0$ and any $\overline{\mathbf{x}}\in\mathcal{X}$, 
\[F_{u_0, \mathbf{u}}(\overline{\mathbf{x}}) = f^{(j^*)}_{u_0, \mathbf{u}}(\overline{\mathbf{x}}) = u_0h_{(j^*)} + \sum_{j=1}^{j^* -1} u_{(j)} (h_{(j)}- h_{(j^*)})\]
where $j^*\in N$ satisfies 
\[\sum_{i=1}^{j^*-1} u_{(i)} < u_0, \quad \text{ and } \quad \sum_{i=1}^{j^*} u_{(i)} \geq u_0. \]
On the other hand, if $u_0 = 0$, then $F_{u_0, \mathbf{u}}(\overline{\mathbf{x}}) = 0$.
\end{lemma}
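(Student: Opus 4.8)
The plan is to evaluate $F_{u_0,\mathbf u}(\overline{\mathbf x})$ directly through the dual linear program \eqref{eq:F_u0_u_dual}, since this LP is a continuous fractional-knapsack-type problem whose optimal solution is obtained greedily. First I would dispose of the trivial case $u_0=0$: the only feasible $\boldsymbol\nu$ is $\boldsymbol\nu=\mathbf 0$ (the constraints $\sum_i\nu_i=0$ and $\boldsymbol\nu\ge\mathbf 0$ force this), so the dual optimum is $0$, and by LP strong duality $F_{u_0,\mathbf u}(\overline{\mathbf x})=0$. Alternatively one can read this off from \eqref{eq:F} and the definition of $f^j_{u_0,\mathbf u}$, since with $u_0=0$ and $(0,\mathbf u)\in\mathcal U$ we must have $\mathbf u=\mathbf 0$, hence $f^j_{u_0,\mathbf u}\equiv 0$ for all $j$.

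For the main case $u_0>0$, I would analyze the dual \eqref{eq:F_u0_u_dual}. The objective $\sum_i h_i\nu_i$ is linear, the feasible region is a box-constrained simplex-like polytope ($0\le\nu_i\le u_i$, $\sum_i\nu_i=u_0$), and maximizing a linear functional over it is solved by the greedy rule: allocate the budget $u_0$ to the coordinates with the largest coefficients $h_i$ first, saturating each at its cap $u_i$ before moving on. Working in the relabeled (descending) order $h_{(1)}\ge\cdots\ge h_{(n)}$, this means $\nu_{(i)}=u_{(i)}$ for $i<j^*$, $\nu_{(j^*)}=u_0-\sum_{i=1}^{j^*-1}u_{(i)}$, and $\nu_{(i)}=0$ for $i>j^*$, where $j^*$ is exactly the index characterized by $\sum_{i=1}^{j^*-1}u_{(i)}<u_0\le\sum_{i=1}^{j^*}u_{(i)}$ (this index exists and is well-defined precisely because $(u_0,\mathbf u)\in\mathcal U$ guarantees $u_0\le\sum_i u_i$; if $u_0=0$ were not excluded this would fail, hence the case split). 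I would then compute the optimal dual value:
\[\sum_{i=1}^{j^*-1}u_{(i)}h_{(i)}+\Bigl(u_0-\sum_{i=1}^{j^*-1}u_{(i)}\Bigr)h_{(j^*)}=u_0h_{(j^*)}+\sum_{i=1}^{j^*-1}u_{(i)}\bigl(h_{(i)}-h_{(j^*)}\bigr),\]
which is the claimed closed form. To rigorously justify that the greedy solution is optimal I would invoke the exchange argument: if an optimal $\boldsymbol\nu$ put positive mass on some $(i)$ with $i>j^*$ while $\nu_{(k)}<u_{(k)}$ for some $k\le j^*$ (which must happen by the budget equation), shifting an $\varepsilon$ of mass from $(i)$ to $(k)$ is feasible and changes the objective by $\varepsilon(h_{(k)}-h_{(i)})\ge 0$; iterating drives $\boldsymbol\nu$ to the greedy allocation without decreasing the objective.

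Finally I would connect this back to $F_{u_0,\mathbf u}$. By Lemma \ref{lemma:G_partialmin} (applied with $h^i$ specialized to the constants $h_i=h^i(\overline{\mathbf x})$, which is exactly how $F_{u_0,\mathbf u}(\overline{\mathbf x})=G_{u_0,\mathbf u}(h^1(\overline{\mathbf x}),\dots,h^n(\overline{\mathbf x}))=\inf_t H(\cdot,t)$ arises), $F_{u_0,\mathbf u}(\overline{\mathbf x})$ equals the primal LP value, and the primal is feasible and bounded below ($u_0>0$, $\mathbf u\ge 0$), so strong duality gives $F_{u_0,\mathbf u}(\overline{\mathbf x})$ equals the dual optimum computed above. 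It remains to identify this with $f^{(j^*)}_{u_0,\mathbf u}(\overline{\mathbf x})$: by definition $f^{j}_{u_0,\mathbf u}(\mathbf x)=u_0h^j(\mathbf x)+\sum_{i\in N}u_i\max\{0,h^i(\mathbf x)-h^j(\mathbf x)\}$, so $f^{(j^*)}_{u_0,\mathbf u}(\overline{\mathbf x})=u_0h_{(j^*)}+\sum_{i:h_{(i)}>h_{(j^*)}}u_{(i)}(h_{(i)}-h_{(j^*)})=u_0h_{(j^*)}+\sum_{i=1}^{j^*-1}u_{(i)}(h_{(i)}-h_{(j^*)})$, matching (one should note that ties $h_{(i)}=h_{(j^*)}$ contribute zero either way, so the formula is insensitive to the choice of descending order, consistent with $F$ being well defined). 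The main obstacle I anticipate is handling ties in the $h_i$ values and the boundary behavior when $\sum_{i=1}^{j^*}u_{(i)}=u_0$ exactly—I would argue that in all such cases the extra terms vanish, so the stated formula is unambiguous; everything else is a routine greedy-LP computation plus an application of the already-established Lemma \ref{lemma:G_partialmin}.
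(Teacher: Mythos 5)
Your proof is correct and takes essentially the same route as the paper: both identify $F_{u_0,\mathbf{u}}(\overline{\mathbf{x}})$ with the LP value via Lemma \ref{lemma:G_partialmin} and evaluate the dual \eqref{eq:F_u0_u_dual} by the greedy allocation, the only difference being that the paper certifies optimality by exhibiting the matching primal feasible solution $t=h_{(j^*)}$, $r_i=\max\{0,h_i-h_{(j^*)}\}$ (so weak duality suffices), whereas you argue greedy optimality by an exchange argument and then invoke LP strong duality; both are fine. One small slip in your aside on the $u_0=0$ case: $(0,\mathbf{u})\in\mathcal{U}$ does \emph{not} force $\mathbf{u}=\mathbf{0}$, since the definition \eqref{eq:U} only requires $u_0\le\sum_{i\in N}u_i$, so $f^j_{0,\mathbf{u}}$ need not vanish for every $j$ (only the minimum over $j$ does); your primary LP argument for that case does not rely on this remark, so the conclusion $F_{0,\mathbf{u}}(\overline{\mathbf{x}})=0$ is unaffected.
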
 
\begin{proof}
When $u_0 > 0$, such $j^*$ must exist because $u_0 \leq \sum_{i\in N} u_i$. The dual problem \eqref{eq:F_u0_u_dual} can be solved greedily. We let 
\[\nu_{(i)} = \begin{cases}
u_{(i)}, & i < j^*, \\
u_0 - \sum_{i=1}^{j^*-1} u_{(i)}, & i = j^*, \\
0, & \text{otherwise.}
\end{cases}\]
The dual objective at $\boldsymbol{\nu}$ is 
\[
\sum_{i=1}^{j^*-1} u_{(i)} h_{(i)} + \left(u_0 - \sum_{i=1}^{j^*-1} u_{(i)}\right)h_{(j^*)} = u_0h_{(j^*)} +  \sum_{i=1}^{j^*-1} u_{(i)} (h_{(i)} - h_{(j^*)}),
\]
which is exactly the primal objective at $t = h_{(j^*)}$ and $r_i = \max\{0,h_{(i)} - h_{(j^*)}\}$ for $i\in N$. By strong duality, this objective value is optimal and is equal to $F_{u_0, \mathbf{u}}(\overline{\mathbf{x}})$. When $u_0 = 0$, both the primal and the dual objectives trivially attain zero. 
\end{proof}

\begin{remark}
\label{remark:u0_0}
Note that for $(u_0,\mathbf{u})\in\mathcal{U}$ with $u_0=0$, the constraints related to $F_{0, \mathbf{u}}$ in the convex hull description in Theorem \ref{thm:conv_H} are equivalent 
to $\mathbf{u}^\top\mathbf{y} \geq 0$. With $(u_0,\mathbf{u})=(0, \mathbf{1}^i)$, we obtain
$y_i \geq 0$ for $i\in N$. This explains why $\mathbf{y}\geq \mathbf{0}$ is ensured in Theorem \ref{thm:conv_H} without explicitly stating. In addition, the inequalities for any  $(0,\mathbf{u})\in\mathcal{U}$ with $\mathbf{u}\ne \mathbf{1}^i$ for any $i\in N$, are dominated by $y_i \geq 0$ for $i\in N$. 
\end{remark}

In the next lemma, we provide an intuition behind another special case of $F_{u_0, \mathbf{u}}$ where $\mathbf{u}\in\mathbb{B}^n$ and $u_0\in \{1,\dots, \mathbf{u}^\top\mathbf{1}\}$. This lemma will be useful in Section \ref{sect:eg_CMIX}. 

\begin{lemma}
\label{lemma:F_u0_binary_u}
Suppose $\mathbf{u} = \mathbf{1}^{\Gamma}$ for some non-empty $\Gamma\subseteq N$, and $u_0 \in \{1,\dots, |\Gamma|\}$. Then 
\[F_{u_0,\mathbf{u}}(\mathbf{x}) = \max_{S\subseteq \Gamma, |S| = u_0}\left\{\sum_{i\in S} h^i(\mathbf{x})\right\}\]
for all $\mathbf{x}\in\mathcal{X}$. 
\end{lemma}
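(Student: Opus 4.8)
The plan is to recognize that $F_{u_0,\mathbf{u}}$ with $\mathbf{u}=\mathbf{1}^\Gamma$ is exactly the output of the greedy/LP characterization in Lemma~\ref{lemma:F_eval}, and then to reinterpret that optimal value combinatorially. Fix $\overline{\mathbf{x}}\in\mathcal{X}$ and write $h_i:=h^i(\overline{\mathbf{x}})$. Since $\mathbf{u}=\mathbf{1}^\Gamma$, the LP of Lemma~\ref{lemma:F_eval} only "sees" the coordinates in $\Gamma$: the dual problem \eqref{eq:F_u0_u_dual} becomes $\max\{\sum_{i\in N}h_i\nu_i : \sum_{i\in N}\nu_i=u_0,\ \nu_i\le 1\ \forall i\in\Gamma,\ \nu_i\le 0\ \forall i\notin\Gamma,\ \boldsymbol{\nu}\ge\mathbf 0\}$, which forces $\nu_i=0$ off $\Gamma$ and leaves $\max\{\sum_{i\in\Gamma}h_i\nu_i : \sum_{i\in\Gamma}\nu_i=u_0,\ 0\le\nu_i\le 1\}$. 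Because $u_0\in\{1,\dots,|\Gamma|\}$ is an integer and the feasible region is the (scaled) hypersimplex — an integral polytope whose vertices are the $0/1$ vectors with exactly $u_0$ ones — the LP attains its optimum at such a vertex, giving $F_{u_0,\mathbf u}(\overline{\mathbf x})=\max_{S\subseteq\Gamma,\,|S|=u_0}\sum_{i\in S}h_i$.

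Concretely, I would carry this out in three steps. First, specialize Lemma~\ref{lemma:F_eval}: with $\mathbf{u}=\mathbf{1}^\Gamma$ and $u_0\ge 1$, the index $j^*$ identified there satisfies $\sum_{i=1}^{j^*-1}u_{(i)}<u_0\le\sum_{i=1}^{j^*}u_{(i)}$; since each $u_{(i)}\in\{0,1\}$, and the $h$-sorted order can be chosen so that all coordinates in $\Gamma$ come before all coordinates outside $\Gamma$ among equal-$h$ blocks (ties), one checks that exactly $u_0$ of the top weights $u_{(1)},\dots,u_{(j^*)}$ equal $1$ and they correspond to the $u_0$ indices in $\Gamma$ with largest $h$-values. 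Second, plug into the closed form $F_{u_0,\mathbf u}(\overline{\mathbf x})=u_0h_{(j^*)}+\sum_{j=1}^{j^*-1}u_{(j)}(h_{(j)}-h_{(j^*)})$; the weights $u_{(j)}$ are $0/1$, so this sum telescopes to $\sum_{i\in\Gamma:\,h_i\ \text{among top }u_0}h_i$ (the $u_0 h_{(j^*)}$ term cancels against the $-h_{(j^*)}$ contributions, leaving exactly the sum of the $u_0$ largest $h_i$ over $i\in\Gamma$). Third, observe $\max_{S\subseteq\Gamma,|S|=u_0}\sum_{i\in S}h^i(\overline{\mathbf x})$ is by definition the sum of the $u_0$ largest values among $\{h^i(\overline{\mathbf x})\}_{i\in\Gamma}$, which matches.

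Alternatively — and more cleanly — I would bypass the sorting bookkeeping by arguing directly from the dual LP: the polytope $\{\boldsymbol\nu\in[0,1]^\Gamma : \sum_{i\in\Gamma}\nu_i=u_0\}$ is integral (it is a face of the unit cube cut by an integer-valued equation with integer right-hand side; equivalently its constraint matrix is an interval/network matrix hence totally unimodular), so an optimal vertex is the indicator $\mathbf{1}^S$ of some $S\subseteq\Gamma$ with $|S|=u_0$, and the optimal value is $\max_{|S|=u_0}\sum_{i\in S}h_i$. Then invoke Lemma~\ref{lemma:G_partialmin} / Lemma~\ref{lemma:F_eval} (strong duality) to equate this with $F_{u_0,\mathbf u}(\overline{\mathbf x})$, and since $\overline{\mathbf x}$ was arbitrary the identity holds on all of $\mathcal X$.

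The only mild obstacle is the tie-handling in the sorting-based argument: when several $h_i$ coincide, the permutation $(\cdot)$ and the index $j^*$ are not unique, and one must make sure the selected "top $u_0$" indices can always be taken inside $\Gamma$ and that the resulting value is independent of the tie-breaking. This is why I prefer routing the proof through the integrality of the hypersimplex (the dual-LP viewpoint), where ties cause no difficulty at all — the optimal value of an LP is well-defined regardless of which optimal vertex one picks, and every optimal vertex of this particular polytope is automatically a $0/1$ vector supported on $\Gamma$ with exactly $u_0$ ones.
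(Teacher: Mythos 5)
Your argument is correct, and your preferred route differs from the paper's in its final step. The paper's proof simply sorts the values $\{h^i(\mathbf{x})\}_{i\in\Gamma}$, observes that for $\mathbf{u}=\mathbf{1}^\Gamma$ the index $j^*$ of Lemma~\ref{lemma:F_eval} is the $u_0$-th largest entry within $\Gamma$, and then plugs into the closed form $u_0 h_{(j^*)}+\sum_{j<j^*}u_{(j)}\left(h_{(j)}-h_{(j^*)}\right)$, which telescopes to the sum of the $u_0$ largest values over $\Gamma$ --- exactly your first sketch, so that part coincides with the paper (and the tie-handling you worry about is harmless there: zero-weight indices outside $\Gamma$ never affect $j^*$ or the sum, and any tie-breaking among equal $h$-values in $\Gamma$ yields the same top-$u_0$ sum). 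Your second route instead stops at the dual LP \eqref{eq:F_u0_u_dual}: with $u_i=0$ off $\Gamma$ the feasible region collapses to the hypersimplex $\left\{\boldsymbol{\nu}\in[0,1]^{\Gamma}:\sum_{i\in\Gamma}\nu_i=u_0\right\}$, whose integrality (total unimodularity) immediately gives an optimal $0/1$ vertex supported on a size-$u_0$ subset of $\Gamma$, and strong duality --- already established by the paper just before Lemma~\ref{lemma:F_eval} via Lemma~\ref{lemma:G_partialmin} --- identifies that optimum with $F_{u_0,\mathbf{u}}(\mathbf{x})$. This buys you a tie-free, bookkeeping-free argument that does not even need the closed form of Lemma~\ref{lemma:F_eval}, at the cost of importing the integrality of the hypersimplex; the paper's version is slightly more self-contained in that it reuses Lemma~\ref{lemma:F_eval} verbatim and needs only a two-line telescoping computation. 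Both are valid proofs of the statement.
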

\begin{proof}
We sort $\{h_i\}_{i\in\Gamma}$ such that 
\[h_{(1)^\Gamma} \geq \dots \geq h_{(|\Gamma|)^\Gamma}.\]
With the special form of $u_0$ and $\mathbf{u}$, $u_{(i)^\Gamma} = 1$ for all $i\in \{1,\dots, |\Gamma|\}$, zero otherwise. Note that 
\[\sum_{i=1}^{u_0 - 1} u_{(i)^\Gamma} < u_0, \quad \sum_{i=1}^{u_0} u_{(i)^\Gamma} = u_0. \]
By Lemma \ref{lemma:F_eval}, 
\begin{align*}
F_{u_0,\mathbf{u}}(\mathbf{x})& = f^{(u_0)^\Gamma}_{u_0,\mathbf{u}}(\mathbf{x}) \\
& = u_0h_{(u_0)^\Gamma} + \sum_{j=1}^{u_0 -1} (h_{(j)^\Gamma}- h_{(u_0)^\Gamma}) \\
& = h_{(u_0)^\Gamma} + \sum_{j=1}^{u_0 -1} (h_{(j)^\Gamma}- h_{(u_0)^\Gamma} + h_{(u_0)^\Gamma}) \\
& = \sum_{j=1}^{u_0} h_{(j)^\Gamma} \\
& = \max_{S\subseteq \Gamma, |S| = u_0}\left\{\sum_{i\in S} h_i \right\},
\end{align*}
for all $\mathbf{x}\in\mathcal{X}$. 
Intuitively, $F_{u_0, \mathbf{u}}(\mathbf{x})$ evaluates the sum of 
the $u_0$-th highest entries among $\{h^i(\mathbf{x})\}_{i\in\Gamma}$ for this choice of $(u_0,\mathbf{u})$. 
\end{proof}

\subsection{Separation of MISEPIs}
\label{sect:sepa_MISEPIs}
Recall that when Assumption \hyperlink{A1}{1} holds and when $F_{u_0, \mathbf{u}}$ are L$^\natural$-convex, $\conv{\mathcal{P}^H}$ is given by infinitely many MISEPIs \eqref{eq:MISEPI}. We address the challenge of handling so many constraints by discussing the separation problem of MISEPIs. 
Let any $(\overline{w}, \overline{\mathbf{y}}, \overline{\mathbf{x}})\in \mathbb{R}\times \mathbb{R}^{n}_+\times\overline{\mathcal{X}}$ be given. 
Here, we include the trivial constraints $\mathbf{y}\geq \mathbf{0}$ in the relaxation due to Remark \ref{remark:u0_0}. This way, it suffices to only consider $(u_0, \mathbf{u})\in\mathcal{U}$ with $u_0 > 0$. Fix any such $(u_0, \mathbf{u})\in\mathcal{U}$. We let $\overline{\eta} =  u_0 \overline{w} + \mathbf{u}^\top\overline{\mathbf{y}}$. With Algorithm \ref{alg:frac_greedy}, we can exactly separate an SEPI for $\mathcal{P}^{F_{u_0, \mathbf{u}}}$ associated with $\mathbf{p}\in\underline{\mathcal{X}}$ and $\boldsymbol{\delta}\in\mathfrak{S}(N)$ that attains maximal violation at $(\overline{\eta},  \overline{\mathbf{x}})$. Thanks to the exact separation algorithm (Algorithm \ref{alg:frac_greedy}) and the structure of SEPIs, we have   $\boldsymbol{\lambda} \geq \mathbf{0}$ with $\sum_{k=0}^n \lambda_k = 1$, such that $\overline{\mathbf{x}} = \sum_{k=0}^n\lambda_k \mathbf{p}^k$ and 
\[\mathbf{s}^{\mathbf{p},\boldsymbol{\delta}, u_0, \mathbf{u}^\top}\overline{\mathbf{x}} + s_0^{\mathbf{p},\boldsymbol{\delta}, u_0, \mathbf{u}} = \sum_{k=0}^n\lambda_kF_{u_0, \mathbf{u}}(\mathbf{p}^k). \]
Recall $\mathbf{p}^k = \mathbf{p} + \sum_{j=1}^k \mathbf{1}^{\delta(j)}$, and $\mathbf{p}^{0} = \mathbf{p}$. We highlight that Algorithm \ref{alg:frac_greedy} only depends on $\overline{\mathbf{x}}$, so no matter how we vary $(u_0, \mathbf{u})\in\mathcal{U}$, we obtain the same $\mathbf{p}$ and $\boldsymbol{\delta}$.\\

Next, we explore how to determine $(u_0, \mathbf{u})\in\mathcal{U}$ so that the MISEPI \eqref{eq:MISEPI} with respect to $u_0, \mathbf{u}, \mathbf{p}$ and $\boldsymbol{\delta}$ leads to the maximal violation at $(\overline{w}, \overline{\mathbf{y}}, \overline{\mathbf{x}})$. For ease of notation, let $h^k_i = h^i\left(\mathbf{p}^{k}\right)$ and $a^k_i = \lambda_k h^k_i$ for $k\in N\cup\{0\}$ and $i\in N$. The violation of \eqref{eq:MISEPI} at $(\overline{w}, \overline{\mathbf{y}}, \overline{\mathbf{x}})$ is $\sum_{k=0}^n\lambda_kF_{u_0, \mathbf{u}}(\mathbf{p}^k) - u_0 \overline{w} + \mathbf{u}^\top\overline{\mathbf{y}}$.
The separation problem can thus be stated as 
\begin{equation*}
\begin{aligned}
\max_{u_0,\mathbf{u}} & \quad \sum_{k=0}^n\lambda_k \min_{t^k\in\mathbb{R}} \left\{ u_0t^k + \sum_{i\in N} u_i \max\left\{ 0, h^k_i - t^k\right\} \right\} - \overline{w}u_0 - \overline{\mathbf{y}}^\top\mathbf{u} \\
\text{s.t.} & \quad \sum_{i\in N} u_i \geq u_0, \\
& \quad (u_0, \mathbf{u}) \geq \mathbf{0}. 
\end{aligned}
\end{equation*}
Recall that we only need to consider $u_0 > 0$, so we scale $u_0$ such that it is one. Thanks to \eqref{eq:F_u0_u_dual}, the separation problem becomes 
\begin{subequations}
\label{eq:sepa_prob_single_max}
\begin{align}
\max_{\boldsymbol{\nu}, \mathbf{u}} \quad &\sum_{k=0}^n \sum_{i\in N} a^k_i \nu^k_i   - \sum_{i\in N}\overline{y}_i u_i - \overline{w}\\
\text{s.t.} & \sum_{i\in N} \nu^k_i = 1, \quad \forall\, k\in N\cup\{0\}, \label{eq:sepa_prob_single_max2}  \\
& \nu^k_i \leq u_i, \quad \forall\, i\in N, k\in N\cup\{0\}, \label{eq:sepa_prob_single_max3} \\
& \boldsymbol{\nu}^k \geq \mathbf{0}, \quad \forall\, k\in N\cup\{0\}, \label{eq:sepa_prob_single_max4} \\
& \mathbf{u} \geq \mathbf{0}. \label{eq:sepa_prob_single_max5}
\end{align}
\end{subequations}

We drop the constraint $\sum_{i\in N} u_i \geq 1$ because it is implied by constraints \eqref{eq:sepa_prob_single_max2}--\eqref{eq:sepa_prob_single_max3}. 
If the optimal objective value of the separation LP \eqref{eq:sepa_prob_single_max}  is strictly positive, then a violation occurs. The MISEPI associated with $\mathbf{p}$, $\boldsymbol{\delta}$ and $(1, \mathbf{u}^*)$, where $\mathbf{u}^*$ is an optimal solution to \eqref{eq:sepa_prob_single_max}, is most violated by $(\overline{w}, \overline{\mathbf{y}}, \overline{\mathbf{x}})$. 

\begin{proposition}
\label{prop:complexity}
Given any function $H$ defined in \eqref{eq:H}, if Assumption \hyperlink{A1}{1} is satisfied and $F_{u_0, \mathbf{u}}$ are L$^\natural$-convex for all $(u_0, \mathbf{u})\in\mathcal{U}$, then the mixed-integer problem $\min_{(\mathbf{x},\mathbf{y})\in\mathcal{X}\times\mathbb{R}^n_+} H(\mathbf{x}, \mathbf{y})$ is polynomially solvable. 
\end{proposition}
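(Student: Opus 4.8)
The plan is to combine the polyhedral convex hull description from Theorem \ref{thm:conv_H} with the polynomial-time separation machinery just developed, and then invoke the equivalence between separation and optimization (the ellipsoid method / Gr\"otschel--Lov\'asz--Schrijver). First I would observe that, under the stated hypotheses, $\conv{\mathcal{P}^H}$ is a polyhedron (more precisely, the linear relaxation built from all MISEPIs) whose description is given by Theorem \ref{thm:conv_H} and the corollary immediately following it: $\conv{\mathcal{P}^H}$ equals the set of $(w,\mathbf{y},\mathbf{x})\in\mathbb{R}^{n+1}\times\overline{\mathcal{X}}$ satisfying the MISEPIs \eqref{eq:MISEPI} for all $(u_0,\mathbf{u})\in\mathcal{U}$, together with the trivial bounds $\boldsymbol{\ell}\le\mathbf{x}\le\mathbf{u}$ and $\mathbf{y}\ge\mathbf{0}$ (the latter being implied, as noted in Remark \ref{remark:u0_0}). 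Minimizing $H(\mathbf{x},\mathbf{y})$ over $\mathcal{X}\times\mathbb{R}^n_+$ is equivalent to minimizing the linear functional $w$ over $\conv{\mathcal{P}^H}$, since $H(\mathbf{x},\mathbf{y})=\min\{w:(w,\mathbf{y},\mathbf{x})\in\mathcal{P}^H\}$ and the objective $w$ attains its minimum at an extreme point of $\conv{\mathcal{P}^H}$, which lies in $\mathcal{P}^H$.

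Next I would set up the separation oracle. Given a candidate point $(\overline{w},\overline{\mathbf{y}},\overline{\mathbf{x}})$, first check the (polynomially many) trivial bounds. Otherwise, run Algorithm \ref{alg:frac_greedy} on $\overline{\mathbf{x}}$; as emphasized in Section \ref{sect:sepa_MISEPIs}, this produces a single pair $(\mathbf{p},\boldsymbol{\delta})$ that is simultaneously the greedy/most-violated choice for every $F_{u_0,\mathbf{u}}$, independent of $(u_0,\mathbf{u})$. Then solve the separation LP \eqref{eq:sepa_prob_single_max} in the variables $(\boldsymbol{\nu},\mathbf{u})$; this LP has $O(n^2)$ variables and $O(n^2)$ constraints, hence is solvable in polynomial time. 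If its optimal value is $\le 0$ the point is (up to the trivial bounds) in $\conv{\mathcal{P}^H}$; otherwise an optimal $\mathbf{u}^*$ together with $u_0=1$, $\mathbf{p}$, $\boldsymbol{\delta}$ yields an explicitly violated MISEPI. Thus we have a polynomial-time exact separation oracle for the (possibly exponentially or infinitely many) facets of $\conv{\mathcal{P}^H}$. By the polynomial-time equivalence of separation and optimization over rational polyhedra (Gr\"otschel, Lov\'asz, Schrijver), linear optimization over $\conv{\mathcal{P}^H}$—in particular minimizing $w$—is polynomially solvable, which gives the result.

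The main obstacles are bookkeeping rather than conceptual. One needs the polyhedron to be well-bounded/rational so that the ellipsoid method applies: when $\mathcal{X}$ is bounded this is immediate; when $\mathcal{X}$ is unbounded one should either restrict attention to the bounded case (consistent with how the paper states its complexity claims) or argue that an optimal solution can be confined to a box of polynomial bit-size, using finiteness of the distinct SEPIs in the relevant structured instances and boundedness of $H$ from below on the feasible region. A second point to verify carefully is that the separation LP \eqref{eq:sepa_prob_single_max} genuinely certifies membership: its value being nonpositive for the fixed $(\mathbf{p},\boldsymbol{\delta})$ must rule out violation by \emph{any} $(u_0,\mathbf{u})\in\mathcal{U}$, which is exactly the content of the max-over-$(u_0,\mathbf{u})$ reformulation derived in Section \ref{sect:sepa_MISEPIs} together with the fact that Algorithm \ref{alg:frac_greedy} returns the most-violated SEPI uniformly in $(u_0,\mathbf{u})$; I would state this explicitly as a lemma if it is not already packaged that way. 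Finally, one must note that the oracle also needs to handle the trivial $\mathbf{x}$-bounds and the degenerate cases $u_0=0$ (handled by $\mathbf{y}\ge\mathbf{0}$ via Remark \ref{remark:u0_0}), so that the full list of defining inequalities of $\conv{\mathcal{P}^H}$ is covered.
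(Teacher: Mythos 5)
Your proposal is correct and follows essentially the same route as the paper: the paper's proof simply observes that the separation LP \eqref{eq:sepa_prob_single_max} is polynomially solvable (after Algorithm \ref{alg:frac_greedy} fixes $\mathbf{p}$ and $\boldsymbol{\delta}$ independently of $(u_0,\mathbf{u})$) and then invokes the equivalence of separation and optimization of \citet{grotschel1981ellipsoid}. Your additional remarks on reducing $\min H$ to minimizing $w$ over $\conv{\mathcal{P}^H}$, the $u_0=0$ case, and boundedness/rationality issues are sensible elaborations of details the paper leaves implicit, not a different argument.
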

\begin{proof}
The separation LP \eqref{eq:sepa_prob_single_max} is polynomially solvable. The statement follows from the equivalence of optimization and separation for LPs \citep{grotschel1981ellipsoid}. 
\end{proof}

\begin{remark}
Intuitively, the separation problem for MISEPI is easy due to L$^\natural$-convexity of $F_{u_0, \mathbf{u}}$ for any $(u_0, \mathbf{u})\in\mathcal{U}$. This special property ensures that the separation for $\mathbf{p}$ and $\boldsymbol{\delta}$  relies only on $\overline{\mathbf{x}}$, and the returned parameters are the same for all $(u_0, \mathbf{u})\in\mathcal{U}$. Thus, the separation for MISEPI can be decomposed into two steps, namely first identifying $\mathbf{p}$ and $\boldsymbol{\delta}$ with Algorithm \ref{alg:frac_greedy}, and then obtaining $(1, \mathbf{u}^*)\in\mathcal{U}$ with the LP \eqref{eq:sepa_prob_single_max}. 
\end{remark}

\begin{remark}
We remark that it is generally hard to obtain the optimal $\mathbf{u}^*$ to \eqref{eq:sepa_prob_single_max} in closed form. With any feasible $\boldsymbol{\nu}$, we can construct the best feasible $\mathbf{u}$ with $u_i = \max_{k\in N\cup\{0\}}\{\nu^k_i\}$. In addition, with any feasible $\mathbf{u}$, the best assignment of $\boldsymbol{\nu}$ can easily be obtained greedily. However, simultaneously optimizing for $\mathbf{u}$ and $\boldsymbol{\nu}$ is difficult due to their highly interdependent nature.
\end{remark}

Nonetheless, we can still exploit the structure of \eqref{eq:sepa_prob_single_max} to characterize the optimal solutions. In what follows, we use the notation $GL(m,2)$ to represent the set of all invertible $m\times m$ binary matrices, where $m\in \mathbb{Z}_+$. For any non-empty $M\subseteq N$, let $\mathbf{u}^M = [u_i]_{i\in M} \in \mathbb{R}^{|M|}$ be the corresponding sub-vector of $\mathbf{u}$. We construct a collection of vectors $\mathbf{u}\in\mathbb{R}^n_+$ in the following way:

\begin{equation}
\label{eq:U_prime}
\mathcal{U}' := \bigcup_{M\in 2^N\setminus \{\emptyset\}}\left\{\mathbf{u}\in\mathbb{R}^n_+ : \; 
  \mathbf{u}^M \in \bigcup_{B\in GL(|M|,2)} \left\{B^{-1}\mathbf{1}\right\}, \; 
 \mathbf{u}^{N\setminus M} = \mathbf{0}
\right\}.
\end{equation}
The cardinality of $\mathcal{U}'$ is upper bounded by $\sum_{m=1}^n {n\choose m} 2^{m^2}$; even though large, $\mathcal{U}'$ is finite.  

\begin{lemma}
\label{lemma:legal_u}
Any $\mathbf{u}\in\mathcal{U}'$ defined in \eqref{eq:U_prime} satisfies $(1,\mathbf{u})\in\mathcal{U}$. 
\end{lemma}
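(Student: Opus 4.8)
The plan is to verify the two defining conditions of $\mathcal{U}$ for any $\mathbf{u}\in\mathcal{U}'$: namely $(1,\mathbf{u})\ge\mathbf{0}$ and $1\le\sum_{i\in N}u_i$. The first condition is immediate, since in \eqref{eq:U_prime} every member has $\mathbf{u}^{N\setminus M}=\mathbf{0}$ and $\mathbf{u}^M=B^{-1}\mathbf{1}$ for some $B\in GL(|M|,2)$; but a priori $B^{-1}\mathbf{1}$ need not be nonnegative, so I first need to argue that the construction in \eqref{eq:U_prime} implicitly restricts to those $B$ for which $B^{-1}\mathbf{1}\ge\mathbf{0}$ (otherwise $\mathbf{u}\notin\mathbb{R}^n_+$ and the point is vacuous for that $B$). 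Reading \eqref{eq:U_prime} literally, the outer intersection with $\mathbb{R}^n_+$ does exactly this filtering, so after that observation nonnegativity holds by definition.

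The substantive step is the inequality $\sum_{i\in N}u_i\ge 1$. Fix $M$ and an invertible binary $B$ with $\mathbf{u}^M=B^{-1}\mathbf{1}\ge\mathbf{0}$. Write $\mathbf{v}=\mathbf{u}^M$, so $B\mathbf{v}=\mathbf{1}$ and $\mathbf{v}\ge\mathbf{0}$. I would look at the first row of $B\mathbf{v}=\mathbf{1}$: it reads $\sum_{i\in M}B_{1i}v_i=1$, where each $B_{1i}\in\{0,1\}$. Since $\mathbf{v}\ge\mathbf{0}$ and the coefficients $B_{1i}$ are $0$ or $1$, we get $\sum_{i\in M}v_i\ge\sum_{i\in M}B_{1i}v_i=1$. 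Because $\mathbf{u}^{N\setminus M}=\mathbf{0}$, this gives $\sum_{i\in N}u_i=\sum_{i\in M}v_i\ge 1$, which is exactly the second defining inequality of $\mathcal{U}$. Hence $(1,\mathbf{u})\in\mathcal{U}$.

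The one point requiring care — and the most likely place to slip — is ensuring that $B$ has at least one row, i.e. that $M\neq\emptyset$; this is guaranteed because the union in \eqref{eq:U_prime} ranges over $M\in 2^N\setminus\{\emptyset\}$, so $|M|\ge 1$ and $B$ is a genuine square matrix with a first row. A secondary subtlety is that the bound $\sum_{i\in M}v_i\ge 1$ uses only nonnegativity of $\mathbf{v}$ together with the $\{0,1\}$-entries of the relevant row of $B$; no invertibility of $B$ is actually needed for this inequality (invertibility only guarantees $\mathbf{v}=B^{-1}\mathbf{1}$ is well-defined). I would state the argument for an arbitrary row of $B$ and note that any row suffices. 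This completes the verification, and the lemma follows.
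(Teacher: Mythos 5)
Your proposal is correct and follows essentially the same argument as the paper: pick any row of $B$, use that its entries are in $\{0,1\}$ together with $\mathbf{u}^M = B^{-1}\mathbf{1} \geq \mathbf{0}$ to get $\mathbf{1}^\top\mathbf{u}^M \geq B_{i\cdot}\mathbf{u}^M = 1$, and note nonnegativity is built into the definition of $\mathcal{U}'$. The extra remarks about $M\neq\emptyset$ and the filtering by $\mathbb{R}^n_+$ are accurate but not materially different from the paper's proof.
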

\begin{proof}
Consider an arbitrary $\mathbf{u}\in\mathcal{U}'$. There exists a non-empty $M\subseteq N$ and $B\in GL(|M|,2)$ such that $B\mathbf{u}^M = \mathbf{1}$. Given that $B$ is binary, $B_{i\cdot} \leq \mathbf{1}^\top$ for every row $i\in \{1,\dots,|M|\}$ in $B$. Thus, $\mathbf{1}^\top \mathbf{u}^M \geq B_{i\cdot}\mathbf{u}^M = 1$ for $i\in \{1,\dots,|M|\}$, because $\mathbf{u}^M \geq \mathbf{0}$. Hence, $(1,\mathbf{u})\in\mathcal{U}$.
\end{proof}

\begin{lemma}
\label{lemma:U_prime}
Given the separation LP \eqref{eq:sepa_prob_single_max} for any $(\overline{w}, \overline{\mathbf{y}}, \overline{\mathbf{x}})\in\mathbb{R}\times\mathbb{R}^n_+\times\overline{\mathcal{X}}$, every optimal $\mathbf{u}^*$ belongs to $\mathcal{U}'$ defined in \eqref{eq:U_prime}. 
\end{lemma}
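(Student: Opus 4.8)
The plan is to analyze the vertex structure of the separation LP \eqref{eq:sepa_prob_single_max} and show that the $\mathbf{u}$-component of any vertex lies in $\mathcal{U}'$. First I would observe that \eqref{eq:sepa_prob_single_max} is a linear program over the variables $(\boldsymbol{\nu}^0, \dots, \boldsymbol{\nu}^n, \mathbf{u}) \in \mathbb{R}^{(n+1)n + n}$, so its optimal value (when finite) is attained at a vertex. At a vertex, the number of linearly independent active constraints among \eqref{eq:sepa_prob_single_max2}--\eqref{eq:sepa_prob_single_max5} must equal the number of variables, $(n+1)n + n$. The equalities \eqref{eq:sepa_prob_single_max2} contribute $n+1$; the remaining $(n+1)n + n - (n+1) = (n+1)(n-1) + n$ active constraints must come from the inequalities \eqref{eq:sepa_prob_single_max3}--\eqref{eq:sepa_prob_single_max5}. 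The key structural fact I would extract is that for each index $k$, the constraints $\nu^k_i \le u_i$ together with $\nu^k_i \ge 0$ and the single equality $\sum_i \nu^k_i = 1$ form a small independent block linking $\boldsymbol{\nu}^k$ to $\mathbf{u}$; counting active constraints blockwise forces, for each $i \in N$, that $u_i$ equals $\nu^k_i$ for at least one $k$ at which the bound $\nu^k_i = u_i$ is tight (otherwise $u_i$ would be a ``free'' coordinate with no active constraint touching it, and one could perturb it, contradicting vertexhood — more precisely, some $u_i > 0$ with no tight upper bound can be decreased until either $\nu^k_i = u_i$ becomes tight for some $k$ or $u_i = 0$).

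Next I would make this precise by letting $M = \{i \in N : u^*_i > 0\}$ (which is non-empty since, e.g., \eqref{eq:sepa_prob_single_max2} forces some $u_i \ge 1/n > 0$) and restricting attention to the sub-vector $\mathbf{u}^M$. For each $i \in M$, since $u^*_i > 0$, vertexhood forces a tight constraint involving $u_i$, and among \eqref{eq:sepa_prob_single_max3}--\eqref{eq:sepa_prob_single_max5} the only ones involving $u_i$ nontrivially (given $u_i > 0$ rules out \eqref{eq:sepa_prob_single_max5}) are $\nu^k_i = u_i$ for various $k$. So for each $i \in M$ pick $k(i)$ with $\nu^{k(i)}_i = u_i$. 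Now the equalities $\sum_{i \in N} \nu^{k(i)}_i = 1$ combined with $\nu^{k(i)}_j \in \{$ tight: $= u_j$, or $0$, or free in $(0,u_j)\}$ — I would argue that a dimension count on the restricted system (fixing $\boldsymbol{\nu}^k$ for $k \notin \{k(i) : i \in M\}$ and zeroing out the inactive coordinates) yields a square invertible binary system $B \mathbf{u}^M = \mathbf{1}$: the row for each chosen $k(i)$ reads $\sum_{j : \nu^{k(i)}_j = u_j} u_j = 1$ once we show that at a vertex every $\nu^{k(i)}_j$ is either $0$ or equal to $u_j$ (no strictly-interior values survive, again by an active-constraint count within block $k(i)$). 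This $0/1$ incidence matrix $B$ has exactly $|M|$ rows (one per distinct $k(i)$, after noting that if two indices share the same $k$ the corresponding rows can be taken distinct by a further vertex argument or the system is reduced accordingly) and $|M|$ columns, and invertibility follows because $\mathbf{u}^M$ is uniquely determined at the vertex. Setting $\mathbf{u}^{N \setminus M} = \mathbf{0}$ then places $\mathbf{u}^*$ in $\mathcal{U}'$, and $(1, \mathbf{u}^*) \in \mathcal{U}$ by Lemma \ref{lemma:legal_u}.

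The main obstacle I anticipate is the bookkeeping in the blockwise active-constraint count: ensuring that the selected binary matrix $B$ is genuinely square and invertible rather than merely showing $\mathbf{u}^M$ satisfies \emph{some} binary linear system. The delicate point is that several $k$'s may tie the same coordinate $u_i$, so one must argue that the set of ``defining'' indices $k$ can be pruned to exactly $|M|$ of them whose incidence rows are linearly independent — this is where the uniqueness of $\mathbf{u}^M$ at a vertex is essential, since it says the row space of the incidence matrix has full rank $|M|$, letting us select an invertible $|M| \times |M|$ submatrix $B$; then $B \mathbf{u}^M = \mathbf{1}$ gives $\mathbf{u}^M = B^{-1}\mathbf{1}$. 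A secondary care point is handling the edge case where $u_i > 0$ for $i \in M$ but the tight constraint touching $u_i$ is, degenerately, shared — I would dispatch this by a standard perturbation/lexicographic argument or simply by invoking that one may assume the LP vertex is non-degenerate after an infinitesimal perturbation of the objective, which does not change $\mathcal{U}'$ membership since $\mathcal{U}'$ is a fixed finite set independent of the data.
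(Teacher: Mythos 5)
Your overall strategy is the same as the paper's: treat an optimal $(\boldsymbol{\nu}^*,\mathbf{u}^*)$ as a basic (vertex) solution of \eqref{eq:sepa_prob_single_max}, count linearly independent tight constraints, and extract a square invertible binary system $B\mathbf{u}^M=\mathbf{1}$ with $\mathbf{u}^{N\setminus M}=\mathbf{0}$. However, there is a genuine gap at the decisive step. You claim that for each $i\in M$ one can pick a block $k(i)$ with $\nu^{k(i)}_i=u_i$ tight and that ``an active-constraint count within block $k(i)$'' forces every coordinate of $\boldsymbol{\nu}^{k(i)}$ to be either $0$ or $u_j$. That within-block claim is false in general: at a vertex, each block $k$ generically has exactly one coordinate strictly between $0$ and its bound (the $\ell_k$-th one in the sorted order, carrying the slack $1-\sum_{j<\ell_k}u_{(j)^k}$), and a block can simultaneously contain tight constraints $\nu^k_i=u_i$ for several $i$ \emph{and} a strictly interior coordinate. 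In such a block the row you write down reads $\sum_{j:\nu^{k(i)}_j=u_j}u_j<1$, not $=1$, so the system $B\mathbf{u}^M=\mathbf{1}$ is not established. What the paper does instead is a \emph{global} count: the $n+1$ equalities \eqref{eq:sepa_prob_single_max2}, the $n^2-1$ per-block tight constraints from \eqref{eq:sepa_prob_single_max3}--\eqref{eq:sepa_prob_single_max4}, and the $n-m$ tight constraints $u_i=0$ leave exactly $m$ further linearly independent constraints that must enter the basis, and these can only be of the form $\nu^k_{(\ell_k)^k}=u_{(\ell_k)^k}$ for $k$ in some $L$ with $|L|=m$; only those ``saturated'' blocks yield rows summing to $1$, and their incidence rows form the invertible $B$. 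Your fallback via ``uniqueness of $\mathbf{u}^M$ at the vertex gives full row rank of the incidence matrix'' does not repair this, because full rank of an incidence matrix whose rows are not known to have right-hand side $1$ does not produce $B\mathbf{u}^M=\mathbf{1}$.

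A secondary problem is your degeneracy fix: perturbing the \emph{objective} does not remove degeneracy of a vertex (degeneracy is a property of the constraint system, not of the cost vector); it only isolates a unique optimal vertex. A right-hand-side perturbation would remove degeneracy but changes the polyhedron, so it cannot be invoked so casually. The paper avoids this issue by arguing directly about which tight constraints can be chosen linearly independently in an optimal basis (noting, e.g., that when $u^*_{(i)^k}=0$ only one of $\nu^{*k}_{(i)^k}=u^*_{(i)^k}$ and $\nu^{*k}_{(i)^k}=0$ is counted), rather than assuming nondegeneracy.
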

\begin{proof}
Intuitively, an optimal $\boldsymbol{\nu}^*$ with respect to the optimal $\mathbf{u}^*$ is constructed in a greedy fashion that prioritizes the highest $a^k_i$ with $\nu^{*k}_i = u^*_i$ until \eqref{eq:sepa_prob_single_max2} is satisfied. 
For every $k\in N\cup\{0\}$, sort $\{a^k_i\}_{i\in N}$ in any (in case of ties)  descending order such that  
\[a^k_{(1)^k} \geq a^k_{(2)^k} \geq  \dots \geq a^k_{(n)^k}.\]
Then $\boldsymbol{\nu}^{*k}$ assumes the form 
\[\nu^{*k}_{(i)^k} = \begin{cases}
u^*_{(i)^k}, & i \in \{1, \dots, \ell_k - 1\}, \\
1-\sum_{j=1}^{i - 1} u^*_{(j)^k}, & i = \ell_k, \\
0, & i \in \{\ell_k + 1, \dots, n\},
\end{cases}\]
for some $\ell_k\in N$, where $k\in N\cup\{0\}$. More precisely, $\ell_k$ satisfies 
\[\sum_{i=1}^{\ell_k-1} u^*_{(i)^k} < 1, \quad \text{and} \quad \sum_{i=1}^{\ell_k} u^*_{(i)^k} \geq 1. \]
In addition to the $n+1$ equality constraints \eqref{eq:sepa_prob_single_max2}, the form of $\boldsymbol{\nu}^*$ results in $n^2-1$ binding constraints \eqref{eq:sepa_prob_single_max3}--\eqref{eq:sepa_prob_single_max4}. 
Let $M := \bigcup_{k\in N\cup\{0\}} \{(1)^k, \dots, (\ell_k)^k\}$ and $m = |M|$. We know that $u^*_i = 0$ for every $i\in N\setminus M$ because $u^*_i = \max_{k\in N\cup\{0\}} \{\nu^{*k}_i\}$. These are $n-m$ additional binding constraints \eqref{eq:sepa_prob_single_max5}. 
These $n^2 + 2n - m$ constraints are linearly independent because the only possible redundancy would occur when $u^*_{(i)^k} = \nu^{*k}_{(i)^k} = 0$; however, when $u^*_{(i)^k} = 0$, exactly one of $\nu^{*k}_{(i)^k} = u^*_{(i)^k}$ and $\nu^{*k}_{(i)^k} = 0$ is included in the set of binding constraints, so the redundancy is avoided.  
Given that $(\boldsymbol{\nu}^*, \mathbf{u}^*)$ is a basic solution, there must exist another $m$ constraints in the optimal basis. The only possibility is that there exists $L\subset N\cup\{0\}$ with $|L| = m$, such that $\nu^k_{(\ell_k)^k} =  u_{(\ell_k)^k}$ for every $k \in L$. Note that $u_{(\ell_k)^k} > 0$ for all $k$ by definition of $\ell_k$. We remark that adding these binding constraints to the aforementioned set of tight constraints is equivalent to adding $\sum_{i = 1}^{\ell_k} u^*_{(i)^k} = 1$ for every $k\in L$. 
Therefore, the optimal basis is the full rank constraint matrix of the following system: 
\begin{subequations}
\begin{align*}
 \sum_{i\in N} \nu^k_i & = 1, \quad \forall\, k\in N\cup\{0\}, \\
 u_{(i)^k} - \nu^{k}_{(i)^k} & = 0, \quad \forall\, i \in \{1, \dots, \ell_k - 1\}, \; k\in N\cup\{0\},\\
 u_{(\ell_k)^k} - \nu^{k}_{(\ell_k)^k} & = 0, \quad \forall\, k\in L,\\
 \nu^{k}_{(i)^k} & = 0, \quad \forall\, i \in \{\ell_k + 1, \dots, n\}, \; k\in N\cup\{0\},\\
 u_i & = 0, \quad \forall\,i\in N\setminus M. 
\end{align*}
\end{subequations}
By elementary row operations, the system transforms into 
\begin{subequations}
\begin{align}
\sum_{i = 1}^{\ell_k} \nu^k_{(i)^k} & = 1, \quad \forall\, k\in N\cup\{0\}, \\
 u_{(i)^k} - \nu^{k}_{(i)^k} & = 0, \quad \forall\, i \in \{1, \dots, \ell_k - 1\}, \; k\in N\cup\{0\},\\
  \nu^{k}_{(i)^k} & = 0, \quad \forall\, i \in \{\ell_k + 1, \dots, n\}, \; k\in N\cup\{0\},\\
 \sum_{i = 1}^{\ell_k} u_{(i)^k} &= 1 \quad \forall\, k\in L,\label{eq:basis_contr}\\
 u_i & = 0, \quad \forall\,i\in N\setminus M. 
\end{align}
\end{subequations}

Now, construct a matrix $B\in\{0,1\}^{L\times M}$ where 
\[B_{ki} = \begin{cases}
1, & i\in \{(1)^k , \dots, (\ell_k)^k\}, \\
0, & i\in M\setminus \{(1)^k , \dots, (\ell_k)^k\}
\end{cases}\]
for all $k\in L$. In matrix form, constraints \eqref{eq:basis_contr} are exactly 
\[B\mathbf{u}^M = \mathbf{1}. \]
Matrix $B\in\mathbb{B}^{m\times m}$, and it contributes full row rank to the system of equality constraints, so $B\in GL(m,2)$. The optimal $\mathbf{u}^*\in\mathbb{R}^n_+$ satisfies $B\mathbf{u}^{*M} = \mathbf{1}$, and $\mathbf{u}^{*N\setminus M} = \mathbf{0}$.
Hence, $\mathbf{u}^*\in\mathcal{U}'$.
\end{proof}

\begin{remark}
We provide more insights into why the optimal solutions $\mathbf{u}^*$ to the separation problems \eqref{eq:sepa_prob_single_max} with respect to all $(\overline{w}, \overline{\mathbf{y}}, \overline{\mathbf{x}})\in\mathbb{R}\times\mathbb{R}^n_+\times\overline{\mathcal{X}}$ form a finite set $\mathcal{U}'$. We emphasize that constructing the optimal solution to \eqref{eq:sepa_prob_single_max} in Lemma \ref{lemma:U_prime} does not directly use the actual values of $\{\mathbf{a}^k\}_{k\in N\cup\{0\}}$. Rather, it suffices to know the set of $n+1$ descending orders of $\{a^k_i\}_{i\in N}$, which are exactly the descending orders of $\{h^k_i\}_{i\in N}$, for $k\in N\cup \{0\}$. Recall that there are as many $\{\{h^k_i\}_{i\in N}\}_{k\in N\cup\{0\}}$ as the pairs of $\mathbf{p}\in\underline{\mathcal{X}}$ and $\boldsymbol{\delta}\in\mathfrak{S}(N)$. Even though $|\underline{\mathcal{X}}|$ could be infinite, there are finitely many (at most $n!^{n+1}$) possible sets of descending orders of $\{\{h^k_i\}_{i\in N}\}_{k\in N\cup\{0\}}$. 
\end{remark}

Following the previous lemma, we can replace the infinite set $\mathcal{U}$ in the convex hull description from Theorem \ref{thm:conv_SEPI} by its finite subset $\mathcal{U}'$. We formalize this observation in the next proposition.

\begin{proposition}
\label{prop:finite_CV}
Under Assumption \hyperlink{A1}{1}, suppose $F_{u_0, \mathbf{u}}$ are L$^\natural$-convex for all $(u_0, \mathbf{u})\in\mathcal{U}$. Let 
\[\mathcal{CP}' := \left\{(w, \mathbf{y}, \mathbf{x})\in \mathbb{R}\times \mathbb{R}^{n}_+\times\overline{\mathcal{X}} :  (w + \mathbf{u}^\top\mathbf{y}, \mathbf{x}) \in \conv{\mathcal{P}^{F_{1, \mathbf{u}}}}, \forall\, \mathbf{u}\in\mathcal{U}' \right\}.\] Then 
$\conv{\mathcal{P}^H} = \mathcal{CP}'$. 
\end{proposition}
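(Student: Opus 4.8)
\begin{proof}
The plan is to prove the two inclusions, the bulk of the work residing in the reverse one, which we obtain by reducing separation over the full MISEPI family to the two-stage procedure of Section~\ref{sect:sepa_MISEPIs}.

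First I would record $\conv{\mathcal{P}^H}\subseteq\mathcal{CP}'$, which is immediate. By Lemma~\ref{lemma:legal_u} every $\mathbf{u}\in\mathcal{U}'$ satisfies $(1,\mathbf{u})\in\mathcal{U}$, so for each such $\mathbf{u}$ the requirement $(w+\mathbf{u}^\top\mathbf{y},\mathbf{x})\in\conv{\mathcal{P}^{F_{1,\mathbf{u}}}}$ is one of the defining conditions of $\conv{\mathcal{P}^H}$ in Theorem~\ref{thm:conv_H}; moreover $\mathbf{x}\in\overline{\mathcal{X}}$ holds there, and $\mathbf{y}\geq\mathbf{0}$ holds by Remark~\ref{remark:u0_0}. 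Hence every point of $\conv{\mathcal{P}^H}$ lies in $\mathcal{CP}'$.

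For $\mathcal{CP}'\subseteq\conv{\mathcal{P}^H}$, take $(\overline{w},\overline{\mathbf{y}},\overline{\mathbf{x}})\in\mathcal{CP}'$ and suppose, toward a contradiction, that it is not in $\conv{\mathcal{P}^H}$. By Theorem~\ref{thm:conv_H} there exists $(u_0,\mathbf{u})\in\mathcal{U}$ with $(u_0\overline{w}+\mathbf{u}^\top\overline{\mathbf{y}},\overline{\mathbf{x}})\notin\conv{\mathcal{P}^{F_{u_0,\mathbf{u}}}}$, i.e.\ some MISEPI is violated at $(\overline{w},\overline{\mathbf{y}},\overline{\mathbf{x}})$. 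If $u_0=0$, then by Remark~\ref{remark:u0_0} this MISEPI is equivalent to $\mathbf{u}^\top\overline{\mathbf{y}}\geq 0$, which holds since $\overline{\mathbf{y}}\in\mathbb{R}^n_+$ in $\mathcal{CP}'$; contradiction. So $u_0>0$, and after scaling $u_0$ to $1$ we may assume the violated MISEPI has parameters $(1,\mathbf{u})$ with $(1,\mathbf{u})\in\mathcal{U}$, together with some $\mathbf{p}'\in\underline{\mathcal{X}}$ and $\boldsymbol{\delta}'\in\mathfrak{S}(N)$. Now apply Algorithm~\ref{alg:frac_greedy} to $\overline{\mathbf{x}}$ to obtain $\mathbf{p}\in\underline{\mathcal{X}}$, $\boldsymbol{\delta}\in\mathfrak{S}(N)$, and the convex multipliers $\boldsymbol{\lambda}$ with $\overline{\mathbf{x}}=\sum_{k=0}^n\lambda_k\mathbf{p}^k$. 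Since $F_{1,\mathbf{u}}$ is L$^\natural$-convex by hypothesis and Algorithm~\ref{alg:frac_greedy} exactly separates SEPIs for any L$^\natural$-convex function (Proposition~\ref{prop:exact_sepa_SEPI}) while its output depends only on $\overline{\mathbf{x}}$, the SEPI for $F_{1,\mathbf{u}}$ associated with $(\mathbf{p},\boldsymbol{\delta})$ is at least as violated at $(\,u_0\overline{w}+\mathbf{u}^\top\overline{\mathbf{y}},\overline{\mathbf{x}})$ as the one associated with $(\mathbf{p}',\boldsymbol{\delta}')$. Thus the MISEPI with parameters $(\mathbf{p},\boldsymbol{\delta},1,\mathbf{u})$ is also violated, which is exactly the statement that the separation LP~\eqref{eq:sepa_prob_single_max} (set up with this $\mathbf{p},\boldsymbol{\delta}$ and the point $(\overline{w},\overline{\mathbf{y}},\overline{\mathbf{x}})$) has strictly positive optimal value. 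By Lemma~\ref{lemma:U_prime}, an optimal solution $\mathbf{u}^*$ of~\eqref{eq:sepa_prob_single_max} lies in $\mathcal{U}'$, so the MISEPI associated with $(\mathbf{p},\boldsymbol{\delta},1,\mathbf{u}^*)$ is violated at $(\overline{w},\overline{\mathbf{y}},\overline{\mathbf{x}})$. But this MISEPI is an SEPI for the epigraph of the L$^\natural$-convex function $F_{1,\mathbf{u}^*}$, hence valid for $\conv{\mathcal{P}^{F_{1,\mathbf{u}^*}}}$ by Proposition~\ref{prop:valid_SEPI}; its violation forces $(\overline{w}+\mathbf{u}^{*\top}\overline{\mathbf{y}},\overline{\mathbf{x}})\notin\conv{\mathcal{P}^{F_{1,\mathbf{u}^*}}}$, contradicting $(\overline{w},\overline{\mathbf{y}},\overline{\mathbf{x}})\in\mathcal{CP}'$ since $\mathbf{u}^*\in\mathcal{U}'$. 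Therefore $\mathcal{CP}'\subseteq\conv{\mathcal{P}^H}$, and the two sets coincide.
\end{proof}

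The step I expect to be the crux is the reduction argument inside the reverse inclusion: one must legitimately reorder the maximization defining ``maximal MISEPI violation'' as a maximum over $(u_0,\mathbf{u})$ of a maximum over $(\mathbf{p},\boldsymbol{\delta})$, observe that the inner maximum is attained at the $(\mathbf{p},\boldsymbol{\delta})$ produced by Algorithm~\ref{alg:frac_greedy} from $\overline{\mathbf{x}}$ alone (the same for every $(u_0,\mathbf{u})$), and then swap the two maximizations so that the remaining optimization over $\mathbf{u}$ is exactly the LP~\eqref{eq:sepa_prob_single_max}, whose optima are confined to the finite set $\mathcal{U}'$ by Lemma~\ref{lemma:U_prime}. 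Everything else — the trivial inclusion, the handling of the $u_0=0$ case via Remark~\ref{remark:u0_0}, and the validity of each MISEPI as an SEPI — is routine once this reduction is in place.
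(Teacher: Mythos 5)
Your proof is correct and follows essentially the same route as the paper: the forward inclusion via Lemma \ref{lemma:legal_u}, and the reverse inclusion by contradiction using the two-stage separation machinery — Algorithm \ref{alg:frac_greedy} applied to $\overline{\mathbf{x}}$ yields $(\mathbf{p},\boldsymbol{\delta})$ independently of $(u_0,\mathbf{u})$, the separation LP \eqref{eq:sepa_prob_single_max} then has positive value, and Lemma \ref{lemma:U_prime} places its optimizer $\mathbf{u}^*$ in $\mathcal{U}'$, contradicting membership in $\mathcal{CP}'$. Your write-up merely spells out details the paper leaves implicit (the $u_0=0$ case via Remark \ref{remark:u0_0} and the scaling to $u_0=1$).
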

\begin{proof}
By Lemma \ref{lemma:legal_u}, $\{1\}\times \mathcal{U}'\subset \mathcal{U}$, so $\mathcal{CP}' \supseteq\conv{\mathcal{P}^H}$. We show the reverse containment by contradiction. Suppose that there exists $(\overline{w}, \overline{\mathbf{y}}, \overline{\mathbf{x}}) \in\mathcal{CP}' \setminus\conv{\mathcal{P}^H}$. Then $\overline{\mathbf{x}}$ gives rise to $\mathbf{p}\in\underline{\mathcal{X}}$, $\boldsymbol{\delta}\in\mathfrak{S}(N)$, and the corresponding separation LP \eqref{eq:sepa_prob_single_max} with an optimal solution $\mathbf{u}^*$. Given that $(\overline{w}, \overline{\mathbf{y}}, \overline{\mathbf{x}}) \notin\conv{\mathcal{P}^H}$, this point incurs a positive violation of the MISEPI associated with $\mathbf{p}$, $\boldsymbol{\delta}$, and $(1,\mathbf{u}^*)$. However, by Lemma \ref{lemma:U_prime}, $\mathbf{u}^*\in\mathcal{U}'$, which suggests that this most violated MISEPI is already valid for $\mathcal{CP}'$. Therefore, $\mathcal{CP}' \setminus\conv{\mathcal{P}^H} = \emptyset$.  
\end{proof}

It follows that if $\conv{\mathcal{P}^{F_{1, \mathbf{u}}}}$ is polyhedral for every $\mathbf{u}\in\mathcal{U}'$, then $\conv{\mathcal{P}^H}$ is polyhedral. This is trivially true when $\mathcal{X}$ is finite. Moreover, the set $\mathcal{U}'$ can be further refined, but to do so, it is crucial to exploit the structure of the functions $\{h^i\}_{i\in N}$. We will address this in Section \ref{sec:structuredH}.

\subsection{Facet-defining MISEPIs}
\label{sect:FD_MISEPI}

In this section, we explore the conditions under which an MISEPI \eqref{eq:MISEPI} is facet-defining for $\conv{\mathcal{P}^H}$. Our discussion is relevant when Assumption \hyperlink{A1}{1} is satisfied and $F_{u_0, \mathbf{u}}$ are L$^\natural$-convex for all $(u_0, \mathbf{u})\in\mathcal{U}$. We focus on the instances where $u_0 > 0$ because otherwise the MISEPIs are dominated by the trivial inequalities $y_i \geq 0$ for $i\in N$ according to Remark \ref{remark:u0_0}. Additionally, we will only consider $n\geq 2$. When $n=1$, $H(x,y) = h(x) - y$ where $x\in\mathcal{X}\subseteq \mathbb{Z}$ and $y \geq 0$. By Assumption \hyperlink{A1}{1}, $h$ is L$^\natural$-convex, so this case is trivial.\\

Consider any $\mathbf{p}^0\in\underline{\mathcal{X}}$ and any $\boldsymbol{\delta}\in\mathfrak{S}(N)$. Recall that 
\[\mathbf{p}^k = \mathbf{p}^0 + \sum_{j=1}^k \mathbf{1}^{\delta(j)}, \; \forall\, k \in N,\]  
\[h^k_i = h^i\left(\mathbf{p}^{k}\right), \; \forall\, k \in N \cup \{0\}, i\in N, \]
and 
\[h^k_{(1)^k} \geq h^k_{(2)^k} \geq \dots \geq h^k_{(n)^k}.\] 

\begin{proposition}
\label{prop:full_dim_pH}
If $\mathcal{X} \supseteq \mathcal{B}_{\mathbf{p}}$ for any $\mathbf{p}\in\mathbb{Z}^n$, then $\conv{\mathcal{P}^H}$ is full-dimensional. 
\end{proposition}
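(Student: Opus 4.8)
The plan is to exhibit $\dim(\conv{\mathcal{P}^H}) = 2n+1$ by constructing $2n+2$ affinely independent points in $\mathcal{P}^H$, exploiting the fact that $\mathcal{X}$ contains a full discrete unit hypercube $\mathcal{B}_\mathbf{p}$, so we have a ``free'' copy of $\{0,1\}^n$ to work with and the extreme rays of $\mathcal{D}(\mathbf{x})$ from Proposition~\ref{prop:Dx_gen} let us move $w$ and $\mathbf{y}$ freely upward. Recall $\mathcal{P}^H \subseteq \mathbb{R}^{n+1}\times\overline{\mathcal{X}} \subseteq \mathbb{R}^{2n+1}$, so full-dimensionality means producing $2n+2$ affinely independent feasible points.

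The construction I would carry out: fix $\mathbf{p}$ with $\mathcal{B}_\mathbf{p}\subseteq\mathcal{X}$, and pick a base point $(\hat w,\hat{\mathbf{y}},\mathbf{p})\in\mathcal{P}^H$ by setting $\hat{\mathbf{y}}=\mathbf{0}$ and $\hat w = H(\mathbf{p},\mathbf{0}) = \max_{i\in N}h^i(\mathbf{p})$. Then generate: (a) $n$ points by adding the ray $(0,\mathbf{1}^i,\mathbf{0})$, i.e. perturbing $\hat y_i$ upward by one for each $i\in N$ — these vary the $\mathbf{y}$-coordinates one at a time; (b) one point by adding the ray $(1,\mathbf{0},\mathbf{0})$, perturbing $\hat w$ upward by one — this varies the $w$-coordinate; (c) $n$ points by moving $\mathbf{x}$ from $\mathbf{p}$ to $\mathbf{p}+\mathbf{1}^i$ for each $i\in N$, with $\mathbf{y}$-coordinate taken as $\mathbf{y}^{j}(\mathbf{p}+\mathbf{1}^i)$ from \eqref{eq:wyj} for a suitable $j$ and $w=h^{j}(\mathbf{p}+\mathbf{1}^i)$ so that the point lies in $\mathcal{D}(\mathbf{p}+\mathbf{1}^i)\subseteq\mathcal{P}^H$ — these vary the $\mathbf{x}$-coordinates one at a time (each $\mathbf{p}+\mathbf{1}^i\in\mathcal{B}_\mathbf{p}\subseteq\mathcal{X}$). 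Together with the base point this is $1 + n + 1 + n = 2n+2$ points. Affine independence then follows by subtracting the base point: the difference vectors are $\{(0,\mathbf{1}^i,\mathbf{0})\}_{i\in N}$, $(1,\mathbf{0},\mathbf{0})$, and $\{(\ast,\ast,\mathbf{1}^i)\}_{i\in N}$, and the block-triangular structure (the last $n$ coordinates, the $\mathbf{x}$-block, are the identity on the group-(c) vectors and zero on the rest; the $w$-coordinate is handled by group (b); the $\mathbf{y}$-block by group (a)) shows the $2n+1$ difference vectors are linearly independent.

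The step I expect to require the most care is group~(c): ensuring each point $(w,\mathbf{y},\mathbf{p}+\mathbf{1}^i)$ is genuinely in $\mathcal{P}^H$ and that, after subtracting the base point, the resulting $2n+1$ difference vectors are linearly independent even though the group-(c) vectors have arbitrary (possibly nonzero) entries in the $w$- and $\mathbf{y}$-blocks. The cleanest way is to argue by the echelon/block-triangular structure: order the difference vectors as (c)-block, then (b), then (a); in the $\mathbf{x}$-coordinates only the (c)-vectors are nonzero and they equal $\mathbf{1}^i$, so they are independent and independent of the rest; modulo the (c)-block, the (b)-vector is the only one with a nonzero $w$-coordinate after we note the (a)-vectors have $w$-coordinate $0$; and modulo (c) and (b), the (a)-vectors are $\mathbf{1}^i$ in the $\mathbf{y}$-block. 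A small technical point to dispatch: when some $u_i=0$ cases or ties among $\{h^i\}$ arise we simply pick any valid $j$ in \eqref{eq:wyj}; validity of the constructed point is immediate from $(w^j(\mathbf{x}),\mathbf{y}^j(\mathbf{x})) \in \mathcal{D}(\mathbf{x})$ noted right before Lemma~\ref{lem:Dx_gen_helper}. No further assumptions on $H$ beyond Assumption~\hyperlink{A1}{1} (in fact not even that) are needed — only $\mathcal{B}_\mathbf{p}\subseteq\mathcal{X}$.
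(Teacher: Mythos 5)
Your proposal is correct and follows essentially the same approach as the paper: both exhibit $2n+2$ affinely independent points consisting of a base point $(\max_i h^i(\mathbf{p}),\mathbf{0},\mathbf{p})$, one point shifted along the ray $(1,\mathbf{0},\mathbf{0})$, $n$ points shifted along the rays $(0,\mathbf{1}^i,\mathbf{0})$, and $n$ feasible points with perturbed $\mathbf{x}$, then conclude by a block-triangular independence argument. The only (immaterial) difference is that you perturb $\mathbf{x}$ by unit vectors $\mathbf{p}+\mathbf{1}^i$ with $(w,\mathbf{y})$ taken from \eqref{eq:wyj}, whereas the paper walks along the chain $\mathbf{p}^k=\mathbf{p}+\sum_{j\le k}\mathbf{1}^{\delta(j)}$ keeping $\mathbf{y}=\mathbf{0}$; both choices yield a linearly independent $\mathbf{x}$-block.
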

\begin{proof}
We construct $2n+2$ affinely independent points in $\mathcal{P}^H$. The points $(h^k_{(1)^k}, \mathbf{0}, \mathbf{p}^k) \in \mathcal{P}^H$ for $k\in N\cup\{0\}$ because $h^k_{(1)^k} \geq h^k_{(j)^k} - 0$ for all $j\in N$. Moreover, $(h^0_{(1)^0} + 1, \mathbf{0}, \mathbf{p}^0) \in \mathcal{P}^H$, and $(h^0_{(1)^0}, \mathbf{1}^j, \mathbf{p}^0) \in \mathcal{P}^H$ for all $j\in N$. Subtracting $(h^0_{(1)^0}, \mathbf{0}, \mathbf{p}^0)$ from all other $2n + 1$ points, we obtain 
\begin{align*}
 \left(h^k_{(1)^k} - h^0_{(1)^0}, \mathbf{0}, \sum_{j=1}^k \mathbf{1}^{\delta(j)}\right), & \, \forall\, k\in N, \\
 (1, \mathbf{0}, \mathbf{0}),& \\
 (0, \mathbf{1}^j, \mathbf{0}),& \, \forall\, j\in N, 
\end{align*}
which are linearly independent. Therefore, $\conv{\mathcal{P}^H}$ is full-dimensional. 
\end{proof}

Consider a given $\mathbf{p}^0\in\underline{\mathcal{X}}$ and $\boldsymbol{\delta}\in\mathfrak{S}(N)$. For any $(u_0,\mathbf{u})\in\mathcal{U}$, the face of the MISEPI \eqref{eq:MISEPI} contains $(w, \mathbf{y}, \mathbf{x})\in\mathcal{P}^H$ such that 
\[u_0 w + \mathbf{u}^\top\mathbf{y} = \mathbf{s}^{\mathbf{p}^0,\boldsymbol{\delta}, u_0, \mathbf{u}^\top}\mathbf{x} + s_0^{\mathbf{p}^0,\boldsymbol{\delta}, u_0, \mathbf{u}}.\]
By Proposition \ref{prop:SEPI_FD}, we know that 
\[ F_{u_0,\mathbf{u}}(\mathbf{p}^k) = \mathbf{s}^{\mathbf{p}^0,\boldsymbol{\delta}, u_0, \mathbf{u}^\top}\mathbf{p}^k + s_0^{\mathbf{p}^0,\boldsymbol{\delta}, u_0, \mathbf{u}}, \, \forall\, k\in N\cup\{0\}.\]
Thus, we next explore $(w, \mathbf{y})$ such that $u_0 w + \mathbf{u}^\top\mathbf{y} = F_{u_0,\mathbf{u}}(\mathbf{p}^k)$. Consider any $(u_0,\mathbf{u})\in\mathcal{U}$. For every $k\in N\cup\{0\}$, let $j^*_k\in N$ be the index such that 
\begin{equation}
\label{eq:def_j_star}
\sum_{j=1}^{j^*_k-1} u_{(j)^k} < u_0, \quad \text{and} \quad \sum_{j=1}^{j^*_k} u_{(j)^k} \geq u_0,
\end{equation}
as in Lemma \ref{lemma:F_eval}. Recall that $(\cdot)^k$ are permutations governed by $\{h^k_i\}_{i\in N}$ for $k\in N\cup\{0\}$. Each $(\cdot)^k$ gives rise to a permuted version of $\mathbf{u}$, and $j^*_k$ must exist according to the definition of $\mathcal{U}$. These permutations can coincide, but are generally different for $k\in N\cup\{0\}$. Thus, for any $k,k'\in N\cup\{0\}$, we have to include the subscripts $k, k'$ to distinguish $j^*_k$ and $j^*_{k'}$. We include an example to illustrate our notation. 

\begin{example}
\label{eg:j_star_k}
Suppose $n=3$, $u_0 = 1$, $\mathbf{u} = (0.3,1.2,0.7)$, and 
\begin{equation*}
h^1  = (0.8, 0.5, 0.1), \; h^2  = (-0.2, 0.5, 0.1), \; h^3  = (-0.2, -0.5, 0.1), \; h^4  = (-0.2, -0.5, -0.9).\\
\end{equation*}
We obtain 
\begin{align*}
h^1_1 > h^1_2 > h^1_3, \quad \longrightarrow\quad (1)^1 = 1, (2)^1 = 2, (3)^1 = 3, \\
h^2_2 > h^2_3 > h^2_1, \quad \longrightarrow\quad (1)^2 = 2, (2)^2 = 3, (3)^2 = 1, \\
h^3_3 > h^3_1 > h^3_2, \quad \longrightarrow\quad (1)^3 = 3, (2)^3 = 1, (3)^3 = 2, \\
h^4_1 > h^4_2 > h^4_3, \quad \longrightarrow\quad (1)^4 = 1, (2)^4 = 2, (3)^4 = 3.
\end{align*}
When $k=1$, 
\[u_{(1)^1} = 0.3 < 1 = u_0, \quad u_{(1)^1} + u_{(2)^1} = 0.3 + 1.2 > u_0,\]
so $j^*_1 = 2$. When $k=2$, $j^*_2 = 0$ because 
\[0 < u_0, \quad u_{(1)^2} = 1.2 > u_0.\]
Similarly, when $k=3$, $j^*_3 = 2$ because 
\[u_{(1)^3} = 0.7 < u_0, \quad u_{(1)^3} + u_{(2)^3} = 0.7 + 0.3 = 1 = u_0. \]
Note that the second inequality holds at equality in this case. 
\end{example}

In Observations \ref{obs:tight_pt1}--\ref{obs:tight_pt3} and Lemma \ref{lemma:tight_pt2}, we explore elements of $\mathcal{P}^H$ that belong to the face of the MISEPI associated with $(u_0, \mathbf{u})$, $\mathbf{p}^0$, and $\boldsymbol{\delta}$.

\begin{observation}
\label{obs:tight_pt1}
For any $(u_0,\mathbf{u})\in\mathcal{U}$ and every $k\in N\cup\{0\}$,
\[u_0 w^k + \mathbf{u}^\top\mathbf{y}^k = F_{u_0,\mathbf{u}}(\mathbf{p}^k),\]
where 
\begin{equation}
\label{eq:construct_wy_k}
\begin{aligned}
w^k & = h^k_{(j^*_k)^k}, \\
y^k_i & = \max\left\{0, h^k_i - h^k_{(j^*_k)^k}\right\}, \;\forall\, i\in N.
\end{aligned}
\end{equation}  This follows from Lemma \ref{lemma:F_eval}. Notice that $(w^k, \mathbf{y}^k, \mathbf{p}^k)\in\mathcal{P}^H$.
\end{observation}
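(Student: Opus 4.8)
\textbf{Proof proposal for Observation \ref{obs:tight_pt1}.}

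The plan is to directly verify the two claims: that $(w^k,\mathbf{y}^k,\mathbf{p}^k)$ as constructed in \eqref{eq:construct_wy_k} lies in $\mathcal{P}^H$, and that it attains equality $u_0 w^k + \mathbf{u}^\top\mathbf{y}^k = F_{u_0,\mathbf{u}}(\mathbf{p}^k)$. Since this is labeled an \emph{observation} and the excerpt explicitly says ``This follows from Lemma \ref{lemma:F_eval},'' I expect the argument to be short and essentially bookkeeping — the real content is already in Lemma \ref{lemma:F_eval}, which evaluates $F_{u_0,\mathbf{u}}$ at a point via the primal LP whose optimal solution is $t = h_{(j^*)}$ and $r_i = \max\{0, h_{(i)} - h_{(j^*)}\}$.

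First I would observe membership in $\mathcal{P}^H$: by construction $y^k_i = \max\{0, h^k_i - h^k_{(j^*_k)^k}\} \geq 0$ for all $i\in N$, and for each $i$ we have $h^i(\mathbf{p}^k) - y^k_i = h^k_i - \max\{0, h^k_i - h^k_{(j^*_k)^k}\} = \min\{h^k_i, h^k_{(j^*_k)^k}\} \le h^k_{(j^*_k)^k} = w^k$, so $w^k \ge h^i(\mathbf{p}^k) - y^k_i$ for all $i$, which is exactly the defining inequality of $\mathcal{P}^H$; hence $(w^k,\mathbf{y}^k,\mathbf{p}^k)\in\mathcal{P}^H$. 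Next I would identify $(w^k,\mathbf{y}^k)$ with the primal-optimal pair $(t,\mathbf{r})$ from the LP used in Lemma \ref{lemma:F_eval} applied at the point $\overline{\mathbf{x}} = \mathbf{p}^k$: there, $h_i$ plays the role of $h^i(\mathbf{p}^k) = h^k_i$, the ordering $(\cdot)$ is the ordering $(\cdot)^k$, and $j^* = j^*_k$ by the defining inequalities \eqref{eq:def_j_star}. Lemma \ref{lemma:F_eval} then gives $F_{u_0,\mathbf{u}}(\mathbf{p}^k) = u_0 h^k_{(j^*_k)^k} + \sum_{j=1}^{j^*_k - 1} u_{(j)^k}\,(h^k_{(j)^k} - h^k_{(j^*_k)^k})$, and a one-line expansion of $u_0 w^k + \mathbf{u}^\top\mathbf{y}^k = u_0 h^k_{(j^*_k)^k} + \sum_{i\in N} u_i \max\{0, h^k_i - h^k_{(j^*_k)^k}\}$ shows these agree, since the nonzero summands in $\sum_{i\in N} u_i\max\{0, h^k_i - h^k_{(j^*_k)^k}\}$ are precisely those with $i\in\{(1)^k,\dots,(j^*_k - 1)^k\}$ (by the descending order of $\{h^k_i\}$). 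The edge case $u_0 = 0$ (whence $j^*_k$ should be read as giving the empty sum, $F = 0$) is handled by the second part of Lemma \ref{lemma:F_eval}; the edge case where $\sum_{j=1}^{j^*_k} u_{(j)^k} = u_0$ exactly (as in Example \ref{eg:j_star_k}) changes nothing in the identity above.

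The main — and only real — obstacle is notational consistency: making sure the per-$k$ permutations $(\cdot)^k$ and the per-$k$ index $j^*_k$ are unambiguously the quantities that Lemma \ref{lemma:F_eval} produces when instantiated at $\overline{\mathbf{x}} = \mathbf{p}^k$, and that the convention for $j^*_k$ when $u_0=0$ is spelled out. No new estimates or constructions are needed beyond what Lemma \ref{lemma:F_eval} already supplies.
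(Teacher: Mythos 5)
Your proposal is correct and matches the paper's intent exactly: the paper treats this as a direct consequence of Lemma \ref{lemma:F_eval} (instantiated at $\overline{\mathbf{x}} = \mathbf{p}^k$, where $(w^k,\mathbf{y}^k)$ is the primal-optimal pair $t = h^k_{(j^*_k)^k}$, $r_i = \max\{0, h^k_i - h^k_{(j^*_k)^k}\}$), plus the routine check that the point lies in $\mathcal{P}^H$, which is all you do. Your extra care about the $u_0 = 0$ convention is harmless and consistent with the paper, which restricts to $u_0 > 0$ in this section anyway.
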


The second inequality in \eqref{eq:def_j_star} may hold at equality for some $k\in N\cup\{0\}$; that is, 
$j^*_k\in N$ further satisfies 
\[\sum_{j=1}^{j^*_k} u_{(j)^k} = u_0.\]
We have seen an instance in Example \ref{eg:j_star_k} with $k=3$.
When this scenario occurs, we can identify additional points at which the MISEPI is tight. We explain what these points are in the next lemma. 

\begin{lemma}
\label{lemma:tight_pt2}
Suppose $k\in N\cup\{0\}$ further satisfies that $\sum_{i=1}^{j^*_k} u_{(i)^k} = u_0$. Then we have 
\[u_0 \tilde{w}^k + \mathbf{u}^\top\tilde{\mathbf{y}}^k = F_{u_0,\mathbf{u}}(\mathbf{p}^k),\]
where 
\begin{equation}
\label{eq:construct_wy'_k}
\begin{aligned}
\tilde{w}^k & = h^k_{(j^*_k+1)^k}, \\
\tilde{y}^k_i & = \max\left\{0, h^k_i - h^k_{(j^*_k+1)^k}\right\}, \;\forall\, i\in N.
\end{aligned}
\end{equation}
\end{lemma}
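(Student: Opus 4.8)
The plan is to show that $(\tilde w^k, \tilde{\mathbf y}^k, \mathbf p^k)$ is a feasible point of $\mathcal P^H$ and that it satisfies the MISEPI associated with $(u_0,\mathbf u)$, $\mathbf p^0$, and $\boldsymbol\delta$ at equality. The feasibility part is routine: since $\tilde y^k_i = \max\{0, h^k_i - h^k_{(j^*_k+1)^k}\}$ for every $i\in N$, we have $\tilde y^k_i \geq 0$ and $\tilde w^k + \tilde y^k_i = h^k_{(j^*_k+1)^k} + \max\{0, h^k_i - h^k_{(j^*_k+1)^k}\} = \max\{h^k_{(j^*_k+1)^k}, h^k_i\} \geq h^k_i = h^i(\mathbf p^k)$, so $(\tilde w^k,\tilde{\mathbf y}^k)\in\mathcal D(\mathbf p^k)$ and thus $(\tilde w^k,\tilde{\mathbf y}^k,\mathbf p^k)\in\mathcal P^H$.

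The crux is the equality $u_0\tilde w^k + \mathbf u^\top\tilde{\mathbf y}^k = F_{u_0,\mathbf u}(\mathbf p^k)$. First I would compute the left-hand side directly. Using the descending order $(\cdot)^k$ of $\{h^k_i\}_{i\in N}$ and the fact that $h^k_i - h^k_{(j^*_k+1)^k} \geq 0$ exactly for $i\in\{(1)^k,\dots,(j^*_k)^k\}$ (with the $(j^*_k+1)$-st term contributing zero), we get
\begin{align*}
u_0\tilde w^k + \mathbf u^\top\tilde{\mathbf y}^k
&= u_0 h^k_{(j^*_k+1)^k} + \sum_{j=1}^{j^*_k} u_{(j)^k}\bigl(h^k_{(j)^k} - h^k_{(j^*_k+1)^k}\bigr) \\
&= \Bigl(u_0 - \sum_{j=1}^{j^*_k} u_{(j)^k}\Bigr) h^k_{(j^*_k+1)^k} + \sum_{j=1}^{j^*_k} u_{(j)^k} h^k_{(j)^k}.
\end{align*}
By the hypothesis $\sum_{i=1}^{j^*_k} u_{(i)^k} = u_0$, the first term vanishes, leaving $\sum_{j=1}^{j^*_k} u_{(j)^k} h^k_{(j)^k}$. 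On the other hand, Lemma~\ref{lemma:F_eval} applied at $\overline{\mathbf x} = \mathbf p^k$ (with the $h_i$ there being $h^i(\mathbf p^k) = h^k_i$ and the sorted order being $(\cdot)^k$) gives $F_{u_0,\mathbf u}(\mathbf p^k) = u_0 h^k_{(j^*_k)^k} + \sum_{j=1}^{j^*_k - 1} u_{(j)^k}(h^k_{(j)^k} - h^k_{(j^*_k)^k})$; rewriting this as $\bigl(u_0 - \sum_{j=1}^{j^*_k-1} u_{(j)^k}\bigr) h^k_{(j^*_k)^k} + \sum_{j=1}^{j^*_k-1} u_{(j)^k} h^k_{(j)^k}$ and using $u_0 - \sum_{j=1}^{j^*_k-1} u_{(j)^k} = u_{(j^*_k)^k}$ (again by the equality hypothesis) collapses it to $\sum_{j=1}^{j^*_k} u_{(j)^k} h^k_{(j)^k}$, matching the computed left-hand side.

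The only subtlety I anticipate is bookkeeping around degenerate cases — in particular whether $j^*_k + 1 \le n$ (so that $h^k_{(j^*_k+1)^k}$ is well defined) and whether the index set $\{(1)^k,\dots,(j^*_k)^k\}$ could be empty (i.e. $j^*_k = 0$, possible when $u_{(1)^k} \ge u_0$, though the equality hypothesis then forces $u_0 = \sum_{i=1}^0 u_{(i)^k} = 0$, contradicting $u_0 > 0$, so $j^*_k \ge 1$). One should note that the equality $\sum_{i=1}^{j^*_k}u_{(i)^k} = u_0$ together with $\sum_{i\in N}u_i \ge u_0$ forces $j^*_k < n$ unless all remaining $u_{(i)^k}$ vanish, in which case $h^k_{(j^*_k+1)^k}$ still makes sense and the same computation goes through with the extra zero terms. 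Once these edge cases are dispatched, the equality follows from the two displayed computations, completing the proof.
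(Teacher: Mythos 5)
Your proposal is correct and follows essentially the same route as the paper: compute $u_0\tilde w^k+\mathbf u^\top\tilde{\mathbf y}^k = u_0 h^k_{(j^*_k+1)^k}+\sum_{j=1}^{j^*_k}u_{(j)^k}\bigl(h^k_{(j)^k}-h^k_{(j^*_k+1)^k}\bigr)$, invoke the equality hypothesis, and match it with the expression for $F_{u_0,\mathbf u}(\mathbf p^k)$ from Lemma \ref{lemma:F_eval} (the paper rewrites the left-hand side directly into that expression, while you reduce both sides to $\sum_{j=1}^{j^*_k}u_{(j)^k}h^k_{(j)^k}$ — an immaterial difference). Your edge-case remarks are fine, except that if $j^*_k=n$ the index $(j^*_k+1)^k$ genuinely does not exist; this is an implicit restriction of the lemma statement itself rather than something your computation (or the paper's) resolves.
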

\begin{proof}
By construction, $(\tilde{w}^k, \tilde{\mathbf{y}}^k, \mathbf{p}^k)\in\mathcal{P}^H$. We observe that 
\begin{align*}
u_0 \tilde{w}^k + \sum_{i\in N} u_i \tilde{y}^k_i & = u_0 h^k_{(j^*_k + 1)^k} + \sum_{j =1}^{j^*_k} u_{(j)^k}(h^k_{(j)^k} - h^k_{(j^*_k + 1)^k}) \\
& = u_0 h^k_{(j^*_k)^k} + \sum_{j =1}^{j^*_k} u_{(j)^k}(h^k_{(j)^k} - h^k_{(j^*_k)^k}) + \left(\sum_{j=1}^{j^*_k} u_{(j)^k} - u_0\right) (h^k_{(j^*_k)^k} - h^k_{(j^*_k+1)^k}) \\
& = u_0 h^k_{(j^*_k)^k} + \sum_{j =1}^{j^*_k - 1} u_{(j)^k}(h^k_{(j)^k} - h^k_{(j^*_k)^k}) + 0\\
& = F_{u_0, \mathbf{u}}(\mathbf{p}^k). 
\end{align*}
\end{proof}

\begin{observation}
\label{obs:tight_pt3}
For any $i\in N$ with $u_i = 0$, 
\[u_0 w^k + \mathbf{u}^\top(\mathbf{y}^k + \mathbf{1}^i) = F_{u_0,\mathbf{u}}(\mathbf{p}^k),\]
where $(w^k, \mathbf{y}^k, \mathbf{p}^k)$ is given by \eqref{eq:construct_wy_k}. Furthermore, the point $(w^k, \mathbf{y}^k + \mathbf{1}^i, \mathbf{p}^k)$ remains in $\mathcal{P}^H$.
\end{observation}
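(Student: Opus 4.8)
\textbf{Proof proposal for Observation \ref{obs:tight_pt3}.} This statement is essentially immediate from Observation \ref{obs:tight_pt1}, and the plan is simply to unpack the two assertions. For the equality, I would expand
\[
u_0 w^k + \mathbf{u}^\top(\mathbf{y}^k + \mathbf{1}^i) = u_0 w^k + \mathbf{u}^\top\mathbf{y}^k + u_i,
\]
and then use the hypothesis $u_i = 0$ to drop the last term, leaving $u_0 w^k + \mathbf{u}^\top\mathbf{y}^k$, which Observation \ref{obs:tight_pt1} has already identified with $F_{u_0,\mathbf{u}}(\mathbf{p}^k)$. So the equality needs no new computation beyond that one-line reduction.

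For membership in $\mathcal{P}^H$, the key point is that the defining inequalities of $\mathcal{P}^H$, namely $w \geq h^\ell(\mathbf{x}) - y_\ell$ for $\ell\in N$ together with $\mathbf{y}\geq\mathbf{0}$, are preserved under \emph{increasing} any single coordinate of $\mathbf{y}$. Concretely, since Observation \ref{obs:tight_pt1} gives $(w^k,\mathbf{y}^k,\mathbf{p}^k)\in\mathcal{P}^H$, we have $w^k \ge h^\ell(\mathbf{p}^k) - y^k_\ell$ for every $\ell$; replacing $y^k_\ell$ by $y^k_\ell + (\mathbf{1}^i)_\ell \ge y^k_\ell$ only makes the right-hand side smaller (or unchanged), so the inequality still holds, and $\mathbf{y}^k + \mathbf{1}^i \ge \mathbf{y}^k \ge \mathbf{0}$. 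Hence $(w^k,\mathbf{y}^k+\mathbf{1}^i,\mathbf{p}^k)\in\mathcal{P}^H$.

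There is no real obstacle here: the observation is a direct corollary of Observation \ref{obs:tight_pt1} and the trivial monotonicity of the $\mathbf{y}$-variables in the description of $\mathcal{P}^H$; its role is purely to supply one more affinely independent point on the face of the MISEPI (one for each index $i$ with $u_i=0$) for the later facet-dimension count.
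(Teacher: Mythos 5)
Your proposal is correct and matches the reasoning the paper intends for this observation, which it leaves implicit precisely because it follows in one line from Observation \ref{obs:tight_pt1} (the $u_i=0$ term vanishes) and from the fact that increasing a $y$-coordinate preserves both $w \geq h^\ell(\mathbf{x}) - y_\ell$ and $\mathbf{y}\geq\mathbf{0}$. No gap to report.
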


So far, we have found points in $\mathcal{P}^H$ that are also on the face of an MISEPI. When $2n+1$ of these points are affinely independent, we can conclude that this MISEPI is facet-defining. \\

We now discuss how to construct $(u_0, \mathbf{u})\in\mathcal{U}$ for the given $\mathbf{p}^0\in\underline{\mathcal{X}}$ and $\boldsymbol{\delta}\in\mathfrak{S}(N)$, such that the resulting MISEPI is facet-defining. 
For every $k\in N\cup\{0\}$, pick any $\ell_k\in N$, and construct a binary incidence matrix $A(\boldsymbol{\ell})\in\{0,1\}^{(n+1)\times n}$ such that 
\[A(\boldsymbol{\ell})_{k(j)^k} = \begin{cases}
1, & \text{if } j \leq \ell_k, \\
0, & \text{otherwise.}
\end{cases}\]
In other words, the $k$-th row of $A(\boldsymbol{\ell})$ is the incidence vector for the support of the top $\ell_k$ elements in $\{h^k_i\}_{i\in N}$. Every row of $A(\boldsymbol{\ell})$ contains at least one entry of one. Let $M$ denote all the non-zero columns in $A(\boldsymbol{\ell})$, and let $|M| = m$.

\begin{proposition}
\label{prop:MISEPI_FD}
Fix any $\mathbf{p}\in\underline{\mathcal{X}}$ and $\boldsymbol{\delta}\in\mathfrak{S}(N)$. We assume that $\mathcal{B}_\mathbf{p} \subseteq \mathcal{X}$. Suppose that $\{h^k_i\}_{i\in N}$ are all distinct for every $k\in N\cup\{0\}$, and that $A(\boldsymbol{\ell})$ contains a submatrix $B\in GL(m, 2)$ such that $B^{-1}\mathbf{1} > \mathbf{0}$ entry-wise. 
Then the MISEPI associated with $\mathbf{p}, \boldsymbol{\delta}$ and $(1, \mathbf{u})$ is facet-defining for $\conv{\mathcal{P}^H}$, where $\mathbf{u}^M = B^{-1}\mathbf{1}$ and $\mathbf{u}^{N\setminus M} = \mathbf{0}$. 
\end{proposition}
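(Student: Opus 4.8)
The plan is to exhibit $2n+1$ affinely independent points of $\mathcal{P}^H$ lying on the face cut out by the MISEPI associated with $\mathbf{p}, \boldsymbol{\delta}$ and $(1,\mathbf{u})$, relative to the base point $(w^0,\mathbf{y}^0,\mathbf{p}^0)$ of Observation~\ref{obs:tight_pt1}. By Proposition~\ref{prop:full_dim_pH} (using $\mathcal{B}_\mathbf{p}\subseteq\mathcal{X}$), $\conv{\mathcal{P}^H}$ is full-dimensional, so it suffices to produce $2n$ further points whose differences from the base point are linearly independent. The face in question is the zero set of $w + \mathbf{u}^\top\mathbf{y} - \mathbf{s}^{\mathbf{p},\boldsymbol{\delta},1,\mathbf{u}\top}\mathbf{x} - s_0^{\mathbf{p},\boldsymbol{\delta},1,\mathbf{u}}$, and since the SEPI for $F_{1,\mathbf{u}}$ is tight at each $\mathbf{p}^k$ (Proposition~\ref{prop:SEPI_FD}), the face contains every $(w,\mathbf{y},\mathbf{p}^k)\in\mathcal{P}^H$ with $w + \mathbf{u}^\top\mathbf{y} = F_{1,\mathbf{u}}(\mathbf{p}^k)$.

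The points I would use fall into three families. First, for each $k\in N$, the point $(w^k,\mathbf{y}^k,\mathbf{p}^k)$ from \eqref{eq:construct_wy_k} lies on the face by Observation~\ref{obs:tight_pt1}; together with the base point this gives $n+1$ points, and their $\mathbf{x}$-coordinates $\mathbf{p}^0,\dots,\mathbf{p}^n$ differ by the linearly independent vectors $\sum_{j=1}^k\mathbf{1}^{\delta(j)}$, contributing rank $n$ in the $\mathbf{x}$-block. Second, I need $n$ more points that move in the $(w,\mathbf{y})$-directions while keeping $\mathbf{x}=\mathbf{p}^0$. For each coordinate $i\in N\setminus M$ (where $u_i=0$), Observation~\ref{obs:tight_pt3} gives $(w^0,\mathbf{y}^0+\mathbf{1}^i,\mathbf{p}^0)$ on the face, contributing the direction $(0,\mathbf{1}^i,\mathbf{0})$. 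For the $m$ coordinates in $M$ I would instead use the equality case of Lemma~\ref{lemma:tight_pt2}: the hypothesis $\mathbf{u}^M = B^{-1}\mathbf{1}$ with $B$ a submatrix of $A(\boldsymbol{\ell})$ means $\sum_{j=1}^{\ell_k}u_{(j)^k}=1=u_0$ for each $k$ in the index set $L$ selecting the rows of $B$, so for those $k$ both $(w^k,\mathbf{y}^k,\mathbf{p}^k)$ and $(\tilde w^k,\tilde{\mathbf{y}}^k,\mathbf{p}^k)$ lie on the face; the differences $(w^k-\tilde w^k,\mathbf{y}^k-\tilde{\mathbf{y}}^k,\mathbf{0})$ for $k\in L$, being supported (after the $F$-evaluation bookkeeping) on $M$ and governed by the invertible matrix $B$, span the remaining $m$ directions in the $(w,\mathbf{y})$-block. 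Finally the ray $(1,\mathbf{0},\mathbf{0})$ is in the recession cone of the face (increasing $w$ keeps $u_0w+\mathbf{u}^\top\mathbf{y}\geq$ RHS; to get a \emph{point} I would translate any face point along $(-1,\mathbf{1},\mathbf{0})$, which also keeps $u_0w+\mathbf{u}^\top\mathbf{y}$ fixed since $u_0=\mathbf{u}^\top\mathbf{1}$ is forced here—wait, only $u_0\le\mathbf{u}^\top\mathbf{1}$ in general, so I will instead use that the face, being a facet candidate of a full-dimensional polyhedron, has dimension $2n$, and count exactly $2n$ independent difference vectors as above).

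Assembling the difference vectors: $n$ from the $\mathbf{x}$-block (family one), $n-m$ of the form $(0,\mathbf{1}^i,\mathbf{0})$ for $i\notin M$, and $m$ from $\{(w^k-\tilde w^k,\mathbf{y}^k-\tilde{\mathbf{y}}^k,\mathbf{0}):k\in L\}$, for a total of $2n$. Linear independence follows by a block-triangular argument: the first $n$ vectors are the only ones with nonzero $\mathbf{x}$-component and are independent there; among the remaining $n$, the $n-m$ unit vectors are supported off $M$ while the $m$ vectors from Lemma~\ref{lemma:tight_pt2} are supported on $M$ (in the $\mathbf{y}$-coordinates, after eliminating $w$) and independent there precisely because $B\in GL(m,2)$ and $B^{-1}\mathbf{1}>\mathbf{0}$ (the strict positivity guarantees $\ell_k$ is an \emph{interior} breakpoint so that the two points of Lemma~\ref{lemma:tight_pt2} are genuinely distinct).

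The main obstacle I anticipate is the last independence check: one must verify that the $m$ difference vectors coming from the equality-case points actually have rank $m$ in the $(w,\mathbf{y})$-block. This requires unwinding \eqref{eq:construct_wy_k}–\eqref{eq:construct_wy'_k}, noting that $w^k-\tilde w^k = h^k_{(j^*_k)^k} - h^k_{(j^*_k+1)^k}>0$ (distinctness of $\{h^k_i\}$) and that $\mathbf{y}^k-\tilde{\mathbf{y}}^k = (h^k_{(j^*_k)^k}-h^k_{(j^*_k+1)^k})\mathbf{1}^{\{(1)^k,\dots,(j^*_k)^k\}}$ up to the components that are already zero, so after scaling these vectors the $\mathbf{y}$-parts are exactly the rows of $A(\boldsymbol\ell)$ restricted to $M$, i.e. the rows of $B$; invertibility of $B$ then closes the argument. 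The strict positivity $B^{-1}\mathbf{1}>\mathbf{0}$ is used twice—once to ensure $(1,\mathbf{u})\in\mathcal{U}$ in the spirit of Lemma~\ref{lemma:legal_u}, and once to ensure each selected breakpoint $\ell_k=j^*_k$ is strictly interior so Lemma~\ref{lemma:tight_pt2} genuinely applies.
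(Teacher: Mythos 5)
Your proposal follows essentially the same route as the paper's proof: the same $2n+1$ tight points (the points of Observation \ref{obs:tight_pt1} for $k\in N\cup\{0\}$, Observation \ref{obs:tight_pt3} for $i\in N\setminus M$, and the equality-case points of Lemma \ref{lemma:tight_pt2} for the $m$ rows of $B$, using $j^*_k=\ell_k$), followed by the same block rank argument in which the invertibility of $B$ supplies the last $m$ independent directions. The only small caveats are a harmless sign slip ($\mathbf{y}^k-\tilde{\mathbf{y}}^k$ is a \emph{negative} multiple of $\mathbf{1}^{\{(1)^k,\dots,(\ell_k)^k\}}$, which does not affect the rank count) and that the existence of $(\ell_k+1)^k$, i.e.\ $\ell_k<n$ for $k\in L$, is argued in the paper from the nonsingularity of $B$ together with $\mathbf{u}^M=B^{-1}\mathbf{1}>\mathbf{0}$ (a row of all ones would force every row of $B$ to be all ones), rather than from strict positivity alone as you suggest.
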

\begin{proof}
First, we notice that $\mathbf{u}\in\mathcal{U}'$ given in \eqref{eq:U_prime} by construction, and $(1, \mathbf{u})\in\mathcal{U}$ from Lemma \ref{lemma:legal_u}.  

\hspace{12pt} By Proposition \ref{prop:full_dim_pH}, $\conv{\mathcal{P}^H}$ is full dimensional.
Thus, we need to construct $2n+1$ affinely independent points on the face of the MISEPI. We construct $\{(w^k, \mathbf{y}^k, \mathbf{p}^k)\}_{k\in N\cup\{0\}}$ and $\{(w^0, \mathbf{y}^0 + \mathbf{1}^i, \mathbf{p}^0)\}_{i\in N\setminus M}$ as described in 
Observations \ref{obs:tight_pt1} and \ref{obs:tight_pt3}. These $2n+1-m$ points lie on the face of the proposed MISEPI. Let $L\subset N\cup\{0\}$ be the rows in $A(\boldsymbol{\ell})$ that overlap with $B$, then $|L| = m$ because $B$ is a square matrix. As described in Lemma \ref{lemma:tight_pt2}, we construct $m$ points, namely $(\tilde{w}^k, \tilde{\mathbf{y}}^k, \mathbf{p}^k)$ for $k\in L$. Note that $j^*_k = \ell_k$ for $k\in L$. This is because $B$ is a sub-matrix of the non-zero columns of $A(\boldsymbol{\ell})$, and $B_{k\cdot}\mathbf{u}^M =\sum_{i=1}^{\ell_k}u_{(i)^k} = 1 = u_0$ for every $k\in L$. 
For the construction to work, $(\ell_k+1)^k$ must exist for $k\in L$. Recall that $n \geq 2$. If, for a contradiction, there exists $k'\in L$ such that $\ell_{k'} = n$, then $B_{k'\cdot} = \mathbf{1}$. For $B\mathbf{u}^M = \mathbf{1}$ to hold with $\mathbf{u}^M > \mathbf{0}$ entry-wise, every row in $B$ has to be $\mathbf{1}$, which contradicts the nonsingularity of $B$. Thus, $\ell_k < n$, and $(\ell_k+1)^k$ always exists for $k\in L$. These $m$ points also belong to the face of the MISEPI by Lemma \ref{lemma:tight_pt2}. 

\hspace{12pt} So far, we have obtained $2n+1 - m + m = 2n + 1$ points on the face. We next verify that they are affinely independent. We subtract $(w^0, \mathbf{y}^0, \mathbf{p}^0)$ from all points, and form a $2n\times (2n+1)$ matrix with  
\[\{(w^k, \mathbf{y}^k, \mathbf{p}^k) - (w^0, \mathbf{y}^0, \mathbf{p}^0)\}_{k\in N}, \]
\[\{(w^0, \mathbf{y}^0 + \mathbf{1}^i, \mathbf{p}^0) - (w^0, \mathbf{y}^0, \mathbf{p}^0)\}_{i\in N\setminus M}, \]
\[\{(\tilde{w}^k, \tilde{\mathbf{y}}^k, \mathbf{p}^k) - (w^0, \mathbf{y}^0, \mathbf{p}^0)\}_{k\in L}\]
as its rows. The block sub-matrix $[\mathbf{p}^k - \mathbf{p}^0]_{k\in N}$ contributes $n$ to the total rank because $\mathbf{p}^k - \mathbf{p}^0 = \sum_{i=1}^k \mathbf{1}^{\delta(i)}$ for all $k\in N$. The block sub-matrix $[\mathbf{y}^0 + \mathbf{1}^i - \mathbf{y}^0]_{i\in N\setminus M}$ contributes $n-m$ to the total rank. We perform the elementary row operation of adding the negated row $-(w^k, \mathbf{y}^k, \mathbf{p}^k) +(w^0, \mathbf{y}^0, \mathbf{p}^0)$ to $(\tilde{w}^k, \tilde{\mathbf{y}}^k, \mathbf{p}^k)-(w^0, \mathbf{y}^0, \mathbf{p}^0)$ for every $k\in L$. The resulting row is $(h^k_{(\ell_k)^k} - h^k_{(\ell_k+1)^k}) (-1, \mathbf{1}^{\{(1)^k, \dots, (\ell_k)^k\}}, \mathbf{0})$. Given that $\{h^k_i\}_{i\in N}$ are all distinct for every $k\in N\cup\{0\}$, $h^k_{(\ell_k)^k} - h^k_{(\ell_k+1)^k} \neq 0$. 
We observe that $B$ coincides with $A(\boldsymbol{\ell})$ in the columns $i\in M$; otherwise, $B$ contains all-zero columns and cannot be nonsingular. Therefore, $\mathbf{1}^{\{(1)^k, \dots, (\ell_k)^k\}} = B_{k\cdot}$ for all $k\in L$. 
Matrix $B$ is nonsingular, so it contributes $m$ to the total rank. Thus, the matrix representation of these $2n$ points has a total rank of $n + (n - m) + m = 2n$. We conclude that the MISEPI is facet-defining. 
\end{proof}

We can derive facet-defining inequalities by enumerating $A(\boldsymbol{\ell})$ and characterizing its nonsingular submatrices. We include the following corollary as one example. 

\begin{corollary}
Fix any $\mathbf{p}\in\underline{\mathcal{X}}$ and $\boldsymbol{\delta}\in\mathfrak{S}(N)$. Let $M = \{(1)^k\}_{k\in N\cup\{0\}}\subseteq N$. Then the MISEPI associated with $\mathbf{p}, \boldsymbol{\delta}$ and $(1, \mathbf{1}^M)$ is facet-defining for $\conv{\mathcal{P}^H}$. 
\end{corollary}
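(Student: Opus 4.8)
The plan is to derive this corollary as a direct application of Proposition~\ref{prop:MISEPI_FD}. The key observation is that the choice $M = \{(1)^k\}_{k\in N\cup\{0\}}$ corresponds to setting $\ell_k = 1$ for every $k\in N\cup\{0\}$: then the $k$-th row of the incidence matrix $A(\boldsymbol{\ell})$ is exactly $\mathbf{1}^{(1)^k}$, a unit vector supported on the index of the largest value among $\{h^k_i\}_{i\in N}$. The non-zero columns of $A(\boldsymbol{\ell})$ are then precisely the columns indexed by $M$, so $|M| = m$ as required. I would first spell out this correspondence, and note that the hypothesis ``$\{h^k_i\}_{i\in N}$ are all distinct'' from Proposition~\ref{prop:MISEPI_FD} can be assumed (or, if one wishes to state the corollary without it, observe that $(1)^k$ is still well-defined by the tie-breaking convention and that the proof of Proposition~\ref{prop:MISEPI_FD} only uses distinctness to ensure $h^k_{(\ell_k)^k} - h^k_{(\ell_k+1)^k}\neq 0$; I expect we simply inherit this assumption silently as in the surrounding text).

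Next I would exhibit the nonsingular submatrix $B\in GL(m,2)$ and verify $B^{-1}\mathbf{1} > \mathbf{0}$. Since every row of $A(\boldsymbol{\ell})$ is a unit vector and $M$ enumerates the columns that actually appear, for each $i\in M$ there is at least one row $k$ with $(1)^k = i$; picking one such row for each $i\in M$ yields an $m\times m$ submatrix $B$ which, after permuting rows, is the identity matrix $I_m$. Hence $B$ is nonsingular and $B^{-1}\mathbf{1} = \mathbf{1} > \mathbf{0}$, so the vector $\mathbf{u}$ produced by Proposition~\ref{prop:MISEPI_FD} satisfies $\mathbf{u}^M = \mathbf{1}$ and $\mathbf{u}^{N\setminus M} = \mathbf{0}$, i.e.\ $\mathbf{u} = \mathbf{1}^M$. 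Combined with $\mathcal{B}_\mathbf{p}\subseteq\mathcal{X}$ (which I carry over as the standing hypothesis) and Assumption~\hyperlink{A1}{1} together with L$^\natural$-convexity of the $F_{u_0,\mathbf{u}}$, all hypotheses of Proposition~\ref{prop:MISEPI_FD} are met, and the conclusion that the MISEPI associated with $\mathbf{p}$, $\boldsymbol{\delta}$, and $(1,\mathbf{1}^M)$ is facet-defining follows immediately.

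I expect the main (minor) obstacle to be bookkeeping about what $(1, \mathbf{1}^M)$ really is in the $\mathcal{U}$-parametrization and confirming that the separation-LP structure indeed produces $\mathbf{u} = \mathbf{1}^M$ rather than some scaled variant: one must check that with $\ell_k = 1$, the defining relation $\sum_{i=1}^{j^*_k} u_{(i)^k} \geq u_0 = 1$ is satisfied with $j^*_k = 1$ precisely because $u_{(1)^k} = 1$, so the constructed tight points in Observations~\ref{obs:tight_pt1}--\ref{obs:tight_pt3} and Lemma~\ref{lemma:tight_pt2} line up as in the proof of Proposition~\ref{prop:MISEPI_FD}. This is routine once the identification $B = I_m$ (up to row permutation) is in place. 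I would therefore keep the proof to a couple of sentences: specify $\ell_k \equiv 1$, identify $A(\boldsymbol{\ell})$ and its unit-vector rows, point to the identity submatrix, and invoke Proposition~\ref{prop:MISEPI_FD}.
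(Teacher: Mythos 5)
Your proposal is correct and matches the paper's own proof: the paper likewise sets $\ell_k = 1$ for all $k\in N\cup\{0\}$, selects rows with distinct $(1)^k$ to obtain a submatrix $B$ that is an $m\times m$ permutation matrix (your identity-up-to-row-permutation), and concludes $B^{-1}\mathbf{1}=\mathbf{1}$, hence $\mathbf{u}=\mathbf{1}^M$, before invoking Proposition~\ref{prop:MISEPI_FD}. Your additional remarks about silently inheriting the distinctness and $\mathcal{B}_\mathbf{p}\subseteq\mathcal{X}$ hypotheses are consistent with how the paper treats them.
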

\begin{proof}
In this case, let $\ell_k = 1$ for all $k\in N\cup\{0\}$. We construct the incidence matrix $A(\boldsymbol{\ell})$ accordingly. This incidence matrix has zero columns for $i\in N\setminus M$. Let $L\subseteq N\cup\{0\}$ be any set of indices such that $\{(1)^k\}_{k\in L}$ are distinct. We know that $|L| = |M| = m$. Consider the sub-matrix $B$ of $A(\boldsymbol{\ell})$ with rows in $L$ and columns in $M$. This sub-matrix is exactly an $m\times m$ permutation matrix, which satisfies $B^{-1}\mathbf{1} = B^\top\mathbf{1} = \mathbf{1}$. Thus, $\mathbf{u} = \mathbf{1}^M$, which is the indicator vector of $M$.
\end{proof}

\subsection{Examples of structured $H$ and $\conv{\mathcal{P}^H}$} \label{sec:structuredH}
In this section, we illustrate the results from previous discussions using two examples: the general integer continuous mixing set and the binary multi-capacity continuous mixing set.

\subsubsection{General integer continuous mixing set}
\label{sect:eg_CMIX}
The \emph{continuous mixing set} closely resembles the general integer mixing set (see Section \ref{sect:MIX}), but with additional non-negative continuous variables. We denote a continuous mixing set by 
\[\mathcal{P}^{\text{CMIX}} := \{(w, \mathbf{y}, \mathbf{x}) \in \mathbb{R}\times \mathbb{R}^n_+ \times\mathbb{Z}^n: w + x_i + y_i \geq q_i, \forall \, i\in N\},\]
where $\mathbf{q}\in\mathbb{R}^n_+$ such that $1 > q_1 \geq q_2 \geq \dots \geq q_n$. The set $\mathcal{P}^{\text{CMIX}}$ plays a crucial role in the relaxations of lot-sizing, capacitated facility location, and capacitated network design problems. \citet{miller2003tight} extend the mixing inequalities \eqref{eq:mix_a}--\eqref{eq:mix_b} to propose a class of valid inequalities for $\mathcal{P}^{\text{CMIX}} \cap \{w \geq 0\}$. The full convex hull description of $\mathcal{P}^{\text{CMIX}}$ is provided by \citet{van2005continuous} with the \emph{cycle inequalities}.  We next describe the construction of these inequalities. Throughout this section, $G = (N, A)$ is a directed graph, where $A = \{(j,k) \in N\times N : q_j \neq q_k\} \cup\{(j,j) : j\in N\}$ represents the arcs in $G$. Each arc $(j,k)\in A$ with $q_j \neq q_k$ is associated with the following expression: 
\[\phi_{jk}(w, \mathbf{y}, \mathbf{x}) := 
\begin{cases}
w + y_j - [q_k + (q_k - q_j - 1)x_j],  & \text{if } q_j < q_k , \\ 
y_j - (q_k - q_j)x_j,  & \text{otherwise.} 
\end{cases}
\]
Each self-loop $(j,j)\in A$ is associated with 
\[ \phi_{jj}(\mathbf{x},\mathbf{y},w) :=  w + y_j - (q_j - x_j).  \]
Every elementary cycle $C \subseteq A$ in $G$ corresponds to a cycle inequality 
\begin{equation}
\label{eq:cycle_ineq}
\sum_{(j,k)\in C} \phi_{jk}(w, \mathbf{y}, \mathbf{x}) \geq 0. 
\end{equation}
The cycle inequalities associated with elementary cycles are facet-defining for $\conv{\mathcal{P}^{\text{CMIX}}}$, and they fully describe $\conv{\mathcal{P}^{\text{CMIX}}}$ \citep{van2005continuous}. In what follows, we exploit the hidden L$^\natural$-convexity in $\mathcal{P}^{\text{CMIX}}$ and show that our result subsumes $\conv{\mathcal{P}^{\text{CMIX}}}$ as a special case. 
Consider $H:\mathbb{Z}^n \times \mathbb{R}^n_+ \rightarrow \mathbb{R}$ given by 
\begin{equation}
\label{eq:H_CMIX}
H(\mathbf{x},\mathbf{y}) := \max_{i\in N}\{q_i - x_i - y_i\}.
\end{equation}
Notice that $\mathcal{P}^H = \mathcal{P}^{\text{CMIX}}$. In this case, $h^i(x_i) = q_i - x_i$ is a univariate monotone function for every $i\in N$. Thus, Assumption \hyperlink{A1}{1} is satisfied by Remark \ref{remark:A1_eg}, and $F_{u_0, \mathbf{u}}$ is L-convex for every $(u_0, \mathbf{u})\in\mathcal{U}$ according to Remark \ref{remark:LC_Fu0u}.  By Theorem \ref{thm:conv_SEPI}, $\conv{\mathcal{P}^{\text{CMIX}}}$ is fully described by MISEPIs. We remark that $\mathcal{P}^{\text{CMIX}}$ also satisfies all the conditions in Proposition \ref{prop:finite_CV}, which means that we only need the MISEPIs corresponding to the finitely many $(u_0,\mathbf{u})\in\mathcal{U}'$ to describe this convex hull. 

\begin{corollary}
Any facet-defining cycle inequality is an MISEPI. 
\end{corollary}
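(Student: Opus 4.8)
The plan is to show that every facet-defining cycle inequality \eqref{eq:cycle_ineq} coincides, after rescaling, with an MISEPI \eqref{eq:MISEPI} for an appropriate choice of $(u_0,\mathbf{u})\in\mathcal{U}$, $\mathbf{p}\in\underline{\mathcal{X}}=\mathbb{Z}^n$, and $\boldsymbol{\delta}\in\mathfrak{S}(N)$. Since $\mathcal{P}^H=\mathcal{P}^{\text{CMIX}}$ with $H$ given by \eqref{eq:H_CMIX}, and since Theorem \ref{thm:conv_H} together with Remarks \ref{remark:A1_eg} and \ref{remark:LC_Fu0u} guarantee that $\conv{\mathcal{P}^{\text{CMIX}}}$ is described entirely by the MISEPIs, every facet of $\conv{\mathcal{P}^{\text{CMIX}}}$ must be among the MISEPIs (up to positive scaling). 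Thus the argument is a bookkeeping/identification argument: take an arbitrary elementary cycle $C$, read off the coefficient vector of \eqref{eq:cycle_ineq}, and exhibit $(u_0,\mathbf{u},\mathbf{p},\boldsymbol{\delta})$ whose MISEPI has the same coefficient vector.

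The key steps, in order, are as follows. First, I would decode the structure of an elementary cycle $C$ in $G=(N,A)$. Because $A$ contains only arcs $(j,k)$ with $q_j\neq q_k$ plus all self-loops, an elementary cycle either is a single self-loop $(j,j)$, or visits a set $\{j_1,\dots,j_m\}$ of distinct indices (no repeats in an elementary cycle) with $q_{j_1}\neq q_{j_2}\neq\cdots$. Summing the $\phi_{jk}$ along $C$ and collecting terms, each variable $w$ appears once per ``ascending'' arc (where $q_j<q_k$) and once per self-loop, each $y_j$ appears exactly once (as $j$ is the tail of exactly one arc of $C$), and each $x_j$ gets a coefficient determined by the two arcs incident to $j$ in $C$. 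So the cycle inequality has the form $(u_0)\,w+\sum_{j}y_j \ge (\text{affine in }\mathbf{x})$ after identifying $u_0$ with the number of ascending/self-loop arcs — here $\mathbf{u}=\mathbf{1}^{S}$ where $S$ is the set of indices touched by $C$, so $(u_0,\mathbf{u})\in\mathcal{U}$ precisely when $u_0\le |S|$, which holds since each arc contributes at most one to $u_0$ and there are $|S|$ arcs in an elementary cycle on $|S|$ nodes (or a self-loop with $u_0=1$, $|S|=1$). Second, using Lemma \ref{lemma:F_u0_binary_u} with $\Gamma=S$, I would identify $F_{u_0,\mathbf{u}^{S}}(\mathbf{x})=\max_{T\subseteq S,\,|T|=u_0}\sum_{i\in T}(q_i-x_i)$, which is L-convex by Remark \ref{remark:LC_Fu0u}. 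Third, I would choose $\mathbf{p}\in\mathbb{Z}^n$ and $\boldsymbol{\delta}\in\mathfrak{S}(N)$ so that the SEPI for $\mathcal{P}^{F_{u_0,\mathbf{u}}}$ associated with $(\mathbf{p},\boldsymbol{\delta})$ has exactly the affine right-hand side appearing in \eqref{eq:cycle_ineq}; this parallels the two propositions in Section \ref{sect:MIX} (Propositions \ref{prop:mix_a} and the one after it) where mixing inequalities of forms \eqref{eq:mix_a} and \eqref{eq:mix_b} were matched to SEPIs by a careful choice of shifted origin $\mathbf{p}$ (with entries in $\{-1,0,1\}$ depending on whether an ascending arc is present). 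The ``ascending'' arcs of $C$ will correspond to the analogue of the $-1$ shift that produced \eqref{eq:mix_b}, and I expect $\mathbf{p}$ and $\boldsymbol{\delta}$ built so that the sequence of partial sums $\mathbf{p}+\sum_{j\le k}\mathbf{1}^{\delta(j)}$ sweeps through the relevant evaluation points of $F_{u_0,\mathbf{u}}$, exactly as in the worked examples in Section \ref{sect:MIX}. Fourth, I would verify the two coefficient vectors agree by a direct (telescoping) computation mirroring the SEPI-expansion computations already carried out for the mixing set.

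The main obstacle I anticipate is the combinatorial translation between a cycle $C$ and the pair $(\mathbf{p},\boldsymbol{\delta})$: an elementary cycle can mix ascending arcs, descending arcs, and self-loops in an arbitrary cyclic pattern, and one must show that for each such pattern there is a consistent shifted origin $\mathbf{p}$ (whose entries encode how many ``wrap-arounds'' of $+\mathbf{1}$ each coordinate undergoes) and ordering $\boldsymbol{\delta}$ reproducing the cycle's coefficients. This is exactly the kind of delicate case analysis done in Algorithm \ref{alg:build_K} and the proposition following it for the mixing set, now complicated by the presence of the continuous $\mathbf{y}$ and the parameter $u_0>1$. A clean way to organize it is: first reduce to the case $u_0$ equal to the number of ascending arcs plus self-loops, then argue that rotating the cycle so it starts at the index with the largest $q$-value among those paired with a ``downward'' transition lets $\boldsymbol{\delta}$ be taken as the natural order restricted to $S$ (as in Proposition \ref{prop:mix_a}), and finally set $p_i=-(\text{number of ascending arcs encountered before reaching }i)$ for $i\in S$ and $p_i=1$ for $i\notin S$. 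I would also need to confirm that \emph{facet-defining} cycle inequalities land on facet-defining MISEPIs (not strictly necessary for the stated corollary, which only claims ``is an MISEPI,'' but worth a remark), which follows from Proposition \ref{prop:MISEPI_FD} once the $(\mathbf{p},\boldsymbol{\delta},u_0,\mathbf{u})$ data is in hand. A fallback, if the explicit matching proves too messy, is the soft argument: \eqref{eq:cycle_ineq} is valid and facet-defining for $\conv{\mathcal{P}^{\text{CMIX}}}=\conv{\mathcal{P}^H}$, which by Theorem \ref{thm:conv_H} and Proposition \ref{prop:finite_CV} is cut out by MISEPIs alone; hence any facet is (a positive multiple of) an MISEPI — but the paper's style suggests the authors want the explicit correspondence, so I would present the constructive version.
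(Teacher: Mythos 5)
Your fallback ``soft argument'' is in fact the paper's entire proof of this corollary: the statement is placed immediately after the observation that $H$ in \eqref{eq:H_CMIX} satisfies Assumption \hyperlink{A1}{1}, that $F_{u_0,\mathbf{u}}$ is L-convex for every $(u_0,\mathbf{u})\in\mathcal{U}$, and hence that $\conv{\mathcal{P}^{\text{CMIX}}}=\conv{\mathcal{P}^H}$ is completely described by MISEPIs (with the $y_i\ge 0$ bounds themselves being the MISEPIs for $(0,\mathbf{1}^i)$, cf.\ Remark \ref{remark:u0_0}), so any facet-defining inequality, in particular any facet-defining cycle inequality, must coincide (up to positive scaling) with one of them. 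So for the corollary as stated you do not need the explicit matching at all, and your proposal is correct on that route. Your primary, constructive route corresponds to what the paper does \emph{afterwards}, in answering which MISEPI realizes the cycle inequality for a given elementary cycle $C$; there your guessed parameters diverge from the paper's construction and, as written, would not reproduce the coefficients. The paper takes $u_0=|L(C)|$, $\mathbf{u}=\mathbf{1}^{N(C)}$, $p_j=-1$ exactly for tails of backward arcs ($j\in L(C)$), $p_j=0$ for the remaining cycle nodes, $p_j=2$ off the cycle, and builds $\boldsymbol{\delta}$ via $\delta(i)=N^-_C(N(C)_i)$ with $N(C)$ sorted by decreasing $q$; your rule $p_i=-(\text{number of ascending arcs encountered before reaching } i)$ already fails on the paper's Example \ref{eg:cycle_MISEPI_eg} (it gives $p_4=0$ rather than $-1$), and the ``natural order after rotation'' choice of $\boldsymbol{\delta}$ does not in general make the partial sums $\mathbf{p}^k$ sweep the supports of $F_{|L(C)|,\mathbf{1}^{N(C)}}$ in the way the telescoping computation (Lemma \ref{lemma:cycle_SEPI}) requires. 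Since you flagged these details as tentative and the soft argument carries the stated corollary, there is no gap in the corollary's proof, but the constructive correspondence would need the paper's parameter choices to go through.
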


In what follows, we address the question  `which MISEPI corresponds to the facet-defining cycle inequality associated with $C$?' Consider any elementary cycle $C\subseteq A$. We construct $\mathbf{p}\in\underline{\mathcal{X}} = \mathbb{Z}^n$, $\boldsymbol{\delta}\in\mathfrak{S}(N)$, and $(u_0,\mathbf{u})\in\mathcal{U}$ such that the associated MISEPI is identical to the cycle inequality corresponding to $C$. The next observation addresses the case where $C$ is a self-loop. 

\begin{observation}
\label{obs:self_loop}
For each self-loop $C = \{(j,j)\}$ where $j\in N$, the corresponding cycle inequality is $w + y_j \geq q_j - x_j$. Note that $(1, \mathbf{1}^j)\in\mathcal{U}$, and $F_{1, \mathbf{1}^j}(\mathbf{x}) = q_i - x_j$. For the epigraph of $F_{1, \mathbf{1}^j}$, $\eta \geq q_i - x_i$ is the only SEPI associated with all $\mathbf{p}\in \underline{\mathcal{X}}$ and all $\boldsymbol{\delta}\in\mathfrak{S}(N)$. 
The resulting MISEPI, given $(1, \mathbf{1}^j)$ along with any arbitrary pair of $\mathbf{p}$ and $\boldsymbol{\delta}$, is exactly $w + y_j \geq q_j - x_j$. 
\end{observation}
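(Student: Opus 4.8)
\textbf{Proof plan for the final statement (Observation \ref{obs:self_loop}).}

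The statement to be established is that, for any self-loop $C = \{(j,j)\}$ with $j \in N$, the cycle inequality $\phi_{jj}(w,\mathbf{y},\mathbf{x}) \geq 0$ — that is, $w + y_j \geq q_j - x_j$ — is an MISEPI, and in fact the MISEPI obtained by choosing $(u_0,\mathbf{u}) = (1,\mathbf{1}^j)$ together with an arbitrary $\mathbf{p}\in\underline{\mathcal{X}}$ and $\boldsymbol{\delta}\in\mathfrak{S}(N)$. The plan is to simply trace through the definitions of each object in the MISEPI machinery for this particular choice of $(u_0,\mathbf{u})$ and verify that everything collapses to the desired single inequality.

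First I would check that $(1,\mathbf{1}^j)\in\mathcal{U}$: indeed $u_0 = 1 \leq 1 = \sum_{i\in N} u_i$ since $\mathbf{u} = \mathbf{1}^j$ has exactly one nonzero coordinate equal to $1$, and clearly $(u_0,\mathbf{u})\geq\mathbf{0}$. Next I would compute $F_{1,\mathbf{1}^j}$ from its definition in \eqref{eq:F}: $F_{1,\mathbf{1}^j}(\mathbf{x}) = \min_{\ell\in N}\{f^\ell_{1,\mathbf{1}^j}(\mathbf{x})\}$ where $f^\ell_{1,\mathbf{1}^j}(\mathbf{x}) = h^\ell(\mathbf{x}) + g^\ell_j(\mathbf{x}) = h^\ell(\mathbf{x}) + \max\{0, h^j(\mathbf{x}) - h^\ell(\mathbf{x})\} = \max\{h^\ell(\mathbf{x}), h^j(\mathbf{x})\}$. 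Taking the minimum over $\ell\in N$, the choice $\ell = j$ gives $h^j(\mathbf{x})$, and every other term is $\geq h^j(\mathbf{x})$; hence $F_{1,\mathbf{1}^j}(\mathbf{x}) = h^j(\mathbf{x}) = q_j - x_j$. (Alternatively this is immediate from Lemma \ref{lemma:F_u0_binary_u} with $\Gamma = \{j\}$ and $u_0 = 1$, which gives $F_{1,\mathbf{1}^j}(\mathbf{x}) = \max_{S\subseteq\{j\}, |S|=1}\{\sum_{i\in S}h^i(\mathbf{x})\} = h^j(\mathbf{x})$.) Since $F_{1,\mathbf{1}^j}$ is an affine function of $\mathbf{x}$ (with all-equal slope, being univariate in $x_j$ with slope $-1$ and constant in the other coordinates), it is L-convex hence L$^\natural$-convex, consistent with Assumption \hyperlink{A1}{1} and Remark \ref{remark:LC_Fu0u}, so the MISEPI framework applies.

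Then I would show that for the epigraph $\mathcal{P}^{F_{1,\mathbf{1}^j}}_{\mathcal{X}} = \{(\eta,\mathbf{x})\in\mathbb{R}\times\mathcal{X} : \eta \geq q_j - x_j\}$, \emph{every} SEPI — for any $\mathbf{p}\in\underline{\mathcal{X}}$ and any $\boldsymbol{\delta}\in\mathfrak{S}(N)$ — reduces to the single inequality $\eta \geq q_j - x_j$. This follows directly from the SEPI formula \eqref{eq:SEPI}: for an affine function $F(\mathbf{x}) = q_j - x_j$, the difference $F(\mathbf{p} + \sum_{i=1}^t\mathbf{1}^{\delta(i)}) - F(\mathbf{p} + \sum_{i=1}^{t-1}\mathbf{1}^{\delta(i)})$ equals $-1$ when $\delta(t) = j$ and $0$ otherwise; substituting into \eqref{eq:SEPI} leaves only the term $-(x_j - p_j)$, and combined with $F(\mathbf{p}) = q_j - p_j$ the constant $p_j$ cancels, yielding $\eta \geq q_j - x_j$ regardless of the choice of $\mathbf{p},\boldsymbol{\delta}$. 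Finally, substituting $u_0 = 1$, $\mathbf{u} = \mathbf{1}^j$, and this SEPI into the MISEPI form \eqref{eq:MISEPI} gives $w + y_j \geq q_j - x_j$, which is exactly $\phi_{jj}(w,\mathbf{y},\mathbf{x})\geq 0$. There is no real obstacle here — the statement is essentially a definitional unwinding — but the one point requiring a moment of care is verifying that the SEPI genuinely does not depend on $\mathbf{p}$ and $\boldsymbol{\delta}$, i.e.\ that the telescoping and the cancellation of $p_j$ go through for every permutation and every base point; this is the only place where one must actually manipulate \eqref{eq:SEPI} rather than just read off a definition.
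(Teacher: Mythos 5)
Your proposal is correct and follows exactly the chain of facts the paper itself uses for this observation: membership of $(1,\mathbf{1}^j)$ in $\mathcal{U}$, the collapse $F_{1,\mathbf{1}^j}(\mathbf{x}) = h^j(\mathbf{x}) = q_j - x_j$, the fact that every SEPI for this affine function telescopes to $\eta \geq q_j - x_j$ independently of $\mathbf{p}$ and $\boldsymbol{\delta}$, and the substitution into the MISEPI form. The paper states this as an observation without further detail, and your write-up supplies precisely the routine verifications it leaves implicit.
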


In what follows, we focus on the case where $C$ is not a self-loop. Let the set of nodes involved in $C$ be $N(C)$. We know that $|N(C)| \geq 2$ and that $q_i$ are distinct for all $i\in N(C)$ because the original graph $G$ does not contain arcs that connect distinct $j,k\in N$ with $q_j = q_k$. We define $L(C)\subset N(C)$ to be the set of indices in the cycle from which the arc points backward; that is, $L(C) := \{j\in N(C): (j,k)\in C, q_j < q_k\}$. We let 
\[u_0 = |L(C)|, \quad \text{ and } \quad \mathbf{u} = \mathbf{1}^{N(C)}.\] 
Observe that $|N(C)|-1 \geq u_0 \geq 1$ because $C$ is a cycle with at least two arcs---one of them must point backward, but not all vertices have backward arcs. Thus, $(|L(C)|,  \mathbf{1}^{N(C)})\in\mathcal{U}$. 
In addition, we construct $\mathbf{p}\in\mathbb{Z}^n$ by 
\[p_j = \begin{cases}
-1, & \text{ if } j \in L(C), \\
0, & \text{else if } j\in N(C)\setminus L(C), \\
2, & \text{otherwise, i.e., } j\in N\setminus N(C). 
\end{cases}\]
The permutation $\boldsymbol{\delta}\in\mathfrak{S}(N)$ is obtained as follows. 
We first construct a permutation of $N(C)$. For ease of exposition, we label the elements in $N(C)$ by $\{N(C)_1,\dots,N(C)_{|N(C)|}\}$ such that $q_{N(C)_1} > \dots > q_{N(C)_{|N(C)|}}$. 
We use $N^-_C(i)$ and $N^+_C(i)$ to denote the in-neighbor and out-neighbor of any $i\in N(C)$ in the cycle $C$, respectively. That is, $(N^-_C(i), i), (i, N^+_C(i))\in C$. 
We construct 
\[\delta(i) = N_C^-(N(C)_i), \; \forall\, i \in \{1, \dots, |N(C)|\}. \]
If $N(C) \subsetneq N$, then we complete $\boldsymbol{\delta}$ with an arbitrary permutation of the remaining elements $N\setminus N(C)$. We illustrate the aforementioned construction using an example.

\begin{example}
\label{eg:cycle_MISEPI_eg}
Consider the example of $H(\mathbf{x}, \mathbf{y})$ given in \eqref{eq:H_CMIX} with $\mathbf{q} = (0.8, 0.5, 0.2, 0.1)$. The cycle $C = \{(1,4), (4,3), (3,1)\}$ (see Figure \ref{fig:cycle_eg}) gives rise to the cycle inequality: 
\[2w + y_1 + y_3 + y_4 \geq 1 - 0.7x_1 - 0.4x_3 - 0.9x_4. \]
Here, $N(C) = \{1,3,4\}$, so $\mathbf{u} = \mathbf{1}^{\{1,3,4\}}$. We obtain $L(C) = \{3,4\}$ and $u_0 = 2$. It follows that $\mathbf{p} = (0, 2, -1, -1)$. 
\begin{figure}
    \centering
    \includegraphics[width=0.25\linewidth]{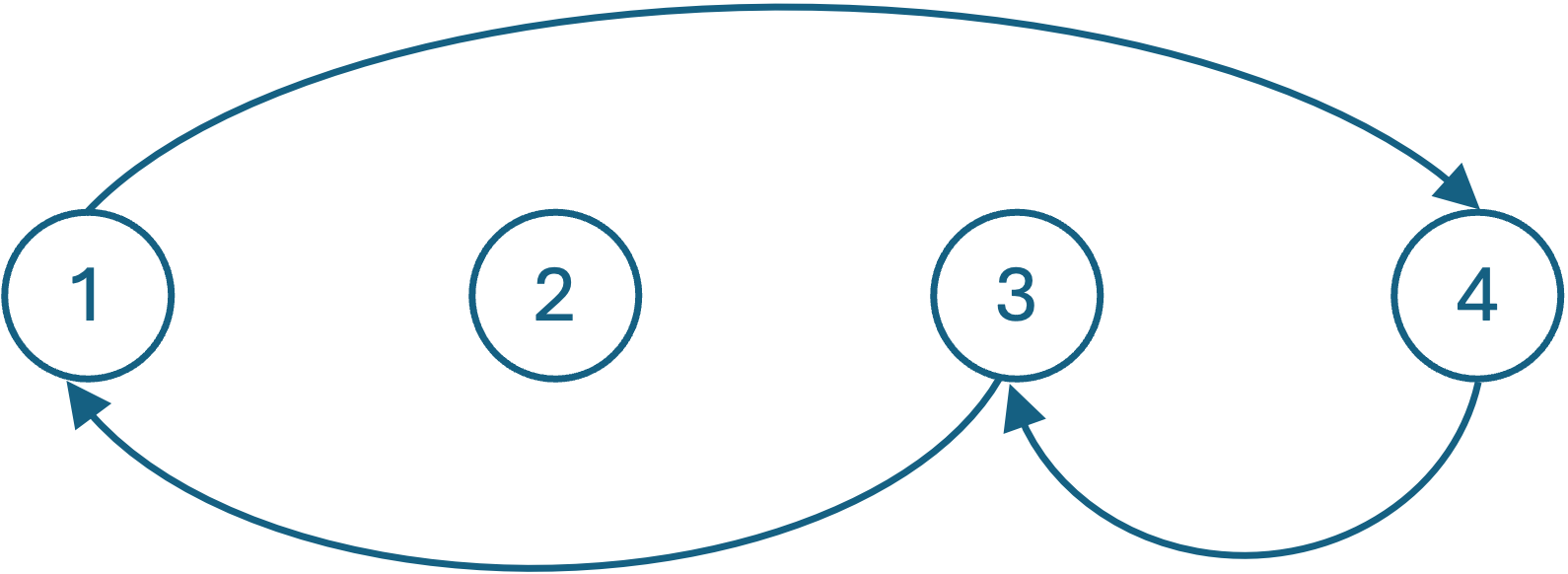}
    \caption{A visual illustration of the cycle $C=\{(1,4), (4,3), (3,1)\}$, where $N(C) = \{1,3,4\}$ and $L(C) = \{3,4\}$ because $(4,3), (3,1)$ are backward arcs.}
    \label{fig:cycle_eg}
\end{figure}
In addition, the elements in $N(C)$ satisfies $q_1 > q_3 > q_4$, so $N(C)_1 = 1, N(C)_2 = 3$, and $N(C)_3 = 4$. We obtain 
\[\delta(1) = N_C^-(N(C)_1) = N_C^-(1) = 3, \quad \delta(2) = N_C^-(N(C)_2) = N_C^-(3) = 4, \quad \delta(3) = N_C^-(N(C)_3) = N_C^-(4) = 1,\]
and we complete the permutation by appending $\delta(4) = 2$. Thus, $\boldsymbol{\delta} = (3,4,1,2)$. The associated MISEPI is 
\begin{align*}
2w + y_1 + y_3 + y_4 \geq \;& F_{2, \mathbf{1}^{\{1,3,4\}}}(0, 2, -1, -1) + [F_{2, \mathbf{1}^{\{1,3,4\}}}(0, 2, 0, -1) - F_{2, \mathbf{1}^{\{1,3,4\}}}(0, 2, -1, -1)](x_3 + 1) \\
\;& + [F_{2, \mathbf{1}^{\{1,3,4\}}}(0, 2, 0, 0) - F_{2, \mathbf{1}^{\{1,3,4\}}}(0, 2, 0, -1)](x_4 + 1) \\
\;& + [F_{2, \mathbf{1}^{\{1,3,4\}}}(1, 2, 0, 0) - F_{2, \mathbf{1}^{\{1,3,4\}}}(0, 2, 0, 0)]x_1 \\
\;& + [F_{2, \mathbf{1}^{\{1,3,4\}}}(1, 3, 0, 0) - F_{2, \mathbf{1}^{\{1,3,4\}}}(1, 2, 0, 0)](x_2 - 2)  \\
= \;& 2.3 + (1.9 - 2.3)(x_3 + 1) + (1-1.9)(x_4 + 1) + (0.3 - 1)x_1 + 0  \\
= \;& 1 - 0.4x_3 -0.9x_4 -0.7x_1. 
\end{align*}
This is identical to the given cycle inequality. 
\end{example}

Before establishing the equivalence between a cycle inequality and the constructed MISEPI, we highlight an important property of the function $F_{|L(C)|, \mathbf{1}^{N(C)}}$. Recall that Lemma \ref{lemma:F_u0_binary_u} shows an alternative interpretation of $F_{|L(C)|, \mathbf{1}^{N(C)}}$---this function evaluates the highest $|L(C)|$ values (including ties) among $\{h^i(\mathbf{x})\}_{i\in N(C)}$ for any $\mathbf{x}\in\mathcal{X}$. We provide an example to illustrate how the supports of $F_{|L(C)|, \mathbf{1}^{N(C)}}(\mathbf{p}^k)$ and of $F_{|L(C)|, \mathbf{1}^{N(C)}}(\mathbf{p}^{k-1})$ are related. 

\begin{example}
We assume the same setup in Example \ref{eg:cycle_MISEPI_eg}. 
Observe that 
\begin{align*}
\mathbf{p}^0 = (0, 2, -1, -1), \quad & F_{2, \mathbf{1}^{\{1,3,4\}}}(\mathbf{p}^0) = 1.2 + 1.1 = \sum_{i\in \{3,4\}} (q_i - p^0_i), \\
& \{3,4\} = L(C), \\
\mathbf{p}^1 = (0, 2, 0, -1), \quad & F_{2, \mathbf{1}^{\{1,3,4\}}}(\mathbf{p}^1) = 0.8 + 1.1 = \sum_{i\in \{1,4\}} (q_i - p^1_i), \\
& \{1,4\} = \{3,4\}\cup\{N(C)_1\} \setminus \{N^-_C(N(C)_1)\}, \\
\mathbf{p}^2 = (0, 2, 0, 0), \quad & F_{2, \mathbf{1}^{\{1,3,4\}}}(\mathbf{p}^2) = 0.8 + 0.2 = \sum_{i\in \{1,3\}} (q_i - p^2_i),\\
& \{1,3\} = \{1,4\}\cup\{N(C)_2\} \setminus \{N^-_C(N(C)_2)\}, \\
\mathbf{p}^3 = (1, 2, 0, 0), \quad & F_{2, \mathbf{1}^{\{1,3,4\}}}(\mathbf{p}^3) = 0.2 + 0.1 = \sum_{i\in \{3,4\}} (q_i - p^3_i), \\
& \{3,4\} = \{1,3\}\cup\{N(C)_3\} \setminus \{N^-_C(N(C)_3)\}, \\
\mathbf{p}^4 = (1, 3, 0, 0), \quad & F_{2, \mathbf{1}^{\{1,3,4\}}}(\mathbf{p}^4) = 0.2 + 0.1 = \sum_{i\in \{3,4\}} (q_i - p^4_i). 
\end{align*}
Intuitively, the arcs in $C$ govern the supports of $F_{|L(C)|, \mathbf{1}^{N(C)}}(\mathbf{p}^{k})$, which follows from our construction of $\mathbf{p}$ and $\boldsymbol{\delta}$ with respect to $C$.
\end{example}

With this intuition, we discuss the terms in the SEPI  associated with our proposed $\mathbf{p}$ and $\boldsymbol{\delta}$ for the epigraph of $F_{|L(C)|, \mathbf{1}^{N(C)}}$.

\begin{observation}
For any $i\in \{1,\dots,|N(C)|\}$, let $j = N_C^-(N(C)_i) = \delta(i)$. We represent $N(C)_i$ equivalently by $N_C^+(j)$. 
When $j = \delta(i)\in L(C)$, $x_j - p_j = x_j - (-1)$. Given that $h^j(p^{i-1}_j) = q_j + 1$, $j$ must be in the support of $F_{|L(C)|, \mathbf{1}^{N(C)}}(\mathbf{p}^{i-1})$. At $\mathbf{p}^i$, $h^j(p^{i}_j) = q_j < q_{N^+_C(j)} = h^{N^+_C(j)}(p^{i}_{N^+_C(j)})$. In fact, by our construction, the support of $F_{|L(C)|, \mathbf{1}^{N(C)}}(\mathbf{p}^{i})$ is exactly the support of $F_{|L(C)|, \mathbf{1}^{N(C)}}(\mathbf{p}^{i-1})$ with $j$ removed and with $N^+_C(j)$ added. Thus, 
$F_{|L(C)|, \mathbf{1}^{N(C)}}(\mathbf{p}^i) - F_{|L(C)|, \mathbf{1}^{N(C)}}(\mathbf{p}^{i-1}) = q_{N_C^+(j)} - (1+q_j)$.
Similarly, when $j\in N(C)\setminus L(C)$, $x_j - p_j = x_j - 0$. By construction, the support of $F_{|L(C)|, \mathbf{1}^{N(C)}}(\mathbf{p}^{i})$ is again the support of $F_{|L(C)|, \mathbf{1}^{N(C)}}(\mathbf{p}^{i-1})$ with $j$ removed and with $N^+_C(j)$ added. Therefore, $F_{|L(C)|, \mathbf{1}^{N(C)}}(\mathbf{p}^i) - F_{|L(C)|, \mathbf{1}^{N(C)}}(\mathbf{p}^{i-1}) = h^{N^+_C(j)}(p^{i}_{N^+_C(j)}) - h^j(p^{i-1}_j) = q_{N_C^+(j)} - q_j$. In summary,
\begin{equation*}
 \left[F_{|L(C)|, \mathbf{1}^{N(C)}}(\mathbf{p}^i) - F_{|L(C)|, \mathbf{1}^{N(C)}}(\mathbf{p}^{i-1})\right](x_{\delta(i)} - p_{\delta(i)})  = \begin{cases}
\left[q_{N_C^+(j)} - (1+q_j)\right](x_j + 1), & j \in L(C), \\
\left(q_{N_C^+(j)} - q_j\right)x_j, & j \in N(C)\setminus L(C).
\end{cases}
\end{equation*}
\end{observation}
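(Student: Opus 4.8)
The plan is to track the \emph{support} of $F_{|L(C)|,\mathbf{1}^{N(C)}}$, that is, the maximizing set $S^k\subseteq N(C)$ furnished by Lemma~\ref{lemma:F_u0_binary_u} (applied with $\Gamma=N(C)$, which is legitimate since $|L(C)|\in\{1,\dots,|N(C)|\}$): $S^k$ consists of the indices of the $|L(C)|$ largest values among $\{h^\ell(\mathbf{p}^k)=q_\ell-p^k_\ell\}_{\ell\in N(C)}$. The goal is then to show that step $i$ replaces exactly one index, $j$ by $N^+_C(j)$. First I would record the explicit values along the path. Because $N^-_C$ is a bijection of the cycle, the first $|N(C)|$ entries of $\boldsymbol{\delta}$ are precisely the elements of $N(C)$, so for $0\le k\le|N(C)|$ each $\ell\in N(C)$ is incremented at most once, namely at step $\rho(N^+_C(\ell))$, where $\rho$ is the rank in the fixed descending $q$-order of $N(C)$ (so $\rho(N(C)_t)=t$). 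Setting $a_\ell=1$ if $\ell\in L(C)$ and $0$ otherwise, and $\beta^k_\ell=1$ if $\ell$ is incremented by step $k$ (equivalently $\rho(N^+_C(\ell))\le k$) and $0$ otherwise, this gives $h^\ell(\mathbf{p}^k)=q_\ell+a_\ell-\beta^k_\ell$, so every value sits in exactly one of the three ``levels'' $a_\ell-\beta^k_\ell\in\{-1,0,1\}$. Since $q_\ell\in[0,1)$ and the $q_\ell$ are distinct on $N(C)$, the values are totally ordered lexicographically by (level, then $q_\ell$) with no ties; hence $|S^k|=|L(C)|$ and $S^k$ is the set of level-$1$ nodes together with the highest-$q$ level-$0$ nodes needed to reach $|L(C)|$.

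Next I would isolate the two nodes exchanged at step $i$. Writing $j=\delta(i)=N^-_C(N(C)_i)$, only coordinate $j$ changes between $\mathbf{p}^{i-1}$ and $\mathbf{p}^i$, so $h^j$ drops by exactly one level while all other values are unchanged. One checks $j$ is not incremented before step $i$, so $h^j(\mathbf{p}^{i-1})=q_j+a_j$ and $h^j(\mathbf{p}^i)=q_j+a_j-1$. The incoming node $N(C)_i=N^+_C(j)$ has level $0$ at both $\mathbf{p}^{i-1}$ and $\mathbf{p}^i$: indeed $a_{N(C)_i}=\beta^i_{N(C)_i}$ because $N(C)_i\in L(C)\iff q_{N(C)_i}<q_{N^+_C(N(C)_i)}\iff\rho(N^+_C(N(C)_i))<i$, whence $h^{N(C)_i}(\mathbf{p}^{i-1})=h^{N(C)_i}(\mathbf{p}^i)=q_{N(C)_i}$. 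Combining these level computations yields the crossing $h^j(\mathbf{p}^{i-1})>q_{N(C)_i}>h^j(\mathbf{p}^i)$ in both cases $j\in L(C)$ and $j\notin L(C)$, which is the mechanism forcing $j$ out and $N(C)_i$ in.

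The heart of the argument, and the step I expect to be the main obstacle, is establishing that $j\in S^{i-1}$ while $N(C)_i$ is precisely the highest-valued node \emph{outside} $S^{i-1}$. I would first prove two monotonicity facts from the level formula: the count of level-$1$ nodes is non-increasing in $k$ and never exceeds $|L(C)|$ (it equals $|L(C)|$ at $\mathbf{p}^0$, where $S^0=L(C)$), and the count of level-$(-1)$ nodes never exceeds $|N(C)|-|L(C)|$. Together these guarantee that the support never reaches down to level $-1$, so its boundary always lies inside the level-$0$ band and membership there is decided purely by $q$-rank. With this in hand, the case $j\in L(C)$ is immediate, since $j$ is then a level-$1$ node and automatically in $S^{i-1}$; the genuinely delicate case is $j\notin L(C)$, where $j$ sits at level $0$ and one must locate both $j$ and $N(C)_i$ within the $q$-ordered level-$0$ band. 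I would handle this by an induction on $i$ carrying an explicit description of $S^{i-1}$ (equivalently of its complement), whose inductive step is exactly the single swap, thereby certifying $j\in S^{i-1}$ and that $N(C)_i$ is the top node just below the boundary.

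Finally, given the swap $S^i=S^{i-1}\setminus\{j\}\cup\{N(C)_i\}$ together with the fact that every node common to $S^{i-1}$ and $S^i$ retains its value (only coordinate $j$ moved), the difference telescopes to
\[
F_{|L(C)|,\mathbf{1}^{N(C)}}(\mathbf{p}^i)-F_{|L(C)|,\mathbf{1}^{N(C)}}(\mathbf{p}^{i-1})=h^{N(C)_i}(\mathbf{p}^i)-h^j(\mathbf{p}^{i-1})=q_{N^+_C(j)}-(q_j+a_j),
\]
i.e.\ $q_{N^+_C(j)}-(1+q_j)$ when $j\in L(C)$ and $q_{N^+_C(j)}-q_j$ when $j\in N(C)\setminus L(C)$. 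Multiplying through by $x_{\delta(i)}-p_{\delta(i)}=x_j-p_j$, with $p_j=-1$ for $j\in L(C)$ and $p_j=0$ for $j\in N(C)\setminus L(C)$, reproduces the two displayed cases and completes the Observation.
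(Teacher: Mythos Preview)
Your proposal is correct and follows the same approach as the paper: both track the maximizing set $S^k$ of the $|L(C)|$ largest values of $\{h^\ell(\mathbf{p}^k)\}_{\ell\in N(C)}$ via Lemma~\ref{lemma:F_u0_binary_u} and argue that passing from $\mathbf{p}^{i-1}$ to $\mathbf{p}^i$ swaps $j$ out and $N^+_C(j)$ in. The paper treats this as an observation with the justification essentially embedded in the statement (``by our construction''), whereas your level formalism $h^\ell(\mathbf{p}^k)=q_\ell+a_\ell-\beta^k_\ell$ and the proposed induction on $i$ supply the rigorous verification that the paper leaves implicit.
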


\begin{observation}
For all $i\in \{|N(C)|+1, \dots, n\}$, again, let $j = N_C^-(N(C)_i) = \delta(i)$. First note that $h^j(p^{i-1}_j) = q_j - 2$, which is too low to be in the support of $F_{|L(C)|, \mathbf{1}^{N(C)}}(\mathbf{p}^{i-1})$. 
Second, $h^j(p^i_j) = h^j(p^{i-1}_j + 1) = q_j - 3$, which means that $j$ must not be added to the optimal support of $F_{|L(C)|, \mathbf{1}^{N(C)}}(\mathbf{p}^i)$. Thus, $F_{|L(C)|, \mathbf{1}^{N(C)}}(\mathbf{p}^i) = F_{|L(C)|, \mathbf{1}^{N(C)}}(\mathbf{p}^{i-1})$, and 
\[\left[F_{|L(C)|, \mathbf{1}^{N(C)}}(\mathbf{p}^i) - F_{|L(C)|, \mathbf{1}^{N(C)}}(\mathbf{p}^{i-1})\right](x_{\delta(i)} - p_{\delta(i)})  = 0. \]
\end{observation}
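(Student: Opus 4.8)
The plan is to prove the claimed vanishing of the increment term by showing that, for every step index $i\in\{|N(C)|+1,\dots,n\}$, the coordinate $\delta(i)$ being incremented lies outside $N(C)$, so that $F_{|L(C)|,\mathbf{1}^{N(C)}}$ cannot change. First I would establish the membership fact $\delta(i)\in N\setminus N(C)$ for $i>|N(C)|$. By construction $\delta(i)=N_C^-(N(C)_i)$ for $i\in\{1,\dots,|N(C)|\}$, and since $C$ is an elementary cycle, the map $m\mapsto N_C^-(m)$ is a bijection of $N(C)$ onto itself; composing it with the relabeling $i\mapsto N(C)_i$ (an ordering of $N(C)$ by $q$-value) shows that the first $|N(C)|$ entries of $\boldsymbol{\delta}$ are exactly a permutation of $N(C)$. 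Because $\boldsymbol{\delta}\in\mathfrak{S}(N)$ is then completed with an arbitrary permutation of $N\setminus N(C)$, every remaining entry $j:=\delta(i)$ with $i>|N(C)|$ satisfies $j\in N\setminus N(C)$, where $p_j=2$.

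Next I would record the two value computations quoted in the statement. Since $j=\delta(i)$ is incremented for the first time only at step $i$, the $j$-th coordinate of $\mathbf{p}^{i-1}$ is untouched, giving $p^{i-1}_j=p_j=2$ and hence $h^j(p^{i-1}_j)=q_j-2$; one further increment yields $p^i_j=p_j+1=3$ and $h^j(p^i_j)=q_j-3$. These confirm the two displayed values and, in particular, show that the value at $j$ strictly decreases from step $i-1$ to step $i$.

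The crux is then to convert these observations into the equality $F_{|L(C)|,\mathbf{1}^{N(C)}}(\mathbf{p}^i)=F_{|L(C)|,\mathbf{1}^{N(C)}}(\mathbf{p}^{i-1})$. Here I would invoke Lemma~\ref{lemma:F_u0_binary_u}, which, with $\Gamma=N(C)$ and $u_0=|L(C)|\in\{1,\dots,|N(C)|\}$, gives $F_{|L(C)|,\mathbf{1}^{N(C)}}(\mathbf{x})=\max_{S\subseteq N(C),\,|S|=|L(C)|}\sum_{m\in S}h^m(\mathbf{x})$. The decisive point is that this expression depends only on the coordinates $x_m$ with $m\in N(C)$: both the admissible index sets $S$ and the summands $h^m$ are confined to $N(C)$. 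Since $\mathbf{p}^i$ and $\mathbf{p}^{i-1}$ differ only in the single coordinate $j\in N\setminus N(C)$, their restrictions to $N(C)$ coincide, so the two maximizations are literally the same optimization problem and attain the same optimal value. Thus the difference $F_{|L(C)|,\mathbf{1}^{N(C)}}(\mathbf{p}^i)-F_{|L(C)|,\mathbf{1}^{N(C)}}(\mathbf{p}^{i-1})$ is zero, and the increment term $\bigl[F_{|L(C)|,\mathbf{1}^{N(C)}}(\mathbf{p}^i)-F_{|L(C)|,\mathbf{1}^{N(C)}}(\mathbf{p}^{i-1})\bigr](x_{\delta(i)}-p_{\delta(i)})$ vanishes identically, as claimed.

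I expect essentially no serious obstacle here: the only step requiring care is the bijection argument that the first $|N(C)|$ entries of $\boldsymbol{\delta}$ exhaust $N(C)$, since this is precisely what guarantees $\delta(i)\notin N(C)$ for $i>|N(C)|$. I would also remark that the ``too low to be in the support'' language in the statement, while correct (the values $h^j(p^{i-1}_j)$ and $h^j(p^i_j)$ sit at an index outside the candidate set $N(C)$ and moreover only decrease), is stronger than necessary: the coordinate-independence of $F_{|L(C)|,\mathbf{1}^{N(C)}}$ on $N\setminus N(C)$ already forces the equality directly.
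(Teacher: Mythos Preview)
Your proposal is correct and follows the same idea as the paper's embedded reasoning, namely that the coordinate $j=\delta(i)$ being incremented cannot affect the optimal support of $F_{|L(C)|,\mathbf{1}^{N(C)}}$. Your version is in fact cleaner: you make explicit the bijection argument that $\{\delta(1),\dots,\delta(|N(C)|)\}=N(C)$, so $j\in N\setminus N(C)$ for $i>|N(C)|$, and then invoke Lemma~\ref{lemma:F_u0_binary_u} to conclude coordinate-independence directly. You are also right that the paper's ``too low to be in the support'' phrasing is imprecise here (and that the notation $N_C^-(N(C)_i)$ for $i>|N(C)|$ is a slip, since $N(C)_i$ is undefined in that range); once $j\notin N(C)$, the value of $h^j$ is irrelevant because $u_j=0$, so $j$ is excluded from the candidate set $\Gamma=N(C)$ regardless of magnitude.
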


\begin{lemma}
\label{lemma:cycle_SEPI}
The SEPI  associated with our proposed $\mathbf{p}$ and $\boldsymbol{\delta}$ for the epigraph of $F_{|L(C)|, \mathbf{1}^{N(C)}}$ is 
\[\eta \geq \sum_{j\in L(C)} \left[ q_{N_C^+(j)} + \left(q_{N_C^+(j)} - q_j - 1\right)x_j\right] + \sum_{j\in N(C)\setminus L(C)} \left(q_{N_C^+(j)} - q_j\right)x_j. \]
\end{lemma}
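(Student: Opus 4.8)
The plan is to evaluate the SEPI for the epigraph of $F_{|L(C)|,\mathbf{1}^{N(C)}}$ directly by expanding its definition \eqref{eq:SEPI} at the specific $\mathbf{p}$ and $\boldsymbol{\delta}$ we constructed from the cycle $C$. First I would write out the SEPI as
\[\eta \geq F_{|L(C)|,\mathbf{1}^{N(C)}}(\mathbf{p}) + \sum_{i=1}^{n}\left[F_{|L(C)|,\mathbf{1}^{N(C)}}(\mathbf{p}^i) - F_{|L(C)|,\mathbf{1}^{N(C)}}(\mathbf{p}^{i-1})\right](x_{\delta(i)} - p_{\delta(i)}),\]
and then substitute the increment formulas already established in the two Observations immediately preceding this lemma. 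Those Observations show that for $i \in \{1,\dots,|N(C)|\}$ with $j=\delta(i)=N_C^-(N(C)_i)$, the $i$-th term equals $\bigl[q_{N_C^+(j)} - (1+q_j)\bigr](x_j+1)$ when $j\in L(C)$ and $\bigl(q_{N_C^+(j)}-q_j\bigr)x_j$ when $j\in N(C)\setminus L(C)$, while for $i>|N(C)|$ the term vanishes. So after substitution the sum collapses to
\[\eta \geq F_{|L(C)|,\mathbf{1}^{N(C)}}(\mathbf{p}) + \sum_{j\in L(C)}\bigl[q_{N_C^+(j)} - 1 - q_j\bigr](x_j+1) + \sum_{j\in N(C)\setminus L(C)}\bigl(q_{N_C^+(j)} - q_j\bigr)x_j,\]
where I have reindexed the sum over $i$ as a sum over $j=\delta(i)$ ranging exactly over $N(C)$ (the bijection $i\mapsto N_C^-(N(C)_i)$ is a permutation of $N(C)$ because $C$ is an elementary cycle through $N(C)$).

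The remaining work is to identify the constant term. By Lemma \ref{lemma:F_u0_binary_u}, $F_{|L(C)|,\mathbf{1}^{N(C)}}(\mathbf{p})$ is the sum of the $|L(C)|$ largest values among $\{h^i(\mathbf{p})\}_{i\in N(C)} = \{q_i - p_i\}_{i\in N(C)}$. With our construction $p_j = -1$ for $j\in L(C)$ and $p_j = 0$ for $j\in N(C)\setminus L(C)$, and since $\mathbf{q}\in[0,1)^n$, every value $q_j + 1$ for $j\in L(C)$ strictly exceeds every value $q_j$ for $j\in N(C)\setminus L(C)$; hence the top $|L(C)|$ entries are precisely $\{q_j+1\}_{j\in L(C)}$, giving $F_{|L(C)|,\mathbf{1}^{N(C)}}(\mathbf{p}) = \sum_{j\in L(C)}(q_j+1) = |L(C)| + \sum_{j\in L(C)}q_j$. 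Plugging this in, the constant and the $+1$ offsets in the $L(C)$ sum cancel the $\sum_{j\in L(C)}q_j$ terms:
\[F_{|L(C)|,\mathbf{1}^{N(C)}}(\mathbf{p}) + \sum_{j\in L(C)}(q_{N_C^+(j)} - 1 - q_j) = \sum_{j\in L(C)} q_{N_C^+(j)},\]
so the inequality becomes exactly $\eta \geq \sum_{j\in L(C)}\bigl[q_{N_C^+(j)} + (q_{N_C^+(j)} - q_j - 1)x_j\bigr] + \sum_{j\in N(C)\setminus L(C)}(q_{N_C^+(j)} - q_j)x_j$, as claimed.

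The main obstacle — though it is largely discharged by the preceding Observations rather than by this lemma — is verifying that the support of $F_{|L(C)|,\mathbf{1}^{N(C)}}(\mathbf{p}^k)$ evolves along the chain $\mathbf{p}^0,\mathbf{p}^1,\dots,\mathbf{p}^n$ exactly by "removing $\delta(i)$ and adding $N_C^+(\delta(i))$" at step $i\le|N(C)|$ and stabilizing thereafter; this is what makes each increment telescope to a single arc contribution. I would double-check the edge cases: that when $\delta(i)\in L(C)$ the newly-incremented coordinate $q_{\delta(i)} - p^i_{\delta(i)} = q_{\delta(i)}$ indeed drops below $q_{N_C^+(\delta(i))} - p^i_{N_C^+(\delta(i))}$ (which holds since the out-neighbor still sits at its $-1$ or $0$ level until its own turn comes, and $q_{N_C^+(\delta(i))} > q_{\delta(i)}$ exactly when the arc is "backward," i.e. $\delta(i)\in L(C)$), and that coordinates outside $N(C)$, pinned near value $q_j - 2$ or lower, never enter the top-$|L(C)|$ set. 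Once these support-tracking facts are in hand (they are exactly the content of the two Observations), the lemma follows by the mechanical substitution and cancellation described above.
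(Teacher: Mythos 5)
Your proposal is correct and follows essentially the same route as the paper's proof: substitute the increment formulas from the two preceding Observations into the SEPI, evaluate the constant term $F_{|L(C)|,\mathbf{1}^{N(C)}}(\mathbf{p}^0)=\sum_{j\in L(C)}(1+q_j)$, and carry out the telescoping cancellation. Your additional justification of the constant term via Lemma \ref{lemma:F_u0_binary_u} and the reindexing bijection $i\mapsto N_C^-(N(C)_i)$ only makes explicit what the paper leaves implicit.
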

\begin{proof}
Due to the two observations, the SEPI  is 
\begin{align*}
\eta & \geq F_{|L(C)|, \mathbf{1}^{N(C)}}(\mathbf{p}^0) + \sum_{i=1}^{|N(C)|} \left[F_{|L(C)|, \mathbf{1}^{N(C)}}(\mathbf{p}^i) - F_{|L(C)|, \mathbf{1}^{N(C)}}(\mathbf{p}^{i-1})\right](x_{\delta(i)} - p_{\delta(i)}) \\
& = \sum_{j\in L(C)} (1 + q_j) + \sum_{j\in L(C)} \left[q_{N_C^+(j)} - (1+q_j)\right](x_j + 1) + \sum_{j\in N(C)\setminus L(C)} \left(q_{N_C^+(j)} - q_j\right)x_j\\
& = \sum_{j\in L(C)} (1 + q_j + q_{N_C^+(j)} - (1+q_j)) +  \sum_{j\in L(C)} \left(q_{N_C^+(j)} - q_j - 1\right)x_j + \sum_{j\in N(C)\setminus L(C)} \left(q_{N_C^+(j)} - q_j\right)x_j\\
& = \sum_{j\in L(C)} \left[ q_{N_C^+(j)} + \left(q_{N_C^+(j)} - q_j - 1\right)x_j\right] + \sum_{j\in N(C)\setminus L(C)} \left(q_{N_C^+(j)} - q_j\right)x_j. 
\end{align*}
\end{proof}

\begin{proposition}
Any facet-defining cycle inequality associated with $C$ is the MISEPI associated with $(u_0, \mathbf{u})$, $\mathbf{p}$ and $\boldsymbol{\delta}$ following our construction. 
\end{proposition}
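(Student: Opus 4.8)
The plan is to show that the MISEPI associated with $(u_0, \mathbf{u}) = (|L(C)|, \mathbf{1}^{N(C)})$, $\mathbf{p}$, and $\boldsymbol{\delta}$ constructed above coincides with the cycle inequality \eqref{eq:cycle_ineq} for $C$. The self-loop case is already handled in Observation \ref{obs:self_loop}, so I may assume $C$ is not a self-loop, so that $|N(C)| \geq 2$ and the $q_i$ for $i \in N(C)$ are all distinct. By definition, the MISEPI is
\[
u_0 w + \mathbf{u}^\top \mathbf{y} \geq \mathbf{s}^{\mathbf{p},\boldsymbol{\delta}, u_0, \mathbf{u}^\top}\mathbf{x} + s_0^{\mathbf{p},\boldsymbol{\delta}, u_0, \mathbf{u}},
\]
where the right-hand side is the SEPI for the epigraph of $F_{|L(C)|, \mathbf{1}^{N(C)}}$ associated with $\mathbf{p}$ and $\boldsymbol{\delta}$. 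With the chosen $u_0$ and $\mathbf{u}$, the left-hand side is exactly $|L(C)| w + \sum_{i \in N(C)} y_i$, which matches the $\phi_{jk}$-terms of the cycle inequality: each backward arc $(j,k) \in C$ with $q_j < q_k$ (i.e., $j \in L(C)$) contributes one copy of $w + y_j$, and each forward/self arc contributes one copy of $y_j$ but no $w$; summing over $C$ gives $|L(C)|$ copies of $w$ and one copy of $y_j$ for each $j \in N(C)$ (every vertex appears exactly once as the tail of an arc of $C$).

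The core of the argument is then to identify the right-hand side. I would invoke Lemma \ref{lemma:cycle_SEPI}, which already computes that the SEPI for the epigraph of $F_{|L(C)|, \mathbf{1}^{N(C)}}$ with respect to $\mathbf{p}$ and $\boldsymbol{\delta}$ is
\[
\eta \geq \sum_{j\in L(C)} \left[ q_{N_C^+(j)} + \left(q_{N_C^+(j)} - q_j - 1\right)x_j\right] + \sum_{j\in N(C)\setminus L(C)} \left(q_{N_C^+(j)} - q_j\right)x_j.
\]
Substituting $\eta = u_0 w + \mathbf{u}^\top \mathbf{y}$, the MISEPI becomes
\[
|L(C)| w + \sum_{i \in N(C)} y_i \geq \sum_{j\in L(C)} \left[ q_{N_C^+(j)} + \left(q_{N_C^+(j)} - q_j - 1\right)x_j\right] + \sum_{j\in N(C)\setminus L(C)} \left(q_{N_C^+(j)} - q_j\right)x_j.
\]
It remains to check, term by term, that rearranging the cycle inequality $\sum_{(j,k)\in C}\phi_{jk}(w,\mathbf{y},\mathbf{x}) \geq 0$ produces precisely this inequality. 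For each arc $(j, k) \in C$, since $k = N_C^+(j)$: if $q_j < q_k$ (backward arc, $j \in L(C)$), then $\phi_{jk} = w + y_j - q_k - (q_k - q_j - 1)x_j$, which rearranges so that moving the constant and $x_j$-terms to the right gives $w + y_j \geq q_k + (q_k - q_j - 1)x_j$; if $q_j > q_k$ (forward arc, $j \in N(C)\setminus L(C)$), then $\phi_{jk} = y_j - (q_k - q_j)x_j$, giving $y_j \geq (q_k - q_j)x_j$. Summing these over all arcs of $C$ and collecting the $w$-terms ($|L(C)|$ of them), the $y$-terms (one $y_j$ per vertex $j \in N(C)$), and the right-hand-side terms indexed by $N_C^+(j) = k$, yields exactly the displayed MISEPI. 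I would also remark that when $N(C) \subsetneq N$, the vertices $j \in N \setminus N(C)$ have $p_j = 2$, so $F_{|L(C)|, \mathbf{1}^{N(C)}}$ never puts them in its support along the chain $\mathbf{p}^0, \dots, \mathbf{p}^n$, and the corresponding SEPI coefficients vanish — consistent with the cycle inequality not involving those variables.

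The main obstacle is the bookkeeping in the final term-by-term matching: one must be careful that the reindexing by $N_C^+$ is a bijection from the arcs of $C$ onto $N(C)$ (which holds because $C$ is an elementary cycle), that the vertex-to-arc correspondence $j \mapsto (j, N_C^+(j))$ is likewise a bijection, and that the constants $\sum_{j \in L(C)} q_{N_C^+(j)}$ on the right match up after the algebra. All of this is essentially the computation already carried out in Lemma \ref{lemma:cycle_SEPI} and Example \ref{eg:cycle_MISEPI_eg}, so once those are in hand the proof is a direct verification; there is no genuine difficulty beyond the arithmetic of re-collecting terms. I do not anticipate needing any additional structural result — validity and facet-definingness of the MISEPI are supplied by earlier sections, and here we only need the identity of the two inequalities.
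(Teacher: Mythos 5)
Your proposal is correct and takes essentially the same route as the paper: dispose of self-loops via Observation \ref{obs:self_loop}, read the left-hand side off the choice $(u_0,\mathbf{u})=(|L(C)|,\mathbf{1}^{N(C)})$, obtain the right-hand side from Lemma \ref{lemma:cycle_SEPI}, and match terms with the rearranged cycle inequality. The only slip is the parenthetical claim that a ``self'' arc contributes no $w$ (in fact $\phi_{jj}$ does contain $w$), which is harmless here because an elementary cycle with $|N(C)|\geq 2$ contains no self-loops.
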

\begin{proof}
A cycle inequality is facet-defining if it corresponds to an elementary cycle $C$ with distinct $q_i$ for $i\in N(C)$ \citep{van2005continuous}. Observation \ref{obs:self_loop} has fully addressed the case where $C = \{(i,i)\}$ is any self-loop.  Now suppose $|N(C)| \geq 2$. 
The cycle inequality assumes the form 
\[|L(C)| w + \sum_{i\in N(C)} y_i \geq \sum_{j\in L(C), (j,k)\in C} [q_k + (q_k - q_j - 1)x_j] +  \sum_{j\in N(C)\setminus L(C), (j,k)\in C} (q_k - q_j)x_j. \]
By construction, we obtain the MISEPI 
\[|L(C)| w + \sum_{i\in N(C)} y_i \geq \sum_{j\in L(C)} \left[ q_{N_C^+(j)} + \left(q_{N_C^+(j)} - q_j - 1\right)x_j\right] + \sum_{j\in N(C)\setminus L(C)} \left(q_{N_C^+(j)} - q_j\right)x_j.\]
The right-hand side follows from Lemma \ref{lemma:cycle_SEPI}.
This coincides with the cycle inequality. 
\end{proof}

\begin{remark}
Recall that Proposition \ref{prop:finite_CV} provides a finite representation of $\conv{\mathcal{P}^{\text{CMIX}}}$. Here, in fact,
$\mathcal{U}'$ given by \eqref{eq:U_prime} can be refined to the following set
\begin{equation*}
\mathcal{U}' := \bigcup_{M\in 2^N\setminus \{\emptyset\}}\left\{\mathbf{u}\in\mathbb{R}^n_+ : \; 
\begin{aligned}
& \mathbf{u}^M \in \bigcup_{B\in GL(|M|,2)}\{B^{-1}\mathbf{1} : B_{i\cdot}\mathbf{1} = B_{i'\cdot}\mathbf{1}, \forall\, i,i'\in \{1,\dots, |M|\}, \mathbf{u}^{N\setminus M} = \mathbf{0} \\
\end{aligned}
\right\}.
\end{equation*}
That is, every nonsingular binary square matrix $B$ is further restricted to have equal row sums. 
\end{remark}

\subsubsection{Binary multi-capacity continuous mixing set}

In this section, we consider another example of $\mathcal{P}^H$ whose convex hull follows from our discussion. Let arbitrary $\mathbf{q}\in\mathbb{R}^n$ and $\mathbf{c}\in\mathbb{R}^n_+$ be given. We examine $H:\mathcal{B}_\mathbf{0} \times \mathbb{R}^n_+ \rightarrow\mathbb{R}$ given by
\[H(\mathbf{x}, \mathbf{y}) := \max_{i\in N}\{q_i - c_i x_i - y_i\}.\] 
We denote its epigraph by $\mathcal{P}^{\text{MCMIX}}$. This set generalizes $\mathcal{P}^{\text{CMIX}}$ and more realistically captures the varying capacities of the elements involved in lot-sizing, capacitated facility location, or capacitated network design. Multiple studies have considered incorporating divisible weights $\mathbf{c}$, while omitting the continuous variables $\mathbf{y}$. For example,  \citet{constantino2010mixing} examine
\[\{(w,\mathbf{x})\in\mathbb{R}_+\times\mathbb{Z}^n : w + c_ix_i \geq q_i, \; \forall\, i\in N\},\] where $c_i \in \{L, C\}\subset \mathbb{Z}$ and $C | L\in\mathbb{Z}$.  \citet{zhao2008mixing} allow $c_i$, $i\in N$, to be positive real values that are divisible (i.e., $c_1 | \dots | c_n$). In other words, $c_i / c_j \in \mathbb{Z}$ for $i \geq j$. Studies that are more closely related to $\mathcal{P}^H$ in our case are \citet{bansal2015n} and \citet{zhao2008note}. The former proposes a class of valid inequalities called the $n$-step cycle inequalities and gives their facet conditions. The latter shows that when the weights $\mathbf{c}$ are divisible (i.e., $c_1 | \dots |c_n$), optimization over $\mathcal{P}^H$ can be performed in polynomial time. To the best of our knowledge, the description of $\conv{\mathcal{P}^H}$ and the complexity of optimizing over $\mathcal{P}^H$ without any restrictions on $\mathbf{c}$ (e.g., divisibility) remain unanswered. Thanks to the hidden L$^\natural$-convexity and our results in Sections \ref{sect:properties_PH}--\ref{sect:FD_MISEPI}, we now address this challenge. \\

Recall that $\mathbf{c} \geq \mathbf{0}$, so $h^i(x_i) = q_i - c_ix_i$ is univariate and monotonically decreasing for every $i\in N$. Assumption \hyperlink{A1}{1} holds by Remark \ref{remark:A1_eg}, and $F_{u_0, \mathbf{u}}$ are L-convex for all $(u_0, \mathbf{u})\in\mathcal{U}$ by Remark \ref{remark:LC_Fu0u}. 
The next corollary addresses the description of $\conv{\mathcal{P}^{\text{MCMIX}}}$, following from Proposition \ref{prop:finite_CV}. 
\begin{corollary}
The convex hull of $\mathcal{P}^{\text{MCMIX}}$ is completely described by the MISEPIs associated with $(1, \mathbf{u})$, for all $\mathbf{u}\in\mathcal{U}'$ \eqref{eq:U_prime}, with $\mathbf{p} = \mathbf{0}$, and with $\boldsymbol{\delta}\in\mathfrak{S}(N)$. 
\end{corollary}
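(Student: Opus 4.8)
The plan is to invoke the general machinery already assembled in Sections \ref{sect:properties_PH}--\ref{sect:sepa_MISEPIs} and simply verify that the set $\mathcal{P}^{\text{MCMIX}}$ satisfies all the hypotheses of Proposition \ref{prop:finite_CV}. Concretely, observe that $\mathcal{P}^{\text{MCMIX}} = \mathcal{P}^H$ for $H(\mathbf{x},\mathbf{y}) = \max_{i\in N}\{q_i - c_ix_i - y_i\}$ with $h^i(x_i) = q_i - c_ix_i$. First I would record that each $h^i$ is a univariate monotone (indeed monotonically decreasing, since $c_i \ge 0$) function on $\mathcal{B}_\mathbf{0}$, so that Assumption \hyperlink{A1}{1} is satisfied by the argument in Remark \ref{remark:A1_eg} (the functions $g^j_i$ and $h^j$ are submodular over the binary cube, hence L$^\natural$-convex). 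Next I would note that, by Remark \ref{remark:LC_Fu0u}, the monotonicity of the $h^i$ across all $i\in N$ forces $F_{u_0,\mathbf{u}}$ to be lattice submodular, and on $\mathcal{B}_\mathbf{0}$ lattice submodular means submodular, hence L$^\natural$-convex, for every $(u_0,\mathbf{u})\in\mathcal{U}$.

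With both hypotheses of Proposition \ref{prop:finite_CV} verified, the conclusion $\conv{\mathcal{P}^H} = \mathcal{CP}'$ applies: $\conv{\mathcal{P}^{\text{MCMIX}}}$ is described by the constraints $(w + \mathbf{u}^\top\mathbf{y}, \mathbf{x}) \in \conv{\mathcal{P}^{F_{1,\mathbf{u}}}}$ over the finite index set $\mathbf{u}\in\mathcal{U}'$. The final step is to translate each such constraint into MISEPIs \eqref{eq:MISEPI}. Since each $F_{1,\mathbf{u}}$ is L$^\natural$-convex, Theorem \ref{thm:conv_SEPI} says $\conv{\mathcal{P}^{F_{1,\mathbf{u}}}}$ is cut out by the SEPIs over all $\mathbf{p}\in\underline{\mathcal{X}}$ and $\boldsymbol{\delta}\in\mathfrak{S}(N)$, together with trivial bounds; and here $\underline{\mathcal{X}} = \{\mathbf{x}\in\mathbb{Z}^n : 0 \le x_i \le 1-1 = 0\} = \{\mathbf{0}\}$, so the only admissible point is $\mathbf{p} = \mathbf{0}$. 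The trivial bounds $\mathbf{0}\le \mathbf{x}\le \mathbf{1}$ and $\mathbf{y}\ge\mathbf{0}$ are the standard facets, and the remaining SEPIs all have $\mathbf{p}=\mathbf{0}$ and run over $\boldsymbol{\delta}\in\mathfrak{S}(N)$; lifting these to the MISEPI form \eqref{eq:MISEPI} via the definition of $\mathcal{CP}'$ gives exactly the stated description.

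I do not anticipate a genuine obstacle here — the statement is essentially a specialization corollary — but the one point requiring a little care is the identification $\underline{\mathcal{X}} = \{\mathbf{0}\}$ when $\mathcal{X} = \mathcal{B}_\mathbf{0} = \{0,1\}^n$, which collapses the SEPI family so that only $\mathbf{p} = \mathbf{0}$ survives; this is what lets us drop $\mathbf{p}$ from the indexing in the corollary's statement. One should also briefly confirm that $\mathcal{U}'$ from \eqref{eq:U_prime} is the correct finite index set for this instance (no further refinement is claimed, in contrast to the $\mathcal{P}^{\text{CMIX}}$ case where the remark sharpens $\mathcal{U}'$ using equal-row-sum matrices), so the statement quotes \eqref{eq:U_prime} verbatim. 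With those observations the proof is a one-line appeal to Proposition \ref{prop:finite_CV} and Theorem \ref{thm:conv_SEPI}.
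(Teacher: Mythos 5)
Your proposal is correct and follows essentially the same route as the paper: verify Assumption 1 via the monotone univariate $h^i$ (Remark \ref{remark:A1_eg}), verify L$^\natural$-convexity of $F_{u_0,\mathbf{u}}$ on the binary cube via Remark \ref{remark:LC_Fu0u}, and then specialize Proposition \ref{prop:finite_CV}, with $\underline{\mathcal{X}}=\{\mathbf{0}\}$ collapsing the SEPI family to $\mathbf{p}=\mathbf{0}$. Your observation that the binary-cube (submodularity) bullet of Remark \ref{remark:LC_Fu0u} is the applicable one, since the slopes $c_i$ are not common, is a fair and careful reading but does not constitute a different argument.
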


We include an example to illustrate what these MISEPIs look like. 
\begin{example}
When $\mathbf{q} = (2, 0.5, 4, 2.75)$ and $\mathbf{c} = (3, 1, 4, 2.5)$, the following inequality is facet-defining for $\conv{\mathcal{P}^{\text{MCMIX}}}$. 
\[w + (2/3, 1/3, 1/3, 1/3)^\top\mathbf{y} \geq 35/12 - 3x_1/2 - x_2/12 - 7x_3/6 - x_4/4 \]
Here, $\mathbf{u} = B^{-1}\mathbf{1}$ where 
\[ B = 
\begin{pmatrix}
1 & 1 & 0 & 0 \\
1 & 0 & 1 & 0 \\
1 & 0 & 0 & 1 \\
0 & 1 & 1 & 1
\end{pmatrix}, 
\]
and $\boldsymbol{\delta} = (4,3,2,1)$. 
\end{example}

In addition, we determine the complexity of the associated minimization problem following from Proposition \ref{prop:complexity}. 
\begin{corollary}
The relaxation of the multi-capacitated lot-sizing problem, given by optimizing a linear objective over $\mathcal{P}^{\text{MCMIX}}$, is polynomial-time solvable. 
\end{corollary}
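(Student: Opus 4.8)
The plan is to invoke Proposition \ref{prop:complexity} directly. The key observations are already in place: for the function $H(\mathbf{x},\mathbf{y}) = \max_{i\in N}\{q_i - c_i x_i - y_i\}$ defining $\mathcal{P}^{\text{MCMIX}}$, each $h^i(x_i) = q_i - c_i x_i$ is univariate and, since $\mathbf{c}\geq\mathbf{0}$, monotonically decreasing. Thus Assumption \hyperlink{A1}{1} holds by Remark \ref{remark:A1_eg}, and $F_{u_0,\mathbf{u}}$ is L-convex (hence L$^\natural$-convex) for every $(u_0,\mathbf{u})\in\mathcal{U}$ by Remark \ref{remark:LC_Fu0u}. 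These are exactly the hypotheses of Proposition \ref{prop:complexity}.

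First I would state that minimizing an arbitrary linear objective $\mathbf{c}^\top_w w + \mathbf{c}^\top_y \mathbf{y} + \mathbf{c}^\top_x \mathbf{x}$ over $\mathcal{P}^{\text{MCMIX}}$ is, up to sign conventions, an instance of optimizing a linear function over $\conv{\mathcal{P}^H}$, since a linear program over a set and over its convex hull have the same optimal value. Next I would note that by Proposition \ref{prop:finite_CV} (or Theorem \ref{thm:conv_H}), $\conv{\mathcal{P}^H}$ is described by the MISEPIs, and by Proposition \ref{prop:exact_sepa_SEPI} together with the separation LP \eqref{eq:sepa_prob_single_max}, the separation problem for MISEPIs is polynomially solvable: given a candidate point, Algorithm \ref{alg:frac_greedy} returns $\mathbf{p},\boldsymbol{\delta}$ in $\mathcal{O}(n\log n)$ time (here $\mathbf{p}=\mathbf{0}$ suffices since $\mathcal{X}=\mathcal{B}_\mathbf{0}$), and then the polynomially-sized LP \eqref{eq:sepa_prob_single_max} finds the most violated MISEPI. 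Then the equivalence of separation and optimization for polyhedra \citep{grotschel1981ellipsoid} yields a polynomial-time algorithm for the optimization problem. This is precisely the content of Proposition \ref{prop:complexity}, so the corollary follows by a one-line application.

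The main point that needs a remark rather than being an obstacle is ensuring the hypotheses of Proposition \ref{prop:complexity} are literally met: we need $F_{u_0,\mathbf{u}}$ to be L$^\natural$-convex for \emph{all} $(u_0,\mathbf{u})\in\mathcal{U}$, which is guaranteed here because the $h^i$ are affine univariate with (in general) \emph{distinct} slopes $c_i$ — Remark \ref{remark:LC_Fu0u} covers the common-slope case giving L-convexity, but for the multi-capacity setting one should appeal instead to Proposition \ref{prop:F_lattice_sub} combined with Theorem~2.2 (integrally convex + lattice submodular $\Rightarrow$ L$^\natural$-convex) or simply to the already-stated fact in the section's opening paragraph that $F_{u_0,\mathbf{u}}$ is L-convex when $h^i(x_i)=q_i-\alpha x_i$; here, since $\mathcal{X}=\mathcal{B}_\mathbf{0}$ is the unit hypercube, the univariate functions restricted to $\{0,1\}$ make $g^j_i$ and $h^j$ submodular set functions, hence L$^\natural$-convex, and $F_{u_0,\mathbf{u}}$ lattice submodular over $\mathcal{B}_\mathbf{0}$, which on the binary cube is submodular, hence L$^\natural$-convex. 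Therefore the corollary is immediate: the relaxation of the multi-capacitated lot-sizing problem, i.e.\ optimizing a linear objective over $\mathcal{P}^{\text{MCMIX}}$, is polynomial-time solvable by Proposition \ref{prop:complexity}.
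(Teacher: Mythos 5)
Your proposal is correct and follows essentially the same route as the paper: verify Assumption \hyperlink{A1}{1} via Remark \ref{remark:A1_eg} and the L$^\natural$-convexity of $F_{u_0,\mathbf{u}}$ for all $(u_0,\mathbf{u})\in\mathcal{U}$ (on $\mathcal{X}=\mathcal{B}_\mathbf{0}$ via submodularity), then apply Proposition \ref{prop:complexity}, whose proof is exactly the polynomial separation of MISEPIs combined with the equivalence of separation and optimization. Your side remark about which part of Remark \ref{remark:LC_Fu0u} applies (the binary-cube case rather than the common-slope case, since the $c_i$ are generally distinct) is a fair and accurate clarification, but it does not change the argument.
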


\section{Conclusion}
\label{sect:conclusion}

L$^\natural$-convex functions form a broad class of nonlinear functions with immense practical utility. Our work conducts a comprehensive, in-depth study of the convexification of multiple classes of mixed-integer sets associated with L$^\natural$-convex functions. We establish useful properties of these and related functions, such as lattice and continuous submodular functions. We formalize the linear description for the convex hull of any L$^\natural$-convex function's epigraph with linear inequalities called SEPIs, which are polynomial-time separable. We reveal hidden L$^\natural$-convexity in the mixing set, recovering its existing convex hull description as a special case of our study. We also completely describe the joint convex hulls of multiple L$^\natural$-convex functions sharing common variables, as well as a variant with additional constraints. We show that the intersection of individual convex hulls equals the convex hull of the intersection in this case, generalizing analogous results for submodular functions. Lastly, we extend the notion of L$^\natural$-convexity to functions defined over mixed-integer domains. We examine the convex hull description for the epigraph of any such mixed-integer function. Under certain conditions, we provide a complete description via MISEPIs, with specified facet conditions and polynomial separation complexity; the latter implies polynomial complexity of minimizing such functions over mixed-integer variables. We illustrate our results with the continuous mixing set and its multi-capacitated variant, by uncovering the hidden L$^\natural$-convexity. A natural step in our future research is to extend our results for L$^\natural$-convexity to its super-class, lattice submodularity---more precisely, to obtain convexification of the epigraph of lattice submodular functions in the original decision space. Combining with our methods in this work, we will then be able to generalize the convex hull description for the multi-capacity continuous mixing set from $\mathcal{X} = \mathcal{B}_{\mathbf{0}}$ to $\mathcal{X} = \mathbb{Z}^n$.

\section*{Acknowledgments}
This work is supported by grants from the Office of Naval Research \#N00014-22-1-2602 and the Natural Sciences and Engineering Research Council of Canada RGPIN-2024-05059.

\section*{Appendix}

\begin{proof}[Proof of Lemma \ref{lemma:monotone_composite_sub}.]
Given any $i\in N$, 
\[h^i(v) \vee h^i(w) = h^i(v\wedge w), \quad h^i(v) \wedge h^i(w) = h^i(v\vee w),\]
for all $v, w \in\mathcal{X}_i$ by monotonicity of $h^i$. Consider any $\mathbf{x}, \mathbf{y}\in\mathcal{X}$. We note that 
\begin{align*}
G(\mathbf{x}) + G(\mathbf{y}) & = F\left(h^1(x_1), h^2(x_2), \dots, h^n(x_n)\right) + F\left(h^1(y_1), h^2(y_2), \dots, h^n(y_n)\right) \\
& \geq F\left(h^1(x_1) \vee h^1(y_1), h^2(x_2)\vee h^2(y_2), \dots, h^n(x_n)\vee h^n(y_n)\right) \\
& \quad \quad + F\left(h^1(x_1) \wedge h^1(y_1), h^2(x_2)\wedge h^2(y_2), \dots, h^n(x_n)\wedge h^n(y_n)\right)  \quad \text{($F$ is submodular \eqref{eq:submodular})} \\
& = F\left(h^1(x_1\wedge y_1), h^2(x_2\wedge y_2), \dots, h^n(x_n\wedge y_n)\right) \\
& \quad \quad + F\left(h^1(x_1\vee y_1), h^2(x_2\vee y_2), \dots, h^n(x_n\vee y_n)\right) \quad \text{($h^i$ is monotone for $i\in N$)}\\
& = G(\mathbf{x}\wedge \mathbf{y}) + G(\mathbf{x}\vee \mathbf{y}).
\end{align*}
\end{proof}

\begin{proof}[Proof of Lemma \ref{lemma:continuous_max}]
Consider any $\mathbf{x}, \mathbf{y}\in\overline{\mathcal{X}}$. Suppose $i', j'\in N$ satisfy 
\[g(\mathbf{x}\vee \mathbf{y}) = x_{i'}\vee y_{i'}, \quad  g(\mathbf{x} \wedge\mathbf{y}) = x_{j'}\wedge y_{j'},\]
respectively. If $x_{i'} \geq y_{i'}$, then 
\[g(\mathbf{x}\vee \mathbf{y}) + g(\mathbf{x} \wedge\mathbf{y}) = x_{i'} + (x_{j'}\wedge y_{j'}) \leq x_{i'} +  y_{j'} \leq g(\mathbf{x}) + g(\mathbf{y}).\]
On the other hand, if $x_{i'} < y_{i'}$, then 
\[g(\mathbf{x}\vee \mathbf{y}) + g(\mathbf{x} \wedge\mathbf{y}) = y_{i'} + (x_{j'}\wedge y_{j'}) \geq y_{i'} + x_{j'} \leq g(\mathbf{y}) + g(\mathbf{x}).\]
\end{proof}

\begin{proof}[Proof of Lemma \ref{lemma:uni_conv_sub}]
We show that \eqref{eq:sub_alt_def} holds. Fix any $\mathbf{x}\in\mathbb{R}^2$ and any $a, b\in\mathbb{R}_+$. 
If $i = 1$ and $j= 2$, then 
\begin{align*}
g(\mathbf{x} + a\mathbf{1}^i) - g(\mathbf{x}) & = f(x_1 - x_2 + a) - f(x_1 - x_2) \\
& \geq f(x_1 - x_2 - b + a) - f(x_1 - x_2 - b) \\
& = g(\mathbf{x} + a\mathbf{1}^i + b\mathbf{1}^j) - g(\mathbf{x}+ b\mathbf{1}^j). 
\end{align*} The inequality follows from the convexity of $f$. Otherwise, $i = 2$ and $j = 1$, then similarly, 
\begin{align*}
g(\mathbf{x} + a\mathbf{1}^i) - g(\mathbf{x}) & = f(x_1 - x_2 - a) - f(x_1 - x_2) \\
& \geq f(x_1 - x_2 + b - a) - f(x_1 - x_2 + b) \\
& =g(\mathbf{x} + a\mathbf{1}^i + b\mathbf{1}^j) - g(\mathbf{x}+ b\mathbf{1}^j). 
\end{align*}
\end{proof}

\bibliography{L_mixing_ref}{}
\bibliographystyle{apalike}
\end{document}